\documentclass[a4paper,11pt]{amsart}
\usepackage{amsmath,amsthm,amssymb,amsfonts,enumerate,color,esint,bbm}
\usepackage[pdftex]{graphicx}
\usepackage{float}
\usepackage{tikz}
\usepackage[shortlabels]{enumitem}
\usepackage{subfig}
\captionsetup[subfigure]{labelfont=rm}

\oddsidemargin = 9pt \evensidemargin = 9pt \textwidth = 440pt

\usepackage[colorlinks=true, allcolors=blue]{hyperref}
\usepackage{amsrefs}
\usepackage{pgfplots}
\pgfplotsset{
compat=newest,
colormap={blackwhite}{gray(0cm)=(.25); gray(1cm)=(1)}
}
\usepackage{subfig}
\usetikzlibrary{hobby}

\newcommand{\N}{\mathbb{N}}
\newcommand{\R}{\mathbb{R}}
\newcommand{\Z}{\mathbb{Z}}

\newcommand{\abs}[1]{\left\vert#1\right\vert}
\def\({\left(}
\def\){\right)}
\newcommand{\trace}{\operatorname{tr}}
\newcommand{\one}{\mathbbm{1}}

\newcommand{\ep}{\varepsilon}

\newtheorem{thm}{Theorem}[section]
\newtheorem{prop}[thm]{Proposition}

\newtheorem{lem}[thm]{Lemma}

\theoremstyle{definition}
\newtheorem{defn}[thm]{Definition}
\newtheorem{rem}[thm]{Remark}

\numberwithin{equation}{section}

\allowdisplaybreaks

\author[S.~Patrizi]{\href{http://stepatrizi.altervista.org/}{Stefania Patrizi} }

\address{Department of Mathematics\\
The University of Texas at Austin\\
2515 Speedway, Austin\\
TX 78712, United States of America}
\email{spatrizi@math.utexas.edu}

\author[M. Vaughan]{\href{https://maryvaughan.github.io/}{Mary Vaughan}}

\address{Department of Mathematics and Statistics\\
The University of Western Australia\\
35 Stirling HWY\\
Crawley WA 6009, Australia}
\email{mary.vaughan@uwa.edu.au}

\keywords{Peierls-Nabarro model, 
nonlocal integro-differential equations, 
dislocation dynamics, 
fractional Allen-Cahn, 
phase transitions}

\subjclass[2010]{Primary: 82D25, 35R09, 35R11. Secondary: 74E15, 47G20}

\begin{document}

\title[The discrete dislocation dynamics of multiple dislocation loops]{The discrete dislocation dynamics of multiple dislocation loops}

\begin{abstract}
We consider a nonlocal reaction-diffusion equation that physically arises from the classical Peierls--Nabarro model for dislocations in crystalline structures. Our initial configuration corresponds to multiple slip loop dislocations in $\R^n$, $n \geq 2$. After suitably rescaling the equation with a small phase parameter $\ep>0$, the rescaled solution solves a fractional Allen--Cahn equation. We show that, as $\ep \to 0$, the limiting solution exhibits multiple interfaces evolving independently and according to their mean curvature. 
\end{abstract}

\maketitle

\section{Introduction}

We study the fractional Allen--Cahn equation
\begin{equation} \label{eq:pde}
\ep \partial_t u^{\ep} = \frac{1}{\ep \abs{\ln \ep} }  (\ep\mathcal{I}_n [u^{\ep}]  - W'(u^\ep)) \quad \hbox{in}~(0,\infty)\times \R^n, ~n \geq 2,
\end{equation}
where $\ep>0$ is a small parameter,
$\mathcal{I}_n=-c_n(-\Delta )^\frac12$ denotes up to a constant  the square root of the Laplacian in $\R^n$,  
and $W$ is a multi-well potential, see \eqref{eq:operator} and \eqref{eq:W} respectively.  

Equation \eqref{eq:pde} is a rescaled version of the evolutionary Peierls--Nabarro model for atomic dislocations in crystalline structures. 
In this article, we initiate the mathematical analysis of the evolution of multiple slip dislocations according to \eqref{eq:pde}. 
Our initial configuration corresponds to multiple slip loops, all contained in the same slip plane, which we view as the boundaries of a nested sequence of  smooth and bounded sets in $\R^n$. 
We show that, as $\ep \to 0$, the curves 
move independently and according to their mean curvature. 
To the best of our knowledge, we are the first to consider the movement of multiple fronts in $\R^n$, $n \geq 2$, (the physical dimension being $n=2$) 
in this setting. 
Our results are new even for the case of a single curve and completes the work of Imbert and Souganidis in \cite{Imbert}. 
In dimension $n=2$, our main theorem is a formal passage from the microscopic Peierls--Nabarro model to the discrete dislocation dynamics at the mesoscopic scale and is the first time this has been done for multiple curved dislocations. 
Our work establishes the foundation for future research on the dynamics of an ensemble  of a large number of curved dislocations all  contained in the same slip plane. 

Before further detailing the significance of our main result and discussing the Peierls--Nabarro model for slip loop dislocations, let us formalize our problem mathematically.

\subsection{Setting of the problem and main result}
The operator $\mathcal{I}_n$ is a nonlocal integro-differential operator of order 1 and is 
defined on functions $u \in C^{1,1}(\R^n)$ by
\begin{equation}\label{eq:operator}
\mathcal{I}_n u(x) 
	= \operatorname{P.V.}~ \int_{\R^n} \( u(x+y) - u(x)\) \,\frac{dy}{\abs{y}^{n+1}}, \quad x \in \R^n, \quad n \in \N,
\end{equation}
where $\operatorname{P.V.}$ indicates that the integral is taken in the principal value sense. 
For further background on fractional Laplacians, see for example \cites{Hitchhikers,Stinga}. 
The potential $W$ satisfies
\begin{equation}\label{eq:W}
\begin{cases}
W \in C^{4, \beta} (\R) & \hbox{for some}~0 < \beta <1 \\
W(u+1) = W(u) & \hbox{for any}~ u \in \R\\
W= 0 & \hbox{on}~\Z\\
W>0 & \hbox{on}~\R \setminus \Z\\
W''(0) >0.
\end{cases}
\end{equation}
We let $u^{\ep}$ be the solution to \eqref{eq:pde} when the initial condition $u_0^\ep$ is a superposition of layer solutions. 
The layer solution (also called the phase transition) $\phi:\R \to (0,1)$ is the unique solution to  
\begin{equation} \label{eq:standing wave}
\begin{cases}
C_n \mathcal{I}_1[\phi] = W'(\phi) & \hbox{in}~\R\\
 \dot{\phi}>0 & \hbox{in}~\R\\
\phi(-\infty) = 0, \quad \phi(+\infty)=1,\quad \phi(0) = \frac{1}{2},
\end{cases}
\end{equation}
where $\mathcal{I}_1$ is the fractional operator in \eqref{eq:operator} on $\R$ and the constant $C_n>0$ (given explicitly in \eqref{eq:Cn}) depends only on $n \geq 2$. 
Further discussion on $\phi$ will be presented in Section \ref{sec:functions}. 

For a fixed $N \in \N$, let $(\Omega_0^i)_{i=1}^N$ be a finite sequence of open subsets of $\R^n$ that are  bounded, satisfy $\Omega_{0}^{i+1} \subset \subset \Omega_0^{i}$  and have smooth boundaries $\Gamma_0^i = \partial \Omega_0^i$. When $n=2$, the curves   
$\Gamma_0^i $ can be understood as the initial dislocation loops in the crystal.
Let $d_i^0(x)$ be the signed distance function associated to  $\Omega_0^i$, $i=1,\dots, N$, given by
\begin{equation}\label{eq:initial d_i}
d_i^0(x) = \begin{cases}
d(x, \Gamma_0^i) & \hbox{if}~x \in \Omega_0^i \\
-d(x,\Gamma_0^i) & \hbox{otherwise}.
\end{cases}
\end{equation}
For our initial condition to be well-prepared, we let $u_0^\ep$ be the $N$-fold sum of the layer solutions $\phi(d_i^0(x)/\ep)$, see \eqref{eq:initial cond} and Figure \ref{fig:initial}.

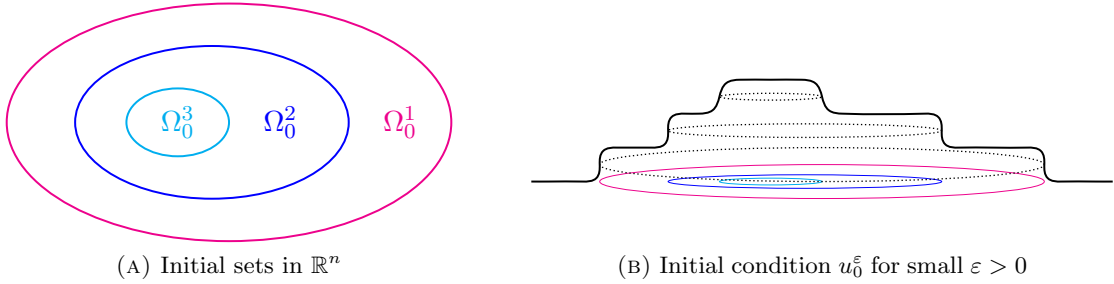
\begin{figure}[h]
\centering
\subfloat[Initial sets in $\R^n$]{
 \begin{tikzpicture}[scale=0.45, use Hobby shortcut, closed=true]
\draw[magenta,line width=.75pt] (1.5,0) ellipse (6.5cm and 3.5cm);  
\node[magenta] at (6.5,0) {$\Omega_0^1$} ;
\draw[blue,line width=.75pt] (1,0) ellipse (4cm and 2.25cm);  
\node[blue] at (3,0) {$\Omega_0^2$} ;
\draw[cyan,line width=.75pt] (0,0) ellipse (1.5cm and 1cm);   
\node[cyan] at (0,0) {$\Omega_0^3$} ;
\end{tikzpicture}
}
\qquad
\subfloat[Initial condition $u_0^{\ep}$ for small $\ep>0$]{
 \begin{tikzpicture}[scale=0.45, use Hobby shortcut, closed=false]
\draw[magenta,line width=.25pt] (1.5,0) ellipse (6.5cm and .5cm);  
\draw[blue,line width=.25pt] (1,0) ellipse (4cm and .2cm);  
\draw[cyan,line width=.25pt] (0,0) ellipse (1.5cm and .1cm);   
\draw[densely dotted,line width=.5pt] (1.5,.5) ellipse (6.5cm and .5cm);  
\draw[densely dotted,line width=.5pt] (1,1.5) ellipse (4cm and .2cm);  
\draw[densely dotted,line width=.5pt] (0,2.5) ellipse (1.5cm and .1cm);   
\draw[line width=.75pt]
	(-7,0)..(-6.5,0)..(-5.5,0)..(-5.1,.1)..(-5,.5)..(-4.9,.9)..(-4.5,1)..
	(-3.5,1)..(-3.1,1.1)..(-3,1.5)..(-2.9,1.9)..(-2.5,2)..
	(-2.1,2)..(-1.7,2.1)..(-1.5,2.5)..(-1.3,2.9)..(-.9,3)..(0,3)..
	(.9,3)..(1.3,2.9)..(1.5,2.5)..(1.7,2.1)..(2.1,2)..
	(4.5,2)..(4.9,1.9)..(5,1.5)..(5.1,1.1)..(5.5,1)..
	(7.5,1)..(7.9,.9)..(8,.5)..(8.1,.1)..(8.5,0)..(9.5,0)..(10,0);
\node[opacity=0] at (1,-1.5) {};
\end{tikzpicture}
}
\caption{Initial configuration for $N=3$ in dimension $n=2$}
\label{fig:initial}
\end{figure}

Consider a continuous viscosity  solution $u^i(t,x)$ to the mean curvature equation (see  \eqref{eq:meancurvature} with $\mu$ as in \eqref{eq:velocity-intro}) whose
 positive,  zero and negative sets  at time $t=0$ respectively are  $\Omega_0^i$, $\Gamma_0^i$ and  $(\overline{\Omega_0^i})^c$.  
If $ {^+}\Omega_t^i$, $\Gamma_t^i$, and ${^-}\Omega_t^i$ respectively are the positive, zero, and negative sets of $u^i(t,\cdot)$ at time $t>0$, then we say that the triplet 
$( {^+}\Omega_t^i, \Gamma_t^i, {^-}\Omega_t^i)_{t \geq 0}$ denotes the 
level set evolution of $( \Omega_0^i, \Gamma_0^i, (\overline{\Omega_0^i})^c)$.  
Since $\Omega_{0}^{i+1} \subset \subset \Omega_0^{i}$, one has that 
$ {^+}\Omega_t^{i+1}\subset \subset {^+}\Omega_t^i$ and $ {^-}\Omega_t^{i}\subset \subset {^-}\Omega_t^{i+1}$, for all times $t>0$ such that $ {^+}\Omega_t^i\neq\emptyset$. 
See Section \ref{sec:MC} for definitions and details on the level set approach to motion by mean curvature. 

We now present the main result of our paper.  
See Section \ref{sec:frac} for more on solutions to \eqref{eq:pde}.

\begin{thm}\label{thm:main}
Let $u^{\ep} = u^{\ep}(t,x)$ be the unique  solution of the reaction-diffusion equation \eqref{eq:pde} with the initial datum $u^{\ep}_0:\R^n \to (0,N)$ defined by
\begin{equation}\label{eq:initial cond}
u^{\ep}_0(x) = \sum_{i=1}^N \phi\(\frac{d_i^0(x)}{\ep}\),
\end{equation}
where $\phi$ solves \eqref{eq:standing wave}, $d_i^0$ are given in \eqref{eq:initial d_i}, and $N\geq1$.
Then, as $\ep \to 0$, the solutions $u^{\ep}$ satisfy
\[
u^\ep \to \begin{cases}
N & {^+}\Omega_t^N,\\
 i  \quad \,\, \hbox{locally uniformly in}~&{^+}\Omega_t^i \cap {^-}\Omega_t^{i+1}, \quad i=1,\dots, N-1,\\
 0 & {^-}\Omega_t^1,
\end{cases}
\]
where $( {^+}\Omega_t^i, \Gamma_t^i, {^-}\Omega_t^i)$ denotes the level set evolution of $( \Omega_0^i, \Gamma_0^i, (\overline{\Omega_0^i})^c)$ with velocity \eqref{eq:velocity-intro}.
\end{thm}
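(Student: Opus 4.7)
The plan is to adapt the perturbed test function / half-relaxed limits method, which in the nonlocal single-front setting was carried out by Imbert--Souganidis \cite{Imbert}, to the case of $N$ nested fronts. I introduce
\[
\overline{u}(t,x) = \limsup_{\ep \to 0,\, (s,y)\to(t,x)} u^\ep(s,y), \qquad \underline{u}(t,x) = \liminf_{\ep \to 0,\, (s,y)\to(t,x)} u^\ep(s,y),
\]
and aim to prove, for each $i \in \{0,1,\dots,N\}$, that $\overline{u} \leq i$ on ${^+}\Omega_t^i \cap {^-}\Omega_t^{i+1}$ (with the conventions ${^+}\Omega_t^0 = \R^n$ and ${^-}\Omega_t^{N+1} = \R^n$), together with the matching lower bound $\underline{u} \geq i$. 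Local uniform convergence to the integer values in each region will then follow from the squeeze $\underline{u} = \overline{u} = i$ and the monotonicity of $\phi$.

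The main analytic ingredient is the construction of sharp barriers built from translates of the layer profile. On any time interval over which each $\Gamma_t^i$ is smooth, I consider an ansatz of the form
\[
v^{\ep,\pm}(t,x) = \sum_{i=1}^N \phi\!\left(\frac{d_i(t,x) \pm \ep\, \sigma_i(t)}{\ep}\right) \pm C\ep,
\]
where $d_i$ is the signed distance function to $\Gamma_t^i$, smoothly cut off outside a tubular neighborhood to avoid the cut locus, and $\sigma_i$ is a Lipschitz-in-time shift designed to absorb the curvature driving term. Plugging $v^{\ep,\pm}$ into the parabolic operator in \eqref{eq:pde} and expanding around each $\Gamma_t^i$ using \eqref{eq:standing wave}, the constant $C_n$ in the layer equation is tuned precisely so that the leading-order balance identifies the effective velocity with $\mu$ in \eqref{eq:velocity-intro}. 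At each front, this calculation is a direct adaptation of the Imbert--Souganidis matched asymptotic.

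The principal obstacle is controlling the cross-interactions between layers, and two distinct mechanisms contribute: (i) although $\mathcal{I}_n$ is linear, it is nonlocal, so $\mathcal{I}_n[v^{\ep,\pm}]$ picks up at the front $\Gamma_t^i$ a global contribution from every other layer $\phi(d_j/\ep)$, $j\neq i$; and (ii) the potential derivative $W'\big(\sum_j \phi(d_j/\ep)\big)$ is not additive. Because $\mathcal{I}_n$ has order $\tfrac12$, the layer $\phi$ has only algebraic tails $\phi(s) - \one_{s>0} = O(1/|s|)$, which forces delicate estimates on these cross-terms. The way forward uses the preservation of the strict nested ordering ${^+}\Omega_t^{i+1} \subset\subset {^+}\Omega_t^i$ under mean curvature motion (a standard consequence of the comparison principle for \eqref{eq:meancurvature}) to guarantee a uniform positive separation between distinct fronts on compact smoothness intervals. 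The periodicity $W(u+1) = W(u)$, the nondegeneracy $W''(0) > 0$ from \eqref{eq:W}, and a Taylor expansion of $W'$ around the integer $i-1$ (the number of ``outer'' layers already saturated to $1$ near $\Gamma_t^i$) are then combined with this separation to show that the nonlinear remainder is strictly smaller than the parabolic scale $\ep|\ln\ep|^{-1}$ governing \eqref{eq:pde}; the linear nonlocal cross-terms are handled using the explicit logarithmic expansion of $\mathcal{I}_n[\phi(d/\ep)]$ isolated in \cite{Imbert}. Making all of this uniform in $\ep$ and in the tubular neighborhoods is the step I expect to require the most work.

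Once the barriers are in place on smooth time intervals, the comparison principle for \eqref{eq:pde} recalled in Section \ref{sec:frac} yields $v^{\ep,-} \leq u^\ep \leq v^{\ep,+}$, and passing to the limit gives the required bounds on $\overline{u}$ and $\underline{u}$. To extend the conclusion across possible singularities of the mean curvature flow, I invoke the level set approach of Section \ref{sec:MC}: approximate each $\Omega_0^i$ by smooth inner and outer perturbations, apply the barrier argument to the corresponding smooth flows on maximal smoothness intervals, and use the stability and uniqueness of the generalized level set evolution to pass to the limit in the approximation parameter and conclude Theorem \ref{thm:main}.
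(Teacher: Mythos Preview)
Your barrier ansatz $v^{\ep,\pm}=\sum_i\phi\bigl((d_i\pm\ep\sigma_i)/\ep\bigr)\pm C\ep$ is missing the crucial lower-order corrector, and without it the construction cannot close. When you plug in a single layer, the quantity
\[
\ep\,\mathcal{I}_n\!\left[\phi\!\left(\frac{d(t,\cdot)}{\ep}\right)\right](x)-C_n\,\mathcal{I}_1[\phi]\!\left(\frac{d(t,x)}{\ep}\right)=a_\ep\!\left(\frac{d(t,x)}{\ep};t,x\right)+O(\ep)
\]
is the source of the curvature. Its projection onto $\dot\phi$ gives $\bar a_\ep\to\mu c_0^{-1}\Delta d$, but the component of $a_\ep(\xi;t,x)/(\ep|\ln\ep|)$ orthogonal to $\dot\phi$ is $O(1)$ near the front and has no reason to have a sign; a time-dependent shift $\sigma_i(t)$ cannot absorb a $\xi$-dependent error. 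Away from the front the situation is worse: $a_\ep$ is only $O(\ep^{1/2})$ in general (Lemma~\ref{lem:ae bound}), so $a_\ep/(\ep|\ln\ep|)$ blows up like $\ep^{-1/2}|\ln\ep|^{-1}$, and your global $\pm C\ep$ cannot compensate for that. The paper resolves this by adding a corrector $\ep|\ln\ep|\,\psi(\xi;t,x)$ solving the linearized equation \eqref{eq:linearized wave}; the term $W''(\phi)\psi$ then cancels $a_\ep/(\ep|\ln\ep|)$ pointwise in $\xi$, and the remaining contribution is exactly $c_0\dot\phi\,(\bar a_\ep-\sigma)$. The corrector is not a cosmetic refinement: its size $\|\psi\|_\infty=O(\ep^{-1/2}|\ln\ep|^{-1})$ (Theorem~\ref{lem:psi-reg}) matches the blow-up of $a_\ep/(\ep|\ln\ep|)$, and the paper devotes several sections to proving its existence and precise asymptotics, which Imbert--Souganidis had only assumed. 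The discussion in Section~\ref{sec:significance} explains why a direct superposition of the \cite{Imbert} barriers (which is essentially what you propose) fails for this reason.

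Your plan for handling singularities is also weaker than what the paper does. Approximating $\Omega_0^i$ by smooth perturbations and running the barrier argument on maximal smoothness intervals does not, by itself, let you cross singular times of the mean curvature flow: you would need to reinitialize after each singularity with uniform control, and the level-set evolution can develop interior. The paper instead uses the abstract generalized super/sub-flow framework of \cite{Barles-DaLio,Barles-Souganidis} (Definition~\ref{defn:flow}): one defines open sets $D_t^i$, $E_t^i$ from the half-relaxed limits of $(u^\ep-i)/(\ep|\ln\ep|)$, proves an Initialization step and a Propagation step (the latter using only \emph{local} barriers against smooth test functions $\varphi$), and then invokes the abstract comparison of generalized flows with the level-set evolution. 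This avoids ever having to track the actual flow through singularities.
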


\begin{figure}[h]
\centering
 \begin{tikzpicture}[scale=0.5, use Hobby shortcut, closed=true]
\draw[magenta,line width=.75pt]
	(0,4)..(3,4.5)..(4.5,2)..(7,0)..(6.5,-3)..(5,-4)..(2,-3)..(0,-3)..
	(-2.5,-4)..(-3,-2.5)..(-3,-2)..(-4.5,-1)..(-4,1)..(-4.2,1.5)..(-6,3)..(-4,5.25)..(-3.5,2.5)..(-2,2);
\draw[blue,line width=.75pt]
	(1,2.5)..(2,2.25)..(3,1.2)..(4,1)..(5,-.5).. 
	(3.5,-1)..(3.25,-1.5)..(2,-2)..(-1.5,-1.5)..
	(-2.5,-1)..(-2.75,-.75)..(-3,.25)..(-1.2,1.5)..(-.5,1.75)..(.2,2.25)..(1,2.5);
\draw[cyan,line width=.75pt] (1,0) ellipse (2cm and 1cm);    
\node[] at (1,0) { $u^{\ep} \to 3$};
\node[] at (1,1.5) { $u^{\ep} \to 2$};
\node[] at (1,3.25) { $u^{\ep} \to 1$};
\node[] at (1,5.5) { $u^{\ep} \to 0$};
\node[cyan] at (3.5,0) {$\Gamma_t^3$};
\node[blue] at (5.65,0) {$\Gamma_t^2$};
\node[magenta] at (7.6,0) {$\Gamma_t^1$};
\end{tikzpicture}
\caption{Convergence result for $N=3$  in dimension $n=2$}
\label{fig:main thm}
\end{figure}
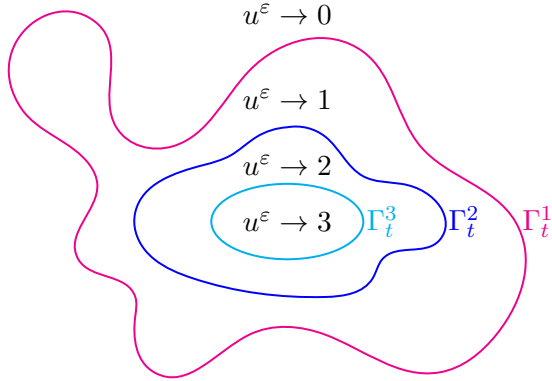

As illustrated in Figure \ref{fig:main thm}, Theorem \ref{thm:main} says that the solutions $u^{\ep}$ converge to integers ``between'' the interfaces $\Gamma_t^i$.
In particular, if $u^i$ is a continuous viscosity  solution to the mean curvature equation whose positive, zero, and negative sets at time $t=0$ are $\Omega_0^i$, $\Gamma_0^i$ and  $(\overline{\Omega_0^i})^c$, respectively, then 
\[
\lim_{\ep \to 0} u^\ep(t,x) = \begin{cases}
N & \{u^N>0\},\\
 i \,\, \quad \hbox{locally uniformly in}~&\{u^i >0\} \cap \{u^{i+1}<0\}, \quad i=1,\dots, N-1,\\
 0 & \{u^{i}<0\}.
\end{cases}
\]
Moreover, the sets $(\Gamma_t^i)_{t \geq 0}$ move independently by mean curvature. 
Specifically, $(\Gamma_t^i)_{t \geq 0}$ move in the direction of the interior normal vector with scalar velocity
\begin{equation}\label{eq:velocity-intro}
v_i
=\mu \sum_{j=1}^{n-1}\kappa^i_j, \quad \mu:= \frac{c_0}{2}\frac{|S^{n-2}|}{n-1}>0,
\end{equation}
where $\kappa^i_j$ are the principal curvatures of $\Gamma_t^i$, $c_0>0$ is explicit (see \eqref{eq:c0-gamma}), and $S^{n-2}$ denotes the unit sphere in $\R^{n-1}$. 
We use the level set approach to handle possible singularities for large times $t>0$.
In dimension 2, $\Gamma_t^i$ can be interpreted as  the dislocation loop at time $t>0$ evolved from $\Gamma_0^i$. 

For the case in which 
the sets $\Gamma_t^i$ do not develop interior, i.e.~$\Gamma_t^i = \partial({^+}\Omega_t^i) = \partial ({^-}\Omega_t^i)$, 
the limiting function in Theorem \ref{thm:main} makes integer jumps on the surfaces $\Gamma_t^i$ and satisfies
\[
\lim_{\ep \to 0} u^{\ep} = \frac{N}{2} + \frac{1}{2} \sum_{i=1}^N\( \one_{{^+}\Omega_t^i} - \one_{(\overline{{^+}\Omega_t^i})^c}\)  \quad \hbox{in}~\big((0,\infty)\times  \R^n\big)
\setminus  \bigcup_{i=1}^N \bigg(\bigcup_{t>0}\{t\}\times \Gamma_t^i\bigg)
\]
where $\one_\Omega$ denotes the characteristic function of the set $\Omega \subset \R^n$. 
However,  
it is well known that  $\Gamma_t^i$ may develop interior in finite time,  even if $\Gamma^i_0$ has none, see e.g.~\cite{Grayson}. In this situation,
the discontinuity set at time $t$ of the limiting function is contained in the sets $\Gamma_t^i$, but we cannot say exactly where the jumps occur within these sets.

\subsection{Strategies and significance}\label{sec:significance}

We now discuss the key aspects of Theorem \ref{thm:main}, its proof, and some of the relevant literature from a mathematical point of view. We will address physical motivations further in the next subsection.

Theorem \ref{thm:main} for $N=1$ has been addressed in the literature in the local case when $W$ is instead a double-well potential. 
For instance, the classical Allen--Cahn equation for which \eqref{eq:pde} is instead driven by the usual Laplacian $\Delta$ was studied famously by Modica--Mortola \cite{MM}  for the stationary case. 
Chen studied the corresponding evolutionary Allen--Cahn problem and proved that the solution exhibits an interface moving by mean curvature \cite{Chen}. 
See Section \ref{sec:MC} for more on the phase field theory. 
In the fractional setting and again for a double-well potential, the stationary case was studied by Ambrosio--De Philippis--Martinazzi \cite{AmbrosioDePhilippisMartinazzi} and Savin--Valdinoci  \cite{SavinValdinoci}
and the evolution problem was considered by Imbert--Souganidis in the preprint \cite{Imbert}. 
We will describe below how our result is not included in and does not follow from \cite{Imbert, SavinValdinoci}. 
Indeed, our problem deals with $N\geq1$ interfaces and a multi-well potential. 

The proof of Theorem \ref{thm:main} relies on 
the abstract method introduced in  \cite{ Barles-DaLio, Barles-Souganidis} for the study of front propagation. 
For this, we construct new barriers that are strict sub/super solutions to \eqref{eq:pde}, see Section \ref{sec:barriers}. 
The authors recently proved in \cite{PatriziVaughan} that the mean curvature of a smooth, evolving front arises as the $\ep$-limit of an auxiliary function that is intrinsically related to the difference between an $n$- and $1$-dimensional fractional Laplacian  (see Theorem \ref{lem:4} and also Lemma \ref{lem:ae near front}). This plays a key role in building the appropriate barriers.

We found that a superposition of the corresponding barriers in \cite{Imbert} cannot be applied for several reasons.
Their idea is to use the ansatz 
$\phi_c(d(t,x)/\ep)$ to construct a subsolution close to the front $\Gamma^1_t$ 
where $d$ is the signed distance to  ${^+}\Omega_t^1$  and $\phi_c$  solves a traveling wave equation with speed $c$ (with $c=0$ corresponding to \eqref{eq:standing wave}). 
Since the equation is nonlocal and nonlinear, a difficulty arises in dealing with $d(t,x)$ when $(t,x)$ is far from the front since $d$ may not be smooth at such points. 
In \cite{Imbert}, they truncate/extend the subsolution away from the front, taking particular care when truncating from below in order to remain a subsolution.
We cannot use a superposition of the corresponding subsolutions from \cite{Imbert} because  those functions go asymptotically to integers far from the front  like $\ep|\ln\ep|$ and this introduces an error associated with the nonlinear potential that cannot be controlled. 
Instead, we truncate/extend the signed distance function away from the front which allows us to directly construct a global subsolution using a superposition of layer solutions $\phi(\tilde{d}_i(t,x)/\ep)$ where $\tilde{d}_i$ is a smooth bounded extension of the signed distance function $d_i$ to ${^+}\Omega_t^i$ and $\phi$ solves the stationary equation \eqref{eq:standing wave}.
 Each of these functions  approaches integers far from the front $\Gamma_t^i$  like $\ep$, and we can control the error.

Moreover, as previously mentioned, in \cite{Imbert}, they use traveling waves to control the error. However, existence and asymptotic behavior of traveling waves is not proven but rather \emph{assumed}. 
In fact, the asymptotic behavior at infinity is not true as shown in the stationary case. 
We use the stationary solution $\phi$ to \eqref{eq:standing wave}  whose existence, uniqueness, and asymptotic behavior are known  (see Lemma \ref{lem:asymptotics}).

In the actual construction of the barriers, we introduce lower-order correctors to control the error as $\ep \to0$. 
We use a superposition of solutions  $\psi= \psi_\ep^i$ to the linearized equation
\begin{equation}\label{eq:corrector-intro}
 -C_n\mathcal{I}_1[\psi] + {W}''(\phi) \psi =g
 \end{equation}
 for some right-hand side $g = g_\ep^i$ depending on $\ep>0$ and on the signed distance function $d_i$. 
The explicit form of $g$ is somewhat technical (given in Section \ref{sec:functions}) and is different than in \cite{Imbert}.
We add an extra term involving $\sigma\in\R$ (with $\sigma = 0$ corresponding to \cite{Imbert}) which provides us with a new way for controlling the error.
Moreover, in \cite{Imbert}, they \emph{assume} the existence, uniqueness, and asymptotic behavior of  correctors. 
It was left as an open question whether such correctors exist. 
Then, the assumed asymptotic behavior of the traveling waves and corrector are 
used to build a global subsolution. 
However, the assumption that $\psi$ and its derivatives are bounded independently of $\ep$ and that the behavior at infinity of $\psi$ is uniform in $\ep$ both seem to be too strong.
In this paper, we prove  both existence, uniqueness, and asymptotic behavior of the solution $\psi$ to \eqref{eq:corrector-intro}. For example, we found, via delicate analysis, that $\psi = \psi^i_\ep$ can only be bounded independently of $\ep>0$ near the front $\Gamma_t^i$. In general, it holds that $\psi =O(\ep^{-\frac{1}{2}}|\ln \ep|^{-1})$. 
See Theorem \ref{lem:psi-reg} for detailed properties of $\psi$ which are sufficient to prove Theorem \ref{thm:main}.

A further feature 
of our work is the addition of two heuristical derivations in Section \ref{sec:heuristics}. 
First, we provide heuristics for the evolution of the fronts $\Gamma_t^i$ by mean curvature in Theorem \ref{thm:main} in order to give an intuitive explanation of the behavior of the fronts. 
Second, we write a heuristical derivation for the equation of the corrector \eqref{eq:corrector-intro} which clarifies the choice of $g$. 

The results and proofs of this paper can be applied to situations where the number of fronts,  $N$, is large, as discussed below. 
Our formal computations  show that the dynamics of the surface $\Gamma_t^i$ is also affected by an interaction potential that depends on the distance of
$\Gamma_t^i$  from the other surfaces $\Gamma_t^j$, $j\neq i$. This potential takes the form  
\begin{equation}\label{interaction-potential:intro}
\frac{c_0 C_n }{\abs{\ln\ep}}\sum_{j\neq i}\frac{1}{d_j},
\end{equation}
with $c_0>0$ and $C_n>0$  as defined  in  \eqref{eq:c0-gamma} and  \eqref{eq:Cn}, where $d_j$ is the signed distance function from ${^+}\Omega_t^j$, see Remark \ref{Potential-remark}. However, since the fronts remain at a positive distance from one another, this potential
is negligible to  first-order approximation. This information is completely lost if we cut the barriers far from the front (as in \cite{Imbert}). 
In dimension $n=2$, 
 when dislocation lines are {\em straight} and parallel (thus with zero mean curvature), 
 after a logarithmic rescaling in time and a reduction of spatial dimension, 
equation \eqref{eq:pde} becomes
\begin{equation}\label{uepeq-n=1}
\ep \partial_t u^{\ep} =C_2\mathcal{I}_1 [u^{\ep}]  -  \frac{1}{\ep }  W'(u^\ep) \quad \hbox{in}~(0,\infty)\times \R.
\end{equation}
The  asymptotics of the solution to \eqref{uepeq-n=1} with initial condition as in 
\eqref{eq:initial cond}, where $d_i^0(x)=x-x_i^0$ and $x=x_i^0$ represents the location of the curve  $\Gamma_0^i$, were studied by  Gonzalez--Monneau \cite{GonzalezMonneau}.
They   proved 
that  in this time scale, dislocation lines  $(\Gamma_t^i)_{t \geq 0}$ move with velocities 
$$v_i=c_0 C_2 \sum_{j\neq i}\frac{1}{d_j},$$
where   $d_j(t,x)=x-x_j(t)$ is  the signed distance function from the dislocation line $x=x_j(t)$. Moreover, they remain at a positive distance from each other for all times. 

Equation \eqref{uepeq-n=1}, where the operator $ \mathcal{I}_1$ is replaced by 
$-c_{1,s}(-\Delta)^{s}$,   $s\in(0,1)$,  and with the appropriate space scaling,  is studied in \cite{DipierroFigalliValdinoci,DipierroPalatucciValdinoci}.
The case where the number of dislocation lines tends to infinity is explored in \cite{patsan,patsan2}, where 
the authors proved that, after a suitable rescaling, the solution to \eqref{uepeq-n=1} with an appropriate initial condition, converges as both $\ep\to0$ and $N\to\infty$ to the solution of a well-known equation in dislocation theory, representing a plastic flow rule for the density of dislocations. In dimensions $n\geq 1$, the case where $N\to\infty$ is studied in \cite{MonneauPatrizi}, but only when $\ep=1$.
 In this case, the limit equation is implicitly defined by a cell problem.
In extending the analysis to the scenario where both  $\ep\to0$ and $N\to\infty$ in dimension $n\geq 2$,  we believe that, after  an appropriate rescaling in space and time, the potential \eqref{interaction-potential:intro} will no longer be negligible and will play an important role, as it does in dimension 1.
 For a physical interpretation of the parameter $\ep$, see Section \ref{sec:PN}.

We also mention that equation \eqref{uepeq-n=1}, with more general initial data, such as~
opposite orientations of dislocations, is considered in \cite{patval1, patval2, patval3, MeursPatrizi}.
In our setting, fronts with different orientations can be studied by choosing in \eqref{eq:initial cond} the signed distance function $d_i^0$ to  either ${^+}\Omega_t^i$ or 
${^-}\Omega_t^i$. In this case, we expect  fronts will  move with velocity $\pm v_i$, with $v_i$ given in   \eqref{eq:velocity-intro}, depending on their orientation, and collisions  between fronts may occur.
We believe that our analysis still holds for every time smaller than the first collision time. The analysis after the first collision time will be the subject of future work.

Lastly, we mention that one could also include obstacles and external forces 
by adding a suitable  stress $\sigma(t,x)$ to \eqref{eq:pde}. 
This was handled in several of the aforementioned papers, such as \cites{DipierroFigalliValdinoci, DipierroPalatucciValdinoci, GonzalezMonneau, MonneauPatrizi,patval1,patval2, patval3}, but we do not consider it here for simplicity.

\subsection{The Peierls--Nabarro model for slip loop dislocations}\label{sec:PN}

We now describe how \eqref{eq:pde} with initial data \eqref{eq:initial cond}
arises from the Peierls--Nabarro model for slip loop dislocations. 

Dislocations are line defects in crystalline materials whose motion is responsible for the plastic behavior of metals (see \cites{HirthLothe,HullBacon} for more details). 
The dynamics of an {\em ensemble of dislocation lines all contained in the same plane} (the slip plane) can be described at different scales by different models:
\begin{itemize}
	\item atomic scale (Frenkel--Kontorova model),
	\item microscopic scale (Peierls--Nabarro model),
	\item mesoscopic scale (Discrete dislocation dynamics),
	\item macroscopic scale (Elasto-visco-plasticity with density of dislocation). 
\end{itemize}
 Several changes of scales exist in the literature.
The passage from the Frenkel--Kontorova model to the  Peierls--Nabarro model is performed  in \cite{FinoIbMon}. 
In the evolutive models, further changes of scales  are only  known  when the dislocations are  {\em straight}, parallel lines (i.e.~one-dimensional models):  
as already mentioned in Subsection \ref{sec:significance}, Gonzalez--Monneau \cite{GonzalezMonneau} passed from the  Peierls--Nabarro model to the discrete dislocation dynamics
(see also \cite{patval1,patval3,patval4,MeursPatrizi} for the case of dislocations with  different orientation); 
the passage from  the discrete dislocation dynamics to the elasto-visco-plasticity with density of dislocation is done  in 
\cite{forimmon, Meupelpoz}; finally, a direct passage from the   Peierls--Nabarro model to the  elasto-visco-plasticity with density of dislocation is performed in \cite{patsan, patsan2}. 
We reference the reader to \cite{HIM, DipierroPatriziValdinoci} and the references therein for more on the dynamics of edge dislocations in one-dimension. 

For different models for straight dislocation lines, we refer to \cite{ alidelgarpon, alidelgarpon2, garrllepon,  garmepescardia} and the references therein. 

In reality, dislocation lines are rarely straight and often form loops, see \cite{sed}. 
The dynamics of an ensemble of {\em curved} dislocation lines lying in the same slip plane is described by the above models in dimension $n=2$. 
Our main result, Theorem \ref{thm:main}, is a passage from the Peierls--Nabarro model to the discrete dislocation dynamics in any dimension $n\geq2$ and is the first time it has been done for multiple curved dislocations. 
 Differently from the one-dimensional case where lines move according to an interaction potential (see \cite{GonzalezMonneau}), 
here  the line tension effect is stronger than the interaction between the curves, and we have a movement by mean curvature as established in Theorem \ref{thm:main}. 
See also the work of Garroni--M\"uller  \cite{GarroniMuller} and  Garroni--Conti--Muller  \cite{Contigarmul} for  variational Peierls--Nabarro  models  in dimension two.  
As already discussed  in Subsection \ref{sec:significance}, we also  mention that Monneau and the first author  in \cite{MonneauPatrizi} studied homogenization of the Peierls--Nabarro model in any dimension that can be seen as a direct passage from the microscopic scale to the macroscopic scale.
 See   \cite{imr, patval2} for related results. 

\medskip 

We next briefly review the Peierls--Nabarro model for a dislocation loop. 
For the original model, we refer to \cites{PN1,PN2,PN3}. 
A slip dislocation occurs when a portion of atoms  slides over another along a particular plane, called the slip plane, and is the curve created by the boundary between the shifted
and unshifted regions. 
In Cartesian coordinates $x_1x_2x_3$, we assume that the slip plane is the $x_1x_2$-plane. 
The movement of the dislocation is determined by the so-called Burgers' vector $b$.
For slip dislocations, the Burgers' vector is contained within the slip plane, so  we assume it to be in the direction of the $x_1$-axis, say $b = e_1$.

There are two types of slip dislocations that correspond to straight lines in $\R^2$: edge and screw. 
An edge dislocation is formed when an extra half-plane of atoms is included in the crystal. 
In this case, the Burgers' vector is perpendicular to the dislocation line
 and the motion of the dislocation line is in the direction of $b$. 
 A screw dislocation instead creates a spiral or helical path around the core.
Here, the Burgers' vector is parallel to the dislocation curve
which means that the dislocation line moves perpendicular to the direction of $b$. 
It is probable that most slip dislocations are not straight lines and thus exhibit components of both edge and screw dislocations. These are called mixed dislocations.
\emph{Slip loops} (or loop dislocations) are slip dislocations in the form of simple closed curves. 
Since the Burgers' vector is fixed for a given slip loop, the type of dislocation changes from point to point along the dislocation curve; for an idealized sketch, see Figure \ref{fig:loop description}. Depending on the orientation of the crystal and any external body forces acting upon the dislocation,
the slip loop will either shrink or expand within the slip plane. 

\begin{figure}[ht]
\centering
\subfloat[In the slip plane $x_1x_2$]{
 \begin{tikzpicture}[scale=0.29, use Hobby shortcut, closed=true]
 \label{fig:loopa}
\draw[line width=.75pt, magenta] (0,0) ellipse (6cm and 4cm);
\draw[line width=.5pt, -stealth,magenta](0,4)--(2.25,4);
\draw[line width=.5pt, -stealth,magenta](0,-4)--(-2.25,-4);
\draw[line width=.5pt, -stealth,magenta](-6,0)--(-6,2.25);
\draw[line width=.5pt, -stealth,magenta](6,0)--(6,-2.25);
\draw[line width=.5pt, -stealth,magenta](4,3)--(5.6,2);
\draw[line width=.5pt, -stealth,magenta](-4,-3)--(-5.6,-2);
\draw[line width=.5pt, -stealth,magenta](4,-3)--(2.4,-4);
\draw[line width=.5pt, -stealth,magenta](-4,3)--(-2.4,4);
\node at (0,5) {Screw};
\node at (0,-5) {Screw};
\node at (-8,0) {Edge};
\node at (8,0) {Edge};
\node at (5.5,4) {Mixed};
\node at (-5.5,-4) {Mixed};
\node at (5.5,-4) {Mixed};
\node at (-5.5,4) {Mixed};
\draw[<->,line width=.75pt] (-11,-8)--(-11,6);
\draw[<->,line width=.75pt] (-12,-7)--(10,-7);
\node at (-11,6.5) {$x_2$};
\node at (10.75,-7) {$x_1$};
\draw[line width=1.25pt, -stealth](-10,-6.2)--(-8,-6.2);
\node at (-9,-5.45) {{\small$b=e_1$}};
\node at (0,0) {$\Phi \simeq 1$};
\node at (9,6) {$\Phi \simeq 0$};
\end{tikzpicture}
}
\hskip7pt
\subfloat[In $\R^3$ (inspired by Fig.~3.11 in \cite{PassTrouw}) ]{
\begin{tikzpicture}[scale=0.8, x=0.75pt,y=0.75pt,yscale=-1,xscale=1]
\draw  [draw opacity=0][line width=0.5]  (384.5,349.47) .. controls (384.5,349.47) and (384.5,349.47) .. (384.5,349.47) .. controls (384.5,349.47) and (384.5,349.47) .. (384.5,349.47) .. controls (384.5,363.41) and (314.88,374.72) .. (229,374.72) .. controls (143.12,374.72) and (73.5,363.41) .. (73.5,349.47) .. controls (73.5,335.52) and (143.12,324.22) .. (229,324.22) -- (229,349.47) -- cycle ; \draw  [color=magenta  ,draw opacity=1 ][line width=0.75]  (384.5,349.47) .. controls (384.5,349.47) and (384.5,349.47) .. (384.5,349.47) .. controls (384.5,349.47) and (384.5,349.47) .. (384.5,349.47) .. controls (384.5,363.41) and (314.88,374.72) .. (229,374.72) .. controls (143.12,374.72) and (73.5,363.41) .. (73.5,349.47) .. controls (73.5,335.52) and (143.12,324.22) .. (226.8,324.2) ;  
%
\draw (210.5,259.97) -- (211,319.97) ;
\draw    (210.5,259.97) -- (310.5,289.47) ;
\draw    (210,278.97) -- (310.5,309.47) ;
\draw    (210.5,299.97) -- (310.5,329.97) ;
\draw    (229.5,265.97) -- (230,325.97) ;
\draw    (251,271.47) -- (251,332.47) ;
\draw    (270,277.97) -- (270.5,337.97) ;
\draw    (290.5,283.47) -- (290,342.47) ;
\draw    (310.5,289.47) -- (311,349.47) ;
\draw    (435.5,290.97) -- (310.5,289.47) ;
\draw    (335,288.97) -- (335.5,349.47) ;
\draw    (359.75,289.97) -- (360,349.72) ;
\draw    (384.5,289.97) -- (384.5,349.47) ;
\draw    (409,290.47) -- (410,351.47) ;
\draw    (435,309.47) -- (310.5,309.47) ;
\draw    (435,329.97) -- (310.5,329.97) ;
\draw [blue  ,draw opacity=1 ][line width=.75]    (435.75,350.72) -- (286,349.47) ;
\draw [blue  ,draw opacity=1 ][line width=.75]    (211,319.97) -- (311,349.47) ;
\draw    (286,349.47) -- (286.5,409.47) ;
\draw    (436.5,369.97) -- (286,368.47) ;
\draw    (436,390.47) -- (286.5,389.97) ;
\draw    (436,410.47) -- (286.5,409.47) ;
\draw    (211,319.97) -- (211.5,379.97) ;
\draw    (230,325.97) -- (230.5,385.97) ;
\draw    (242.5,331.47) -- (243,391.47) ;
\draw    (211.5,379.97) -- (230.5,385.97) ;
\draw    (211,338.97) -- (230.5,344.97) ;
\draw    (211.5,359.97) -- (230,365.97) ;
\draw [blue  ,draw opacity=1 ,line width=.75]    (230,325.97) -- (286,349.47) ;
\draw    (255.5,337.22) -- (256,397.22) ;
\draw    (270,342.97) -- (270.5,402.97) ;
\draw [blue ,draw opacity=1, line width=.75 ]   (290,342.47) -- (270,342.97) ;
\draw    (311,349.47) -- (311.5,409.97) ;
\draw    (360,349.72) -- (364,369.22) ;
\draw    (364,369.22) -- (367,390.72) ;
\draw    (367,390.72) -- (367.5,410.72) ;
\draw    (403.5,391.97) -- (403,409.97) ;
\draw    (410,351.47) -- (406.5,371.47) ;
\draw    (406.5,371.47) -- (403.5,391.97) ;
\draw    (335.5,261.47) -- (210.5,259.97) ;
\draw    (236.5,259.97) -- (335,288.97) ;
\draw    (259.75,260.47) -- (359.75,289.97) ;
\draw    (284.5,260.47) -- (384.5,289.97) ;
\draw    (309,260.97) -- (409,290.47) ;
\draw    (230.5,385.97) -- (286.5,409.47) ;
\draw    (230.5,366.47) -- (286.5,389.97) ;
\draw    (230,344.97) -- (286,368.47) ;
\draw [blue  ,draw opacity=1, line width=.75 ]   (270.5,337.97) -- (255.5,337.22) ;
\draw [blue ,draw opacity=1, line width=.75]   (251,331.47) -- (242.5,331.47) ;
\draw    (435.5,290.97) -- (436,410.47) ;
\draw    (335.5,261.47) -- (435.5,290.97) ;
\draw    (415.5,284.97) -- (290.5,283.47) ;
\draw    (395,279.47) -- (270,277.97) ;
\draw    (376,272.97) -- (251,271.47) ;
\draw    (354.5,267.47) -- (229.5,265.97) ;
\draw    (335.5,349.47) -- (337.5,368.47) ;
\draw    (337.5,368.47) -- (339.5,389.47) ;
\draw    (339.5,389.47) -- (340,409.97) ;
\draw[line width=1.25pt, -stealth](107,358)--(134,358);
\node at (120,348) {{\small$b=e_1$}};
\node at (405,340) {Edge};
\node at (209,312) {Screw};
\node[opacity=0] at (210,425) {};
\end{tikzpicture}
}
\caption{Dislocation types in a slip loop dislocation with fixed Burgers' vector}
\label{fig:loop description}
\end{figure}
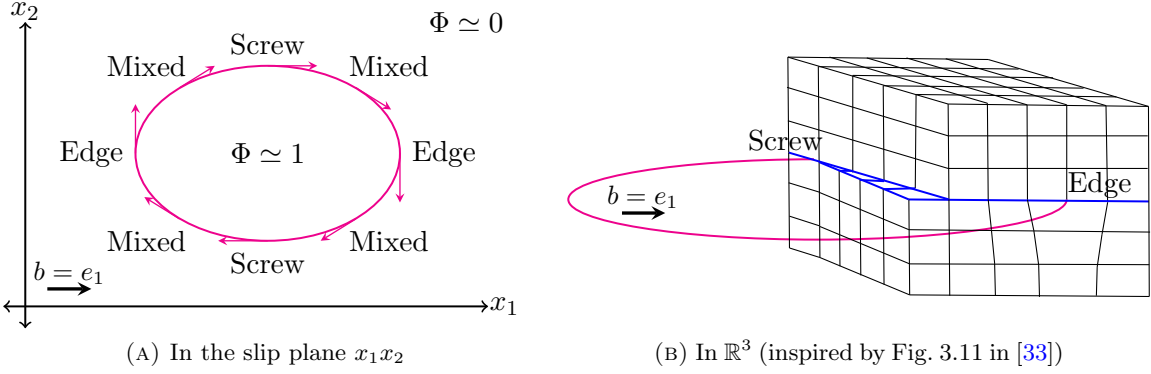

The Peierls--Nabarro model is a phase field model for dislocation dynamics which incorporates the atomic features of a crystalline structure into continuum framework. 
In the phase field approach, dislocations are interfaces represented by a transition of a continuous field: 
a phase parameter 
$\Phi(x_1,x_2)$ between $0$ and $1$ is used to capture the disregistry of the upper half crystal $\{x_3>0\}$ from the lower half crystal $\{x_3<0\}$. 
In particular, the dislocation loop is the set $\Gamma_0 := \{ \Phi = 1/2\} \subset \R^2$, and the phase parameter satisfies $\Phi \simeq 1$ inside the loop and $\Phi \simeq 0$ outside the loop (as indicated in Figure \ref{fig:loopa}). 
(To change the orientation, one would instead assume that $\Phi \simeq 0$ inside the loop and $\Phi \simeq 1$ outside the loop.)

For details on the following derivation, we refer to \cite{MonneauPatrizi}*{Section 2.1} and also \cite{Alvarez}.

Let $U = U(x_1,x_2,x_3): \R^3 \to \R^3$ be the displacement between an atom at location $(x_1,x_2,x_3)$ and its rest position. 
In the Peierls--Nabarro model, the total energy is the elastic energy for bonds between atoms plus the so-called misfit energy due to nonlinear atomic interactions across the slip plane:
\[
\mathcal{E}(\Phi,U) = \mathcal{E}_{\text{elastic}}(\Phi,U) + \mathcal{E}_{\text{misfit}}(\Phi). 
\]
To quantify the mismatch of atoms across the slip plane, we use an interplanar potential $W$ 
so that
\[
\mathcal{E}_{\text{misfit}}(\Phi)
	= \int_{\R^2} W(\Phi(x_1,x_2)) \, dx_1 \, dx_2.
\]
The elastic energy is given by
\[
\mathcal{E}_{\text{elastic}}(\Phi,U) = \frac{1}{2} \int_{\R^3} e : \Lambda : e \, dx_1\, dx_2 \, dx_3
\]
where
\[
e = \frac12 \bigg[ (\nabla U + (\nabla U)^T)
	 - \Phi(x_1,x_2) \delta_0(x_3) (e_1 \otimes e_3 + e_3 \otimes e_1) \bigg], 
\]
$\delta_0$ is the Dirac measure at 0, $\otimes$ is the tensor product of vectors, 
 $\Lambda = \{\Lambda _{ijlk}\}$ are the elastic coefficients, and $ e : \Lambda : e=\sum_{ijkl}\Lambda _{ijlk}e_{ij}e_{kl}$. We assume isotropic elasticity, so that
\[
\Lambda _{ijlk} = \lambda \delta_{ij} \delta_{kl} + \mu(\delta_{ik} \delta_{jl} + \delta_{il} \delta_{jk})
\]
where $\lambda, \mu$ are the Lam\'e coefficients. 

Given the phase parameter $\Phi$, we minimize $\mathcal{E}(\Phi,U)$ over all $U$ to formally find  an energy on the slip plane:
\[
\mathcal{E}(\Phi) := \inf_U \mathcal{E}(\Phi,U)
	= - \frac12 \int_{\R^2} (k_0\star \Phi)(x) \Phi(x) \, dx + \int_{\R^2} W(\Phi(x))\, dx,
\]
with $x=(x_1,x_2)$, where, in the special case that $\lambda = 0$ and $\mu= 4\pi$,  the Fourier transform of the kernel $k_0(x)$ is
\[
\widehat{k_0}(y) = - 2\pi |y|, \quad y \in \R^2,
\]
and $k_0\star \Phi =\mathcal{I}_2(\Phi)$. 

 In the case of a straight dislocation line, the equilibrium configuration is obtained by minimizing the energy $\mathcal{E}(\Phi)$ 
 with respect to $\Phi$, under the constraint that far from the dislocation core,
the function $\Phi$ tends to 0 in one half plane and to 1 in the other half plane. In particular,
the phase transition $\Phi$  is then solution of the following equation
\[
C_2\mathcal{I}_1(\Phi) = W'(\Phi) \quad \hbox{in}~\R,
\] 
where $\mathcal{I}_1 = -\pi (-\Delta)^\frac12$ is the fractional Laplace operator of order 1 in $\R$ and $C_2$ is defined in \eqref{eq:Cn}.
In the original model, $W(u)=1-\cos(2\pi u)$ and the solutions $\Phi$ is explicit, see \cite{CabreSola,GonzalezMonneau}. 
In a more general model, once can consider $W$ satisfying \eqref{eq:W}. Here, the periodicity of $W$ reflects the periodicity of the crystal while 
the minimum property on $\Z$  captures how the energy penalizes atoms which are not aligned with the lattice structure. 

In face cubic structured (FCC) lattices, which are commonly found in metals and alloys, 
dislocation curves move within the slip plane at low temperatures. 
We are interested in the evolution of multiple loop dislocations in the same slip plane and with the same Burgers' vector. 
For this, we use a single parameter $u (t,x)$ defined for $x$ in the slip plane $\R^n$, $n \geq 2$, the physical dimension being $n=2$. 
The dislocation dynamics are then captured by the evolutionary Peierls--Nabarro model:
\[
\partial_tu = \mathcal{I}_n[u] - W'(u) \quad \hbox{in}~(0,\infty) \times \R^n.
\]
At the microscopic scale, we assume that the dislocations curves are at a distance of order $1/\ep$ from each other. 
This can be represented by the initial condition
\[
u(0,x) = \sum_{i=1}^N \phi \(\frac{d_i^0(\ep x)}{\ep}\) \quad \hbox{for}~x \in \R^n,~\ep>0,
\]
where $\phi$ solves \eqref{eq:standing wave} in $\R$ and $d_i^0$ is the signed distance function associated to the loop $\Gamma^i_0$.

In order to understand the movement of the dislocation curves at a larger (mesoscopic) scale, we consider the rescaling
\begin{equation*}\label{eq:scaling}
u^{\ep}(t,x) =  u\(\frac{t}{\ep^2 \abs{\ln \ep}}, \frac{x}{\ep}\), \quad (t,x) \in [0,\infty) \times \R^n,~\ep>0. 
\end{equation*} 
Here, $\ep>0$ represents the scaling between the microscopic scale and the mesoscopic scale. 
The presence of the factor $\abs{\ln \ep}$ is well-known in physics, see \cites{Brown,DaLio,HirthLothe}. 
Roughly speaking, it arises from an integrability condition for the kernel of $\mathcal{I}_n$  in \eqref{eq:operator}, see \cite{PatriziVaughan, SavinValdinoci}.
One can easily check that $u^{\ep}$ solves \eqref{eq:pde} with the initial condition \eqref{eq:initial cond}.

Therefore, Theorem \ref{thm:main} indeed establishes the passage from the Peierls--Nabarro model to the discrete dislocation dynamics for an ensemble slip loop dislocations.  
As discussed in Section \ref{sec:significance}, our results lay the groundwork for future study on the dynamics of an ensemble of curved dislocations, such as collisions of dislocations with different orientations, external body forces, and understanding the passage from the Peierls--Nabarro model to the elasto-visco-plasticity with density of dislocation at the macroscopic scale.

\subsection{Organization of the paper}

The rest of the paper is organized as follows. 
First, in Section \ref{sec:MC}, we provide the necessary background pertaining to motion by mean curvature and the level set approach. 
Section \ref{sec:frac} contains preliminary results on fractional Laplacians and solutions $u^\ep$.
Then, we establish preliminary results pertaining to the phase transition $\phi$, the corrector $\psi$, and other auxiliary functions needed for the rest of the paper in Section \ref{sec:functions}. 
Section \ref{sec:heuristics} contains heuristics for the evolution of fronts in Theorem \ref{thm:main} and heuristics for the equation for the corrector. 
The construction of barriers is presented in Section \ref{sec:barriers}. 
Section \ref{sec:proof of thm} contains the proof of Theorem \ref{thm:main}.
Lastly, since the proofs of some of auxiliary results in Section \ref{sec:functions} are rather technical, we write them in Sections \ref{sec:estimates}, \ref{sec:ae-updates}, \ref{sec:estimatesbis}, \ref{lem:ae psi estimatesec}. 

Some of the estimates in Section \ref{sec:functions} are used to justify the heuristics in Section \ref{sec:heuristics}. Nevertheless, we encourage the interested reader to study Section \ref{sec:heuristics} in detail before Section \ref{sec:functions}, while referring back for notation and results as necessary.

\subsection{Notations}

In the paper, we will denote by $C>0$ any universal constant depending only on the dimension $n$ and $W$. 

We let $B(x_0,r)$ and $\overline B(x_0,r)$ denote respectively the ball of radius $r>0$ centered at $x_0\in \R^n$ and its closure,  and let $S^n$ denote the unit sphere in $\R^{n+1}$. 

For $\beta \in (0,1]$,  $k \in \N \cup \{0\}$ and $m\in\N$, we denote by $C^{k,\beta}(\R^m)$ the usual class of functions with bounded $C^{k,\beta}$ norm over $\R^m$. 
For $\beta=0$ we simply write $C^{k}(\R^m)$.
The class $H^{\frac{1}{2}}(\R)$ 
is the set of functions $g \in L^2(\R)$ such that
\[
\int_{\R} \int_{\R} \frac{|g(x) - g(y)|^2}{|x-y|^{3}} \, dy \, dx < \infty.
\]
For multi-variable functions $v(\xi;t,x)$, we write $v \in C_\xi^{k,\beta}(\R)$ and $v \in H^{\frac{1}{2}}_{\xi}(\R)$ if, respectively,  $v(\cdot;t,x) \in C^{k,\beta}(\R)$ and $v(\cdot;t,x) \in H^{\frac{1}{2}}(\R)$ for all $t,x$ in the domain of $v$. 
Moreover, we use the dot notation for derivatives with respect to the variable $\xi$, namely $\dot{v}(\xi;t,x) = v_\xi(\xi;t,x)$.

Given a function $\eta = \eta(t,x)$, defined on a set $A$,  we write $\eta = O(\ep)$ if there is $C>0$ such that
$|\eta(t,x)| \leq C \ep$ for all $(t,x)\in A$, and we write $\eta = o_\ep(1)$ if  $\lim_{\ep \to 0} \eta(t,x) = 0$, uniformly in $(t,x)\in A$. 
Given a sequence of functions $u^\ep(t,x)$, we write
\[
\liminf_{\ep \to 0}{_*} u^\ep(t,x) = \inf \left\{ \liminf_{\ep \to 0} u^\ep(t_\ep,x_\ep): ( t_\ep,x_\ep) \to (t,x)\right\}
\]
and
\[
\limsup_{\ep \to 0}{^*} u^\ep(t,x) =\sup \left\{ \limsup_{\ep \to 0} u^\ep(t_\ep,x_\ep): ( t_\ep,x_\ep) \to (t,x)\right\}.
\]
Given vectors  $p,\,q$, we denote by $p\otimes q$ the usual tensor product of $p$ and $q$. 
For a vector $p \in \R^n$, we write $\hat{p} = p/\abs{p}$. 

For a set $A$, we denote by $\one_A$ the characteristic function of the set $A$.

\section{Motion by mean curvature}\label{sec:MC}

In this section, we present the necessary background and preliminaries regarding the mean curvature of an evolving front.

\subsection{The level set approach}\label{sec:levelset}

Here, we review the level set approach for the geometric motions of the fronts. 
For a smooth function $u = u(t,x)$, consider a level set $\Gamma_t = \{x \in \R^n : u(t,x) = \ell\}$ of $u(t,\cdot)$ at level $\ell \in \R$ and assume that $\Gamma_t$ is bounded and $\nabla u$ does not vanish on $\Gamma_t$. 
Let $d(t,x)$ denote the signed distance function to   $\{u>\ell\}$:
\[
d(t,x) = \begin{cases}
	d(x,\Gamma_t) & \hbox{for}~u(t,x) \geq \ell\\
	-d(x,\Gamma_t) & \hbox{for}~u(t,x) < \ell. 
\end{cases}
\]
Note that $n(t,x) := \nabla d(t,x)$ is normal to $\Gamma_t$ and satisfies $|\nabla d| = 1$ in a time-space neighborhood $\mathcal{N}$ of $\Gamma_t$. 
Then, as theorized by Osher--Sethian \cite{OsherSethian} 
and justified by Evans--Spruck in \cite{EvansSpruck}, 
the  level set $\Gamma_t$ moves in the normal direction with velocity $v(t,x)$ proportional to its mean curvature, i.e.,
\begin{equation}\label{eq:velocity}
\begin{aligned}
v(t,x) 
	&= - \mu \operatorname{div}(n(t,x)) n(t,x) \\
	&= -\mu \Delta d(t,x) \nabla d(t,x) \quad \hbox{in}~\mathcal{N}, 
\end{aligned}
\end{equation}
where $\mu>0$ is a constant,
if and only if $u$ is a solution in $\mathcal{N}$ to the \emph{mean curvature equation}
\begin{equation}\label{eq:meancurvature}
\partial_t u = \mu \trace\( (I - \widehat{\nabla u}\otimes\widehat{\nabla u}) D^2u\),
\end{equation} 
where we recall that $\widehat{p} = p/|p|$ for $p \in \R^n$.
Consequently,   if $u$ is smooth and $\nabla u\neq0$ in an open region $O\subset (0,\infty) \times \R^n$, then
 $u$ is a solution to \eqref{eq:meancurvature} in $O$  if and only if 
 all its level sets  move according to their mean curvature \eqref{eq:velocity} in $O$.

 However,  level sets that move by mean curvature may develop singularities in finite time. Indeed, observe that  equation \eqref{eq:meancurvature}  is degenerate,  and it is not well-defined when  $\nabla u=0$. 
To  overcome these difficulties and develop a weak notion of evolving fronts, they use in \cite{EvansSpruck} the theory of viscosity solutions (see \cite{UsersGuide} for more on viscosity solutions). 
More precisely, for a bounded, open set $\Omega_0 \subset \R^n$, set $\Gamma_0 = \partial \Omega_0$ and consider the initial triplet $(\Omega_0, \Gamma_0, (\overline{\Omega_0})^c)$.  
Focusing on the zero level set of the solution, 
let $u_0(x)$ be a continuous function such that
\[
\Omega_0 = \{x : u_0(x)>0\}, \quad
\Gamma_0= \{x : u_0(x)=0\}, \quad \hbox{and} \quad
(\overline{\Omega_0})^c = \{x : u_0(x)<0\}, 
\]
and $u_0$ is a  constant outside a ball. 
Then, there exists a  unique continuous viscosity  solution $u$  to \eqref{eq:meancurvature} in $(0,\infty)\times\R^n$ with initial datum $u(0,x) = u_0(x)$, 
and such that $u$ is constant outside a ball of  $(0,\infty)\times\R^n$. 
 By our previous discussion, the zero level sets of $u$ move according to their mean curvature, at least in the region where $u$ is smooth and $\nabla u\neq 0$. 
Set
\begin{equation}\label{eq:triplets}
{^+}\Omega_t = \{ x : u(t,x)>0\}, \quad
\Gamma_t = \{x : u(t,x)=0\}, \quad \hbox{and} \quad
{^-}\Omega_t = \{ x : u(t,x)<0\}.
\end{equation}
The triplet $({^+}\Omega_t, \Gamma_t, {^-}\Omega_t)_{t \geq 0}$ is called  the \emph{level set evolution} of $(\Omega_0, \Gamma_0, (\overline{\Omega_0})^c)$. 
 As shown in \cite{EvansSpruck}, $\Gamma_t$ does not depend on the particular choice of $u_0$ but only on $\Gamma_0$. 

If $({^+}\tilde \Omega_t, \tilde \Gamma_t, {^-}\tilde \Omega_t)_{t \geq 0}$ denotes the  level set evolution of $(\tilde \Omega_0, \tilde \Gamma_0, (\overline{\tilde\Omega_0})^c)$  with $\tilde \Gamma_0=\partial \tilde \Omega_0$, 
and $\Omega_0 \subset\subset \tilde\Omega_0$,  then ${^+}\Omega_t \subset\subset   {^+}\tilde \Omega_t$ and $ {^-}\tilde \Omega_t  \subset\subset {^-}\Omega_t $ for all times $t>0$ such that $ {^+}\tilde \Omega_t \neq\emptyset$, see \cite[Theorem 7.3]{EvansSpruck}. 
 
In the special setting in which $\Gamma_t$ is smooth for $t\in  [t_0,t_0+h]$, the signed distance function $d$ is a solution to \eqref{eq:meancurvature} in $\{(t,x): t \in [t_0,t_0+h],~x \in \Gamma_t\}$.
More generally, if $u$ solves 
\begin{equation*}
\partial_t u = \mu \trace\( (I - \widehat{\nabla u}\otimes\widehat{\nabla u}) D^2u\)+\sigma
\end{equation*}
in a neighborhood of $\Gamma_t$ for some $\sigma=\sigma(t,x)$, then, since 
$$|\nabla d|=1,\quad   \partial_t d=\frac{\partial_t u}{|\nabla u|},\quad D^2d= \frac{1}{|\nabla u|}\trace\( (I - \widehat{\nabla u}\otimes\widehat{\nabla u}) D^2u\)\quad\text{in }
\bigcup_{t \in [t_0,t_0+h]} \{t\} \times \Gamma_t,$$
the function $d$ solves
\begin{equation}\label{meancurvature_intro_sigma}
\partial_t d = \mu \trace\( (I - \widehat{\nabla d}\otimes\widehat{\nabla d}) D^2d\)+\frac{\sigma}{|\nabla u|}=\mu\Delta d+\frac{\sigma}{|\nabla u|} \quad\text{in }
\bigcup_{t \in [t_0,t_0+h]} \{t\} \times \Gamma_t.
\end{equation}

We remark that around the same time as \cite{EvansSpruck}, Chen--Giga--Goto  independently established a level set approach for degenerate parabolic PDEs \cite{CGG}. 
Shortly after, Barles--Soner--Souganidis  rigorously connected the level set approach and the phase field theory for reaction-diffusion equations \cite{BSS}.

\subsection{Generalized flows}\label{sec:flows}

A different approach for defining the weak evolutions of fronts was considered   in \cite{Barles-Souganidis} and \cite{Barles-DaLio}. 
We present a slight variation of the definition in \cite{Barles-DaLio} (see also \cite{Imbert09}).

Let $F(p,X)$ be given by
\begin{equation}\label{Fdefintro-flow}
F(p,X) = - \mu \trace\(( I - \widehat{p}\otimes\widehat{p}) X\)
\end{equation}
and the lower and upper semi-continuous envelopes of $F$ be denoted by $F_*$ and $F^*$ respectively.

\begin{defn}
\label{defn:flow}
A family $ (D_t)_{t >0}$ (resp., $(E_t)_{t >0}$) of open (resp., closed)  subsets of $\R^n$ is a \emph{generalized super-flow (resp., sub-flow)} of the mean curvature equation \eqref{eq:meancurvature} 
if for all $(t_0,x_0) \in (0,\infty) \times \R^n$,  $h,\, r>0$, and for all smooth functions $\varphi:(0,\infty) \times \R^n \to \R$ such that
\begin{enumerate}[start=1,label={(\roman*)}]
\item \label{item:i}
(Boundedness) for all $t\in[t_0,t_0+h]$, the set
 $$
\{ x \in \R^n : \varphi(t,x) >0\}\quad (\text{resp. }\{ x \in \R^n : \varphi(t,x) < 0\})
$$ is bounded and 
$$
\{ x \in \overline{B}(x_0,r)  : \varphi(t,x) >0\}\quad (\text{resp. }\{ x\in \overline{B}(x_0,r)  : \varphi(t,x) < 0\})
$$
is non-empty,
\item \label{item:ii}
 (Speed) 
there exists $\delta =\delta(\varphi)>0$ such that
\begin{align*}
\partial_t\varphi + F^*(\nabla\varphi,D^2\varphi) &\leq -\delta \quad \hbox{in}~[t_0, t_0 + h] \times \overline{B}(x_0,r),\\
(\text{resp., }\partial_t\varphi + F_*(\nabla\varphi,D^2\varphi) &\geq \delta)
\end{align*}
\item  \label{item:iii}
(Non-degeneracy)
\[
\nabla\varphi \not= 0 \quad \hbox{on}~\{ (t,x) \in [t_0, t_0 + h] \times  \overline{B}(x_0,r) : \varphi(t,x) = 0\},
\] 
\item \label{item:iv}
(Initial condition)
\begin{align*}
\{x \in \R^n: \varphi(t_0,x) \geq 0\} &\subset D_{t_0}\\
(\text{resp., }\{x \in \R^n: \varphi(t_0,x) \leq 0\} &\subset \R^n \setminus E_{t_0}),
\end{align*}
\item \label{item:v}
(Boundary condition)  for all $t\in[t_0,t_0+h]$,
\begin{align*} 
\{x \in \R^n\setminus B(x_0,r) : \varphi(t,x) \geq 0\} &\subset  
D_t\\
(\text{resp., }\{x \in  \R^n\setminus B(x_0,r) : \varphi(t,x) \leq 0\} &\subset \R^n \setminus E_{t}),
\end{align*}
\end{enumerate}
it holds that
\begin{align*}
\{x \in B(x_0,r): \varphi(t_0+h,x)>0\}& \subset D_{t_0+h}\\
(\text{resp., }\{x \in B(x_0,r): \varphi(t_0+h,x)<0\}& \subset \R^n \setminus E_{t_0+h}).
\end{align*}
\end{defn}

In this paper, we will implement the abstract method developed in  \cite{Barles-DaLio}-\cite{Barles-Souganidis}. Precisely, for $i=1,\ldots,N$, let $\Omega_0^i$ be the open sets defined in \eqref{eq:initial d_i}. We will show that there exist families of  open sets $(D_t^i)_{t\geq 0}$ and  $(E_t^i)_{t\geq 0}$,  $i=1,\ldots, N$, such that the following hold:
$(D_t^i)_{t\geq 0}$ and $((E_t^i)^c)_{t\geq 0}$ are respectively a generalized super and  sub-flow  of the mean curvature equation 
 with $\mu$ as  in \eqref{eq:velocity-intro},   $\Omega_0^i \subset D_0^i$, $(\overline{\Omega}_0^i)^c \subset E_0^i$,  and 
$$u^\ep(t,x)\to i\quad\text{if }x\in  D_t^i\cap   E_t^{i+1}.$$
 Therefore, if  $({^+}\Omega^i_t, \Gamma^i_t, {^-}\Omega^i_t)_{t \geq 0}$ denotes the level set evolution of $(\Omega^i_0, \Gamma^i_0, (\overline{\Omega^i_0})^c)$,   by 
 \cite[Theorem 1.1, Theorem 1.2]{Barles-DaLio} (see also   \cite[Proposition 2.1, Theorem 2.4]{Barles-Souganidis}), 
 \[
{^+}\Omega_t^i \subset D_t^i \subset {^+}\Omega_t^i \cup \Gamma_t^i
\quad \hbox{and} \quad
{^-}\Omega_t^i \subset E_t^i \subset {^-}\Omega_t^i \cup \Gamma_t^i.
\]
In particular,  if the sets $(\Gamma^i_t)_{t \geq0}$ do not develop interior, 
 then 
 $$ {^+}\Omega^i_t=D^i_t \quad \hbox{and} \quad {^-}\Omega^i_t = E_t^i.$$
 Our main result, Theorem \ref {thm:main}, immediately follows. 

\subsection{Extension of the signed distance function}

Recall that the signed distance function $d = d(t,x)$ associated to the front $\Gamma_t$ in \eqref{eq:triplets} is smooth in some neighborhood $Q_\rho = \{|d|<\rho\}$ of the front, provided $\Gamma_t$ is smooth. 
However, in general, $d$ is not smooth away from the front. 
Throughout the paper, we will use the following smooth extension of the distance function away from $\Gamma_t$. 

\begin{defn}[Extension of the signed distance function]\label{defn:extension}
For $t\in[t_0,t_0+h]$, let $\tilde{d}$ be the signed distance function from a bounded domain  $\Omega_t$ with smooth boundary  $\Gamma_t$  and let   $\rho>0$ be such that $\tilde{d}$  is smooth in 
\[
Q_{2\rho} = \{(t,x)\in [t_0,t_0+h] \times \R^n: |\tilde{d}(t,x)| \leq 2\rho\}.
\]
Let $\eta(t, x)$ be a smooth, bounded function such that 
\[
\eta = 1~\hbox{in}~\{|\tilde{d}|\leq \rho\}, \quad \eta = 0~\hbox{in}~\{|\tilde{d}|\geq 2\rho\},\quad 0 \leq \eta \leq 1.
\]
We extend $\tilde{d}(t,x)$ in the set $\{(t,x)\in [t_0,t_0+h] \times \R^n: |\tilde{d}(t,x)|>\rho\}$ with the smooth bounded function $d(t,x)$ given by
\[
d(t,x) 
= \begin{cases}
\tilde{d}(t,x) & \hbox{in}~Q_\rho=\{|\tilde{d}(t,x)|\leq \rho\}\\
\tilde{d}(t,x) \eta(t,x) + 2\rho(1-\eta(t,x)) & \hbox{in}~\{\rho < \tilde{d}(t,x) < 2 \rho\}\\
\tilde{d}(t,x) \eta(t,x) -2\rho(1-\eta(t,x)) & \hbox{in}~\{-2\rho < \tilde{d}(t,x) < - \rho\}\\
2\rho &\hbox{in}~\{\tilde{d}(t,x)\geq 2\rho\}\\
-2\rho &\hbox{in}~\{\tilde{d}(t,x)\leq -2\rho\}.
\end{cases}
\]
Notice that, in $\{\rho < \tilde{d} < 2\rho\}$, the function $d$ satisfies
\[
d = 2\rho + (\tilde{d}-2\rho)\eta > 2 \rho - \rho \eta \geq \rho,
\]
and, in $\{-2\rho < \tilde{d} <-\rho\}$, the function $d$ satisfies
\[
d = -2\rho + (\tilde{d}+2\rho)\eta <- 2 \rho + \rho \eta \leq -\rho.
\]
\end{defn}

\section{Preliminary results}\label{sec:frac}

In this section, we recall a few basic properties of the operator $\mathcal{I}_n $ and solutions to \eqref{eq:pde} that will be used later on in the paper.

\subsection{Fractional Laplacians}

Let $u \in C^{1,1}(\R^n)$.
First of all, by using that 
$$\operatorname{P.V.} \int_{\{\abs{y}<R\}}\nabla u(x) \cdot y \,\frac{dy}{\abs{y}^{n+1}}=0,$$
for any $R>0$, we can write
\begin{align*}
\mathcal{I}_n u(x) 
	&= \int_{\{\abs{y}<R\}} \( u(x+y) - u(x) - \nabla u(x) \cdot y\) \,\frac{dy}{\abs{y}^{n+1}}
		+ \int_{\{\abs{y}>R\}} \( u(x+y) - u(x)\) \,\frac{dy}{\abs{y}^{n+1}}.
\end{align*}
In particular if $u$ is smooth and bounded, both integrals above are finite and we can bound $\mathcal{I}_n u$ as follows,
\begin{equation}\label{Iubound-C11}|\mathcal{I}_n u(x)|\leq c_n\left(\|D^2u\|_\infty R+\frac{\|u\|_\infty}{R}\right), \end{equation}
where we used the following lemma whose proof  is a  direct computation in polar coordinates. 

\begin{lem}\label{kernellemma3}
There exists $C_1,\, C_2>0$, only depending on $n$, such that for any $R>0$,
$$\int_{\{|z|<R\}}\frac{dz}{|z|^{n-1}}= C_1 R
\quad \hbox{and} \quad \int_{\{|z|>R\}}\frac{dz}{|z|^{n+1}}= \frac{C_2}{R}.$$
\end{lem}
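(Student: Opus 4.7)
The plan is to reduce both integrals to one-dimensional integrals by passing to spherical coordinates on $\R^n$. Recall that under the change of variables $z = r\omega$ with $r = |z| > 0$ and $\omega \in S^{n-1}$, the Lebesgue measure factors as $dz = r^{n-1}\, dr\, d\sigma(\omega)$, where $d\sigma$ is the $(n-1)$-dimensional surface measure on the unit sphere.

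For the first integral, I would write
\[
\int_{\{|z|<R\}} \frac{dz}{|z|^{n-1}} = \int_{S^{n-1}} d\sigma(\omega) \int_0^R \frac{r^{n-1}}{r^{n-1}}\, dr = |S^{n-1}|\, R,
\]
so that the identity holds with $C_1 = |S^{n-1}|$. For the second integral, the same change of variables yields
\[
\int_{\{|z|>R\}} \frac{dz}{|z|^{n+1}} = \int_{S^{n-1}} d\sigma(\omega) \int_R^{\infty} \frac{r^{n-1}}{r^{n+1}}\, dr = |S^{n-1}| \int_R^\infty \frac{dr}{r^2} = \frac{|S^{n-1}|}{R},
\]
so that $C_2 = |S^{n-1}|$ works.

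There is really no obstacle here: both integrands are radial, the angular and radial variables separate cleanly, the resulting one-dimensional integrals $\int_0^R 1\, dr$ and $\int_R^\infty r^{-2}\, dr$ are elementary, and both constants $C_1, C_2$ depend only on the dimension $n$ through $|S^{n-1}|$. The only thing worth noting is that the exponents $n-1$ and $n+1$ are precisely tuned so that both integrals converge (the first near the origin, the second at infinity), which is why the computation produces a finite answer that scales linearly in $R$ and $1/R$ respectively.
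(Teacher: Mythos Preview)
Your proof is correct and is exactly the ``direct computation in polar coordinates'' the paper invokes; the paper does not give any further details beyond that phrase, so your argument is precisely what is intended.
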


We will frequently use Lemma \ref{kernellemma3} throughout the paper without reference.

The next is an auxiliary lemma that allows us to view one-dimensional fractional Laplacians of functions defined over $\R$   equivalently as $n$-dimensional fractional Laplacians. 

\begin{lem} \cite[Lemma 3.2]{PatriziVaughan}\label{lem:one to n}
For a vector $e \in \R^n$ and a function $v\in C^{1,1}(\R)$, let $v_e(x) = v(e\cdot x): \R^n \to \R$. Then,
\[
\mathcal{I}_n[v_e](x) =|e| C_n \mathcal{I}_1[v](e\cdot x)
\]
where
\begin{equation}\label{eq:Cn}
C_n =  \int_{\R^{n-1}} \frac{1}{(\abs{y}^2 + 1)^{\frac{n+1}{2}} } \, dy.
\end{equation}
 Consequently,
\begin{equation*}\label{eq:one to n}
|e| C_n \mathcal{I}_1[v](\xi) =\operatorname{P.V.} \int_{\R^n} \( v(\xi + e \cdot z) - v(\xi)\) \frac{dz}{\abs{z}^{n+1}}, \quad \xi \in \R.
\end{equation*}
\end{lem}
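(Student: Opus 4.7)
The plan is to compute $\mathcal{I}_n[v_e](x)$ directly from the definition \eqref{eq:operator}, reducing the $n$-dimensional principal value integral to a $1$-dimensional one by integrating out the $n-1$ directions orthogonal to $e$.

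First, I would use the rotational invariance of the kernel $|y|^{-(n+1)}$. Picking a rotation $R$ such that $R e_1 = e/|e|$ and substituting $y \mapsto R y$, I may assume without loss of generality that $e = |e|\,e_1$, so that $e\cdot x = |e| x_1 =: \xi$ and $e \cdot y = |e| y_1$. Writing $y = (y_1, y')$ with $y' \in \R^{n-1}$, the integrand depends on $y'$ only through $|y'|$, so
\[
\mathcal{I}_n[v_e](x) = \operatorname{P.V.} \int_{\R}\!\int_{\R^{n-1}} \bigl( v(\xi + |e| y_1) - v(\xi) \bigr) \frac{dy'\, dy_1}{(y_1^2 + |y'|^2)^{(n+1)/2}}.
\]

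Second, to justify interchanging the order of integration despite the principal value, I would pair antipodal points in $y_1$ and use that for $v \in C^{1,1}(\R)$ the symmetric second difference
\[
\tfrac{1}{2}\bigl( v(\xi + |e| y_1) + v(\xi - |e| y_1) - 2 v(\xi)\bigr)
\]
is bounded by $C \|v\|_{C^{1,1}}\, |e|^2 y_1^2$, which renders the integrand absolutely integrable near the origin; the decay of $v$ at infinity being handled by a simple split $\{|y_1|<R\}$ vs.\ $\{|y_1|\geq R\}$ exactly as in the estimate \eqref{Iubound-C11}. With Fubini available, the inner integral in $y'$ can be evaluated for each fixed $y_1 \neq 0$ by the substitution $y' = |y_1| w$, which gives
\[
\int_{\R^{n-1}} \frac{dy'}{(y_1^2 + |y'|^2)^{(n+1)/2}} = \frac{1}{|y_1|^2} \int_{\R^{n-1}} \frac{dw}{(1 + |w|^2)^{(n+1)/2}} = \frac{C_n}{|y_1|^2},
\]
with $C_n$ as in \eqref{eq:Cn}.

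Third, I plug this back in and make the change of variables $s = |e|\, y_1$ in the remaining one-dimensional principal value:
\[
\mathcal{I}_n[v_e](x) = C_n \operatorname{P.V.} \int_{\R} \bigl( v(\xi + |e| y_1) - v(\xi)\bigr)\frac{dy_1}{y_1^2} = |e|\, C_n \operatorname{P.V.}\int_{\R} \bigl( v(\xi + s) - v(\xi)\bigr)\frac{ds}{s^2},
\]
which is exactly $|e|\, C_n\, \mathcal{I}_1[v](\xi) = |e|\, C_n\, \mathcal{I}_1[v](e \cdot x)$. The final identity in the statement follows by undoing the rotation, which shows that the right-hand side can also be written as the integral over $\R^n$ in the displayed form.

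The only genuinely delicate step is the justification of Fubini in the presence of the principal value; this is the sole place where the $C^{1,1}$ hypothesis is invoked (via the symmetric second-difference bound). Everything else is a direct change-of-variables computation.
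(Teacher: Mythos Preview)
Your argument is correct and is exactly the standard route to this identity: rotate so that $e$ points along $e_1$, symmetrize to turn the principal value into an absolutely convergent integral (using the $C^{1,1}$ second-difference bound near the origin and boundedness of $v$ together with the $|y|^{-(n+1)}$ decay of the kernel at infinity), then integrate out $y'$ via the scaling $y'=|y_1|w$ and finally rescale $y_1$. The paper does not give its own proof here but simply cites \cite[Lemma~3.2]{PatriziVaughan}, where the same computation is carried out; your write-up matches that.

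Two tiny cosmetic points: your phrase ``the decay of $v$ at infinity'' is a slip --- it is the boundedness of $v$ (not decay) that makes the tail integrable, exactly as in \eqref{Iubound-C11}; and the case $e=0$ is trivial ($v_e$ is constant) and should be mentioned separately since the rotation step assumes $e\neq 0$.
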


\subsection{Properties of solutions to \eqref{eq:pde}}

Here, we state existence, uniqueness, and comparison principles for viscosity solutions to \eqref{eq:pde} for a fixed $\ep>0$. 

First, the following comparison principles can be found in \cite{JakobsenKarlsen} and will be used throughout the paper without reference. 
For the definition of viscosity subsolutions, supersolution, and solutions, see also \cite{UsersGuide}.
For ease, we denote by $USC_b([t_0,t_0+h] \times \R^n)$ (resp.~$LSC_b([t_0,t_0+h] \times \R^n)$) the set of upper (resp.~lower) semicontinuous functions on $[t_0,t_0+h] \times \R^n$ which are bounded on $[t_0,t_0+h] \times \R^n$. 

\begin{prop}[Comparison principle in $\R^n$]
Fix $\ep>0$. 
If $u \in USC_b([t_0,t_0+h] \times \R^n)$ is a viscosity subsolution and $v \in LSC_b([t_0,t_0+h] \times \R^n)$ is a viscosity supersolution of \eqref{eq:pde} such that $u(t_0,\cdot) \leq v(t_0,\cdot)$
on $\R^n$, then $u \leq v$ on $[t_0,t_0+h] \times \R^n$. 
\end{prop}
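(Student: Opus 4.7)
The plan is to argue by contradiction via the doubling-of-variables technique adapted to nonlocal parabolic equations, following the framework of Jakobsen--Karlsen. Suppose, toward a contradiction, that $M := \sup_{[t_0,t_0+h]\times\R^n}(u-v) > 0$. Since $u, v$ are only bounded (and not required to decay at infinity), I would introduce the penalized auxiliary function
$$\Psi(t,s,x,y) = u(t,x) - v(s,y) - \frac{|x-y|^2}{2\lambda} - \frac{(t-s)^2}{2\gamma} - \alpha\bigl(\chi(x) + \chi(y)\bigr) - \frac{\zeta}{t_0+h+1-t},$$
where $\chi \in C^2(\R^n)$ is a nonnegative smooth weight with $\chi(x) \to \infty$ as $|x|\to\infty$ and with bounded first and second derivatives, and $\lambda, \gamma, \alpha, \zeta > 0$ are small parameters. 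The $\alpha$-term forces the supremum to be attained at some $(\hat t, \hat s, \hat x, \hat y)$; the $\lambda, \gamma$-terms pin $\hat x$ close to $\hat y$ and $\hat t$ close to $\hat s$; and the $\zeta$-term, combined with the initial ordering $u(t_0,\cdot) \leq v(t_0,\cdot)$, excludes $\hat t = t_0$ once $\zeta$ is small enough relative to $M$.

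At $(\hat t, \hat s, \hat x, \hat y)$, I would invoke the nonlocal parabolic Jensen--Ishii lemma, whose key ingredient is the splitting of $\mathcal{I}_n$ at an auxiliary scale $r>0$:
$$\mathcal{I}_n[w](z) = \int_{|y|<r}\bigl(\varphi(z+y) - \varphi(z) - \nabla\varphi(z)\cdot y\bigr)\frac{dy}{|y|^{n+1}} + \int_{|y|\geq r}\bigl(w(z+y) - w(z)\bigr)\frac{dy}{|y|^{n+1}},$$
where $\varphi$ denotes the smooth test function arising from the penalization. The singular near-diagonal integral is bounded via \eqref{Iubound-C11} by $\|D^2\varphi\|_\infty\, r \lesssim r/\lambda + O(\alpha)$; the non-singular tail is bounded using the $L^\infty$ bounds on $u,v$ and the maximum property of $\Psi$, which yields $u(\hat t,\hat x+y) - v(\hat s,\hat y+y) \leq u(\hat t,\hat x) - v(\hat s,\hat y) + O(\alpha|y|)$. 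Subtracting the viscosity sub- and supersolution inequalities, and using that $W'$ is Lipschitz (since $W \in C^{4,\beta}$ by \eqref{eq:W}), yields
$$\frac{\ep\,\zeta}{(t_0+h+1-\hat t)^2} \leq \frac{1}{\ep|\ln\ep|}\bigl[\ep\bigl(\mathcal{I}_n[u](\hat t,\hat x) - \mathcal{I}_n[v](\hat s,\hat y)\bigr) + L\bigl(u(\hat t,\hat x) - v(\hat s,\hat y)\bigr)\bigr],$$
with $L = \|W''\|_\infty$.

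The main obstacle is the coordinated passage of the auxiliary limits. One must choose the splitting radius as a function $r = r(\lambda)$ with both $r\to0$ and $r/\lambda\to 0$, so that the near-diagonal contribution vanishes while the non-singular tail is absorbed into $O(\alpha)$ corrections as $\lambda,\gamma\to 0$. After then sending $\alpha\to 0$, the nonlocal difference tends to zero and the right-hand side is reduced to $L M/(\ep|\ln\ep|)$; the Lipschitz contribution from $W'$ is in turn neutralized either by the standard substitution $(u,v) \mapsto (e^{-Kt}u, e^{-Kt}v)$ with $K$ proportional to $L/(\ep^2|\ln\ep|)$, or by a Gronwall-type iteration on subintervals of length $\lesssim L^{-1}\ep^2|\ln\ep|$. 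What remains is the strictly positive lower bound $\ep\zeta/(t_0+h+1)^2$, contradicting the inequality and hence the assumption $M > 0$. This delicate coupling between the splitting scale $r$ and the doubling parameter $\lambda$ is the technical heart of the argument and the reason the authors cite Jakobsen--Karlsen rather than reproducing the proof here.
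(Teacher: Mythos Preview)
The paper does not supply its own proof of this proposition: it simply states the result and cites \cite{JakobsenKarlsen}. Your sketch is a faithful outline of the doubling-of-variables argument developed there for integro-PDEs with L\'evy-type operators, including the splitting of $\mathcal{I}_n$ at an auxiliary scale and the use of Lipschitz continuity of $W'$ to control the zeroth-order term, so your approach coincides with what the cited reference does. One minor slip: the role of the $\zeta$-penalty $\zeta/(t_0+h+1-t)$ is not to exclude $\hat t = t_0$ (that exclusion comes from the initial ordering together with $\gamma\to 0$) but rather to produce a strictly positive lower bound for the time-derivative contribution after differentiation; otherwise your description of the limit hierarchy and the handling of the Lipschitz reaction via an exponential-in-time substitution or short-time iteration is correct.
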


\begin{prop}[Comparison principle in bounded domains]
Fix $\ep>0$ and let $\Omega \subset \R^n$ be a bounded domain. 
If $u \in USC_b([t_0,t_0+h] \times \R^n)$ is a viscosity subsolution and $v \in LSC_b([t_0,t_0+h] \times \R^n)$ is a viscosity supersolution of \eqref{eq:pde} such that $u(t_0,\cdot ) \leq v(t_0,\cdot )$ on $\R^n$ and $u \leq v$ on $[t_0, t_0+h] \times (\R^n \setminus \Omega)$, then $u \leq v$ on $[t_0,t_0+h] \times \R^n$. 
\end{prop}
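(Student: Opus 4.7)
The plan is to argue by contradiction in the style of the classical viscosity-solution comparison proof, but adapted to the nonlocal integro-differential operator $\mathcal{I}_n$ as in the framework of \cite{JakobsenKarlsen}. Suppose, toward a contradiction, that
\[
M := \sup_{[t_0,t_0+h]\times \R^n}(u-v) > 0.
\]
By the hypothesis $u(t_0,\cdot) \leq v(t_0,\cdot)$ on $\R^n$ and $u \leq v$ on $[t_0,t_0+h]\times(\R^n\setminus\Omega)$, and by upper/lower semicontinuity together with the boundedness of $\Omega$, any sequence along which $M$ is approached must accumulate inside the compact set $[t_0+\delta,t_0+h]\times \overline{\Omega}$ for some $\delta>0$. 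To make this rigorous I would first replace $u$ by $u - \eta/(t_0+h-t)$ for small $\eta>0$ to force a strict maximum at an interior time (the equation is strictly stable under such a perturbation since the left-hand side is $\ep\partial_t u^\ep$), and then $\eta\to 0$ at the end.

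Next I would apply the doubling-of-variables technique: consider
\[
\Phi_\alpha(t,s,x,y) = u(t,x) - v(s,y) - \frac{|x-y|^2}{2\alpha} - \frac{(t-s)^2}{2\alpha},
\]
and let $(\bar t,\bar s,\bar x,\bar y)$ be a maximum point. Standard penalization arguments (Crandall--Ishii--Lions) yield $\bar x,\bar y \to \bar x_0 \in \overline{\Omega}$, $\bar t,\bar s \to \bar t_0 \in (t_0, t_0+h]$, and $|\bar x - \bar y|^2/\alpha + (\bar t-\bar s)^2/\alpha \to 0$, while $u(\bar t,\bar x) - v(\bar s,\bar y) \to M > 0$. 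The crucial input from the hypothesis is that $\bar x_0$ cannot lie in $\R^n\setminus\Omega$, so $\bar x,\bar y$ lie in a fixed compact subset of $\Omega$ for small $\alpha$.

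Now I would apply the subsolution inequality for $u$ at $(\bar t,\bar x)$ against the test function $\varphi_1(t,x) = v(\bar s,\bar y) + |x-\bar y|^2/(2\alpha) + (t-\bar s)^2/(2\alpha)$ and the supersolution inequality for $v$ at $(\bar s,\bar y)$ against $\varphi_2(s,y) = u(\bar t,\bar x) - |\bar x - y|^2/(2\alpha) - (\bar t - s)^2/(2\alpha)$. For each, the nonlocal term is evaluated by the standard splitting: for a parameter $r>0$, one writes
\[
\mathcal{I}_n[\varphi,u](\bar x) = \int_{|z|<r}\!\bigl(\varphi(\bar x+z)-\varphi(\bar x)-\nabla\varphi(\bar x)\cdot z\bigr)\frac{dz}{|z|^{n+1}} + \int_{|z|\geq r}\!\bigl(u(\bar t,\bar x+z)-u(\bar t,\bar x)\bigr)\frac{dz}{|z|^{n+1}},
\]
and similarly for $v$. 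Subtracting the two viscosity inequalities, the local second-order terms combine in the usual way to give a non-negative leftover, the time derivative terms cancel, and the difference of the nonlocal "tail" pieces is controlled using that $u - v \leq M$ everywhere (and $u - v \leq 0$ on $\R^n\setminus \Omega$); the singular inner pieces vanish as $r\to 0$ by Lemma \ref{kernellemma3} since $\varphi_1,\varphi_2$ are quadratic. Passing to the limit $r\to 0$ and then $\alpha\to 0$ produces
\[
0 \leq \frac{1}{\ep|\ln\ep|}\bigl(-W'(u(\bar t_0,\bar x_0)) + W'(v(\bar t_0,\bar x_0))\bigr) - \eta/(t_0+h-\bar t_0)^2 + o_\alpha(1),
\]
which, combined with $u(\bar t_0,\bar x_0) > v(\bar t_0,\bar x_0)$ and the boundedness of $W'$ on the relevant range (so the potential contribution is controlled), contradicts the strict sign coming from the $\eta$-perturbation once $\eta$ is sent to $0$ after $\alpha$. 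The main obstacle is the careful bookkeeping of the nonlocal tails to show they are sign-compatible in the limit; this is where the hypothesis $u \leq v$ on $\R^n\setminus\Omega$ is essential, since it ensures the outer integrand is dominated by $M$ and not by an uncontrolled quantity.
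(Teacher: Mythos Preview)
The paper does not prove this proposition; it simply records it as a consequence of \cite{JakobsenKarlsen}. Your sketch follows the standard viscosity-solution route and is structurally correct, but the final step contains a genuine gap.

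After subtracting the two viscosity inequalities and sending $r\to 0$ and $\alpha\to 0$, what you actually obtain is
\[
\frac{\ep\,\eta}{(t_0+h-\bar t_0)^2}\ \leq\ \frac{1}{\ep|\ln\ep|}\bigl(W'(v(\bar t_0,\bar x_0))-W'(u(\bar t_0,\bar x_0))\bigr).
\]
Since $W'$ is merely Lipschitz (not monotone), the right-hand side is only bounded above by $\frac{\|W''\|_\infty}{\ep|\ln\ep|}\,M$, and sending $\eta\to 0$ then gives the trivial inequality $0\leq CM$ --- no contradiction. The missing ingredient is the standard device for absorbing Lipschitz zero-order dependence: work instead with $\sup\, e^{-\lambda(t-t_0)}(u-v)$ for $\lambda>\|W''\|_\infty/(\ep^2|\ln\ep|)$, or equivalently prove the comparison on a short interval of length less than $\ep^2|\ln\ep|/\|W''\|_\infty$ and iterate. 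This is precisely how \cite{JakobsenKarlsen} handles such terms.

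Two minor corrections. The hypothesis $u\leq v$ on $\R^n\setminus\Omega$ is used to force the maximum into $\overline{\Omega}$; the nonlocal tails are controlled not by this hypothesis but by global maximality in the doubled problem, which yields $u(\bar t,\bar x+z)-u(\bar t,\bar x)\leq v(\bar s,\bar y+z)-v(\bar s,\bar y)$ for every $z$ and hence a nonpositive tail difference. Also, the inner integrals with the quadratic test functions contribute $O(r/\alpha)$, so you must send $r\to 0$ for each fixed $\alpha$ before letting $\alpha\to 0$.
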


Next, we prove existence and uniqueness of viscosity solutions. 

\begin{prop}[Existence and uniqueness]
Fix $\ep>0$ and let $u_0 \in C^{1,1}(\R^n)$. 
There exists a unique viscosity solution $u^\ep \in C([0,\infty) \times \R^n) \cap L^{\infty}([0,\infty)\times \R^n)$  to \eqref{eq:pde} with initial data $u^\ep(0,x) = u_0(x)$. 
\end{prop}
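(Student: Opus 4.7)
The plan is to extract uniqueness directly from the comparison principle stated above, and to construct the solution by Perron's method using global barriers built from the periodicity of $W$ and the $C^{1,1}$ regularity of $u_0$. For uniqueness, if $u_1, u_2$ are two bounded continuous viscosity solutions sharing the initial datum, applying the comparison principle in $\R^n$ in each ordering immediately yields $u_1 \equiv u_2$, so the entire task reduces to existence.

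For existence, I would first exploit that $W$ vanishes on $\Z$: since $W'(k) = 0$ for every integer $k$ and trivially $\mathcal{I}_n[k] = 0$, every integer constant is a stationary viscosity solution of \eqref{eq:pde}. Choosing integers $\underline{m} \leq \inf u_0 \leq \sup u_0 \leq \bar{m}$ (possible since the paper's convention makes $u_0 \in C^{1,1}(\R^n)$ globally bounded) gives stationary sub- and supersolutions that will deliver the uniform $L^\infty$ bound. To match the initial datum at $t=0$, I combine \eqref{Iubound-C11} applied to $u = u_0$ with the global bound $\|W'\|_\infty < \infty$ (from periodicity of $W$) and set
\[
C_\ep := \frac{1}{\ep^2|\ln\ep|}\bigl(\ep \|\mathcal{I}_n u_0\|_\infty + \|W'\|_\infty\bigr).
\]
A direct computation, using that $\mathcal{I}_n[v^\pm_\ep(t,\cdot)] = \mathcal{I}_n u_0$, shows that $v^\pm_\ep(t,x) := u_0(x) \pm C_\ep t$ are, respectively, a classical supersolution and a classical subsolution of \eqref{eq:pde} on $[0,\infty) \times \R^n$ that agree with $u_0$ at $t=0$.

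With the barriers in hand, I would define
\[
u^\ep(t,x) := \sup\bigl\{ v(t,x) : v \text{ is a viscosity subsolution of } \eqref{eq:pde},\ v \leq \min(\bar m, v^+_\ep)\bigr\},
\]
which is non-empty since $\max(\underline{m}, v^-_\ep)$ is admissible, and then invoke Perron's method. By the stability theory for nonlocal parabolic equations developed in \cite{JakobsenKarlsen}, the upper semicontinuous envelope $(u^\ep)^*$ is a viscosity subsolution and the lower semicontinuous envelope $(u^\ep)_*$ is a viscosity supersolution of \eqref{eq:pde}. Since the barriers force $(u^\ep)^*(0,\cdot) \leq v^+_\ep(0,\cdot) = u_0 = v^-_\ep(0,\cdot) \leq (u^\ep)_*(0,\cdot)$, the comparison principle applied to $(u^\ep)^*$ and $(u^\ep)_*$ then gives $(u^\ep)^* \leq (u^\ep)_*$, so the two envelopes coincide; hence $u^\ep \in C([0,\infty) \times \R^n)$ with $u^\ep(0,\cdot) = u_0$, and the sandwich $\underline{m} \leq u^\ep \leq \bar{m}$ yields $u^\ep \in L^\infty([0,\infty) \times \R^n)$.

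The main obstacle is the stability step in Perron's construction: verifying that the semicontinuous envelopes of the supremum are genuine viscosity sub/supersolutions of the nonlocal equation is more delicate than in the local second-order case, because the singular kernel of $\mathcal{I}_n$ must be handled carefully at points where test functions touch the envelopes. This is precisely the content of the stability and Perron arguments in \cite{JakobsenKarlsen} (already invoked above for comparison), which I would cite rather than reprove.
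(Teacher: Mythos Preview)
Your proposal is correct and follows essentially the same route as the paper: build linear-in-time barriers $u_0(x)\pm C_\ep t$ from the $C^{1,1}$ bound on $\mathcal{I}_n u_0$ and the periodicity of $W$, invoke Perron's method together with the comparison principle from \cite{JakobsenKarlsen}, and then obtain the $L^\infty$ bound by comparing with integer constants. The paper's proof is terser but uses exactly these ingredients.
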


\begin{proof} 
Since $u_0 \in C^{1,1}(\R^n)$,  by \eqref{Iubound-C11} with $R=1$, the functions
\[
u^{\pm}(t,x) := u_0(x) \pm \frac{Ct}{\ep^2 |\ln \ep|}
\]
are a supersolution and subsolution of \eqref{eq:pde}, respectively, if $C \geq \ep c_n\|u_0\|_{C^{1,1}(\R^n)} + \|W'\|_{L^\infty(\R)}$. 
Noting that $u^\pm(0,x) = u_0(x)$, existence of a unique continuous viscosity  solution $u^\ep$ follows by Perron's method and the above comparison principle in $\R^n$. 

Let $N\in \N$ be such that $\|u_0\|_{L^\infty(\R^n)}\leq N$. Since $W'=0$ on $\N$, the constant functions $\pm N$ are solutions to   \eqref{eq:pde}. By the comparison principle in $\R^n$, $-N\leq u^\ep\leq N$
on $[0,\infty) \times \R^n$, thus $u^\ep\in L^{\infty}([0,\infty)\times\R^n)$. 
\end{proof}

\section{The phase transition, the corrector, and the auxiliary functions}\label{sec:functions}

In this section, we will introduce the phase transition $\phi$ and the corrector $\psi$.
Along the way, we will also define the auxiliary functions $a_{\ep}$ and $\bar{a}_{\ep}$ and exhibit their relationship with fractional Laplacians and the mean curvature equation, respectively. 

\subsection{The phase transition $\phi$}

Let $\phi$ be the solution to \eqref{eq:standing wave}. 
For convenience in the notation, let $c_0$ and $\alpha$ be given respectively by
\begin{equation}\label{eq:c0-gamma}
c_0^{-1} = \int_\R [\dot{\phi}(\xi)]^2 \, d \xi \quad \hbox{and} \quad \alpha = \frac{W''(0)}{C_n}
\end{equation}
where $C_n$ is defined in \eqref{eq:Cn}, and let $H(\xi)$ be the Heaviside function. 

\begin{lem}\label{lem:asymptotics}  
There is a unique solution $\phi \in C^{4, \beta}(\R)$ of \eqref{eq:standing wave}, with $\beta$ as in \eqref{eq:W}. Moreover, there exists a constant $C>0$  such that
\begin{equation}\label{eq:asymptotics for phi}
\abs{\phi(\xi) - H(\xi) + \frac{1}{\alpha \xi}} \leq \frac{C}{\abs{\xi}^{2}}, \quad \text{for }\abs{\xi} \geq 1
\end{equation}
and
\begin{equation}\label{eq:asymptotics for phi dot}
\frac{1}{C\abs{\xi}^{2}}\leq \dot{\phi}(\xi) \leq \frac{C}{\abs{\xi}^{2}}, \quad 
|\ddot{\phi}(\xi)| \leq \frac{C}{\abs{\xi}^{2}},\quad |\dddot{\phi}(\xi)| \leq \frac{C}{\abs{\xi}^{2}}, \quad \text{for }\abs{\xi} \geq 1.
\end{equation}
\end{lem}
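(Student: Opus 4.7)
The plan is to carry out the proof in three steps: (i) existence, uniqueness and $C^{4,\beta}$ regularity; (ii) the first-order asymptotic expansion \eqref{eq:asymptotics for phi}; (iii) the derivative estimates \eqref{eq:asymptotics for phi dot}. For (i), I would invoke the classical theory of layer solutions for the square-root Laplacian developed in \cite{CabreSola, GonzalezMonneau}. Existence of a monotone profile connecting $0$ to $1$ follows either from minimizing the fractional energy naturally associated to \eqref{eq:standing wave} or from a continuation argument; strict monotonicity $\dot\phi>0$ is a consequence of the strong maximum principle for $\mathcal{I}_1$ applied to $\dot\phi$, which satisfies the linearized equation $C_n \mathcal{I}_1[\dot\phi] = W''(\phi)\dot\phi$. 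Uniqueness up to translation follows from the sliding method, and the condition $\phi(0) = 1/2$ pins down the translate. Bootstrap Schauder-type estimates for $\mathcal{I}_1$, iterated using $W \in C^{4,\beta}$ and the uniform bound on $\phi$, upgrade $\phi$ to $C^{4,\beta}(\R)$.

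For (ii), the idea is to linearize \eqref{eq:standing wave} near the endpoints. At $\xi \to +\infty$, set $v = 1 - \phi \to 0$. By periodicity $W''(1) = W''(0)$, so Taylor expansion gives $W'(\phi) = -W''(0)\,v + O(v^2)$, and since $\mathcal{I}_1[\phi] = -\mathcal{I}_1[v]$,
\[
C_n \mathcal{I}_1[v] = W''(0)\, v + O(v^2).
\]
A direct computation shows that $1/(\alpha \xi)$ is, to leading order, a solution of the linear problem $C_n \mathcal{I}_1[w] = W''(0)\, w$ on $\{\xi>0\}$ modulo a remainder of order $|\xi|^{-3}$, which suggests the ansatz $v \sim 1/(\alpha\xi)$. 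To make the heuristic rigorous, I would construct explicit barriers
\[
\phi^{\pm}(\xi) = H(\xi) - \frac{1}{\alpha\xi} \pm \frac{C}{\xi^2},
\]
mollified in a neighborhood of $\xi = 0$, and verify, for $C$ sufficiently large and $|\xi|$ sufficiently large, that $\phi^{+}$ is a classical supersolution and $\phi^{-}$ a classical subsolution of \eqref{eq:standing wave}. A comparison principle applied on $\{|\xi| \geq R\}$, with the boundary values at $|\xi|=R$ and at infinity controlled using the a priori convergence of $\phi$ to its limits, then squeezes $\phi$ between $\phi^\pm$ and yields \eqref{eq:asymptotics for phi}. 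The analysis at $-\infty$ is entirely symmetric via the change of variables $\xi \mapsto -\xi$ and the fact that $W''(0)$ appears on both sides of the linearization.

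For (iii), differentiating \eqref{eq:standing wave} gives the linear equation $C_n \mathcal{I}_1[\dot\phi] = W''(\phi)\dot\phi$. Since $W''(\phi(\xi)) \to W''(0) > 0$ as $|\xi| \to \infty$ and $\dot\phi > 0$, barriers of the form $c|\xi|^{-2}$ and $C|\xi|^{-2}$, consistent with differentiating the leading profile $-1/(\alpha\xi)$, sandwich $\dot\phi$ via comparison together with positivity on compact sets. The bounds on $\ddot\phi$ and $\dddot\phi$ then follow by further differentiation and interior Schauder estimates for $\mathcal{I}_1$ on intervals of unit length centered at $\xi$, using the decay of the lower-order derivatives and the regularity $W \in C^{4,\beta}$.

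The main obstacle is the precise nonlocal computation $\mathcal{I}_1[\chi(\xi)/\xi] = c/\xi + O(|\xi|^{-3})$ for a suitable cutoff $\chi$ equal to $1$ at infinity: one has to control both the cancellation at the singularity of $1/\xi$ and the slow polynomial tails, and one has to match this action to the quadratic nonlinear remainder $W'(\phi) - W''(0)\phi = O(\phi^2) = O(|\xi|^{-2})$ so that everything fits inside the correction term $C/\xi^2$ in the barrier. This careful principal-value analysis of $\mathcal{I}_1$ on slowly decaying profiles is the technical heart of the argument.
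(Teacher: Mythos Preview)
The paper's proof of this lemma is entirely by citation: existence and uniqueness from \cite{CabreSola,PSV}, the $C^{4,\beta}$ regularity by bootstrapping \cite{CabreSola}*{Lemma 2.3}, the asymptotic \eqref{eq:asymptotics for phi} and the bounds on $\dot\phi$ from \cite{GonzalezMonneau}*{Theorem 3.1}, and the bounds on $\ddot\phi,\dddot\phi$ from \cite{MonneauPatrizi2}*{Lemma 3.1}. Your proposal is not so much a different route as a sketch of what those cited proofs actually contain, and the outline is faithful to them: sliding for uniqueness, Schauder bootstrap for regularity, linearization plus barriers/comparison for the asymptotics, and differentiation plus comparison/Schauder for the derivative decay.

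Two small caveats worth flagging if you intend to flesh this out. First, the comparison principle you invoke on $\{|\xi|\geq R\}$ is for a \emph{nonlocal} operator, so you must control $\phi-\phi^\pm$ on the entire complement $\{|\xi|<R\}$, not just at the endpoints $|\xi|=R$; this is easy here, but your phrasing ``boundary values at $|\xi|=R$'' obscures it. Second, the naive barriers $c|\xi|^{-2}$, $C|\xi|^{-2}$ for $\dot\phi$ are delicate because $\mathcal{I}_1$ of a cutoff of $|\xi|^{-2}$ does not obviously produce a clean $O(|\xi|^{-2})$ term (there is a singularity in the rescaled integral at the point where the argument vanishes). In the literature this is handled by using scaled copies of $\phi$ itself as barriers (as the paper does later in the proof of Theorem~\ref{lem:psi-reg}), which automatically have the right nonlocal behavior; you may want to adopt that device rather than raw power functions.
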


\begin{proof}
Existence of a unique solution $\phi\in C^{2,\beta}(\R)$ of \eqref{eq:standing wave} is proven in \cite{CabreSola}, see Theorem 1.2 and Lemma 2.3, and also in \cite{PSV}*{Theorem 2}.
The regularity of $W$ in \eqref{eq:W} implies that  $\phi$ is actually in $C^{4,\beta}(\R)$, see \cite{CabreSola}*{Lemma 2.3}. 
Estimate \eqref{eq:asymptotics for phi} and the estimate on $\dot\phi$ in \eqref{eq:asymptotics for phi dot} are proven in \cite{GonzalezMonneau}*{Theorem 3.1}. Finally,
the estimates on $\ddot\phi$ and $\dddot\phi$  in \eqref{eq:asymptotics for phi dot} are  proven in \cite{MonneauPatrizi2}*{Lemma 3.1}.
\end{proof}

\subsection{The auxiliary functions $a_{\ep}$ and $\bar{a}_{\ep}$}

Now, we will introduce two auxiliary functions that are necessary for our analysis.
Let $\Omega_t$ be a bounded domain with smooth boundary $\Gamma_t$, for $t\in[t_0,t_0+h]$.
Throughout this section, let $d = d(t,x)$ and $\rho$ be such that  $d$ is the smooth extension of the signed distance function $\tilde d$  to  $\Gamma_t$ outside of $Q_\rho$ as given in Definition \ref{defn:extension}. 
We also introduce a new parameter $0<\gamma<1$.
 
Define the function $a_{\ep}=a_{\ep}(\xi; t,x)$ by
\begin{equation}\label{aepsilondef}
a_{\ep}= \int_{\{|z|<\frac{\gamma}{\ep}\}} \( \phi\(\xi + \frac{d(t,x+\ep z)- d(t,x)}{\ep}\) - \phi\( \xi + \nabla d(t,x) \cdot z\) \) \frac{dz}{\abs{z}^{n+1}},
\end{equation}
where $(\xi,t,x) \in \R \times [t_0,t_0+h] \times \R^n$. By Lemma \ref{kernellemma3} and by the regularity of $\phi$ and $d$, the integral in \eqref{aepsilondef} is well defined. 
We note that the integral is taken over $\{|z|<\frac{\gamma}{\ep}\}$ instead of $\R^n$  so that $a_\ep$ is sufficiently small for $(t,x)$ close to $\Gamma_t$ and large $\xi$ (see \eqref{aeclosetofront-xi}) and its derivatives in $t$ and $x$ remain bounded (see the proof of \eqref{aderivativesLinfinityestimate}).

The corresponding function $\bar{a}_{\ep} = \bar{a}_{\ep}(t,x)$ is given by 
\begin{equation} \label{eq:a-epsilon bar}
\bar{a}_{\ep}(t,x) = \frac{1}{\ep \abs{\ln \ep}} \int_{\R}a_{\ep}\(\xi;t,x\) \dot{\phi}(\xi) \, d \xi.
\end{equation}
One of the main observations in \cite{Imbert} is that $\bar{a}_{\ep}(t,x)$ converges,  up to constants, to $\Delta d(t,x)$ in the neighborhood $Q_\rho$, and therefore to the mean curvature of $\Gamma_t$ at points $x\in\Gamma_t$, see \cite{Imbert}*{Lemma 4}. 
In our setting, we use the recent result in \cite{PatriziVaughan}.

\begin{thm}\label{lem:4} 
For $t\in[t_0,t_0+h]$, let $\Omega_t$ be a bounded domain with smooth boundary $\Gamma_t$. 
Let $d$ be as in Definition \ref{defn:extension}. Then,
\begin{align*}
\lim_{\ep \to 0} \bar{a}_{\ep}(t,x)
	&= \frac{1}{2} \frac{|S^{n-2}|}{n-1} \Delta d(t,x)
	=\mu c_0^{-1} \trace\( (I - \widehat{\nabla d} \otimes \widehat{\nabla d}) D^2d\)
\end{align*}
uniformly in $(t,x)  \in Q_{\rho}$ where $\mu>0$ is in \eqref{eq:velocity-intro}.  
\end{thm}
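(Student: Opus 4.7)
The plan is to Taylor-expand both the distance function $d$ inside $\phi$ and, subsequently, $\phi$ itself, then identify where the logarithmic divergence $|\ln\ep|$ comes from. I would first dispose of the second equality in the statement: in $Q_\rho$ the extension $d$ coincides with the true signed distance, so $|\nabla d|^2=1$, and differentiating gives $D^2 d\,\nabla d=0$. Hence $\hat p^T D^2 d\,\hat p=0$ and $\trace\!\bigl((I-\hat p\otimes \hat p)D^2 d\bigr)=\Delta d$. Combined with the definition $\mu=\tfrac{c_0}{2}\tfrac{|S^{n-2}|}{n-1}$, the two expressions for the limit coincide, so it suffices to prove $\bar a_\ep(t,x)\to \tfrac{1}{2}\tfrac{|S^{n-2}|}{n-1}\Delta d(t,x)$ uniformly on $Q_\rho$.

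For the main computation, I would write $p=\nabla d(t,x)$, $X=D^2d(t,x)$, and Taylor expand
\[
\frac{d(t,x+\ep z)-d(t,x)}{\ep}=p\cdot z+\tfrac{\ep}{2}z^T X z+O(\ep^2|z|^3),
\]
valid uniformly in $Q_\rho$ for $|z|\le \gamma/\ep$ with $\gamma$ small, using that $d$ is smooth and bounded (Definition \ref{defn:extension}). Then expand $\phi$ around $\xi+p\cdot z$ to get
\[
\phi\!\Bigl(\xi+\tfrac{d(t,x+\ep z)-d(t,x)}{\ep}\Bigr)-\phi(\xi+p\cdot z)=\tfrac{\ep}{2}(z^T X z)\dot\phi(\xi+p\cdot z)+\mathcal{R}_\ep(\xi,z),
\]
where $\mathcal{R}_\ep$ is a remainder that must be shown to contribute $o(|\ln\ep|)$ after being integrated in $\xi$ against $\dot\phi(\xi)$. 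Inserting this into $\bar a_\ep$,
\[
\bar a_\ep(t,x)=\frac{1}{2|\ln\ep|}\int_\R\!\int_{\{|z|<\gamma/\ep\}}(z^T X z)\,\dot\phi(\xi+p\cdot z)\,\dot\phi(\xi)\,\frac{dz}{|z|^{n+1}}\,d\xi+o(1).
\]

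To evaluate the leading integral, I would decompose $z=s\hat p+z'$ with $z'\perp\hat p$ and $s=p\cdot z$. Since $X\hat p=0$, we have $z^T X z=(z')^T X z'$, and rotational symmetry in $\R^{n-1}$ gives
\[
\int_{S^{n-2}}(\omega^T X\omega)\,d\omega=\frac{|S^{n-2}|}{n-1}\trace\!\bigl(X\big|_{\hat p^\perp}\bigr)=\frac{|S^{n-2}|}{n-1}\Delta d.
\]
Introducing the autocorrelation $g(s):=\int_\R\dot\phi(\xi+s)\dot\phi(\xi)\,d\xi$, which is bounded, satisfies $g(s)=O(1/s^2)$ at infinity by the decay $\dot\phi(\xi)=O(1/\xi^2)$ in Lemma \ref{lem:asymptotics}, and satisfies $\int_\R g(s)\,ds=\int_\R\dot\phi=1$ by Fubini, the question reduces to the radial integral
\[
\int_0^{\sqrt{\gamma^2/\ep^2-s^2}}\frac{r^n}{(r^2+s^2)^{(n+1)/2}}\,dr=|\ln\ep|+O(1+\ln(1+|s|))
\]
for $|s|\le \gamma/\ep$. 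Multiplying by $g(s)$ and integrating in $s$ yields exactly $|S^{n-2}|\,|\ln\ep|+O(1)$, which cancels the prefactor $1/|\ln\ep|$ and produces the claimed limit $\tfrac{1}{2}\tfrac{|S^{n-2}|}{n-1}\Delta d$.

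The hard part will be controlling the remainder $\mathcal{R}_\ep$ uniformly on $Q_\rho$, because for $|z|$ near $\gamma/\ep$ the argument $\tfrac{\ep}{2}z^T X z$ is order one, so a pointwise second-order Taylor expansion of $\phi$ is not justified. The remedy is to split the $z$-integral into three regimes (say $|z|\le R$, $R\le|z|\le\delta/\ep$ for small $\delta$, and the far regime up to $\gamma/\ep$) and on each to use a different tool: Taylor's theorem on the first, the asymptotics $\phi(\xi)-H(\xi)=-1/(\alpha\xi)+O(1/\xi^2)$ and the $1/\xi^2$ decay of $\dot\phi$ on the second, and a direct comparison of $\phi(\xi+\eta)-\phi(\xi+p\cdot z)$ using bounds on $|\eta-p\cdot z|\le C\ep|z|^2$ together with the monotonicity and $L^1$ integrability of $\dot\phi$ on the third. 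Uniformity in $(t,x)\in Q_\rho$ follows because the smoothness constants of $d$ and the modulus of continuity of its Taylor coefficients are uniform there; this is essentially the content imported from our earlier result \cite{PatriziVaughan}.
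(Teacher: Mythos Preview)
Your approach is correct and is precisely the strategy underlying \cite[Theorem~1.3]{PatriziVaughan}, which the paper simply cites here without reproducing the argument. The paper's proof is one line: invoke the earlier result and note that restricting the integral to $\{|z|<\gamma/\ep\}$ rather than $\R^n$ requires only a minor adaptation. Your sketch reconstructs that external proof: the orthogonal decomposition $z=s\hat p+z'$, the identification of $\frac{|S^{n-2}|}{n-1}\Delta d$ from the angular average of $(z')^T X z'$, the autocorrelation $g(s)$ with $\int_\R g=1$, and the logarithmic divergence of the radial integral are exactly the ingredients used there.

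Two small cosmetic points, neither affecting validity. First, your error term $O(1+\ln(1+|s|))$ for the radial integral is not quite right for small $|s|$: one actually gets $|\ln\ep|-\ln|s|+O(1)$, so the correction is $O(1+|\ln|s||)$. This is still integrable against $g(s)$ (since $g$ is bounded near $0$ and $O(1/s^2)$ at infinity), so the conclusion is unchanged. Second, after you have already extracted the factor $\frac{|S^{n-2}|}{n-1}\Delta d$ from the angular integral, the subsequent $s$-integral should yield $|\ln\ep|+O(1)$, not $|S^{n-2}|\,|\ln\ep|+O(1)$; the extra $|S^{n-2}|$ is a slip in bookkeeping. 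Your regime-splitting strategy for the remainder $\mathcal R_\ep$ is the right one and is what is carried out in detail in \cite{PatriziVaughan}.
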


\begin{proof}
This is \cite[Theorem 1.3]{PatriziVaughan} (see also Remark 1.1 there) when $\gamma = +\infty$. 
For $0 < \gamma<1$, the proof is the same after making a minor adaptation to \cite[Lemma 4.1]{PatriziVaughan}. 
\end{proof}

We will also need several estimates on $a_\ep, \bar{a}_\ep$ and their derivatives, none of which are proven in \cite{Imbert}. 
These will be used in the next subsection to prove asymptotic properties of the corrector (which are assumed in \cite{Imbert}). 
First, we have the following general estimate. 
The proof is delayed until Section \ref{sec:estimates}.

\begin{lem}\label{lem:ae bound}
There exists  $C>0$ such that, for all $(\xi,t,x) \in \R \times [t_0,t_0+h]\times \R^n$, 
\begin{equation}\label{aLinfinityestimate}
\|a_\ep\|_{C_\xi^{3}(\R)} \leq C \ep^\frac{1}{2},
 \end{equation} 
 \begin{equation}\label{aderivativesLinfinityestimate}
\|\partial_t a_{\ep}\|_{C_\xi^{3}(\R)} ,\, \|\nabla_x a_{\ep}\|_{C_\xi^{3}(\R)} \leq  C |\ln \ep|, \quad \|D_x^2 a_{\ep}\|_{C_\xi^{3}(\R)} \leq \frac{C}{\ep},
 \end{equation} 
 \begin{equation}\label{aL2estimatefar}
 \abs{a_{\ep}\(\xi;t,x\)},\,\  \abs{\dot a_{\ep}\(\xi;t,x\)}\leq \frac{C}{1+|\xi|},
\end{equation}
 \begin{equation}\label{aL2estimatefarxtder}\begin{split}
&\abs{\partial_t a_{\ep}\(\xi;t,x\)},\, \abs{\nabla_x a_{\ep}\(\xi;t,x\)} \leq \frac{C}{\ep(1+|\xi|)},\quad  \abs{D_x^2 a_{\ep}\(\xi;t,x\)} \leq \frac{C}{\ep^2(1+|\xi|)},\\&
\abs{\partial_t \dot a_{\ep}\(\xi;t,x\)},\, \abs{\nabla_x \dot a_{\ep}\(\xi;t,x\)}\leq \frac{C}{\ep(1+|\xi|)},\quad \abs{D_x^2 \dot a_{\ep}\(\xi;t,x\)}\leq \frac{C}{\ep^2(1+|\xi|)}.
\end{split}\end{equation}
 Consequently, for all $(t,x) \in [t_0,t_0+h] \times \R^n$,
\begin{equation}
\label{abarestimate}\abs{\bar{a}_{\ep}(t,x)} \leq \frac{C}{\ep^\frac{1}{2} \abs{\ln \ep}},
\end{equation}
and 
\begin{equation}
\label{abarestimatextder}\abs{\partial_t \bar{a}_{\ep}(t,x)},\, \abs{\nabla_x  \bar{a}_{\ep}(t,x)} \leq \frac{C}{\ep}, \quad \abs{D_x^2 \bar{a}_{\ep}(t,x)} \leq \frac{C}{\ep^2|\ln \ep|}.
\end{equation}
\end{lem}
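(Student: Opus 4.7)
The plan is to reduce every bound in the statement to the second-order Taylor expansion
\[
\frac{d(t,x+\ep z) - d(t,x)}{\ep} - \nabla d(t,x)\cdot z = \tfrac{\ep}{2}\, z^{T} D^{2}d\bigl(t, x + \theta \ep z\bigr)\, z = O(\ep|z|^{2}),
\]
valid for all $z\in\R^{n}$ because the extension $d$ of Definition \ref{defn:extension} is smooth with bounded derivatives of every order. Combined with the regularity $\phi\in C^{4,\beta}(\R)$ from Lemma \ref{lem:asymptotics} and the mean value theorem, this yields the pointwise integrand estimate
\[
\Bigl| \phi\bigl(\xi + (d(t,x+\ep z)-d(t,x))/\ep\bigr) - \phi\bigl(\xi + \nabla d(t,x)\cdot z\bigr)\Bigr| \leq C\min\bigl(1,\ep|z|^{2}\bigr),
\]
which is the single structural tool behind all the bounds.

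To prove \eqref{aLinfinityestimate}, I split the integration domain at $|z|=\ep^{-1/2}$. On $\{|z|\leq \ep^{-1/2}\}$ the inequality above contributes $C\ep\int_{|z|\leq \ep^{-1/2}} |z|^{1-n}\,dz \leq C\ep^{1/2}$ by Lemma \ref{kernellemma3}; on $\{\ep^{-1/2}\leq|z|\leq \gamma/\ep\}$ the trivial bound $|\phi|\leq 1$ gives $C\int_{|z|\geq \ep^{-1/2}}|z|^{-n-1}\,dz \leq C\ep^{1/2}$. The $C^{3}_{\xi}$ norms are handled in exactly the same way after replacing $\phi$ with $\dot\phi,\ddot\phi,\dddot\phi$, which are uniformly bounded thanks to \eqref{eq:asymptotics for phi dot}.

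Next I differentiate the integrand under the integral sign to establish \eqref{aderivativesLinfinityestimate}. A single $\nabla_{x}$- or $\partial_{t}$-derivative still produces a \emph{difference} structure whose leading term cancels by Taylor's theorem applied to $\nabla d$ (respectively $\partial_{t} d$), so the integrand is bounded by $C\min(|z|,1)/|z|^{n+1}$; integration over $\{|z|\leq \gamma/\ep\}$ then produces the logarithmic factor $|\ln\ep|$ from the annulus $\{1\leq|z|\leq \gamma/\ep\}$. For $D^{2}_{x}a_{\ep}$ the difference structure no longer yields cancellation: a direct estimate of the two $x$-derivatives of $(d(t,x+\ep z)-d(t,x))/\ep$ introduces an extra factor $1/\ep$, producing the bound $C/\ep$.

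The most delicate estimates are the $\xi$-decay bounds \eqref{aL2estimatefar}--\eqref{aL2estimatefarxtder}. Here I exploit the sharper asymptotics $\dot\phi(s)=O(1/(1+s^{2}))$ and $|\phi(s)-H(s)|\leq C/|s|$ for $|s|\geq1$ from Lemma \ref{lem:asymptotics}. For $|\xi|$ large, both arguments $\xi+(d(t,x+\ep z)-d(t,x))/\ep$ and $\xi+\nabla d(t,x)\cdot z$ stay of size at least $|\xi|/2$ except when $|\nabla d(t,x)\cdot z|$ is comparable to $|\xi|$, which, since $|\nabla d|\leq 1$, forces $|z|\geq c|\xi|$. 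I split the $z$-integral accordingly: on the ``good'' region the mean value theorem applied to $\phi$ combined with the $1/(1+s^{2})$ decay of $\dot\phi$ evaluated at the intermediate point provides the required factor $1/|\xi|$; on the ``bad'' region $|z|\geq c|\xi|$, the kernel $|z|^{-n-1}$ itself supplies the decay. This step is the main technical obstacle because one must simultaneously control $\dot\phi$ at both argument points along the whole segment joining them, and then combine this with the Taylor control on $d$ used above. Once \eqref{aL2estimatefar} and its derivative analogues are in hand, the bounds \eqref{abarestimate} and \eqref{abarestimatextder} follow immediately from the definition \eqref{eq:a-epsilon bar}: inserting the $L^{\infty}_{\xi}$ bounds inside the $\xi$-integral and using $\int_{\R}\dot\phi\,d\xi=1$ together with the finite integrals $\int_{\R}\dot\phi(\xi)/(1+|\xi|)\,d\xi$ produces exactly the powers of $\ep$ and $|\ln\ep|$ claimed.
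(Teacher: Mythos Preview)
Your overall strategy matches the paper's proof almost exactly: the paper also reduces everything to the Taylor remainder $O(\ep|z|^{2})$, splits the $z$-integral at $|z|=\ep^{-1/2}$ for \eqref{aLinfinityestimate}, splits at $|z|=\kappa|\xi|$ (with $\|\nabla d\|_{\infty}\kappa<\tfrac14$) for the $1/|\xi|$ decay in \eqref{aL2estimatefar}--\eqref{aL2estimatefarxtder}, and derives \eqref{abarestimate}--\eqref{abarestimatextder} by inserting the $L^{\infty}_{\xi}$ bounds into \eqref{eq:a-epsilon bar} and using $\int_{\R}\dot\phi=1$.

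There is, however, one concrete slip. For \eqref{aderivativesLinfinityestimate} you claim that after one $t$- or $x$-derivative ``the integrand is bounded by $C\min(|z|,1)/|z|^{n+1}$'' and that the logarithm comes from the annulus $\{1\le|z|\le\gamma/\ep\}$. As written this cannot work: $\min(|z|,1)/|z|^{n+1}=|z|^{-n}$ near the origin, which is not integrable, and on the annulus it equals $|z|^{-n-1}$, whose integral is $O(1)$, not $|\ln\ep|$. The correct pointwise bound (which is what the paper records, and what your verbal description of the Taylor cancellation actually yields) is
\[
\bigl|\partial_{t}\text{-integrand}\bigr|\le C\,\min\!\bigl(\ep|z|^{2},\,|z|\bigr),
\]
i.e.\ $O(\ep|z|^{2})$ for $|z|\le\gamma$ (from the second-order Taylor expansion of $d_{t}$ together with $|\dot\phi(A)-\dot\phi(B)|\le C\ep|z|^{2}$) and $O(|z|)$ for $\gamma\le|z|\le\gamma/\ep$ (crude bound on each term). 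Against the kernel this gives $C\ep\int_{|z|<\gamma}|z|^{1-n}\,dz=O(\ep)$ plus $C\int_{\gamma}^{\gamma/\ep}r^{-1}\,dr=C|\ln\ep|$, which is the claimed estimate. Your mechanism is right; only the recorded bound needs to be amended to $\min(\ep|z|^{2},|z|)$ rather than $\min(|z|,1)$. (A minor aside: in general $|\nabla d|$ need not be $\le 1$ outside $Q_{\rho}$, so in the $1/|\xi|$ argument one should take the cutoff at $|z|=\kappa|\xi|$ with $\kappa<1/(4\|\nabla d\|_{\infty})$, as the paper does.)
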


We also have the following refined estimates near the front $\Gamma_t$. 
The proof is in Section \ref{sec:ae-updates}.

\begin{lem}\label{lem:ae-updates}
There exists $C>0$ such that,  for $(\xi,t,x) \in \R \times [t_0,t_0+h]\times \R^n$, if $|\nabla d(t,x)| \not=0$, then
\begin{equation}\label{eq:aeestnonzerograd}
\begin{split}
&|a_\ep(\xi;t,x)|, \, |\dot{a}_\ep(\xi;t,x)|, \, |\ddot{a}_\ep(\xi;t,x)| \leq \frac{C \ep |\ln \ep|}{|\nabla d(t,x)|},\\
&|\bar{a}_\ep(t,x)| \leq \frac{C}{|\nabla d(t,x)|}.
\end{split}
\end{equation}
Moreover, if $(t,x)\in Q_\rho$,  then
\begin{equation}\label{aderivativesclosetothefront}
\begin{split}
&|\partial_ta_{\ep}(\xi;t,x)|,\, |\nabla_x  a_{\ep}(\xi;t,x)|,\, |\nabla_x \dot a_{\ep}(\xi;t,x)| \leq C ,\\
&|\partial_t\bar{a}_{\ep}(t,x)|,\, |\nabla_x  \bar{a}_{\ep}(t,x)| \leq \frac{C}{\ep |\ln \ep|},
\end{split}
\end{equation}
and, there exists $\ep_0=\ep_0(\gamma)$, with $\gamma$ as in \eqref{aepsilondef}, such that for $0 < \ep<\ep_0$, 
\begin{equation}\label{aeclosetofront-xi}
|a_\ep(\xi;t,x)| \leq \frac{C\gamma}{1+|\xi|}. 
\end{equation}
\end{lem}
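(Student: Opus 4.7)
I would prove the three groups of estimates by a common scheme: pass to polar coordinates $z = r\omega$ with $\omega \in S^{n-1}$ in the integral defining $a_\ep$, exploit the sharp decay of $\dot\phi$ from Lemma \ref{lem:asymptotics} to obtain quantitative bounds depending on $p := \nabla d(t,x)$, and then pass the estimates on $a_\ep$ through the definition \eqref{eq:a-epsilon bar} of $\bar a_\ep$. Throughout, I write $\eta_1 = (d(t,x+\ep z) - d(t,x))/\ep$ and $\eta_2 = p\cdot z$, so that $|\eta_1 - \eta_2| \leq C\ep|z|^2$ by a second order Taylor expansion of the smooth function $d$.

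For the $|\nabla d|$-dependent estimate \eqref{eq:aeestnonzerograd}, I would split into $|z| \leq r_0$ and $r_0 < |z| < \gamma/\ep$. On the near part, the mean value theorem together with $\|\dot\phi\|_\infty < \infty$ yields a contribution of size $C\ep r_0$. On the far part, I would use the decomposition $\phi(\xi+\eta_1)-\phi(\xi+\eta_2) = [\phi-H](\xi+\eta_1) - [\phi-H](\xi+\eta_2) + H(\xi+\eta_1) - H(\xi+\eta_2)$ combined with the asymptotic bounds for $\phi - H$ and $\dot\phi$ from Lemma \ref{lem:asymptotics}; when $p\cdot\omega\neq 0$, the radial integral picks up a $1/(1+r^2(p\cdot\omega)^2)$ factor, yielding $\min(\gamma/\ep,\,\pi/(2|p\cdot\omega|))$ after integration. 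The angular integral on $S^{n-1}$ then splits into an equator band $|\cos\theta| \lesssim \ep/(|p|\gamma)$ of measure $\sim\ep/(|p|\gamma)$ giving $\sim 1/|p|$, and its complement giving $|\ln\ep|/|p|$ via $\int\csc\theta\,d\theta$; choosing $r_0 \sim |\ln\ep|/|p|$ balances the two parts to yield $|a_\ep| \leq C\ep|\ln\ep|/|\nabla d|$. The bounds for $\dot a_\ep$ and $\ddot a_\ep$ come by differentiating in $\xi$ under the integral, using the identical decay for $\ddot\phi$ and $\dddot\phi$. The bound for $\bar a_\ep$ then follows from \eqref{eq:a-epsilon bar}, $\int|\dot\phi|\,d\xi<\infty$, and the cancellation of the $\ep|\ln\ep|$ prefactor.

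For \eqref{aderivativesclosetothefront}, I would use that $|\nabla d| = 1$ and all derivatives of $d$ are uniformly bounded in $Q_\rho$. Differentiating $a_\ep$ in $t$ or $x$ produces integrals with the same kernel but integrand of the form $\dot\phi(\xi+\eta_1)A_\ep(z) - \dot\phi(\xi+\eta_2)B_\ep(z)$, where $A_\ep, B_\ep$ involve derivatives of $d$ at $(t, x+\ep z)$ and $(t,x)$ respectively, satisfy $|A_\ep - B_\ep| \leq C\ep|z|^2$, and grow at most linearly in $|z|$; the same polar decomposition as above with $|\nabla d|=1$ then produces the stated $O(1)$ bounds, and the derivative bounds on $\bar a_\ep$ follow again from \eqref{eq:a-epsilon bar}. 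For the sharp $\xi$-decay \eqref{aeclosetofront-xi}, I would use the precise expansion $\phi(\xi) = H(\xi) - 1/(\alpha\xi) + O(1/\xi^2)$ from Lemma \ref{lem:asymptotics}. For $|\xi|$ large compared to $|z|\leq\gamma/\ep$, the Heaviside parts cancel and
\[
\phi(\xi+\eta_1) - \phi(\xi+\eta_2) = \frac{\eta_1-\eta_2}{\alpha(\xi+\eta_1)(\xi+\eta_2)} + O\!\left(\frac{1}{\xi^2}\right),
\]
which is much sharper than the $\dot\phi$-based bound. Tracking constants and choosing $\ep_0(\gamma)$ small then extracts the $\gamma$-factor, essentially coming from the $\ep/\gamma$ measure of the equator band once $|\nabla d|=1$.

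The main technical obstacle is the careful balancing of the radial cutoff $r_0$ and the angular equator band in the polar decomposition to extract the $|\ln\ep|$ factor in \eqref{eq:aeestnonzerograd}: naively, the angular integral $\int\csc\theta\,d\theta$ diverges near the equator, and it is precisely the $|z|<\gamma/\ep$ truncation that regularizes this into a logarithm. A second delicate point is tracking the $\gamma$-dependence in \eqref{aeclosetofront-xi} simultaneously with the asymptotic regime of $\phi$, since the size of $|\xi|$ and the truncation radius must be chosen compatibly.
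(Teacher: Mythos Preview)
Your proposal has a genuine gap in the mechanism for \eqref{eq:aeestnonzerograd}, and this propagates to the other two estimates.

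The claimed bound ``the radial integral picks up a $1/(1+r^2(p\cdot\omega)^2)$ factor, yielding $\min(\gamma/\ep,\pi/(2|p\cdot\omega|))$'' is not correct uniformly in $\xi$. After writing $\phi(\xi+\eta_1)-\phi(\xi+\eta_2)=\int_0^1\dot\phi(\xi+ar+C\ep\tau r^2)\,C\ep r^2\,d\tau$ with $a=p\cdot\omega$, the $r$-integral you need is $\int_0^{\gamma/\ep}\dot\phi(\xi+ar+br^2)\,dr$ with $b=O(\ep)$. The substitution $u=\xi+ar+br^2$ only gives $\int\le C/|a|$ when $|a+2br|\gtrsim|a|$ on the whole range, i.e.\ when $|a|\gtrsim\gamma$, not when $|a|\gtrsim\ep/\gamma$. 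For $|a|$ in the intermediate range the map $r\mapsto\xi+ar+br^2$ has a turning point inside $[0,\gamma/\ep]$, and for the worst $\xi$ (the one placing the vertex at the origin of $\dot\phi$) the integral is of order $1/\sqrt\ep$. Hence your equator band has angular width $\sim\gamma/|p|$, not $\ep/(\gamma|p|)$, and the resulting bound is at best $O(\gamma^2/|p|)+O(\sqrt\ep\,\gamma/|p|)$, not $O(\ep|\ln\ep|/|p|)$. The paper avoids exactly this obstruction by passing to \emph{cylindrical} coordinates aligned with $\nabla d$: after rotating so that $\nabla d\cdot z=c_1y_n$ and rescaling, one integrates first in the normal coordinate (equivalently in $t=y_n/|y'|$). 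In the regime $|t|<1$ the quadratic remainder is $\le C|y'|^2/\ep$, \emph{constant in the variable of integration}, so the inner integral is a genuine translation and $\int\dot\phi=1$ applies cleanly. The remaining regime $|t|>1$ is then handled by a further split at $|t|=\sqrt{c_1/p}$ and an integration by parts. Pure polar coordinates mix the normal and tangential scales and lose this structure.

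For \eqref{aderivativesclosetothefront} you also miss the paper's main simplification. Since $|\nabla d|\equiv1$ on $Q_\rho$, one may rotate by an $x$-dependent orthogonal $T(x)$ so that $\nabla d(x)\cdot T(x)y=y_n$ \emph{identically in $x$}; then the subtracted term $\phi(\xi+y_n)$ carries no $x$-dependence, and $\partial_x a_\ep=\int\dot\phi(\cdot)\,\ep\,\partial_x G(x,y)\,|y|^{-n-1}dy$ with $|\partial_x G|\le C|y|^2$, giving $|\partial_x a_\ep|\le C\ep\int_{|y|<\gamma/\ep}|y|^{1-n}dy\le C\gamma$ in one line. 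Your route (differentiate first, then split $\dot\phi(\xi+\eta_1)A_\ep-\dot\phi(\xi+\eta_2)B_\ep$) produces a leftover term $\int[\dot\phi(\xi+\eta_1)-\dot\phi(\xi+\eta_2)]\,O(|z|)\,|z|^{-n-1}dz$ whose control requires precisely the sharp analysis of part one, which your polar scheme does not deliver.

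Finally, for \eqref{aeclosetofront-xi}, your $-1/(\alpha\xi)$ expansion is only effective once $|\xi|\gtrsim\gamma/\ep$ (so that both arguments stay on the same side of $0$ and the Heaviside terms cancel). The bulk of the paper's work is the intermediate regime $\gamma^{-2}\lesssim|\xi|\lesssim\gamma/\ep$, handled by locating the radii $p_1,p_2$ at which $\xi+pt/\ep$ and $\xi+pt/\ep+O(p^2t^2/\ep)$ change sign and showing $p_2-p_1=O(\gamma\ep|\xi|)$; this is where the explicit $\gamma$-factor actually emerges. Your sketch does not address this regime.
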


It is also important to notice that when $\xi=d(t,x)/\ep$, morally, $a_{\ep}(d/\ep)$ is the difference between an $n$-dimensional and a $1$-dimensional fractional Laplacian of $\phi(d/\ep)$. 
This is made precise in the following lemma whose proof is delayed until Section \ref{sec:estimatesbis}.

\begin{lem}\label{lem:ae near front}
There is  $C = C(\gamma,\rho)>0$ such that 
\[
\left|a_{\ep}\(\frac{d(t,x)}{\ep} ; t,x\)- \left(\ep \mathcal{I}_n\left[\phi\(\frac{d(t,\cdot)}{\ep}\) \right](x) - C_n \mathcal{I}_1[\phi]\(\frac{d(t,x)}{\ep}\) \right)\right| 
	\leq C\ep
\]
for any $(t,x) \in [t_0,t_0+h] \times \R^n$. 
\end{lem}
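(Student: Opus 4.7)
The plan is to rewrite both $\ep\mathcal{I}_n[\phi(d/\ep)](x)$ and $C_n\mathcal{I}_1[\phi](d/\ep)$ as integrals over $\R^n$ in a single variable $z$, so that their difference reproduces $a_\ep(d/\ep;t,x)$ up to small, explicit error terms. The change of variables $y = \ep z$ in the definition of $\mathcal{I}_n$ gives
\begin{equation*}
\ep\mathcal{I}_n[\phi(d(t,\cdot)/\ep)](x) = \operatorname{P.V.}\int_{\R^n}\left[\phi\!\left(\tfrac{d(t,x+\ep z)}{\ep}\right) - \phi\!\left(\tfrac{d(t,x)}{\ep}\right)\right]\frac{dz}{|z|^{n+1}},
\end{equation*}
while Lemma~\ref{lem:one to n} applied to $v = \phi$ and $e = \nabla d(t,x)$ yields
\begin{equation*}
|\nabla d(t,x)|\,C_n\mathcal{I}_1[\phi](d/\ep) = \operatorname{P.V.}\int_{\R^n}\left[\phi\!\left(\tfrac{d(t,x)}{\ep} + \nabla d(t,x)\cdot z\right) - \phi\!\left(\tfrac{d(t,x)}{\ep}\right)\right]\frac{dz}{|z|^{n+1}}.
\end{equation*}

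Subtracting these two identities (the resulting integrand is $O(\ep|z|^2)$ at the origin by Taylor-expanding $d(t,x+\ep z) = d(t,x) + \ep\nabla d(t,x)\cdot z + O(\ep^2|z|^2)$ and using boundedness of $\dot\phi$, so the principal values can be dropped) and then splitting the domain into $\{|z|<\gamma/\ep\}$, which recovers exactly the integral defining $a_\ep(d/\ep;t,x)$, and its complement produces the key identity
\begin{equation*}
a_\ep(d/\ep; t,x) - \bigl(\ep\mathcal{I}_n[\phi(d/\ep)](x) - C_n\mathcal{I}_1[\phi](d/\ep)\bigr) = -R_\ep(t,x) - \bigl(|\nabla d(t,x)|-1\bigr)\,C_n\mathcal{I}_1[\phi](d/\ep),
\end{equation*}
where $R_\ep$ denotes the tail integral over $\{|z|\geq \gamma/\ep\}$ of the same integrand that appears in $a_\ep$.

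The tail is immediate to bound: its integrand is bounded by $1$ (since $0\leq\phi\leq 1$), and Lemma~\ref{kernellemma3} gives $\int_{|z|\geq \gamma/\ep}|z|^{-(n+1)}\,dz \leq C\ep/\gamma$. The correction term $(|\nabla d|-1)C_n\mathcal{I}_1[\phi](d/\ep)$ vanishes inside $Q_\rho$, where $|\nabla d|\equiv 1$. Outside $Q_\rho$, however, the factor $|\nabla d|-1$ is not small in general, and the main obstacle will be to exploit cancellation from the other factor: $|d(t,x)|\geq \rho$ forces $|d/\ep|\geq \rho/\ep$, and combining \eqref{eq:standing wave} with the fact that $W'(0) = W'(1) = 0$ (which follows from the minima structure in \eqref{eq:W}) and the asymptotics \eqref{eq:asymptotics for phi} will yield
\begin{equation*}
|C_n\mathcal{I}_1[\phi](d/\ep)| = |W'(\phi(d/\ep))| \leq C\,|\phi(d/\ep) - H(d)| \leq \tfrac{C\ep}{\rho}.
\end{equation*}
Since $\nabla d$ is globally bounded by the construction of the extension in Definition~\ref{defn:extension}, this correction is $O(\ep)$ with constant depending on $\rho$. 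Combining the two estimates proves the lemma with $C = C(\gamma,\rho)$.
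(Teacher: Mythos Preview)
Your proof is correct. The global identity you derive and the treatment of the tail $R_\ep$ match the paper's argument in Lemma~\ref{lem:ae-near} exactly (which handles the region $|d|<\rho$, where $|\nabla d|=1$). The difference lies in how the region $|d|\geq\rho$ is handled. The paper abandons the identity there and instead bounds the three quantities $a_\ep(d/\ep)$, $\ep\mathcal{I}_n[\phi(d/\ep)]$, and $C_n\mathcal{I}_1[\phi](d/\ep)$ separately by $O(\ep/\rho)$ in Lemmas~\ref{lem:ae-away}--\ref{lem:I_1-away}; the middle one requires its own short-range/long-range splitting with a second-order Taylor argument. Your route keeps the identity globally, carrying the factor $|\nabla d|$ from Lemma~\ref{lem:one to n}, so that away from the front the only extra term is $(|\nabla d|-1)\,C_n\mathcal{I}_1[\phi](d/\ep)$; this is dispatched in one line via $C_n\mathcal{I}_1[\phi]=W'(\phi)$ and the asymptotics~\eqref{eq:asymptotics for phi}, which is precisely the content of Lemma~\ref{lem:I_1-away}. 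In effect you replace three separate far-from-front estimates by a single one, at the cost of tracking the $|\nabla d|$ factor throughout. Both approaches give the same constant dependence $C=C(\gamma,\rho)$.
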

 
 \subsection{The corrector $\psi$}\label{sec:corrector}

Next, we introduce the corrector that is required to solve the equation when constructing barriers for the proof of Theorem \ref{thm:main}. For this, we use
the linearized operator $\mathcal{L}$ associated to \eqref{eq:standing wave} around $\phi$ that is 
given by
\begin{equation}\label{eq:linearized operator}
\mathcal{L}[\psi] = -C_n\mathcal{I}_1[\psi] + {W}''(\phi) \psi.
\end{equation}
The correction $\psi = \psi(\xi;t,x)$ that we will need solves
\begin{equation}\label{eq:linearized wave}
\begin{cases}
\displaystyle{
\mathcal{L}[\psi(\cdot;t,x)](\xi)} 
	=  \frac{a_{\ep}\(\xi;t,x\)}{\ep \abs{\ln \ep}}
		+ c_0 \dot{\phi}\( \xi\) (\sigma -\bar{a}_{\ep}(t,x)) 
		+ \tilde{\sigma} \(W''\( \phi (\xi)\) - W''(0)\),
		 &\xi \in \R\\
\psi(\pm\infty;t,x) = 0,
\end{cases}
\end{equation}
for all $x \in \R^n$ and $t \in [t_0,t_0+h]$,
where  $\sigma\in\R$ is a small constant and $\tilde{\sigma}$ is such that $\sigma = W''(0)\tilde{\sigma}$.
See Section \ref{sec:heuristics} for a formal derivation of \eqref{eq:linearized wave}. 

The rest of this section is devoted to establishing existence, uniqueness, and properties of $\psi$. First, to prove existence and uniqueness of the solution of
\eqref{eq:linearized wave}, we will use the following result from  
\cite{GonzalezMonneau} regarding the linearized equation with a general right-hand side. 

\begin{lem}\label{psiexistencemonneau}
Let $g\in  H^{\frac{1}{2}}(\R)$ be such that 
\begin{equation*}\label{gcond}\int_{\R}g(\xi)\dot\phi(\xi)\,d\xi=0.
		\end{equation*}
		Then, there exists a unique solution $\psi \in H^{\frac{1}{2}}(\R)$ such that  $\int_{\R}\psi(\xi)\dot\phi(\xi)\,d\xi=0$ of 
\begin{equation*}
\mathcal{L}[\psi] 
	=  g \quad  \text{in }\R.
\end{equation*}
Moreover, if $g\in C^{2,\beta}(\R)$ with $\beta$ as in \eqref{eq:W}, then $\psi\in C^{3,\beta}(\R)$, and 
\begin{equation}\label{h12normpsiLinfty}
\|\psi\|_{ C^{3,\beta}(\R) }\leq C( \|g\|_ { C^{2,\beta}(\R)}+\|\psi\|_{L^\infty(\R)}). 
\end{equation}
\end{lem}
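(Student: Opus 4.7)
The plan is to exploit that $\mathcal{L}$ is a self-adjoint nonlocal Schrödinger-type operator whose kernel is spanned by $\dot\phi$, reducing existence and uniqueness to a Lax--Milgram argument on the orthogonal complement of the kernel, then to bootstrap to $C^{3,\beta}$ using Schauder theory for the half-Laplacian.

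First I would associate to $\mathcal{L}$ the symmetric bilinear form
\begin{equation*}
B(u,v) = \frac{C_n}{2}\iint_{\R\times\R} \frac{(u(\xi)-u(\eta))(v(\xi)-v(\eta))}{(\xi-\eta)^2}\, d\xi\, d\eta + \int_{\R} W''(\phi)\, u\, v\, d\xi,
\end{equation*}
which is continuous on $H^{1/2}(\R)\times H^{1/2}(\R)$ since $W''(\phi)$ is bounded by \eqref{eq:W}. Differentiating \eqref{eq:standing wave} with respect to $\xi$ gives $\mathcal{L}[\dot\phi] = 0$, while the decay estimates in \eqref{eq:asymptotics for phi dot} place $\dot\phi \in H^{1/2}(\R)$; thus $\R\,\dot\phi \subset \ker \mathcal{L}$.

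The crucial analytic step is coercivity of $B$ on $V = \{v \in H^{1/2}(\R) : \int_\R v\,\dot\phi\,d\xi = 0\}$. I would decompose $W''(\phi(\xi)) = W''(0) + K(\xi)$ where, by the periodicity of $W$ and the limits $\phi(-\infty)=0$, $\phi(+\infty)=1$, the potential $K(\xi) \to 0$ at $\pm\infty$. The principal part $v \mapsto \tfrac{C_n}{2}[v]_{H^{1/2}}^2 + W''(0)\|v\|_{L^2}^2$ is coercive on $H^{1/2}(\R)$ by \eqref{eq:W}, while multiplication by the decaying $K$ is a compact perturbation from $H^{1/2}(\R)$ into its dual. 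Hence $\mathcal{L}$ is Fredholm of index zero, and once one verifies that $\ker \mathcal{L} = \R\,\dot\phi$ (a ground-state statement, exploiting $\dot\phi > 0$), coercivity on $V$ follows by a standard contradiction-and-compactness argument. Lax--Milgram then produces a unique $\psi \in V$ satisfying $B(\psi, v) = \int_\R g v\, d\xi$ for all $v \in V$, and the orthogonality hypothesis $\int g \dot\phi\,d\xi = 0$ promotes this to a distributional solution of $\mathcal{L}[\psi] = g$ on all of $\R$.

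For the regularity and norm bound I would rewrite the equation as
\begin{equation*}
-C_n \mathcal{I}_1[\psi] = g - W''(\phi)\,\psi,
\end{equation*}
observe that $W''(\phi) \in C^{2,\beta}(\R)$ by \eqref{eq:W} and Lemma \ref{lem:asymptotics}, and first promote $\psi \in H^{1/2}(\R)$ to $\psi \in L^\infty(\R)$ by iterating on the equation with the half-Laplacian. A standard bootstrap using Schauder estimates for $\mathcal{I}_1$ (as in \cite{CabreSola,Hitchhikers,Stinga}) then gains one Hölder derivative at each step, $L^\infty \to C^{0,\alpha} \to C^{1,\alpha} \to C^{2,\alpha} \to C^{3,\beta}$, terminating when the regularity of $g$ is saturated, and yielding the estimate $\|\psi\|_{C^{3,\beta}} \leq C(\|g\|_{C^{2,\beta}} + \|\psi\|_{L^\infty})$ by tracking the Schauder constants.

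The main obstacle is the spectral input: proving that the zero eigenvalue is simple, so that $\ker \mathcal{L} = \R\,\dot\phi$ and coercivity holds on $V$. This depends essentially on the ground-state property of $\dot\phi$ (positivity together with nonlocal Perron--Frobenius type arguments for $(-\Delta)^{1/2}$). A secondary technical point is producing enough decay of $\psi$ at infinity to initiate the Schauder iteration; this can be recovered a posteriori from the equation using the decay of $K$ and the behavior of $\mathcal{I}_1$.
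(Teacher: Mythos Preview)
Your proposal is essentially correct and matches the approach underlying the paper's proof. The paper does not give an independent argument: it simply cites \cite{GonzalezMonneau}*{Theorem 3.2} for existence and uniqueness in $H^{1/2}$, \cite{GonzalezMonneau}*{Corollary 5.16} for the $L^\infty$ bound (via $g\in L^p$ for all $p\geq 2$), and \cite{CabreSola}*{Lemma 2.3} for the Schauder step to $C^{3,\beta}$. What you have written is a faithful reconstruction of the content of those references---the Fredholm/Lax--Milgram set-up on the orthogonal complement of $\dot\phi$, the compact-perturbation decomposition $W''(\phi)=W''(0)+K$ with $K$ vanishing at infinity, simplicity of the zero eigenvalue via the positivity of $\dot\phi$, and the half-Laplacian Schauder bootstrap. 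You have also correctly flagged the simplicity of the kernel as the nontrivial analytic input; that is exactly where \cite{GonzalezMonneau} invests the most effort.
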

\begin{proof}
The proof of existence of a unique solution $\psi\in H^{\frac{1}{2}}(\R)$ satisfying $\int_{\R}\psi(\xi)\dot\phi(\xi)\,d\xi=0$  is contained in the proof of  
\cite{GonzalezMonneau}*{Theorem 3.2}. If  $g\in C^{2,\beta}(\R)\cap H^{\frac{1}{2}}(\R)$, then for all $p\geq 2$, 
$$\|g\|_{L^p(\R)}\leq C$$ and the boundedness of $\psi$ follows from \cite{GonzalezMonneau}*{Corollary 5.16}. The $C^{3,\beta}$ regularity of $\psi$ and \eqref{h12normpsiLinfty} are  a consequence of  the regularity of $\phi$ given by Lemma \ref{lem:asymptotics}  and  \cite{CabreSola}*{Lemma 2.3}. 
\end{proof}

We now show that the right-hand side in \eqref{eq:linearized wave} satisfies the hypothesis of Lemma \ref{psiexistencemonneau}.

\begin{lem}\label{ghortogonaltophidot}
Let 
\begin{equation}\label{gdef}g(\xi;t,x):= \frac{a_{\ep}\(\xi;t,x\)}{\ep \abs{\ln \ep}}
		+ c_0 \dot{\phi}\( \xi\) (\sigma -\bar{a}_{\ep}(t,x)) 
		+ \tilde{\sigma} \(W''\( \phi (\xi)\) - W''(0)\).
\end{equation}		
		Then, 
		\begin{equation}\label{gcond}\int_{\R}g(\xi;t,x)\dot\phi(\xi)\,d\xi=0.
		\end{equation}
		Moreover, 
		$g\in H_\xi^\frac12(\R)\cap C_\xi^{2,\beta}(\R)$  with $\beta$ as in \eqref{eq:W}, uniformly  in $(t,x)\in  [t_0,t_0+h] \times\R^n.$
\end{lem}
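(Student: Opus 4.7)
The plan is to split the orthogonality integral $\int_\R g\dot\phi\,d\xi$ into the three natural pieces corresponding to the three summands of $g$ in \eqref{gdef}, evaluate each, and check that the contributions telescope to zero; the uniform regularity will then follow termwise from the estimates already established in Lemmas \ref{lem:asymptotics} and \ref{lem:ae bound}.

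For the orthogonality, the first summand integrated against $\dot\phi$ produces exactly $\bar a_\ep(t,x)$ by the very definition \eqref{eq:a-epsilon bar}. The second summand, using the identity $c_0\int_{\R}[\dot\phi(\xi)]^2\,d\xi = 1$ from \eqref{eq:c0-gamma}, contributes $\sigma - \bar a_\ep(t,x)$. For the third summand, I would use the antiderivative $\frac{d}{d\xi}W'(\phi(\xi)) = W''(\phi(\xi))\dot\phi(\xi)$ together with $W'(0) = W'(1) = 0$ (which follows from $0,1\in\Z$ being interior minima of $W$ by \eqref{eq:W}) to obtain $\int_\R W''(\phi)\dot\phi\,d\xi = 0$; combined with $\int_\R \dot\phi\,d\xi = \phi(+\infty)-\phi(-\infty) = 1$ and the relation $\sigma = W''(0)\tilde\sigma$, the third summand contributes $-\sigma$. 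The three pieces telescope to $\bar a_\ep + (\sigma - \bar a_\ep) - \sigma = 0$, as required.

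For the $C^{2,\beta}_\xi(\R)$ regularity uniform in $(t,x)$: the bound \eqref{aLinfinityestimate} gives $\|a_\ep(\cdot;t,x)\|_{C^3_\xi(\R)}\leq C\ep^{1/2}$ with constant independent of $(t,x)$; Lemma \ref{lem:asymptotics} gives $\dot\phi \in C^{3,\beta}(\R)$; and since $W \in C^{4,\beta}(\R)$, the composition $W''(\phi)-W''(0)$ lies in $C^{2,\beta}(\R)$. The coefficients $c_0(\sigma - \bar a_\ep(t,x))$ and $\tilde\sigma$ are bounded in $(t,x)$ by \eqref{abarestimate}, yielding a uniform $C^{2,\beta}_\xi$ bound on $g$. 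For the $H^{1/2}_\xi$ membership, each summand is smooth with $L^2$ decay: \eqref{aL2estimatefar} gives $a_\ep,\dot a_\ep \in L^2(\R)$ uniformly in $(t,x)$; \eqref{eq:asymptotics for phi dot} gives $\dot\phi,\ddot\phi \in L^2(\R)$; and for the potential term, the crucial observation is that $W''(1) = W''(0)$ by periodicity of $W$, so that the asymptotic \eqref{eq:asymptotics for phi} yields $W''(\phi(\xi)) - W''(0) = O(|\xi|^{-1})$ at both $\pm\infty$, which is $L^2$ at infinity, while its $\xi$-derivative $W'''(\phi)\dot\phi$ is $L^2$ by \eqref{eq:asymptotics for phi dot}. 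Hence each term lies in $H^1_\xi(\R) \subset H^{1/2}_\xi(\R)$ with $(t,x)$-uniform bound.

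The main obstacle I anticipate is really a bookkeeping one: the whole argument is driven by the periodicity identities $W'(0)=W'(1)=0$ and $W''(0)=W''(1)$, which simultaneously close the telescoping cancellation and ensure the decay of the potential correction at $+\infty$. The particular choice $\tilde\sigma = \sigma/W''(0)$ built into \eqref{gdef} is precisely what forces the third contribution to equal $-\sigma$, and this is the design principle that dictates the form of the right-hand side of the corrector equation \eqref{eq:linearized wave}.
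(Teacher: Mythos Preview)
Your proof is correct and follows essentially the same approach as the paper's: the orthogonality is obtained by the same three computations (definition of $\bar a_\ep$, the identity $c_0\int[\dot\phi]^2=1$, and the antiderivative $\frac{d}{d\xi}W'(\phi)$), and the regularity is checked termwise via Lemmas~\ref{lem:asymptotics} and~\ref{lem:ae bound}. Your treatment of the decay of $W''(\phi)-W''(0)$ at $+\infty$ via the periodicity identity $W''(1)=W''(0)$ is in fact slightly more explicit than the paper's.
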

\begin{proof}
Recalling that $\sigma = W''(0)\tilde{\sigma}$, we compute
\begin{align*}
\int_{\R} g(\xi;t,x)\dot\phi(\xi)\,d\xi
	&=  \int_{\R} \(\frac{1}{\ep \abs{\ln \ep}} a_{\ep}\(\xi;t,x\)- \dot{\phi}\(\xi\)c_0 \bar{a}_{\ep}(t,x) \) \dot{\phi}(\xi)\, d \xi\\
	&\quad + \int_{\R}\(c_0 \sigma  \dot{\phi}\(\xi\)+ \tilde{\sigma} W''\( \phi \(\xi\)\) - \sigma\)\dot{\phi}(\xi) \, d \xi.
\end{align*}
Using the definitions of $c_0$  and $\bar{a}_{\ep}$ in \eqref{eq:c0-gamma}  and \eqref{eq:a-epsilon bar}, respectively,  we get
\begin{align*}
 \int_{\R}\bigg(\frac{1}{\ep \abs{\ln \ep}} a_{\ep}\(\xi;t,x\)
		- \dot{\phi}\( \xi\)c_0 \bar{a}_{\ep}(t,x)\bigg) \dot{\phi}(\xi)\, d \xi
		&= \frac{1}{\ep \abs{\ln \ep}}  \int_{\R}a_{\ep}\(\xi;t,x\) \dot{\phi}(\xi) \, d \xi 
		-\bar{a}_{\ep}(t,x)
		= 0.
\end{align*}
Then, we use that $W'$ is periodic, $\phi(\infty)=1$ and  $\phi(-\infty)=0$ to find that
\begin{align*}
 \int_{\R}  \tilde{\sigma}W''\(\phi(\xi)\) \dot{\phi}(\xi) \, d \xi
 	=  \tilde{\sigma} \int_{\R} \frac{d}{d\xi}[W'\(\phi(\xi)\)] \, d \xi 
	&= \tilde{\sigma} [W'(1) - W'(0)] = 0
\end{align*}
and again the definition of $c_0$ to see that
\begin{align*}
\int_{\R} \(c_0 \sigma \dot{\phi}(\xi) - \sigma \) \dot{\phi}(\xi)\, d \xi
	&= c_0 \sigma\int_{\R} [ \dot{\phi}(\xi)]^2 \, d \xi - \sigma
	=0,
\end{align*}
as desired. This proves \eqref{gcond}. 

Next, from  \eqref{eq:asymptotics for phi}, \eqref{eq:asymptotics for phi dot} and  \eqref{aL2estimatefar}  we have that
$$ W''\( \phi (\xi)\) - W''(0)=O(\phi (\xi)),\,  \dot{\phi}\( \xi\),\,a_{\ep}\(\xi;t,x\)
\in  H^{1}_{\xi}(\R),$$
which implies that $g\in  H^{\frac12}_{\xi}(\R)$,  with $\|g\|_{ H^{\frac12}_{\xi}(\R)}\leq C_\ep$ for all $(t,x)\in [t_0,t_0+h]\times\R^n$. 
Moreover, from the regularity of $\phi$ and $W$ and \eqref{aLinfinityestimate}, it follows that $g\in C_\xi^{2,\beta}(\R)$ with $\|g\|_{C_\xi^{2,\beta}(\R)}\leq C_\ep$,  
for all $(t,x)\in  [t_0,t_0+h]\times\R^n$.
\end{proof}

By the previous two lemmas, we have existence and uniqueness of the solution $\psi$ to \eqref{eq:linearized wave}. 
We mention again that existence and uniqueness of the corrector are assumed in \cite{Imbert}. 

They also assume in \cite{Imbert} that the bounds on the corrector and its derivatives and the asymptotic behavior at infinity are all uniform in $\ep>0$. 
However, 
this appears to be too strong of an assumption.
Using Lemmas \ref{lem:ae bound} and \ref{lem:ae-updates}, 
we prove asymptotic estimates on $\psi$ and its derivatives as $\ep \to 0$ that are needed for the proof of Theorem \ref{thm:main}.  In particular, we require global estimates and also more refined estimates for $(t,x) \in Q_\rho$.

\begin{thm}\label{lem:psi-reg}
There is a unique solution $\psi = \psi(\xi;t,x)\in C_\xi ^{3,\beta}(\R)$ to \eqref{eq:linearized wave} with $\beta$ as in \eqref{eq:W}, and $C>0$ such that, for all 
 $(\xi, t,x)\in \R\times [t_0,t_0+h] \times\R^n$,
 \begin{equation}\label{psiLinfinityasymptoticsfar}
 \begin{split}
& |\psi(\xi;t,x)|,\,|\dot{\psi}(\xi;t,x)|,\,|\ddot{\psi}(\xi;t,x)|\leq \frac{C}{\ep^{\frac12}|\ln\ep|},\\&
|\partial_t\psi(\xi;t,x)|,\,|\nabla_x\psi(\xi;t,x)|,\,|\nabla_x\dot\psi(\xi;t,x)|\leq \frac{C}{\ep},\\
&|D^2_x\psi(\xi;t,x)|\leq \frac{C}{\ep^2|\ln\ep|},
\end{split}
\end{equation}
and 
\begin{equation}\label{psiasymptoticsfar}| \psi(\xi;t,x)|\leq\frac{C}{\ep|\ln\ep|(1+|\xi|)}. 
\end{equation}  
Moreover, if $(t,x)\in Q_\rho$, then
 \begin{equation}\label{psiLinfinityasymptoticnewclosefront}\begin{split}
 & |\psi(\xi;t,x)|,\,|\dot{\psi}(\xi;t,x)|,\,|\ddot{\psi}(\xi;t,x)|\leq C,\\&
 |\partial_t\psi(\xi;t,x)|,\,|\nabla_x\psi(\xi;t,x)|,\,|\nabla_x\dot\psi(\xi;t,x)|\leq \frac{C}{\ep|\ln\ep|}
\end{split}
 \end{equation} 
 and, there exists $\ep_0=\ep_0(\gamma)$, with $\gamma$ as in \eqref{aepsilondef}, such that for $\ep<\ep_0$, 
 \begin{equation}\label{psiasymptoticsveryfarnew}| \psi(\xi;t,x)|\leq\frac{C\gamma}{\ep|\ln\ep|(1+|\xi|)},
\end{equation}  
with $\gamma$ as in \eqref{aepsilondef}. 
\end{thm}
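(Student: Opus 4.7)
The plan is to build $\psi$ via Lemma \ref{psiexistencemonneau} and then extract the estimates by combining the control of $g$ provided by Lemmas \ref{lem:ae bound} and \ref{lem:ae-updates} with a barrier argument for the linearized operator $\mathcal{L}$, exploiting that $\mathcal{L}$ reduces to a ``massive'' fractional operator $-C_n\mathcal{I}_1+W''(0)\,\mathrm{Id}$ at infinity.

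First, Lemmas \ref{psiexistencemonneau} and \ref{ghortogonaltophidot} immediately yield, for each fixed $(t,x)\in[t_0,t_0+h]\times\R^n$, a unique solution $\psi(\cdot;t,x)\in C^{3,\beta}(\R)$ of \eqref{eq:linearized wave} with $\int\psi\dot\phi\,d\xi=0$. Smooth dependence on $(t,x)$ is obtained by differentiating \eqref{eq:linearized wave} in the parameters, producing the same linearized equation with right-hand sides $\partial_t g$ or $\nabla_x g$ that inherit the orthogonality against $\dot\phi$ by differentiating \eqref{gcond}. The global $L^\infty$ bound on $\psi$ in \eqref{psiLinfinityasymptoticsfar} would then be obtained by controlling $g$ in suitable $L^p_\xi$ spaces using \eqref{aLinfinityestimate}, \eqref{abarestimate}, \eqref{aL2estimatefar} and the asymptotics \eqref{eq:asymptotics for phi}--\eqref{eq:asymptotics for phi dot} for $\phi,\dot\phi$, and invoking an $L^p\to L^\infty$ estimate for $\mathcal{L}$ such as \cite{GonzalezMonneau}*{Corollary~5.16}. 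Once $\|\psi\|_\infty$ is under control, the $C^{3,\beta}_\xi$ part of \eqref{psiLinfinityasymptoticsfar} follows from \eqref{h12normpsiLinfty} together with the $C^{2,\beta}_\xi$ bound on $g$, and the estimates on $\partial_t\psi,\nabla_x\psi,D^2_x\psi$ come from applying the same procedure to the differentiated equations using \eqref{aderivativesLinfinityestimate} and \eqref{abarestimatextder}.

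The core of the argument, and the main obstacle, is the decay \eqref{psiasymptoticsfar} at infinity in $\xi$. The plan is a barrier argument exploiting that $W''(\phi(\xi))\to W''(0)>0$ as $|\xi|\to\infty$. By \eqref{aL2estimatefar}, the asymptotics of $\phi,\dot\phi$, and the relation $\sigma=W''(0)\tilde\sigma$, the source satisfies $|g(\xi;t,x)|\leq K/(1+|\xi|)$ with $K=C/(\ep|\ln\ep|)$. A direct computation then shows that $\bar\psi(\xi)=M/(1+|\xi|)$ satisfies $\mathcal{L}[\bar\psi](\xi)\geq |g(\xi;t,x)|$ outside a compact set once $M$ is chosen proportional to $K/W''(0)$; choosing $M$ also larger than the global $L^\infty$ bound on $\psi$ makes $\bar\psi\geq|\psi|$ on the complementary compact set, and a comparison principle for $\mathcal{L}$ yields \eqref{psiasymptoticsfar}.

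Finally, the refined estimates \eqref{psiLinfinityasymptoticnewclosefront} and \eqref{psiasymptoticsveryfarnew} in $Q_\rho$ are obtained by repeating the above program with the sharper inputs from Lemma \ref{lem:ae-updates}: when $(t,x)\in Q_\rho$, the bound \eqref{eq:aeestnonzerograd} uses $|\nabla d|=1$ to replace $\ep^{1/2}$ by $\ep|\ln\ep|$ in the $L^\infty$ control of $a_\ep$ and $\bar a_\ep$, \eqref{aderivativesclosetothefront} upgrades the derivative bounds, and \eqref{aeclosetofront-xi} carries the small factor $\gamma$ into the constant $K$ in the barrier argument, producing the $\gamma$ in \eqref{psiasymptoticsveryfarnew}. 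The most delicate point I foresee is tracking the $\ep$- and $\gamma$-dependence through the $L^p$ and barrier steps, because $W''(\phi)$ need not be positive throughout the transition region, so the barrier must dominate $\psi$ there via the global $L^\infty$ bound rather than through the mass term; this is also why the sharp estimate \eqref{psiLinfinityasymptoticnewclosefront} requires first establishing the pointwise decay \eqref{psiasymptoticsveryfarnew} before returning to re-bound the derivative equations in $Q_\rho$.
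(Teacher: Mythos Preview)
Your outline is essentially correct and follows the paper's strategy: existence via Lemmas \ref{psiexistencemonneau}--\ref{ghortogonaltophidot}, an $L^\infty$ bound on $\psi$ driven by the size of $g$, the Schauder-type upgrade \eqref{h12normpsiLinfty} for $\dot\psi,\ddot\psi$, differentiation in $(t,x)$ for the parameter derivatives, and a barrier/comparison argument in the region where $W''(\phi)\ge c>0$ for the decay \eqref{psiasymptoticsfar}, with the $Q_\rho$ refinements coming from Lemma \ref{lem:ae-updates}. Two implementation choices differ from the paper and are worth flagging.

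First, for the global $L^\infty$ bound the paper does \emph{not} invoke an $L^p\!\to\!L^\infty$ estimate for $\mathcal{L}^{-1}$; instead it runs a contradiction/compactness argument: if $\|\psi_k\|_\infty\,\ep_k^{1/2}|\ln\ep_k|\to\infty$ one normalizes $\tilde\psi_k=\psi_k/\|\psi_k\|_\infty$, so $\mathcal{L}[\tilde\psi_k]=\tilde g_k\to 0$ in $C^{2,\beta}$ by \eqref{psithmexiep12bound1}, uses \eqref{h12normpsiLinfty} for compactness, and passes to a limit $\tilde\psi_\infty$ solving $\mathcal{L}[\tilde\psi_\infty]=0$ with $\int\tilde\psi_\infty\dot\phi=0$, hence $\tilde\psi_\infty\equiv 0$ by Lemma \ref{psiexistencemonneau}, contradicting $\|\tilde\psi_\infty\|_\infty=1$. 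This gives the $\ep$-independent operator bound directly from uniqueness, without having to track constants through \cite{GonzalezMonneau}*{Corollary 5.16}; your route is viable but you would need to check that the cited corollary yields a linear estimate with a constant independent of $g$.

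Second, for the decay barrier the paper does not use $M/(1+|\xi|)$ but rather $\hat\phi=K\phi_a/(\ep|\ln\ep|)$ with $\phi_a(\xi)=\phi(\xi/a)$ and $a=3$. The point is that $C_n\mathcal{I}_1[\phi_a]=a^{-1}W'(\phi_a)$, so $\mathcal{L}[\phi_a]$ is computed \emph{exactly} from the asymptotics \eqref{eq:asymptotics for phi} and equals $-C_n(a-1)/\xi+O(\xi^{-2})$ for $\xi\ll -1$, giving the supersolution inequality without having to estimate $\mathcal{I}_1$ of a hand-built profile. Your $M/(1+|\xi|)$ would also work (one checks $\mathcal{I}_1[1/(1+|\xi|)]=O(|\xi|^{-2}\ln|\xi|)$, so the mass term $W''(0)M/(1+|\xi|)$ dominates), but the computation is less clean and the profile is not $C^{1,1}$ at $0$, which is harmless here but worth noting.

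A small organizational correction: \eqref{psiLinfinityasymptoticnewclosefront} does \emph{not} depend on first proving \eqref{psiasymptoticsveryfarnew}. In $Q_\rho$ one has $|\nabla d|=1$, so \eqref{eq:aeestnonzerograd} gives $\|g(\cdot;t,x)\|_{C^{2,\beta}}\le C$ directly, and the same contradiction argument yields $|\psi|\le C$; the $\gamma$-decay \eqref{psiasymptoticsveryfarnew} is a separate rerun of the barrier step with the sharper input \eqref{aeclosetofront-xi}.
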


\begin{proof}
First, the existence of a unique solution $\psi\in H^\frac12_\xi(\R)\cap C_\xi^{3,\beta}(\R)$ to \eqref{eq:linearized wave} satisfying $\int_{\R}\psi(\xi;t,x)\dot\phi(\xi)\,d\xi=0$ follows  from Lemmas  \ref{psiexistencemonneau} and  \ref{ghortogonaltophidot}.

Let us show  \eqref{psiLinfinityasymptoticsfar} for $\psi$. Let $g$ be defined as in \eqref{gdef}.  We first notice  that by \eqref{aLinfinityestimate} and \eqref{abarestimate}, we have that, for all $ (t,x)\in  [t_0,t_0+h] \times\R^n$
\begin{equation}\label{psithmexiep12bound1}\|g(\cdot;t,x)\|_{C^{2,\beta}(\R)}\leq \frac{C}{\ep^\frac12|\ln\ep|}.\end{equation}
Suppose by contradiction that there is a sequence $\ep_k\to0$ as $k\to\infty$ such that if $\psi_k$ is solution to \eqref{eq:linearized wave} with $\ep=\ep_k$, then for some 
$(t_k,x_k)\in [t_0,t_0+h] \times\R^n$,  
\begin{equation}\label{psithmexiep12bound2}\|\psi_k(\cdot;t_k,x_k)\|_{L^\infty(\R)}\geq \frac{1}{\ep_k^{\frac12}|\ln\ep_k| b_k }\end{equation} with $0<b_k\to0$ as $k\to\infty$. 
Define the function $\tilde\psi_k(\xi):=\psi_k(\cdot;t_k,x_k)/\|\psi_k(\cdot;t_k,x_k)\|_{L^\infty(\R)}$.  Clearly, $\|\tilde\psi_k\|_{L^\infty(\R)}=1$, $\int_{\R}\tilde\psi_k(\xi)\dot\phi(\xi)\,d\xi=0$
and $\tilde\psi_k$ solves 
$$\mathcal{L}[\tilde\psi_k] 
	=  \tilde g_k \quad \text{in }\R,$$
with $$\tilde g_k(\xi)=\frac{g(\xi;t,x)}{\|\psi_k(\cdot;t_k,x_k)\|_{L^\infty(\R)}}.$$ Notice that by \eqref{psithmexiep12bound1} and \eqref{psithmexiep12bound2}, 
$\tilde g_k\to 0$ in $C^{2,\beta}(\R) $ as $k\to\infty$. 
By \eqref{h12normpsiLinfty} the sequence of functions $\tilde \psi_k$ is uniformly bounded in $C^{3,\beta}(\R)$.
Therefore, up to a subsequence, it  converges in $C^{3}(\R)$ to a function $\tilde\psi_\infty\in  H^\frac12(\R)\cap C^{3,\beta}(\R) $ which is solution to 
$$\mathcal{L}[\tilde\psi_\infty] =0 \quad \text{in }\R.$$
Moreover,  $\int_{\R}\tilde\psi_\infty(\xi)\dot\phi(\xi)\,d\xi=0$. Indeed, by the uniform convergence of $\tilde\psi_k$ to $\tilde\psi_\infty$ and using that 
 $\phi(-\infty)=0$ and $\phi(\infty)=1$, as $k\to\infty$,
\begin{equation*}\begin{split}
\left|\int_{\R}\tilde\psi_\infty(\xi)\dot\phi(\xi)\,d\xi\right|&=
\left|\int_{\R}\tilde\psi_k(\xi)\dot\phi(\xi)\,d\xi-\int_{\R}\tilde\psi_\infty(\xi)\dot\phi(\xi)\,d\xi\right|\\&\leq \|\tilde\psi_k-\psi_\infty\|_{L^\infty(\R)}\int_{\R}\dot\phi(\xi)\,d\xi
\\&=\|\tilde\psi_k-\psi_\infty\|_{L^\infty(\R)}\to0.\end{split}\end{equation*}
The uniqueness of the solution guaranteed by Lemma \ref{psiexistencemonneau}  implies that $\tilde\psi_\infty\equiv 0$ which is in contradiction with $\|\tilde\psi_\infty\|_{L^\infty(\R)}=1$. 
This proves \eqref{psiLinfinityasymptoticsfar} for $\psi$. 
The estimates for $\dot\psi$ and $\ddot\psi$ in \eqref{psiLinfinityasymptoticsfar}  then follow from \eqref{psithmexiep12bound1},   \eqref{h12normpsiLinfty} and
\eqref{psiLinfinityasymptoticsfar} for $\psi$, just proven. 

Next, we establish \eqref{psiLinfinityasymptoticsfar} for the time/space derivatives of $\psi$.  From \eqref{gcond}\, it follows that, for $i,j=1,\ldots,n$, 
$$\int_\R \partial_t g(\xi;t,x)\dot\phi(\xi)\,d\xi=0,\quad  \int_\R \partial_{x_i} g(\xi;t,x)\dot\phi(\xi)\,d\xi=0,\quad\int_\R \partial^2_{x_ix_j} g(\xi;t,x)\dot\phi(\xi)\,d\xi=0.$$
Moreover, from \eqref{eq:asymptotics for phi dot}, \eqref{aderivativesLinfinityestimate},  \eqref{aL2estimatefarxtder} and  \eqref{abarestimatextder}, the functions $\partial_t g$, $\partial_{x_i} g$ and  $ \partial^2_{x_ix_j}g $ belong to the space 
$H^\frac12_{\xi}(\R)\cap C_\xi^{2,\beta}(\R)$ uniformly in $(t,x)$. 
Thus, it is easy to see that $\partial_t \psi$,  $ \partial_{x_i}\psi$ and $ \partial^2_{x_ix_j}\psi$ exist and are  the unique solutions in $ H^\frac12_{\xi}(\R)$ such that 
$\int_\R \partial_t \psi(\xi;t,x)\dot\phi(\xi)\,d\xi=0,$ $  \int_\R \partial_{x_i} \psi (\xi;t,x)\dot\phi(\xi)\,d\xi=0$ and $\int_\R \partial^2_{x_ix_j} \psi(\xi;t,x)\dot\phi(\xi)\,d\xi=0$, to 
$$\mathcal{L}[\partial_t \psi] =\partial_t g,\quad \mathcal{L}[ \partial_{x_i}\psi] = \partial^2_{x_i}g,\quad \mathcal{L}[ \partial^2_{x_ix_j}\psi] = \partial^2_{x_ix_j}g\quad\text{in }\R,$$
respectively. 
Therefore,  as above, from \eqref{aderivativesLinfinityestimate},  the estimates  in \eqref{psiLinfinityasymptoticsfar} for $\partial_t \psi$, $\nabla_x \psi$, $\nabla_x \dot\psi$ and  $D^2_x\psi$ follow.

Now, we prove estimate \eqref{psiasymptoticsfar}. 
By \eqref{eq:asymptotics for phi}, \eqref{eq:asymptotics for phi dot}, \eqref{aL2estimatefar} and \eqref{abarestimate}, for $|\xi|\geq 1$, 
\begin{equation}\label{gL2estimate}|g(\xi;t,x)|\leq \frac{C}{\ep|\ln\ep||\xi|}.
\end{equation}
For $a>0$ let us denote $\phi_a(\xi):=\phi\left(\frac{\xi}{a}\right)$. Then, $\phi_a$ solves
$$C_n\mathcal{I}_1[\phi_a]=\frac{1}{a}W'(\phi_a)\quad\text{in }\R.$$
Therefore,  recalling \eqref{eq:c0-gamma} and that $W'(0)=0$, for $\xi\leq -1$, by \eqref{eq:asymptotics for phi} (note that $H(\xi)=0$),
 \begin{align*}\mathcal{L}[\phi_a](\xi)&=W''(\phi(\xi))\phi_a(\xi)-\frac{1}{a}W'(\phi_a(\xi))\\&
 =W''(0)\phi_a(\xi)-\frac{W''(0)}{a}\phi_a(\xi)+O(\phi\phi_a)+O(\phi_a^2)\\&
 =\alpha C_n\left(\phi_a(\xi)-\frac{1}{a}\phi_a(\xi)\right)+O\left(\frac{1}{\xi^2}\right)\\&
 =-C_n\frac{a-1}{\xi}+O\left(\frac{1}{\xi^2}\right).\end{align*}
Choose $a=3$. Then, there exists $R_0>0$ such that, 
 \begin{align}\label{Lphiaestimate}\mathcal{L}[\phi_a](\xi)&=-\frac{2C_n}{\xi}+O\left(\frac{1}{\xi^2}\right)=\frac{2C_n}{|\xi|}+O\left(\frac{1}{\xi^2}\right)\geq \frac{C_n}{|\xi|},\quad\text{for }\xi<-R_0.
 \end{align}
Choose $R_0$ so  large  that for $\xi<-R_0$, by  \eqref{eq:asymptotics for phi},
$$W''(\phi(\xi))\geq  W''(0)-C\phi(\xi)\geq  \alpha C_n-\frac{C}{|\xi|}\geq\frac{\alpha C_n}{2}>0.$$
Then, the operator $\mathcal{L}$ satisfies the maximum principle in $(-\infty,-R_0)$. By \eqref{psiLinfinityasymptoticsfar} and the monotonicity of $\phi$, 
 \begin{align}\label{Lphiaestimateboundary}\psi(\xi;t,x)\leq \frac{C\ep^\frac12}{\ep|\ln\ep|}\leq \frac{\tilde C}{\ep|\ln\ep|}\phi\(-\frac{R_0}{3}\)\leq  \frac{\tilde C\phi_a(\xi)}{\ep|\ln\ep|},\quad\text{for }\xi\geq -R_0.
  \end{align}
Choose $K\geq \tilde C$, with $\tilde C$ as in \eqref{Lphiaestimateboundary},  such that, if we denote $\hat\phi(\xi)=\frac{K\phi_a(\xi)}{\ep|\ln\ep|}$, then by \eqref{gL2estimate} and \eqref{Lphiaestimate},
$$\mathcal{L}[\hat\phi](\xi)\geq g(\xi;t,x)\quad\text{for }\xi<-R_0.$$
Since in addition by \eqref{Lphiaestimateboundary},
$$\psi(\xi;t,x)\leq \hat\phi(\xi)\quad\text{for }\xi\geq -R_0,$$  the maximum principle implies that
$$\psi(\xi;t,x)\leq\hat\phi(\xi)\leq \frac{C}{\ep|\ln\ep||\xi|}\quad \text{for } \xi<-R_0.$$ 
Comparing $\psi$ with $-\hat\phi$ we also get 
$$\psi(\xi;t,x)\geq -\frac{C}{\ep|\ln\ep||\xi|}\quad \text{for } \xi<-R_0.
$$ 
Similarly, choosing $a<0$ and using the periodicity of $W$, one can prove that 
$$|\psi(\xi;t,x)|\leq \frac{C}{\ep|\ln\ep||\xi|}\quad \text{for } \xi> R_0.$$ 
Estimate \eqref{psiasymptoticsfar} then follows. 

Next, we prove \eqref{psiLinfinityasymptoticnewclosefront}.
For $(t,x) \in Q_\rho = \{|d(t,x)|<\rho\}$, recall that $|\nabla d(t,x)| = 1$.  
By \eqref{eq:asymptotics for phi}, \eqref{eq:asymptotics for phi dot}, and \eqref{eq:aeestnonzerograd}, we have
\[
|g(\xi;t,x)| \leq C \quad \hbox{for all}~ \xi \in \R~\hbox{and}~(t,x) \in Q_\rho. 
\]
By the above arguments for the proof of \eqref{psiLinfinityasymptoticsfar}, we can show \eqref{psiLinfinityasymptoticnewclosefront} for $\psi$. The estimates for $\dot{\psi}$ and $\ddot{\psi}$ then follow. 
With \eqref{eq:asymptotics for phi}, \eqref{eq:asymptotics for phi dot}, and \eqref{aderivativesclosetothefront}, we find \eqref{psiLinfinityasymptoticnewclosefront} for $\partial_t\psi, \nabla_x \psi, \nabla_x \dot{\psi}$ in the same way as above for \eqref{psiLinfinityasymptoticsfar}.  

Lastly, we check \eqref{psiasymptoticsveryfarnew}. 
By \eqref{eq:asymptotics for phi}, \eqref{eq:asymptotics for phi dot}, \eqref{aeclosetofront-xi}, and \eqref{eq:aeestnonzerograd}, for $|\xi|\geq 1$ and $(t,x) \in Q_\rho$, 
\[
|g(\xi;t,x)|\leq \frac{C\gamma}{\ep|\ln\ep||\xi|}.
\]
Then, following the above arguments for the proof of  \eqref{psiasymptoticsfar}, we obtain \eqref{psiasymptoticsveryfarnew}.
\end{proof}

We conclude this section by stating the following estimate for the $n$- and $1$-dimensional fractional Laplacians of $\psi$. The proof is in Section \ref{lem:ae psi estimatesec}. 

\begin{lem}\label{lem:ae psi estimate}
There is   $C= C(\rho)>0$ such that 
\[
\abs{\ep \mathcal{I}_n\left[ \psi \( \frac{d(t,\cdot)}{\ep};t,\cdot\) \right](x) 
	- C_n \mathcal{I}_1[\psi\(\cdot;t,x\)]\(\frac{d(t,x)}{\ep}\)  } \leq  \frac{C}{ |\ln\ep|^{\frac{1}{2}}}
\]
for any $(t,x) \in  [t_0,t_0+h]  \times \R^n$. 
\end{lem}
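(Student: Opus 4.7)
The plan is to follow the same broad strategy as in the proof of Lemma \ref{lem:ae near front}, comparing $\ep\mathcal{I}_n[\psi(d/\ep;\cdot)](x)$ with $C_n\mathcal{I}_1[\psi(\cdot;t,x)](d(t,x)/\ep)$ after the rescaling $y=\ep z$, but now paying careful attention to two new difficulties: the corrector $\psi$ depends on the outer variables $(t,x)$ (not just on $\xi$, as $\phi$ did), and the pointwise bounds on $\psi$ and its derivatives from Theorem \ref{lem:psi-reg} blow up as $\ep\to 0$.

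First, applying Lemma \ref{lem:one to n} with $v=\psi(\cdot;t,x)$ and $e=\nabla d(t,x)$, and scaling $y=\ep z$ in the $n$-dimensional integral, I would rewrite the difference as
\[
\int_{\R^n}\!\Bigl[\psi\!\Bigl(\tfrac{d(t,x+\ep z)}{\ep};t,x+\ep z\Bigr) - \psi\!\Bigl(\tfrac{d(t,x)}{\ep}+\nabla d(t,x)\cdot z;t,x\Bigr)\Bigr]\frac{dz}{|z|^{n+1}},
\]
plus a remainder coming from $|\nabla d|\ne 1$ which is harmless because outside $Q_\rho$ the extended signed distance $d$ saturates at $\pm 2\rho$ and the decay estimates \eqref{psiasymptoticsfar}--\eqref{psiasymptoticsveryfarnew} make both sides small. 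I would then add and subtract the intermediate term $\psi(d(t,x)/\ep+\nabla d(t,x)\cdot z;t,x+\ep z)$ to split the integrand into two pieces. The first captures the change in the $\xi$-argument of $\psi$ and is controlled via the Taylor expansion $d(t,x+\ep z)/\ep=d(t,x)/\ep+\nabla d(t,x)\cdot z+O(\ep|z|^2)$ combined with the $|\dot\psi|$-estimates of Theorem \ref{lem:psi-reg}; the second captures the change in the spatial argument, whose leading linear-in-$z$ part cancels in principal value against the even kernel $|z|^{-(n+1)}$, leaving a quadratic remainder involving $D^2_x\psi$ and $\nabla_x\dot\psi$.

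Next, I would split the integration at a cutoff $|z|=r(\ep)$ to be optimized. On the inner region $|z|<r$, the Taylor estimates above combined with the sharper near-front bounds $|\dot\psi|,|\ddot\psi|\leq C$, $|\nabla_x\dot\psi|\leq C/(\ep|\ln\ep|)$, $|D^2_x\psi|\leq C/(\ep^2|\ln\ep|)$ from Theorem \ref{lem:psi-reg} produce a contribution of order $\ep r+r/|\ln\ep|$. On the outer region $|z|\geq r$, I would invoke the decay estimates \eqref{psiasymptoticsfar} and \eqref{psiasymptoticsveryfarnew}: whenever $x+\ep z$ lies outside $Q_\rho$ the argument $d(t,x+\ep z)/\ep$ is of order $\rho/\ep$ and $|\psi|\leq C/|\ln\ep|$ there, while in the anisotropic directions where $\nabla d(t,x)\cdot z$ is small one integrates first in the $\nabla d(t,x)$-direction to exploit the $(1+|\xi|)^{-1}$ decay of $\psi$ and prevent an unbounded contribution. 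Balancing the inner and outer bounds by the appropriate choice of $r$ yields the claimed $C/|\ln\ep|^{1/2}$.

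The main obstacle I anticipate is the outer estimate: a naive use of the global bound $\|\psi\|_\infty\leq C/(\ep^{1/2}|\ln\ep|)$ from \eqref{psiLinfinityasymptoticsfar} would give a rate far worse than claimed. One must combine the stronger $O(1)$ bound valid on $Q_\rho$ with the explicit $(1+|\xi|)^{-1}$ decay of $\psi$ to recover the $|\ln\ep|^{-1/2}$ rate, and this requires carefully tracking both the transition zone where $x+\ep z$ exits $Q_\rho$ and the codimension-one set on which $\nabla d\cdot z\approx 0$. A secondary issue is handling $(t,x)\notin Q_\rho$, where $|\nabla d|<1$ (possibly zero) and Lemma \ref{lem:one to n} does not apply cleanly with $e=\nabla d$; but here $|d(t,x)|/\ep\geq\rho/\ep$, so \eqref{psiasymptoticsfar} alone already forces both $\ep\mathcal{I}_n[\psi(d/\ep;\cdot)](x)$ and $C_n\mathcal{I}_1[\psi(\cdot;t,x)](d(t,x)/\ep)$ to be much smaller than $|\ln\ep|^{-1/2}$, making the difference estimate trivial in that regime.
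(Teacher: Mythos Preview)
Your overall decomposition matches the paper's: split the difference into a $\xi$-change piece $I$ and an $x$-change piece $II$ via the intermediate term $\psi(d(t,x)/\ep+\nabla d(t,x)\cdot z;t,x+\ep z)$, then treat each by an inner/outer split. The paper in fact uses two different cutoffs ($|z|=\ep^{-1/2}$ for $I$, with the global bounds \eqref{psiLinfinityasymptoticsfar}, and $|z|=|\ln\ep|^{1/2}$ for $II$, with the near-front bounds \eqref{psiLinfinityasymptoticnewclosefront}), but a single cutoff $r=|\ln\ep|^{1/2}$ as you propose also yields $C/|\ln\ep|^{1/2}$. The ``main obstacle'' you anticipate---an anisotropic integration in the $\nabla d$ direction on the outer shell---never actually arises in the near-front case: on $|\ln\ep|^{1/2}<|z|<\rho/(2\ep)$ one has $x+\ep z\in Q_\rho$, so the uniform bound $\|\psi(\cdot;t,y)\|_\infty\le C$ from \eqref{psiLinfinityasymptoticnewclosefront} applies to both terms of $II$ and gives $C/r=C/|\ln\ep|^{1/2}$ directly, with no directional decomposition.

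Where your proposal has a real gap is the away-from-front case $|d(t,x)|>\rho/2$, which is not as trivial as you claim, and \eqref{psiasymptoticsfar} alone does not do the job. For $\mathcal{I}_1[\psi(\cdot;t,x)](d/\ep)$ you cannot estimate the singular integral from the decay of $\psi$ alone (the available bound on $\ddot\psi$ is only $C\ep^{-1/2}|\ln\ep|^{-1}$); instead one uses the \emph{equation} \eqref{eq:linearized wave} together with the decay of $a_\ep$, $\dot\phi$ and $\psi$ to get $|\mathcal{I}_1[\psi](\xi)|\le C/(\ep|\ln\ep||\xi|)$. For $\ep\mathcal{I}_n[\psi(d/\ep;\cdot)](x)$ the same $I+II$ splitting is still required, but now with the \emph{smaller} cutoff $r=|\ln\ep|^{-1/2}$ for $II$, since only the global bound $|\nabla_x\dot\psi|\le C/\ep$ is available and this forces the inner contribution down to $Cr$. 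The $(1+|\xi|)^{-1}$ decay \eqref{psiasymptoticsfar} then becomes essential precisely on the middle shell $|\ln\ep|^{-1/2}<|z|<\rho/(4c\ep)$, where $|d/\ep+\nabla d\cdot z|\ge\rho/(4\ep)$; it is here, and not in the near-front outer shell, that the $\xi$-decay of $\psi$ is actually used.
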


\section{Heuristics}\label{sec:heuristics}

Here, we give two formal computations relating to Theorem \ref{thm:main} and its proof. 
We use the notation $\simeq$ to denote equality up to adding terms that vanish as $\ep \to 0$.

\subsection{Derivation of the mean curvature equation}\label{sec:heuristicspart1}

We believe it is helpful to view the heuristical derivation of the evolution of the fronts $\Gamma_t^i$ by mean curvature in Theorem \ref{thm:main}. For simplicity, we present the case $N=2$. 

For the following formal computations, assume that the signed distance function $d_i(t,x)$ associated to $\Gamma_t^i$ is smooth and that $\abs{\nabla d_i} = 1$. 
Moreover, we assume that there is a positive, uniform distance $\rho$ between $\Gamma_t^1$ and $\Gamma_t^2$. 

Consider the following  ansatz for the solution of \eqref{eq:pde}-\eqref{eq:initial cond}
\begin{equation} \label{eq:ansatz1}
u^{\ep}(t,x) \simeq \phi\( \frac{d_1(t,x)}{\ep}\)+ \phi\( \frac{d_2(t,x)}{\ep}\),
\end{equation}
with $\phi$ the solution of \eqref{eq:standing wave}. 
Plugging the ansatz into \eqref{eq:pde}, the left-hand side gives
\begin{equation}\label{eq:antatz1-left}
\ep\partial_t u^{\ep}
	\simeq  \dot{\phi}\(\frac{d_1}{\ep}\) \partial_td_1
		+\dot{\phi}\(\frac{d_2}{\ep}\)\partial_td_2.
\end{equation}
On the other hand, 
we use the equation for $\phi$  and estimates on $a_{\ep}$ (see Lemma \ref{lem:ae near front}) to write 
the fractional Laplacian of the ansatz as
\begin{equation}\label{eq:antatz1-right}
\begin{aligned}
\ep \mathcal{I}_n [u^{\ep}]  
	&\simeq \ep \mathcal{I}_n\bigg[\phi\(\frac{d_1}{\ep}\)\bigg] + \ep \mathcal{I}_n\bigg[\phi\(\frac{d_2}{\ep}\)\bigg]\\
	&= \(\ep \mathcal{I}_n\bigg[\phi\(\frac{d_1}{\ep}\)\bigg]  - C_n\mathcal{I}_1[\phi]\(\frac{d_1}{\ep}\)  \) + \(\ep \mathcal{I}_n\bigg[\phi\(\frac{d_2}{\ep}\)\bigg]   - C_n\mathcal{I}_1[\phi]\(\frac{d_2}{\ep}\)\)\\
	&\quad   +C_n\mathcal{I}_1[\phi]\(\frac{d_1}{\ep}\)  +   C_n\mathcal{I}_1[\phi]\(\frac{d_2}{\ep}\) \\
	&\simeq a_{\ep}\(\frac{d_1}{\ep}\) + a_{\ep}\(\frac{d_2}{\ep}\)
	   +W'\(\phi\( \frac{d_1}{\ep}\)\) + W'\(\phi\( \frac{d_2}{\ep}\)\).
\end{aligned}
\end{equation}
Freeze a point $(t,x)$ near the front $\Gamma_t^1$. 
Let $\xi = d_1(t,x)/\ep$. Since $d_1$ grows linearly away from $\Gamma_t^1$, we can assume, at least formally, separation of scales. That is, assume that $\xi$ and $(t,x)$ are unrelated. In this regard, let $\eta := d_2(t,x)$ with $|\eta| \geq \rho$.
 Since the ansatz $u^{\ep}$ is a solution to \eqref{eq:pde}, we can multiply the equation
 by $\dot{\phi}(\xi)$ and integrate over $\xi \in \R$ to write
\begin{equation}\label{eq:freeze}
\int_{\R} \ep \partial_t u^{\ep} \dot{\phi}(\xi) \, d \xi \simeq \frac{1}{\ep \abs{\ln \ep} } \int_{\R} \( \ep\mathcal{I}_n[ u^{\ep}]  - W'(u^\ep) \) \dot{\phi}(\xi) \, d \xi. 
\end{equation}
For convenience, we will consider the left and right-hand sides separately again.  

First, the left-hand side of \eqref{eq:freeze} with \eqref{eq:antatz1-left} gives
\begin{align*}
\int_{\R} \ep \partial_t u^{\ep}\, \dot{\phi}(\xi) \, d \xi
	&\simeq \partial_td_1(t,x) \int_{\R} [\dot{\phi}\(\xi\)]^2 \, d \xi+ \partial_td_2(t,x) \int_{\R} \dot{\phi}\(\frac{\eta}{\ep}\) \dot{\phi}(\xi) \, d \xi\\
	&\simeq c_0^{-1}\partial_td_1(t,x) +\partial_td_2(t,x) O\(\frac{ \ep^2}{\rho^2}\) \int_{\R}\dot{\phi}(\xi) \, d \xi\\
	&\simeq c_0^{-1}\partial_td_1(t,x),
\end{align*}
where we used \eqref{eq:c0-gamma}, \eqref{eq:standing wave}, and the asymptotics on $\dot{\phi}$ in \eqref{eq:asymptotics for phi dot}.

Then, we look at the right-hand side of \eqref{eq:freeze} with \eqref{eq:antatz1-right} and write
\begin{align*}
\frac{1}{\ep |\ln \ep| }& \int_{\R}  \left[\ep\mathcal{I}_n [u^{\ep}] - W'(u^{\ep})\right] \dot{\phi}(\xi) \, d \xi\\
	&\simeq \frac{1}{\ep |\ln \ep| } \int_{\R} \left[ a_{\ep}\(\xi\) + a_{\ep}\(\frac{\eta}{\ep}\)
	   +W'\(\phi\( \xi\)\) + W'\(\phi\(\frac{\eta}{\ep}\)\) \right]\dot{\phi}(\xi) \, d \xi\\
	  &\quad - \frac{1}{\ep |\ln \ep| } \int_{\R} W'\(\phi\( \xi\)+\phi\(\frac{\eta}{\ep}\)\) \dot{\phi}(\xi) \, d \xi.
\end{align*}
Using that $\phi(-\infty)=0$, $\phi(\infty)=1$  and that $W$ is periodic, we have
\begin{align*}
\frac{1}{\ep \abs{\ln \ep}} \int_{\R} W'\(\phi(\xi)\) \dot{\phi}(\xi) \, d \xi
	&= \frac{1}{\ep \abs{\ln \ep}}  \int_{\R} \frac{d}{d\xi}[W\(\phi\(\xi\)\)]\, d \xi
	= 0. 
\end{align*}
Next, we use again \eqref{eq:standing wave},  the asymptotics on $\phi$ in \eqref{eq:asymptotics for phi} (recall the definition of $\alpha$ in \eqref{eq:c0-gamma}), and that $W'(0)=0$ to  expand $W'$ around the origin and estimate
\begin{equation}\label{heuristic-potential}
\begin{split}
\frac{1}{\ep \abs{\ln \ep}} \int_{\R}W'\(\phi\(\frac{\eta}{\ep}\)\) \dot{\phi}(\xi) \, d \xi
	&= \frac{1}{\ep \abs{\ln \ep}}\int_{\R} W'\(\phi\(\frac{\eta}{\ep}\) - H\(\frac{\eta}{\ep}\)\) \dot{\phi}(\xi) \, d \xi\\
	&\simeq \frac{1}{\ep \abs{\ln\ep}} \int_{\R}\left[W'(0) + W''(0) \(\phi\(\frac{\eta}{\ep}\) - H\(\frac{\eta}{\ep}\)\)\right]\dot{\phi}(\xi) \, d \xi\\
	&\simeq   -\frac{1}{\ep \abs{\ln\ep}} \frac{\ep C_n}{\eta}\int_{\R} \dot{\phi}(\xi) \, d \xi\\
	&=-\frac{C_n}{ \abs{\ln\ep}\eta}= O\(\abs{\ln\ep}^{-1}\)\simeq 0.
\end{split}
\end{equation}
For the remaining $W'$ term, we  expand around $\phi(\xi)$ and use similar estimates to obtain
\begin{align*}
\frac{1}{\ep \abs{\ln \ep}} & \int_{\R} W'\(\phi\(\xi\)+\phi\(\frac{\eta}{\ep}\)\)\dot{\phi}(\xi) \, d \xi\\
 &=\frac{1}{\ep \abs{\ln \ep}} \int_{\R} W'\(\phi\(\xi\)+\phi\(\frac{\eta}{\ep}\)-  H\(\frac{\eta}{\ep}\)\)\dot{\phi}(\xi) \, d \xi\\
 &\simeq \frac{1}{\ep \abs{\ln \ep}} \int_{\R} 
 	\left[W'\(\phi\(\xi\)\)+W''\(\phi\(\xi\)\)\(\phi\(\frac{\eta}{\ep}\) - H\(\frac{\eta}{\ep}\)\)\right]\dot{\phi}(\xi) \, d \xi\\
&\simeq\frac{1}{\ep \abs{\ln \ep}} \int_{\R} 
 	W'\(\phi\(\xi\)\)\dot{\phi}(\xi) \, d \xi
		+ \frac{1}{\ep \abs{\ln \ep}} O\(\frac{\ep}{\rho}\)\int_{\R} W''\(\phi\(\xi\)\)\dot{\phi}(\xi) \, d \xi\\
&=0.
\end{align*}
Lastly, for the nonlocal terms, by \eqref{aL2estimatefar}, 
\begin{align*}
\frac{1}{\ep \abs{\ln \ep}} \int_{\R}  a_{\ep}\(\frac{\eta}{\ep}\) \dot{\phi}(\xi) \, d \xi
	& \simeq \frac{1}{\ep \abs{\ln \ep}} O\(\frac{\ep}{\rho}\)\int_{\R}  \dot{\phi}(\xi) \, d \xi\simeq 0
\end{align*}
and by Theorem \ref{lem:4},  
\begin{align*}
\frac{1}{\ep \abs{\ln \ep}} \int_{\R} a_{\ep}\(\xi \) \dot{\phi}(\xi) \, d \xi
	&= \bar{a}_{\ep}(t,x)\\
	&\simeq c_0^{-1} \mu \trace\( (I - \widehat{\nabla d_1(t,x)}\otimes\widehat{ \nabla d_1(t,x)}) D^2d_1(t,x)\).
\end{align*}
Combing all these pieces, \eqref{eq:freeze} for the ansatz gives 
\[
c_0^{-1} \partial_td_1(t,x)\simeq \mu c_0^{-1}\trace\( (I - \widehat{\nabla d_1(t,x)}\otimes\widehat{\nabla d_1(t,x)}) D^2d_1(t,x)\).
\]
The computation for $(t,x)$  near $\Gamma_t^2$ is similar. We conclude that the fronts move according to their mean curvature:
\[
\begin{cases}
\partial_td_1(t,x) \simeq \mu \trace\( (I - \widehat{\nabla d_1(t,x)}\otimes\widehat{\nabla d_1(t,x)}) D^2d_1(t,x)\) & \hbox{near}~\Gamma_t^1\\
\partial_td_2(t,x) \simeq \mu \trace\( (I - \widehat{\nabla d_2(t,x)}\otimes\widehat{\nabla d_2(t,x)}) D^2d_2(t,x)\) & \hbox{near}~\Gamma_t^2.
\end{cases}
\]

\begin{rem} \label{Potential-remark}
Computations \eqref{heuristic-potential} show that, for the case $N\geq 2$, the  velocity $v_i=-\partial_t d_i$ of  the front $\Gamma_t^i$ is also  affected by 
 the following interaction potential between the surfaces  
\begin{equation*}
\frac{c_0 C_n }{\abs{\ln\ep}}\sum_{j\neq i}\frac{1}{d_j}.
\end{equation*}
Since this is  a lower order term with respect to the mean curvature term in the equation for $d_i$, and  fronts remain at a positive distance  from each other,  the potential disappears when $\ep\to0$. 
\end{rem}

\subsection{ Derivation of \eqref{eq:linearized wave}}\label{Ansatzsection}

 It is actually  necessary to add a lower order correction to  \eqref{eq:ansatz1} for the ansatz to solve  the fractional Allen--Cahn equation \eqref{eq:pde}. This was already observed in the one-dimensional case in  \cite[Section 3.1]{GonzalezMonneau}. 
In order to showcase the equation for the corrector, for $\sigma\in\R$,  let $v^\ep$ be the solution to 
\begin{equation}\label{eq:pde-sub}
\ep \partial_t v^{\ep} = \frac{1}{\ep \abs{\ln \ep} }  (\ep\mathcal{I}_n v^{\ep}  - W'(v^\ep)) - \sigma,
\end{equation}
with initial condition \eqref{eq:initial cond}. 
Consider the simplest case in which $N=1$  and assume that $d(t,x) = d_1(t,x)$ is smooth with $\abs{\nabla d}=1$ and satisfies
\begin{equation}\label{eq:MC for d}
 \partial_td
 	= \mu \Delta d- c_0 \sigma
 \simeq  c_0\bar{a}_{\ep}(t,x) - c_0 \sigma,
\end{equation}
 in a neighborhood of $\Gamma_t^1$. 
Consider the new ansatz 
\[
v^{\ep}(t,x) \simeq \phi\( \frac{d(t,x)}{\ep}\) + \ep \abs{\ln\ep} \psi\( \frac{d(t,x)}{\ep}\) - \ep\abs{ \ln \ep} \tilde{\sigma},
\]
where the function $\psi$ and the constant  $\tilde{\sigma}$ are to  be determined.  
Assume that $\psi$ is smooth and bounded with bounded derivatives.  This in particular implies that the estimate in Lemma \ref{lem:ae psi estimate} holds true.

Plugging the ansatz into \eqref{eq:pde-sub}, the left-hand side gives
\begin{equation}\label{eq:ansats2-left}
\begin{aligned}
\ep \partial_t v^{\ep}
	&\simeq \dot{\phi}\( \frac{d}{\ep}\) \partial_td
		+  \ep \abs{\ln\ep}   \dot{\psi}\( \frac{d}{\ep}\)  \partial_td
	\simeq  \dot{\phi}\( \frac{d}{\ep}\) \partial_td,
\end{aligned}
\end{equation}
where we used that $\dot{\psi}$ and $\partial_td$ are bounded.
Next, we look at the right-hand side of \eqref{eq:pde-sub} for the ansatz. 
First, we use the equation for $\phi$ (see \eqref{eq:standing wave}) and  Lemmas \ref{lem:ae near front} and \ref{lem:ae psi estimate}  to find that
\begin{equation}\label{eq:ansats2-right1}
\begin{aligned}
\frac{\ep}{\ep \abs{\ln \ep}} \mathcal{I}_n[v^\ep]
	&\simeq\frac{\ep}{\ep \abs{\ln \ep}} \mathcal{I}_n\left[ \phi\( \frac{d}{\ep}\) \right]
		+ \ep \mathcal{I}_n\left[ \psi\( \frac{d}{\ep}\) \right]\\
	&=\frac{1}{\ep \abs{\ln \ep}}\(\ep \mathcal{I}_n\left[ \phi\( \frac{d}{\ep}\) \right]- C_n \mathcal{I}_1[\phi]\( \frac{d}{\ep}\) \)
	+\frac{1}{\ep \abs{\ln \ep}}C_n \mathcal{I}_1[\phi]\( \frac{d}{\ep}\)\\
	&\quad +  \(\ep \mathcal{I}_n\left[ \psi\( \frac{d}{\ep}\) \right]-C_n \mathcal{I}_1[\psi]\( \frac{d}{\ep}\) \)
	+C_n \mathcal{I}_1[\psi]\( \frac{d}{\ep}\) \\
	&\simeq \frac{1}{\ep \abs{\ln \ep}} a_{\ep}\(\frac{d}{\ep}\)
	 +  \frac{1}{\ep \abs{\ln \ep}}W'\(\phi\( \frac{d}{\ep}\)\)+C_n \mathcal{I}_1[\psi]\( \frac{d}{\ep}\).
\end{aligned}
\end{equation}
On the other hand, we do a Taylor expansion for $W'$ around $\phi(d/\ep)$ to estimate
\begin{equation}\label{eq:ansats2-right2}
\begin{aligned}
\frac{1}{\ep \abs{\ln \ep}}W'(v^{\ep})
	&\simeq \frac{1}{\ep \abs{\ln \ep}} \left[W'\(\phi\( \frac{d}{\ep}\)\) + W''\(\phi\( \frac{d}{\ep}\)\) \(v^{\ep} - \phi\( \frac{d}{\ep}\)\) \right]\\
	&\simeq \frac{1}{\ep \abs{\ln \ep}} \left[W'\(\phi\( \frac{d}{\ep}\)\)+ W''\(\phi\( \frac{d}{\ep}\)\) \(\ep \abs{\ln\ep}   \psi\( \frac{d}{\ep}\) - \ep \abs{\ln \ep} \tilde{\sigma}\)\right].
\end{aligned}
\end{equation}
Equating \eqref{eq:ansats2-left} with \eqref{eq:ansats2-right1} and \eqref{eq:ansats2-right2}, the equation for the ansatz gives
\begin{equation}\label{eq:first corrector eqn}
\begin{aligned}
 \dot{\phi}\( \frac{d}{\ep}\) \partial_td
 	&\simeq \frac{1}{\ep \abs{\ln \ep}} a_{\ep}\(\frac{d}{\ep}\) +C_n \mathcal{I}_1[\psi]\( \frac{d}{\ep}\)\\
	&\quad- W''\(\phi\( \frac{d}{\ep}\)\)   \psi\( \frac{d}{\ep}\)+\tilde{\sigma} W''\( \phi \(\frac{d}{\ep}\)\) -\sigma.
\end{aligned}
\end{equation}
Rearranging and using \eqref{eq:MC for d},  we  see that $\psi$ satisfies
\begin{align*}
-C_n \mathcal{I}_1[\psi]&\( \frac{d}{\ep}\)
+W''\(\phi\( \frac{d}{\ep}\)\)  \psi\( \frac{d}{\ep}\) \\
 &\simeq \frac{1}{\ep \abs{\ln \ep}} a_{\ep}\(\frac{d}{\ep}\)- \dot{\phi}\( \frac{d}{\ep}\) \partial_td
 	+ \tilde{\sigma} W''\( \phi \(\frac{d}{\ep}\)\)- \sigma\\
&\simeq  \frac{1}{\ep \abs{\ln \ep}} a_{\ep}\(\frac{d}{\ep}\)
		- \dot{\phi}\( \frac{d}{\ep}\)c_0 \bar{a}_{\ep} 
		+ c_0 \sigma  \dot{\phi}\(\frac{d}{\ep}\)+ \tilde{\sigma} W''\( \phi \(\frac{d}{\ep}\)\) - \sigma.
\end{align*}
Evaluating the equation when $|d(t,x)|>> \ep$, from \eqref{eq:asymptotics for phi}, \eqref{eq:asymptotics for phi dot},  \eqref{aL2estimatefar} and \eqref{abarestimate}, if $\psi(\pm\infty)=0$, we see that $\tilde\sigma$ must satisfy
$$ \tilde{\sigma} W''\( 0\) =\sigma.$$
This shows that the corrector $\psi=\psi(\xi;t,x)$ is solution to  \eqref{eq:linearized wave}. 

Note that  for equation \eqref{eq:linearized wave} to be solvable, the right hand-side $g=g(\xi;t,x)$ must satisfy the compatibility condition \eqref{gcond} guaranteed by Lemma \ref{ghortogonaltophidot}. 
Indeed,  multiply both sides of \eqref{eq:linearized wave} by $\dot{\phi}(\xi)$ and integrate over $\R$ to write
\begin{align}\label{ghortogonalphi'0}
\int_{\R} \mathcal{L}[\psi](\xi) \dot{\phi}(\xi)\, d\xi
	=\int_{\R} g(\xi;t,x) \dot{\phi}(\xi)\,d\xi. \end{align}
Since $\mathcal{I}_1$ is self-adjoint and $\phi$ satisfies \eqref{eq:standing wave}, the left-hand side of the equation gives
\begin{equation}\label{ghortogonalphi'}\begin{split}
\int_{\R} \mathcal{L}[\psi] \dot{\phi} \, d\xi
	&= \int_{\R}\(- C_n\mathcal{I}_1[\dot{\phi}]+ W''\(\phi\) \dot{\phi}\)\psi \, d\xi \\
	&= \int_{\R}\frac{d}{d\xi}\(- C_n\mathcal{I}_1[\phi]+ W'\(\phi\)\)\psi \, d\xi 
	=0. 
\end{split}
\end{equation} 

Finally, note that in view of \eqref{ghortogonalphi'0} and the above computations, the addition of the corrector $\psi$ in the ansatz for the solution of 
\eqref{eq:pde}-\eqref{eq:initial cond} does not give any additional contributions in the derivation of the mean curvature equation obtained  in Section \ref{sec:heuristicspart1}. 

\begin{rem} 
Notice that $\psi$ depends on the signed distance function $d(t,x)$ through $a_\ep$ and $\bar{a}_\ep$.
Hence, when $N>1$, we have a finite sequence of correctors, denoted by $\psi_1,\dots, \psi_N$, depending on the signed distance function $d_i(t,x)$ to the front $\Gamma_t^i$, $i=1,\dots,N$. 
\end{rem}

\section{Construction of barriers}\label{sec:barriers}

We now construct  local and global  strict subsolutions (supersolutions) to \eqref{eq:pde} needed for the proof of Theorem \ref{thm:main}. 
We will focus on the construction of subsolutions as the construction of supersolutions is analogous. 
We will start with the global ones.

 \subsection{Global subsolutions}
 
Fix   $t_0 \in [0,\infty)$ and   $h>0$. For $i=1,\ldots,N$  and $t\in[t_0,t_0+h]$, let  $\Omega^i_t$ be  bounded open sets with boundaries $\Gamma_t^i:= \partial \Omega_t^i$ such that 
\begin{equation}\label{eq:ordered phi sets}
\Omega^{i+1}_t\subset \subset  \Omega^i_t \quad \hbox{for}~i=1,\dots, N-1.
\end{equation}
Let $\tilde{d}_i(t,x)$ be the signed distance function associated to the set  $ \Omega^i_t $, then  $\Gamma_t^i =  \{ x \in \R^n : \tilde{d}_i(t,x) = 0\}$.
Assume  that there exists  a $\rho>0$ such that, for all $1 \leq i \leq N$, $\tilde{d}_i(t,x)$ is smooth in the set
\begin{equation}\label{eq:Q2rho}
Q_{2\rho}^i = \{ (t,x) \in[t_0,t_0+h]\times\R^n: |\tilde{d}_i(t,x)| \leq 2\rho\},
\end{equation}
and let $d_i$ be the smooth, bounded extension of $\tilde{d}_i$ outside of $Q_{\rho}^i$ as defined in Definition \ref{defn:extension}.
Assume in addition that there exists $\sigma>0$ such that
\begin{equation}\label{eq:mc for d}
\partial_td_i \leq 
  \mu\Delta  d_i - c_0 \sigma \quad \hbox{in}~Q_\rho^i.
\end{equation}
Let $\tilde{\sigma}>0$ be such that $\sigma = W''(0) \tilde{\sigma}$.
By \eqref{eq:ordered phi sets}, and perhaps making $\sigma $
smaller, we can assume that, for all $t\in[t_0,t_0+h]$, 
 \begin{equation}\label{Qisigmaemptyimters} \{x\in \R^n\,:\,|d_i(t,x)|<\tilde\sigma\}\cap \{x \in \R^n \,:\,|d_j(t,x)|<\tilde\sigma\}=\emptyset\quad i \not=j.\end{equation}
We define the smooth barrier $v^{\ep}(t,x)$ by
\begin{equation}\label{eq:barrier defn}
v^{\ep}(t,x)
	=\sum_{i=1}^N \phi\( \frac{d_i(t,x)- \tilde{\sigma} }{\ep}\) +\ep \abs{\ln \ep} \sum_{i=1}^N\psi_i\( \frac{d_i(t,x)-\tilde{\sigma}}{\ep};t,x\) - \tilde{\sigma} \ep \abs{\ln \ep}.
\end{equation}

Recall the parameter $\gamma \in (0,1)$ in \eqref{aepsilondef}.

\begin{lem}[Global subsolutions to \eqref{eq:pde}]  \label{lem:barrier}
Let $\rho$ be  
as in Definition \ref{defn:extension}. 
Assume \eqref{eq:mc for d}  with $\mu$ and $\sigma$ as in \eqref{eq:velocity-intro} and \eqref{Qisigmaemptyimters}, and  let $v^{\ep}$ be defined as in \eqref{eq:barrier defn}. Then there exists $\ep_0=\ep_0(\rho,\gamma,\sigma)>0$ such that for any $0<\ep<\ep_0$, 
$v^{\ep}$ is a solution to
\begin{equation}\label{eq:pde sub}
\ep \partial_t v^{\ep} - \frac{1}{\ep \abs{\ln \ep}} \( \ep \mathcal{I}_n[v^{\ep} ] - W'(v^{\ep} )\) \leq  -\frac{\sigma}{2}
\quad \hbox{in}~[t_0,t_0+h] \times \R^n.
\end{equation}
Moreover, 
there is a constant $C>0$ such that, for $\ep<\ep_0$,
\begin{equation}\label{eq:barrier estimate-N}
v^{\ep}(t,x) \geq N - 2 \tilde{\sigma} \ep |\ln \ep | 
\quad \hbox{in}~\left\{ (t,x)\in [t_0,t_0+h]\times\R^n :d_N(t,x) - \tilde{\sigma} \geq \frac{C}{\tilde{\sigma}|\ln \ep|}\right\}.
\end{equation}
\end{lem}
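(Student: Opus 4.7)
The plan is to substitute $v^{\ep}$ directly into the left-hand side of \eqref{eq:pde}, use the equations satisfied by $\phi$ and $\psi_i$ (see \eqref{eq:standing wave} and \eqref{eq:linearized wave}) to cancel the leading-order contributions, and then show that the remainder is bounded above by $-\sigma/2$ for $\ep$ small. Thanks to \eqref{Qisigmaemptyimters} we can partition $[t_0,t_0+h]\times\R^n$ into the disjoint ``near-front'' regions $A_i=\{|d_i(t,x)-\tilde\sigma|<\tilde\sigma/2\}$, for $i=1,\dots,N$, and the complementary ``far-from-all-fronts'' region $A_0$. On $A_0$, each $\phi((d_j-\tilde\sigma)/\ep)$ is close to $0$ or $1$, so $v^\ep$ is close to an integer $M\in\{0,1,\dots,N\}$.

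The main computation will be carried out on each $A_i$. Applying Lemma \ref{lem:ae near front} to each $\phi((d_j-\tilde\sigma)/\ep)$ and Lemma \ref{lem:ae psi estimate} to each $\psi_j$, the nonlocal part $\ep\mathcal{I}_n[v^\ep]$ is reduced, up to errors of order $\ep$ and $|\ln\ep|^{-1/2}$, to the sum of one-dimensional terms $C_n\mathcal{I}_1[\phi]+a_\ep$ plus $\ep|\ln\ep|\, C_n\mathcal{I}_1[\psi_j]$. We then Taylor-expand $W'(v^\ep)$ to second order around $\phi((d_i-\tilde\sigma)/\ep)+n_i$, where $n_i$ is the integer $\#\{j\neq i:\, d_j-\tilde\sigma>0\}$; by periodicity of $W$ this is $W'(\phi((d_i-\tilde\sigma)/\ep))+W''(\phi((d_i-\tilde\sigma)/\ep))\bigl(\ep|\ln\ep|\psi_i-\tilde\sigma\ep|\ln\ep|\bigr)$ plus lower-order terms, where the cross contributions from $j\neq i$ are controlled by \eqref{eq:asymptotics for phi} and give an interaction of order $\ep|\ln\ep|^{-1}$ (cf.\ Remark \ref{Potential-remark}). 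Using \eqref{eq:standing wave} to kill the $W'(\phi)$ term, \eqref{eq:linearized wave} to replace $-C_n\mathcal{I}_1[\psi_i]+W''(\phi)\psi_i$ by $a_\ep/(\ep|\ln\ep|)+c_0\dot\phi(\sigma-\bar a_\ep)+\tilde\sigma(W''(\phi)-W''(0))$, and the relation $\sigma=W''(0)\tilde\sigma$, all the leading terms cancel except for $\dot\phi((d_i-\tilde\sigma)/\ep)(\partial_t d_i-c_0\bar a_\ep+c_0\sigma)$. Theorem \ref{lem:4} and the assumption \eqref{eq:mc for d} force this quantity to be $\leq 0$ up to $o_\ep(1)$, and the explicit $-\sigma$ contribution coming from $-\tilde\sigma\ep|\ln\ep|$ in $v^\ep$ (via the Taylor expansion of $W'$) supplies the strict margin $-\sigma/2$. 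On the far region $A_0$, $v^\ep$ is within $O(\ep|\ln\ep|)$ of an integer $M$ where $W'(M)=0$, and we Taylor-expand $W'$ around $M$; the nonlocal terms are bounded by Lemma \ref{lem:ae bound} and estimate \eqref{psiLinfinityasymptoticsfar}, while the decisive $-\sigma$ again comes from $-\tilde\sigma W''(0)\ep|\ln\ep|=-\sigma\ep|\ln\ep|$ multiplied by $1/(\ep|\ln\ep|)$.

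For estimate \eqref{eq:barrier estimate-N}, I use that \eqref{eq:ordered phi sets} implies $d_i(t,x)\geq d_N(t,x)$ for all $i\leq N$ (away from the truncation region, then trivially elsewhere since $d_i$ is an increasing function of the defining set inclusions), so if $d_N-\tilde\sigma\geq C/(\tilde\sigma|\ln\ep|)$ then $d_i-\tilde\sigma\geq C/(\tilde\sigma|\ln\ep|)$ for every $i$. Lemma \ref{lem:asymptotics} then yields $\phi((d_i-\tilde\sigma)/\ep)\geq 1-\tfrac{C\ep}{d_i-\tilde\sigma}\geq 1-\tfrac{C\tilde\sigma\ep|\ln\ep|}{C_0}$, while \eqref{psiasymptoticsfar} gives $\ep|\ln\ep||\psi_i((d_i-\tilde\sigma)/\ep;t,x)|\leq \tfrac{C\ep}{d_i-\tilde\sigma}\leq \tfrac{C\tilde\sigma\ep|\ln\ep|}{C_0}$. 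Summing over $i$ and choosing $C_0$ large enough so that the total error beyond $-\tilde\sigma\ep|\ln\ep|$ is at most $\tilde\sigma\ep|\ln\ep|$ gives \eqref{eq:barrier estimate-N}.

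The main obstacle will be the careful matching in the near-front computation: one has to keep track of three kinds of small parameters simultaneously (the truncation $\tilde\sigma$, the scale $\ep$, and the separation $\rho$), ensure that all cross-terms from the nonlinear potential decay rapidly enough using the asymptotics \eqref{eq:asymptotics for phi}--\eqref{eq:asymptotics for phi dot}, and verify that the refined corrector bounds \eqref{psiLinfinityasymptoticnewclosefront}--\eqref{psiasymptoticsveryfarnew} on $Q_\rho^i$ (together with the global ones \eqref{psiLinfinityasymptoticsfar}--\eqref{psiasymptoticsfar} outside) are strong enough to close all the error estimates uniformly, yielding a threshold $\ep_0=\ep_0(\rho,\gamma,\sigma)>0$.
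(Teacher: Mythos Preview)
Your proposal is correct and follows essentially the same route as the paper's proof: substitute $v^\ep$ into the equation, use Lemmas \ref{lem:ae near front} and \ref{lem:ae psi estimate} together with the equations \eqref{eq:standing wave} and \eqref{eq:linearized wave} to reduce the nonlocal terms, Taylor expand $W'$, identify the $-\sigma$ coming from $-\tilde\sigma\ep|\ln\ep|$ via $\sigma=W''(0)\tilde\sigma$, and use Theorem \ref{lem:4} with \eqref{eq:mc for d} to kill the remaining $\dot\phi_{i_0}(\partial_t d_{i_0}-c_0\bar a_\ep^{i_0}+c_0\sigma)$ term near the front; the proof of \eqref{eq:barrier estimate-N} is also the same.

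The only noteworthy difference is organizational. The paper first carries out a single computation valid at every $(t,x)$, arriving at a master identity of the form
\[
\mathcal{J}[v^\ep]=O(|\ln\ep|^{-\frac12})+\sum_{i\neq i_0}\Big[O\Big(\tfrac{\tilde\phi_i}{\ep|\ln\ep|}\Big)+O(\psi_i)\Big]+\sum_{i=1}^N\dot\phi_i\big[\partial_t d_i-c_0(\bar a_\ep^i-\sigma)\big]-\sigma,
\]
and only afterwards distinguishes the ``near-front'' case $|d_{i_0}-\tilde\sigma|\le|\ln\ep|^{-1/2}$ from the ``far'' case; you instead split first into regions $A_i=\{|d_i-\tilde\sigma|<\tilde\sigma/2\}$ and $A_0$ and compute separately. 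Both work, but be aware that on $A_0$ the nonlocal contribution is not simply ``bounded'': it produces exactly the terms $W''(0)\sum_j\tilde\phi_j+\ep|\ln\ep|\,W''(0)\sum_j\psi_j$ that must cancel against your Taylor expansion of $W'(v^\ep)$ around the integer $M$, so that only $-\sigma$ and $o_\ep(1)$ survive. Also note that the paper's $\ep$-dependent threshold $|\ln\ep|^{-1/2}$ and your fixed threshold $\tilde\sigma/2$ both suffice here, since in either case \eqref{eq:asymptotics for phi}, \eqref{psiasymptoticsfar} give $O(\tilde\phi_i/(\ep|\ln\ep|))$, $O(\psi_i)=o_\ep(1)$ for $i\neq i_0$.
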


\begin{proof}
We will break the proof into four main steps. First, we estimate the equation for $v^{\ep}(t,x)$ for any $(t,x)$. Then, we will show that $v^{\ep}(t,x)$ satisfies \eqref{eq:pde sub} when $(t,x)$ is near a single front $\Gamma_t^{i_0}$ and then when $(t,x)$ is far from all fronts $\Gamma_t^{i}$, $i=1,\dots, N$. 
Lastly, we prove the estimate in \eqref{eq:barrier estimate-N}. 
For convenience, we use the following notation throughout the proof:
\begin{equation}\label{eq:notation}
\begin{aligned}
\phi_i &:= \phi \( \frac{d_i(t,x)- \tilde{\sigma} }{\ep}\)\\
 \psi_i &:= \psi_i \( \frac{d_i(t,x)- \tilde{\sigma} }{\ep};t,x\)\\
 \tilde{\phi}_i &:= \phi \( \frac{d_i(t,x)- \tilde{\sigma} }{\ep}\)- H \( \frac{d_i(t,x)- \tilde{\sigma} }{\ep}\)\\
a_{\ep}^i 
	&: = a_{\ep}\(\frac{d_i(t,x)- \tilde{\sigma} }{\ep};t,x\)\\
\bar{a}_{\ep}^i 
	&:= \bar{a}_{\ep}(t,x) \quad \hbox{corresponding to}~a_{\ep}^i.
\end{aligned}
\end{equation}
We note that it will be important for the reader to remember the dependence of $\psi_i$ and $a_{\ep}$ on the variables $t,x$ and $\xi = d_i(t,x)/\ep$ when taking derivatives in $t,x$. 


\medskip

\noindent
\underline{\bf Step 1}. Computation for $v^{\ep}(t,x)$ in \eqref{eq:pde} for an arbitrary $(t,x) \in [t_0,t_0+h] \times \R^n$. 

\medskip

First, the time derivative of $v^{\ep}$ at $(t,x)$ is given by
\begin{align*}
\ep \partial_t v^{\ep}(t,x)
	&=  \sum_{i=1}^N  \dot{\phi}_i \, \partial_td_i(t,x)
	 + \ep \abs{\ln \ep} \sum_{i=1}^N \left[\dot\psi_i \partial_t d_i(t,x)
	 +\ep \partial_t\psi_i\right].
\end{align*}
By \eqref{psiLinfinityasymptoticsfar} for $\dot{\psi}_i$ and $\partial_t\psi_i$, 
\begin{equation}\label{eq:ve-time}
\ep \partial_t v^{\ep}
	= \sum_{i=1}^N\dot{\phi}_i\, \partial_td_i(t,x) + O(\ep^{\frac12}) + O(\ep|\ln \ep|) 
	=  \sum_{i=1}^N\dot{\phi}_i\, \partial_td_i(t,x) + O(\ep^{\frac12}).
\end{equation}
Next, we consider the nonlocal term. For each $i = 1,\dots, N$, we use that $\phi$ satisfies \eqref{eq:standing wave} and apply Lemma \ref{lem:ae near front} to find
\begin{align*}
\ep\mathcal{I}_n [\phi_i](x)
	&= \ep\mathcal{I}_n [\phi_i](x)
	 	- C_n\mathcal{I}_1 [ \phi ]\( \frac{d_i(t,x)- \tilde{\sigma} }{\ep}\)
			+ W'\(\phi_i\)\\
	&= a_\ep^i + O(\ep)+ W'\(\phi_i\).
\end{align*}
Also, using that $\psi$ satisfies \eqref{eq:linearized wave} and  Lemma \ref{lem:ae psi estimate}, we find that
\begin{align*}
\ep \mathcal{I}_n [ \psi_i](x)
	&=\ep \mathcal{I}_n [ \psi_i](x)
	- C_n \mathcal{I}_1 [\psi_i] \( \frac{d_i(t,x)- \tilde{\sigma} }{\ep}\)
	  - \mathcal{L}[\psi]\( \frac{d_i(t,\cdot)- \tilde{\sigma} }{\ep}\) + W''\(\phi_i\)\psi_i\\
	&=  O(|\ln\ep|^{-\frac{1}{2}})
	 - \frac{ a_{\ep}^i }{\ep \abs{\ln \ep}}
	 + \dot{\phi}_i c_0 \(\bar{a}_{\ep}^i - \sigma\)
	 - \tilde{\sigma} \(W''\(\phi_i\) - W''(0)\)
	+W''\( \phi_i\)\psi_i.
\end{align*}
Therefore, the fractional Laplacian of $v^{\ep}$ can be written as
\begin{align*}
\ep \mathcal{I}_n[v^{\ep}](x)
	&= \sum_{i=1}^N \left[
	a_\ep^i + O(\ep)
			+ W'\(\phi_i\)
			\right]\\
	&\quad + \ep \abs{\ln \ep} \sum_{i=1}^N \bigg[ O(|\ln\ep|^{-\frac{1}{2}})
	 - \frac{ a_{\ep}^i }{\ep \abs{\ln \ep}} 
	+ \dot{\phi}_i c_0 \(\bar{a}_{\ep}^i - \sigma\) \\
	&\hspace{1in}  - \tilde{\sigma} \(W''\(\phi_i\) - W''(0)\)
	+W''\( \phi_i\)\psi_i\bigg].
\end{align*}
Recall the definition of $\tilde{\phi}_i$ introduced in \eqref{eq:notation}.
Since $W$ is periodic, we have that $W'(\phi_i) = W'(\tilde{\phi}_i)$ and similarly $W''(\phi_i) = W''(\tilde{\phi}_i)$.
Using this and rearranging, 
we equivalently write 
\begin{equation}\label{eq:ve-space}
\begin{aligned}
\ep \mathcal{I}_n[v^{\ep}](x)
	&= \sum_{i=1}^N
		W'(\tilde{\phi}_i)  + O(\ep) + O(\ep | \ln \ep|^{\frac{1}{2}})\\
	&\quad 
		+ \ep \abs{\ln \ep} \sum_{i=1}^N \left[
		W''( \tilde{\phi}_i)\psi_i
		+ \dot{\phi}_i c_0 \(\bar{a}_{\ep}^i - \sigma\)  
		- \tilde{\sigma} \(W''(\tilde{\phi}_i) - W''(0)\)\right].
\end{aligned}
\end{equation}
Then, with \eqref{eq:ve-time} and \eqref{eq:ve-space}, the 
left-hand side of \eqref{eq:pde sub}
at $(t,x)$ can be written as 
\begin{align*}
\mathcal{J}[v^\ep]
&:=\ep \partial_t v^{\ep}(t,x) - \frac{1}{\ep \abs{\ln \ep}} \( \ep \mathcal{I}_n[v^{\ep}(t, \cdot)](x) - W'(v^{\ep}(t,x) )\)\\
	&= 
	O(\ep^{\frac12}) + \sum_{i=1}^N\dot{\phi}_i \partial_td_i(t,x)\\
	&\quad - \frac{1}{\ep \abs{\ln \ep}} \Bigg\{
	 \sum_{i=1}^N
		W'(\tilde{\phi}_i)  + O(\ep) + O(\ep | \ln \ep|^{\frac{1}{2}})\\
	&\quad + \ep \abs{\ln \ep} \sum_{i=1}^N\left[ 
		W''( \tilde{\phi}_i)\psi_i
		+ \dot{\phi}_i c_0 \(\bar{a}_{\ep}^i - \sigma\)  
		- \tilde{\sigma} \(W''(\tilde{\phi}_i) - W''(0)\)\right]\\
		&\quad- W'\(\sum_{i=1}^N \tilde{\phi}_i + \ep \abs{\ln \ep}  \sum_{i=1}^N \psi_i -  \tilde{\sigma} \ep \abs{\ln \ep}\) 
		\Bigg\}.
\end{align*}
Grouping the error terms, the $\dot{\phi}_i$ terms, and the nonlinear terms together, we have
\begin{equation}\label{Eqn1}
\begin{aligned}
\mathcal{J}[v^\ep]
	&= 
	O(|\ln\ep|^{-\frac{1}{2}})
	+ \sum_{i=1}^N\dot{\phi}_i \left[\partial_td_i(t,x) - c_0 \(\bar{a}_{\ep}^i - \sigma\) \right]\\
	&\quad + \frac{1}{\ep \abs{\ln \ep}} \Bigg\{
	W'\(\sum_{i=1}^N \tilde{\phi}_i + \ep \abs{\ln \ep}  \sum_{i=1}^N \psi_i -  \tilde{\sigma} \ep \abs{\ln \ep}\)\\
	&\hspace{.75in}-\sum_{i=1}^N\( W'(\tilde{\phi}_i)+  \ep \abs{\ln \ep} 
		\left[W''( \tilde{\phi}_i)\psi_i
		- \tilde{\sigma} \(W''(\tilde{\phi}_i) - W''(0)\) \right]\)
		\Bigg\}.
\end{aligned}
\end{equation}
Fix an index $i_0 \in \{1,\dots,N\}$. For the remainder of Step 1, we will conveniently isolate every term indexed with $i_0$ to help with Step 2. 
First, we do a Taylor expansion for $W'$ around $\tilde{\phi}_{i_0}$ to obtain
\begin{equation}\label{eq:Wprimeforbarrier}
\begin{aligned}
W'\bigg(\sum_{i=1}^N \tilde{\phi}_i &+ \ep \abs{\ln \ep}  \sum_{i=1}^N \psi_i -  \tilde{\sigma} \ep \abs{\ln \ep}\bigg)\\
	&= W'(\tilde{\phi}_{i_0}) + W''(\tilde{\phi}_{i_0})\(\sum_{i\not=i_0} \tilde{\phi}_i + \ep \abs{\ln \ep}  \sum_{i=1}^N\psi_i -  \tilde{\sigma} \ep \abs{\ln \ep}\) \\
	&\quad+ O\(\(\sum_{i\not=i_0} \tilde{\phi}_i + \ep \abs{\ln \ep}  \sum_{i=1}^N\psi_i - \tilde{\sigma} \ep \abs{\ln \ep}\)^2\).
\end{aligned}
\end{equation}
By \eqref{psiLinfinityasymptoticsfar} for $\psi_i$, we have that
\begin{align*}
 \frac{1}{\ep \abs{\ln \ep}} O&\(\(\sum_{i\not=i_0} \tilde{\phi}_i + \ep \abs{\ln \ep}  \sum_{i=1}^N\psi_i -  \tilde{\sigma} \ep \abs{\ln \ep}\)^2\)\\
	&\hspace{.5in}=   \sum_{i\not= i_0} O\(\frac{(\tilde{\phi}_i)^2}{\ep \abs{\ln \ep}}\)
		 + O(\abs{\ln \ep}^{-1})+  O(\ep \abs{\ln \ep}).
\end{align*}
Hence, with \eqref{eq:Wprimeforbarrier} and grouping error terms, we can write \eqref{Eqn1} as
\begin{align*}
\mathcal{J}[v^\ep]
	&= 
	 O(\abs{\ln \ep}^{-\frac{1}{2}})
	+\sum_{i\not= i_0} O\(\frac{(\tilde{\phi}_i)^2}{\ep \abs{\ln \ep}}\)
	+ \sum_{i=1}^N\dot{\phi}_i \left[\partial_td_i(t,x) - c_0 \(\bar{a}_{\ep}^i - \sigma\) \right]\\
	&\quad+ \frac{1}{\ep \abs{\ln \ep}} \Bigg\{
	W'(\tilde{\phi}_{i_0}) + W''(\tilde{\phi}_{i_0})\(\sum_{i\not=i_0} \tilde{\phi}_i + \ep \abs{\ln \ep}  \sum_{i=1}^N\psi_i -  \tilde{\sigma} \ep \abs{\ln \ep}\)\\
	&\quad-\( W'(\tilde{\phi}_{i_0})+  \ep \abs{\ln \ep} 
		\left[W''( \tilde{\phi}_{i_0})\psi_{i_0}
		- \tilde{\sigma} \(W''(\tilde{\phi}_{i_0}) - W''(0)\) \right]\)\\
	&\quad
		-\sum_{i\not= i_0}\( W'(\tilde{\phi}_i)+  \ep \abs{\ln \ep} 
		\left[W''( \tilde{\phi}_i)\psi_i
		- \tilde{\sigma} \(W''(\tilde{\phi}_i) - W''(0)\) \right]\)
		\Bigg\}
\end{align*}
where in the last two lines we extracted the $i_0$ term. 
Cancelling the $W'(\tilde{\phi}_{i_0})$ and $W''(\tilde{\phi}_{i_0})\psi_{i_0}$ terms then distributing $1/(\ep \abs{\ln \ep})$, we simplify to
\begin{align*}
\mathcal{J}[v^\ep]
	&= 
	 O(\abs{\ln \ep}^{-\frac{1}{2}})
	+\sum_{i\not= i_0} O\(\frac{(\tilde{\phi}_i)^2}{\ep \abs{\ln \ep}}\)
	+ \sum_{i=1}^N\dot{\phi}_i \left[\partial_td_i(t,x) - c_0 \(\bar{a}_{\ep}^i - \sigma\) \right]\\
	&\quad + 
	 W''(\tilde{\phi}_{i_0})\(\sum_{i\not=i_0} \frac{\tilde{\phi}_i}{ \ep \abs{\ln \ep}} +  \sum_{i\not= i_0}\psi_i -  \tilde{\sigma} \)
	+\tilde{\sigma} \(W''(\tilde{\phi}_{i_0}) - W''(0)\)\\
	&\quad
		-\sum_{i\not= i_0}\left[ \frac{W'(\tilde{\phi}_i)}{ \ep \abs{\ln \ep}}+ 
		W''( \tilde{\phi}_i)\psi_i
		- \tilde{\sigma} \(W''(\tilde{\phi}_i) - W''(0)\) \right].
\end{align*}
Next, we do a Taylor expansion for $W'$ around $0$ and recall that $W'(0) = 0$ to write
\begin{align*}
W'(\tilde{\phi}_i)
	&= W'(0) + W''(0) \tilde{\phi}_i + O((\tilde{\phi}_i)^2) = W''(0) \tilde{\phi}_i + O((\tilde{\phi}_i)^2).
\end{align*}
With this, we now have that
\begin{align*}
\mathcal{J}[v^\ep]
	&= 
	O(\abs{\ln \ep}^{-\frac{1}{2}})
	+\sum_{i\not= i_0} O\(\frac{(\tilde{\phi}_i)^2}{\ep \abs{\ln \ep}}\)
	+ \sum_{i=1}^N\dot{\phi}_i \left[\partial_td_i(t,x) - c_0 \(\bar{a}_{\ep}^i - \sigma\) \right]\\
	&\quad + 
	 W''(\tilde{\phi}_{i_0})\(\sum_{i\not=i_0} \frac{\tilde{\phi}_i}{ \ep \abs{\ln \ep}} +  \sum_{i\not= i_0}\psi_i -  \tilde{\sigma} \)
	+\tilde{\sigma} \(W''(\tilde{\phi}_{i_0}) - W''(0)\)\\
	&\quad
		-\sum_{i\not= i_0}\left[ \frac{W''(0) \tilde{\phi}_i}{ \ep \abs{\ln \ep}}+ 
		W''( \tilde{\phi}_i)\psi_i
		- \tilde{\sigma} \(W''(\tilde{\phi}_i) - W''(0)\) \right].
\end{align*}
We rearrange to group the terms with $W''(\tilde{\phi}_{i_0}) - W''(0)$ together to obtain
\begin{equation}\label{Eqn2}
\begin{aligned}
\mathcal{J}[v^\ep]
	&= 
	O(\abs{\ln \ep}^{-\frac{1}{2}})
	+\sum_{i\not= i_0} O\(\frac{(\tilde{\phi}_i)^2}{\ep \abs{\ln \ep}}\)
	+ \sum_{i=1}^N\dot{\phi}_i \left[\partial_td_i(t,x) - c_0 \(\bar{a}_{\ep}^i - \sigma\) \right]\\
	&\quad + 
	\( W''(\tilde{\phi}_{i_0}) - W''(0)\)\sum_{i\not=i_0} \frac{\tilde{\phi}_i}{ \ep \abs{\ln \ep}} 
	-\tilde{\sigma}W''(0)\\
	&\quad
		+\sum_{i\not= i_0}\left[ \(W''(\tilde{\phi}_{i_0})- 
		W''( \tilde{\phi}_i)\)\psi_i
		+ \tilde{\sigma} \(W''(\tilde{\phi}_i) - W''(0)\) \right].
\end{aligned}
\end{equation}
Looking at the last two lines for $i \not= i_0$, 
note first that $(W''(\tilde{\phi}_{i_0})- W''(0)) \tilde{\phi}_i = O(\tilde{\phi}_i)$ and then
 Taylor expand $W''$ around $0$ to find
\begin{align*}
(W''(\tilde{\phi}_{i_0})-W''( \tilde{\phi}_i))\psi_i
				+\tilde{\sigma} \(W''(\tilde{\phi}_i) - W''(0)\)
			&= O(\psi_i) - \tilde{\sigma}\( W'''(0) \tilde{\phi}_i + O(\tilde{\phi}_i^2) \) \\
		&= O(\psi_i) + O(\tilde{\phi}_i). 
\end{align*}
Therefore, \eqref{Eqn2} can be written as
\begin{equation}\label{eq:end of step 1}
\begin{aligned}
\mathcal{J}[v^\ep]
	&= 
	O(\abs{\ln \ep}^{-\frac{1}{2}})
	+ \sum_{i\not= i_0} \left[	
		 O\left(\frac{\tilde{\phi}_{i}}{\ep \abs{\ln \ep}}\right)
		+O(\psi_i) \right]\\
	&\quad
	+\sum_{i=1}^N\dot{\phi}_i \left[\partial_td_i(t,x) - c_0 \(\bar{a}_{\ep}^i - \sigma\) \right]
  - \sigma.
\end{aligned}
\end{equation}
where we used that $\sigma = W''(0) \tilde{\sigma}$. 


\medskip

\noindent
\underline{\bf Step 2}. $v^{\ep}(t,x)$ satisfies \eqref{eq:pde sub} when $(t,x)$ is near the front $\Gamma_t^{i_0}$.

\medskip

Assume that $\abs{d_{i_0}(t,x) - \tilde\sigma} \leq \abs{\ln \ep}^{-1/2}$ for some index $1 \leq i_0 \leq N$.
Then, by \eqref{Qisigmaemptyimters}, for $\ep$ sufficiently small, 
\[
\abs{d_{i}(t,x) -  \tilde\sigma} \geq \abs{\ln \ep}^{-\frac12} \quad \hbox{for all}~i \not= i_0.
\]
We begin by estimating the error terms in \eqref{eq:end of step 1} for $i \not= i_0$. 
First, we use \eqref{eq:asymptotics for phi} to estimate
\begin{align*}
|\tilde{\phi}_{i}|
	&= \abs{\phi \(\frac{d_{i}(t,x) - \tilde \sigma}{\ep}\) - H\(\frac{d_{i}(t,x) -  \tilde\sigma}{\ep}\)}  
	\leq C  \frac{\ep}{\abs{d_{i}(t,x) -  \tilde\sigma}} 
	=  O(\ep\abs{\ln \ep}^{\frac12}),
\end{align*}
from which it follows that
\begin{equation*}\label{eq:errors step 1}
\begin{aligned}
 \frac{|\tilde{\phi}_i|}{\ep \abs{\ln \ep}}
 	=
	O\left(\abs{\ln \ep}^{-\frac12}\right).
\end{aligned}
\end{equation*}
Similarly, from \eqref{psiasymptoticsfar}, for $i\neq i_0$, 
\begin{equation*}
 \abs{\psi_i }\leq \frac{C}{\ep|\ln\ep|} \frac{\ep}{\abs{d_{i}(t,x) - \tilde \sigma}} = O\left(\abs{\ln \ep}^{-\frac12}\right).
\end{equation*}
Combining the above estimates in view of \eqref{eq:end of step 1}, we have
\[
\sum_{i\not= i_0} \left(O\(\frac{\tilde{\phi}_i}{\ep \abs{\ln \ep}}\) +O(\psi_i)
\right)
	= O\left(\abs{\ln \ep}^{-\frac12}\right).
\]

Next, we check the terms with the auxiliary functions $\bar{a}_{\ep}^i$ and $a_{\ep}^i$. 
For $i\not= i_0$, we use that $d_i$ is smooth, \eqref{eq:asymptotics for phi dot} and  \eqref{abarestimate} to obtain 
\begin{equation}\label{eq:mc-estimate-far}
\abs{ \sum_{i\not= i_0}\dot{\phi}_i \left[\partial_td_i(t,x) - c_0 \(\bar{a}_{\ep}^i - \sigma\) \right]}\leq 
  \sum_{i\not= i_0} \left(\frac{\ep}{d_{i}(t,x) - \tilde \sigma}\right)^2\frac{C}{\ep^\frac12|\ln\ep|}
	=  O\left(\ep^{\frac32}\right). 
\end{equation}
For $i= i_0$, we use that $\dot{\phi}_{i_0} \geq 0$, \eqref{eq:mc for d}, and Theorem \ref{lem:4} to estimate
\begin{align*}
\dot{\phi}_{i_0} [ \partial_td_{i_0}(t,x)- c_0( \bar{a}_{\ep}^{i_0}
		-  \sigma)]
	&=\dot{\phi}_{i_0}\([ \partial_td_{i_0}(t,x)- \mu \Delta d_{i_0}(t,x) + c_0 \sigma]+ [\mu \Delta d_{i_0}(t,x) - c_0 \bar{a}_{\ep}^{i_0}]\)\\
	&\leq\dot{\phi}_{i_0}(0+ o_\ep(1)) =  o_\ep(1).
\end{align*}
Consequently, in \eqref{eq:end of step 1}, we have that
\begin{align*}
\mathcal{J}[v^\ep]
	\leq   o_\ep(1)
	-\sigma.
\end{align*}
Taking $\ep$ sufficiently small, \eqref{eq:pde sub} holds. 


\medskip

\noindent
\underline{\bf Step 3}. $v^{\ep}(t,x)$ satisfies \eqref{eq:pde sub} when $(t,x)$ is away from all  fronts $\Gamma_t^{i}$.

\medskip

Assume that for all $i=1,\dots, N$,
 \[
\abs{d_{i}(t,x) -  \tilde\sigma} \geq \abs{\ln \ep}^{-\frac12}.
\]
Then, we estimate exactly as in Step 2 but we include $i= i_0$ in  \eqref{eq:mc-estimate-far}
to obtain 
\begin{align*}
\mathcal{J}[v^\ep]
	\leq	  o_\ep(1)-\sigma.
\end{align*}
Taking $\ep$ sufficiently small, \eqref{eq:pde sub} holds. 


\medskip

\noindent
\underline{\bf Step 4}. $v^{\ep}(t,x)$ satisfies \eqref{eq:barrier estimate-N}. 

\medskip

Let $\tilde{\rho} := \frac{C}{\tilde{\sigma}|\ln \ep|}$ for $C>0$ to be determined and $\ep>0$ sufficiently small. 
Fix $(t,x)$ such that $d_N(t,x) - \tilde{\sigma} \geq \tilde{\rho}$. Then,
\begin{equation*}\label{d_i-tildesigmaineq}
d_i(t,x) - \tilde{\sigma} \geq \tilde{\rho} \quad \hbox{for all}~1 \leq i \leq N,
\end{equation*}
and by \eqref{eq:asymptotics for phi} and \eqref{psiasymptoticsfar},
\begin{align*}
v^{\ep}(t,x)
	&\geq N  + \sum_{i=1}^N \left( \phi \left( \frac{d_i(t,x) - \tilde{\sigma}}{\ep}\right) - H \left( \frac{d_i(t,x) - \tilde{\sigma}}{\ep}\right) \right)
		- C \sum_{i=1}^N\frac{\ep}{d_i(t,x) - \tilde{\sigma}} - \tilde{\sigma}\ep |\ln \ep| \\
	&\geq N -  \sum_{i=1}^N \left( \frac{\ep}{\alpha(d_i(t,x) - \tilde{\sigma})} + \frac{C\ep^2}{(d_i(t,x) - \tilde{\sigma})^2} \right)
		- C \sum_{i=1}^N\frac{\ep}{d_i(t,x) - \tilde{\sigma}} - \tilde{\sigma}\ep |\ln \ep| \\
	&\geq N - \frac{C\varepsilon}{\rho} - \frac{C\varepsilon^2}{\rho^2} - \tilde{\sigma} \varepsilon|\ln \varepsilon| \\
	&\geq N - 2 \tilde{\sigma} \varepsilon|\ln \varepsilon| 
\end{align*}
for $C$ sufficiently large in the choice of $\tilde{\rho}$. This proves \eqref{eq:barrier estimate-N}.
\end{proof}

 \subsection{Local subsolutions}

We now construct local subsolutions to \eqref{eq:pde}. For $i=1,\ldots,N$  and $t\in[t_0,t_0+h]$, let  $\Omega^i_t$ be  bounded open sets
satisfying \eqref{eq:ordered phi sets}. Assume that there exists $R,\,\rho$ and $x_0\in\R^n$  such that the signed distance function $\tilde{d}_i(t,x)$ associated to the set  $ \Omega^i_t $ is smooth in the set 
 $Q_{2\rho}^i\cap ([t_0,t_0+h]\times B(x_0,R))$ (recall \eqref{eq:Q2rho}).
   Let $d_i$ be the  bounded extension of $\tilde{d}_i$ outside of $Q_{\rho}^i$  as defined in Definition \ref{defn:extension},
then $d_i$ is smooth in  $[t_0,t_0+h]\times B(x_0,R)$. Assume that 
there exist $\sigma>0$ and  $0<r<R$ such that,
\begin{equation}\label{eq:mc for d-local}
\partial_td_i \leq \mu\Delta  d_i - c_0 \sigma \quad \hbox{in}~Q_\rho\cap ([t_0,t_0+h]\times  B(x_0,r)).
\end{equation}
As before, let $\tilde{\sigma}>0$ be such that $\sigma = W''(0) \tilde{\sigma}$, then, by eventually making $\sigma $
smaller, we can assume that, for all $t\in[t_0,t_0+h]$, \eqref{Qisigmaemptyimters}  holds true. 

The following lemma is the local version of Lemma \ref{lem:barrier}. 

\begin{lem}[Local subsolutions to \eqref{eq:pde}]  \label{lem:barrier-barrier}
For $0 <r <R$, 
let $\rho$ as in Definition \ref{defn:extension} and $\gamma$ as in \eqref{aepsilondef} be such that  $0<\rho,\,\gamma<R-r$.
Assume that $d_i$ is smooth in  $[t_0,t_0+h]\times B(x_0,R)$ and that \eqref{eq:mc for d-local} with $\mu$ as in \eqref{eq:velocity-intro}  and \eqref{Qisigmaemptyimters} hold. 
Let $v^{\ep}$ be defined as in \eqref{eq:barrier defn}.
 Then there exists $\ep_0=\ep_0(\rho,\gamma,\sigma)>0$ such that for any $0<\ep<\ep_0$, 
$v^{\ep}$ is a solution to
\[
\ep \partial_t v^{\ep} - \frac{1}{\ep \abs{\ln \ep}} \( \ep \mathcal{I}_n[v^{\ep} ] - W'(v^{\ep} )\) \leq  -\frac{\sigma}{2}
\quad \hbox{in}~[t_0,t_0+h] \times B(x_0,r). 
\]
\end{lem}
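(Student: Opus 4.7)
The plan is to follow the proof of Lemma \ref{lem:barrier} verbatim at each point $(t,x) \in [t_0,t_0+h] \times B(x_0,r)$. The hypotheses $\rho,\,\gamma < R - r$ are designed precisely so that every local quantity needed at such $(t,x)$ only involves values of $d_i$ inside $[t_0,t_0+h] \times B(x_0,R)$, where $d_i$ is smooth by assumption; the far-field contributions coming from the nonlocality of $\mathcal{I}_n$ will be controlled using only the boundedness of $\phi$ and $\psi_i$.

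First, I would verify that the pointwise estimates on $a_\ep^i(\xi;t,x)$ and $\bar a_\ep^i(t,x)$ used throughout Lemma \ref{lem:barrier} remain valid when $x \in B(x_0,r)$. By definition \eqref{aepsilondef}, $a_\ep^i(\xi;t,x)$ is an integral over $\{|z|<\gamma/\ep\}$, so it only involves $d_i(t,x+\ep z)$ with $|\ep z|<\gamma<R-r$; hence $x+\ep z \in B(x_0,R)$ where $d_i$ is smooth, so Lemmas \ref{lem:ae bound} and \ref{lem:ae-updates} apply unchanged at such points, as do the time/space derivatives obtained by differentiating under the integral sign. Theorem \ref{lem:4} likewise applies when moreover $(t,x) \in Q_\rho^i$, since its conclusion follows from the same local smoothness of $d_i$ on a $\gamma$-neighborhood of $x$.

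Next I would establish local versions of Lemmas \ref{lem:ae near front} and \ref{lem:ae psi estimate}, which compare the $n$-dimensional fractional Laplacian of $\phi(d_i/\ep)$ and $\psi_i(d_i/\ep;t,\cdot)$ to their $1$-dimensional counterparts. For $x \in B(x_0,r)$, I would split
\[
\ep \mathcal{I}_n\!\left[\phi\!\left(\frac{d_i(t,\cdot)-\tilde\sigma}{\ep}\right)\right](x)
= \ep \int_{|y-x|<R-r} \!\!\!\!\! + \ep \int_{|y-x|\geq R-r},
\]
and likewise for $\psi_i$. The near-field integral is handled exactly as in the proof of Lemma \ref{lem:ae near front}, since all points $y$ in the near-field satisfy $y \in B(x_0,R)$ where $d_i$ is smooth; this produces $a_\ep^i$, the one-dimensional contribution $C_n \mathcal{I}_1[\phi](d_i(t,x)/\ep)$, and an $O(\ep)$ remainder. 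The far-field integral is bounded by $C\ep/(R-r)$ using $|\phi|\leq 1$ and is absorbed into the $O(\ep)$ error. The same splitting for $\psi_i$, using the global bound $|\psi_i|\leq C\ep^{-\frac12}|\ln\ep|^{-1}$ from \eqref{psiLinfinityasymptoticsfar}, reproduces the local version of Lemma \ref{lem:ae psi estimate} with the same error $O(|\ln\ep|^{-1/2})$.

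With these localized estimates in hand, the computations in Steps 1--3 of the proof of Lemma \ref{lem:barrier} apply verbatim at each $(t,x) \in [t_0,t_0+h] \times B(x_0,r)$, yielding $\mathcal{J}[v^\ep](t,x) \leq o_\ep(1) - \sigma \leq -\sigma/2$ for $\ep < \ep_0$ sufficiently small, with \eqref{eq:mc for d-local} playing the role of \eqref{eq:mc for d} in Step 2. The main obstacle is the careful book-keeping in the adaptation of the two nonlocal lemmas; once that is done, no structural change to the global argument is needed, because every error term there was estimated pointwise and none required global smoothness of $d_i$.
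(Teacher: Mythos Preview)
Your proposal is correct and follows essentially the same approach as the paper: run the proof of Lemma \ref{lem:barrier} pointwise on $[t_0,t_0+h]\times B(x_0,r)$, using $\gamma<R-r$ so that $a_\ep^i$ only samples $d_i$ where it is smooth, and using the smoothness of $d_i$ on $B(x_0,R)$ together with its global boundedness to make Lemmas \ref{lem:ae near front} and \ref{lem:ae psi estimate} go through at such points. The paper is terser---it simply asserts that those auxiliary lemmas remain valid and refers back to their proofs---whereas you spell out the near/far splitting at radius $R-r$; but the content is the same.
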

\begin{proof}
The proof of the lemma is exactly as the proof of Lemma \ref{lem:barrier} except that we now only consider points $(t,x) \in [t_0,t_0+h]\times B(x_0,r)$. 
Note that if  $0 < \gamma< R-r$ and $x\in  B(x_0,r)$, then $d(t,x+\ep(\cdot))$ is smooth at all points in the domain of integration of $a_\ep$ and all estimates in 
Lemma \ref{lem:ae bound} and Theorem \ref{lem:psi-reg} hold true. 
In addition, the smoothness of $d(t,\cdot)$ in $B(x_0,R)$ and its boundedness in the whole $\R^n$ are sufficient to guarantee
 that for $0 < \rho< R-r$ and $x\in  B(x_0,r)$, Lemmas  \ref{lem:ae near front} and  \ref{lem:ae psi estimate} hold true, see  Sections \ref{sec:estimatesbis} and
  \ref{lem:ae psi estimatesec}. 
\end{proof}

\section{Proof of Theorem \ref{thm:main}}\label{sec:proof of thm}

\begin{proof}
We apply an adaptation of the abstract method in \cite{Barles-Souganidis,Barles-DaLio} as described in Section \ref{sec:flows}.

Begin by defining the families of open sets $(D^i)_{i=1}^N$ and $(E^i)_{i=1}^N$ by 
\begin{align*}
D^i &= \operatorname{Int}\left\{ (t,x) \in (0,\infty) \times\R^n : \liminf_{\ep \to 0}{_*} \frac{u^{\ep}(t,x) - i}{\ep \abs{\ln \ep}} \geq 0 \right\} \subset (0,\infty)\times\R^n\\[.5em]
E^i &= \operatorname{Int}\left\{ (t,x) \in (0,\infty) \times\R^n : \limsup_{\ep \to 0}{^*}\frac{u^{\ep}(t,x)  - (i-1)}{\ep \abs{\ln \ep}} \leq 0 \right\} \subset (0,\infty)\times\R^n.
\end{align*}
To define the traces of $D^i$ and $E^i$, we first define the functions $\underline{\chi}^i, \overline{\chi}^i:(0,\infty) \times \R^n \to \{-1,1\}$, respectively, by
\[
\underline{\chi}^i = \one_{D^i} - \one_{(D^i)^c} \quad \hbox{and} \quad 
\overline{\chi}^i = \one_{(E^i)^c} - \one_{E^i}.
\]
Since $D^i$ is open, $\underline{\chi}^i$ is lower semicontinuous, and since $(E_i)^c$ is closed, $\overline{\chi}^i$ is upper semicontinuous. To ensure that $\overline{\chi}^i$ and $\underline{\chi}^i$ remain lower and upper semicontinuous, respectively, at $t=0$, we set
\[
\underline{\chi}^i(0,x) = \liminf_{t\to 0,~y\to x} \underline{\chi}^i(t,y) \quad \hbox{and} \quad
\overline{\chi}^i(0,x) = \limsup_{t\to 0,~y\to x} \overline{\chi}^i(t,y).
\]
Define the traces $D_0^i$ and $E^i_0$ by
\begin{align*}
D_0^i = \{x \in \R^n : \underline{\chi}^i(0,x) = 1 \} \quad \hbox{and} \quad
E_0^i = \{x \in \R^n : \overline{\chi}^i(0,x) = -1 \}.
\end{align*}
Note that $D_0^i $ and $E_0^i $ are open sets. 
For $t > 0$, define the sets $D_t^i$ and $E_t^i$ by
\[
D_t^i = \{x \in \R^n : (t,x) \in D^i\}  
\quad \hbox{and} \quad
E_t^i = \{x \in \R^n : (t,x) \in E^i\}. 
\]
We need the following propositions for the abstract method. Their proofs are delayed until the end of the section. 

\begin{prop}[Initialization]\label{prop:initialization}
For each $i=1,\dots, N$,
\[
\Omega_0^i \subset D_0^i
\quad \hbox{and} \quad
(\overline{\Omega}_0^i)^c \subset E_0^i.
\]
\end{prop}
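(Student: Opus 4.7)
The plan is: for each small $\tilde\sigma > 0$, construct a global strict subsolution $v^\ep$ of \eqref{eq:pde} on a short time interval $[0,h]$ that lies below $u^\ep$ initially and approximates $i$ on a fixed neighborhood of $x_0$; then the global comparison principle yields $u^\ep \geq v^\ep$, and letting $\tilde\sigma \to 0$ gives $\liminf_\ep (u^\ep - i)/(\ep|\ln\ep|) \geq 0$ throughout the neighborhood, proving $x_0 \in D_0^i$. The second inclusion $(\overline{\Omega_0^i})^c \subset E_0^i$ will follow by the symmetric argument with supersolutions built from the outer $i-1$ fronts shifted outward.

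Fix $x_0 \in \Omega_0^i$. Using openness and strict nesting of the $\Omega_0^j$'s I pick $r, \delta > 0$ with $\overline{B(x_0, 2r)} \subset \Omega_0^j$ and $d_j^0 \geq 2\delta$ on $\overline{B(x_0, 2r)}$ for every $j \leq i$. For each small $\sigma > 0$, with $\tilde\sigma = \sigma/W''(0) < \delta/2$, I would evolve each of $\Omega_0^1, \ldots, \Omega_0^i$ by the mean curvature flow with an additional inward normal velocity $c_0\sigma$; by classical short-time smoothness and the comparison principle for the flow, there is $h = h(\sigma) > 0$ on which the evolved sets $\Omega_t^j$ are smooth, strictly nested, still contain $B(x_0, r)$ for every $j \leq i$, and have signed distance functions satisfying \eqref{eq:mc for d} in a tubular neighborhood. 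Extending smoothly via Definition~\ref{defn:extension} and applying Lemma~\ref{lem:barrier} with $N$ replaced by $i$ (the statement and proof accommodate this verbatim) yields the global strict subsolution
\[
v^\ep(t,x) = \sum_{j=1}^i \phi\Bigl(\tfrac{d_j(t,x) - \tilde\sigma}{\ep}\Bigr) + \ep|\ln\ep|\sum_{j=1}^i \psi_j\Bigl(\tfrac{d_j(t,x) - \tilde\sigma}{\ep};t,x\Bigr) - \tilde\sigma\ep|\ln\ep|,
\]
where the truncation parameter $\gamma$ in $a_\ep$ will be chosen small.

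The central technical step is the pointwise initial comparison $v^\ep(0,\cdot) \leq u^\ep(0,\cdot)$ on $\R^n$. Writing
\[
v^\ep(0,x) - u^\ep(0,x) = \sum_{j=1}^i\Bigl[\phi\bigl(\tfrac{d_j^0 - \tilde\sigma}{\ep}\bigr) - \phi\bigl(\tfrac{d_j^0}{\ep}\bigr)\Bigr] - \sum_{j=i+1}^N \phi\bigl(\tfrac{d_j^0}{\ep}\bigr) + \ep|\ln\ep|\sum_{j=1}^i \psi_j(\cdot;0,x) - \tilde\sigma\ep|\ln\ep|,
\]
the first two sums are pointwise nonpositive by monotonicity and positivity of $\phi$. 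I then split $\R^n$ according to the distance to the nearest front $\Gamma_0^{j_0}$, $j_0 \leq i$ (at most one is close by \eqref{Qisigmaemptyimters}): in the bulk zone $-\ep^{1/2} \leq d_{j_0}^0 \leq \tilde\sigma$, the $\phi$-difference at $j_0$ is $O(1)$ negative by Lemma~\ref{lem:asymptotics} and dominates the $O(\ep|\ln\ep|)$ corrector bounded through \eqref{psiLinfinityasymptoticnewclosefront}; in the outer annulus $-\tilde\sigma \leq d_{j_0}^0 \leq -\ep^{1/2}$ the $\phi$-difference is only $\sim -\ep/|d_{j_0}^0|$, so I invoke the refined tail bound \eqref{psiasymptoticsveryfarnew} which gives $\ep|\ln\ep|\psi_{j_0} \leq C\gamma\ep/\tilde\sigma$, and choose $\gamma$ small (depending on $\tilde\sigma$) so this is dominated by the combined $\phi$-difference and $-\tilde\sigma\ep|\ln\ep|$; far from all fronts ($|d_j^0| \geq \tilde\sigma$ for every $j$), \eqref{eq:asymptotics for phi} and \eqref{psiasymptoticsfar} give $O(\ep/\tilde\sigma)$ contributions easily dominated by $-\tilde\sigma\ep|\ln\ep|$ for $\ep$ small. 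Hence $v^\ep(0,\cdot) \leq u^\ep(0,\cdot)$ for all $\ep$ sufficiently small.

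The comparison principle in $\R^n$ then gives $v^\ep \leq u^\ep$ on $[0,h] \times \R^n$. On $B(x_0, r) \times [0, h]$, where $d_j \geq \delta$ for all $j \leq i$, \eqref{eq:asymptotics for phi} yields $\phi((d_j - \tilde\sigma)/\ep) \geq 1 - C\ep$ and \eqref{psiasymptoticsfar} yields $\ep|\ln\ep||\psi_j| = O(\ep)$, so $v^\ep(t,x) \geq i - C\ep - \tilde\sigma\ep|\ln\ep|$. Therefore $(u^\ep(t,y) - i)/(\ep|\ln\ep|) \geq -\tilde\sigma - C/|\ln\ep|$ for every $(t,y) \in (0,h) \times B(x_0, r)$, and any sequence $(t_\ep, y_\ep) \to (t,y)$ in this open neighborhood satisfies $\liminf_\ep \geq -\tilde\sigma$. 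Since $\tilde\sigma > 0$ was arbitrary, the liminf is $\geq 0$ throughout $(0,h) \times B(x_0, r)$, so this neighborhood lies in $D^i$ and $x_0 \in D_0^i$. The main obstacle will be the pointwise initial comparison, because the corrector $\ep|\ln\ep|\psi_j$ remains $O(\ep|\ln\ep|)$ near each front and is only dominated by the $-\tilde\sigma\ep|\ln\ep|$ shift when $\tilde\sigma$ is comparatively large; the free truncation parameter $\gamma$ in $a_\ep$ is essential, because \eqref{psiasymptoticsveryfarnew} lets us shrink $\psi$ in the outer intermediate annulus, which is the only zone where the $\phi$-difference is too weak to control the corrector on its own.
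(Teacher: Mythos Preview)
Your overall strategy is sound and parallels the paper's, but two points deserve attention.

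First, a comparison: the paper takes a simpler route for the evolving barriers. Rather than evolving the actual sets $\Omega_0^j$ by mean curvature flow with drift, it uses concentric shrinking balls $B(x_0, r_j - Ct)$ with $r_j = d_j^0(x_0) - \tilde\sigma$. This avoids invoking short-time existence for smooth MCF and makes the verification of \eqref{eq:mc for d} in the \emph{full} tubular neighborhood $Q_\rho^j$ completely explicit (your claim that the signed distance to the MCF+drift flow satisfies \eqref{eq:mc for d} throughout $Q_\rho^j$, not merely on the front, needs an extra argument---evolve with a strictly larger drift and take $\rho$ small by continuity). The balls also give the global Lipschitz inequality $d_j^0(x) - \tilde\sigma \geq \tilde d_j(0,x)$, cleanly placing the barrier's transition layer strictly inside the initial one. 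Your approach works too, but the balls are a cleaner device here.

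Second, a genuine gap: your zone-3 claim in the initial comparison is wrong as stated. You assert that when $|d_j^0| \geq \tilde\sigma$ for every $j$, \eqref{psiasymptoticsfar} gives corrector contributions of size $O(\ep/\tilde\sigma)$. But $\psi_{j_0}$ is evaluated at $\xi_{j_0} = (d_{j_0}^0 - \tilde\sigma)/\ep$, not at $d_{j_0}^0/\ep$; when $d_{j_0}^0$ is slightly larger than $\tilde\sigma$---which your zone 3 allows---$|\xi_{j_0}|$ is $O(1)$ and \eqref{psiasymptoticsfar} only yields $\ep|\ln\ep|\,|\psi_{j_0}| = O(1)$, not $O(\ep/\tilde\sigma)$. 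This is certainly not dominated by $-\tilde\sigma\ep|\ln\ep|$, and the $\phi$-difference in that regime is merely nonpositive. The correct case split is by distance to the \emph{barrier's} front $\{d_{j_0}^0 = \tilde\sigma\}$, not to the original front $\{d_{j_0}^0 = 0\}$: for $|d_{j_0}^0 - \tilde\sigma| \leq C_0/|\ln\ep|$ with $C_0$ small, the $\phi$-difference at $j_0$ is at most $-c\ep|\ln\ep|/C_0$ (since $\phi(\xi_{j_0}) \leq 1 - c\ep|\ln\ep|/C_0$ while $\phi(d_{j_0}^0/\ep) \geq 1 - C\ep/\tilde\sigma$), which absorbs the $O(\ep|\ln\ep|)$ corrector; for $|d_{j_0}^0 - \tilde\sigma| \geq C_0/|\ln\ep|$, \eqref{psiasymptoticsveryfarnew} gives $\ep|\ln\ep|\,|\psi_{j_0}| \leq C\gamma\ep|\ln\ep|/C_0$, controlled by $-\tilde\sigma\ep|\ln\ep|$ after choosing $\gamma$ small. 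This is precisely the paper's two-case split in its proof.
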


\begin{prop}[Propagation]\label{prop:sub/sup flows}
For each $i=1,\dots, N$,
$(D_t^i)_{t>0}$ is a generalized super-flow, 
and  $((E^i_t)^c)_{t>0}$ is a generalized sub-flow, according to Definition \ref{defn:flow}.
\end{prop}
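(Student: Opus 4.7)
The proof relies on the abstract barrier method of \cite{Barles-DaLio,Barles-Souganidis}. I outline the super-flow property of $(D_t^i)_{t>0}$; the sub-flow property of $((E_t^i)^c)_{t>0}$ is entirely analogous, using supersolutions built as in Lemma \ref{lem:barrier-barrier} with the natural sign changes. Fix $i$, $(t_0,x_0)\in(0,\infty)\times\R^n$, $h,r>0$, and a smooth test function $\varphi$ satisfying conditions (i)--(v) of Definition \ref{defn:flow}. The aim is to show
\[
\{x \in B(x_0,r) : \varphi(t_0+h,x)>0\} \subset D^i_{t_0+h}.
\]

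First I extract a smooth evolving hypersurface from $\varphi$. For $\eta>0$ sufficiently small, condition (iii) makes $\Gamma_t^\eta := \{\varphi(t,\cdot)=\eta\}$ a smooth hypersurface in $[t_0,t_0+h]\times\overline{B}(x_0,r)$. Let $\tilde d_i$ denote its signed distance function (positive in $\{\varphi>\eta\}$), smooth in a tubular $\rho$-neighborhood $Q_{2\rho}^i$. Combining the strict speed condition (ii) with identity \eqref{meancurvature_intro_sigma} yields $\partial_t\tilde d_i \leq \mu\Delta\tilde d_i - c_0\sigma$ in $Q^i_\rho$ for some $\sigma>0$. I then invoke Lemma \ref{lem:barrier-barrier} on a slightly smaller ball $B(x_0,r')$ with $r'<r$ to construct a local subsolution $v^\ep$ of \eqref{eq:pde}: the $i$-th front in the barrier is $\Gamma_t^\eta$ with distance function $\tilde d_i$, and the remaining $N-1$ fronts are smooth surrogates for the current sets $D^j_t$ (for $j<i$) and $E^j_t$ (for $j>i$), arranged to satisfy the nesting \eqref{eq:ordered phi sets} and the separation \eqref{Qisigmaemptyimters} and so that their $\phi$-contributions reproduce the layered structure of $u^\ep$ outside $B(x_0,r')$.

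Next I verify $v^\ep \leq u^\ep$ on the parabolic boundary of $B(x_0,r')$. At $t=t_0$, condition (iv) combined with the interior-of-$\liminf$ definition of the traces $D_0^j$ (via $\underline\chi^j$), together with analogous upper bounds coming from the $E_0^j$, provides by a standard compactness/semicontinuity argument uniform pointwise estimates $u^\ep(t_0,\cdot)\geq j - o(\ep|\ln\ep|)$ on compact subsets of the respective positive regions, for $\ep$ small. The lateral condition (v) gives the same comparison on $[t_0,t_0+h]\times(\R^n\setminus B(x_0,r'))$. The comparison principle in bounded domains from Section \ref{sec:frac} then yields $v^\ep \leq u^\ep$ in the entire cylinder. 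For any compact $K\subset\{\varphi(t_0+h,\cdot)>0\}\cap B(x_0,r')$ one has $\tilde d_i(t_0+h,x) \geq C/(\tilde\sigma|\ln\ep|)$ on $K$ for $\ep$ small, so the analogue of \eqref{eq:barrier estimate-N} for the $i$-th layer gives $v^\ep \geq i - 2\tilde\sigma\ep|\ln\ep|$ on $K$. Hence $\liminf_{\ep\to0}{_*}(u^\ep(t_0+h,x)-i)/(\ep|\ln\ep|)\geq -2\tilde\sigma$, and letting $\tilde\sigma\to 0$ places $\{\varphi(t_0+h,\cdot)>0\}\cap B(x_0,r')$ inside $D^i_{t_0+h}$.

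The main technical obstacle lies in the barrier construction and its global boundary comparison: while the $i$-th front is naturally prescribed by $\varphi$, the remaining $N-1$ fronts must be chosen so that $v^\ep\leq u^\ep$ holds on the entire exterior of $B(x_0,r')$, despite the traces $D^j_t,E^j_t$ being only (semi)continuously defined. This is managed by a careful smooth approximation respecting the global nesting and the proximity of these sets to the actual layered structure of $u^\ep$; in practice, one exploits the locality of Lemma \ref{lem:barrier-barrier} and the freedom in the test-function formulation of Definition \ref{defn:flow} to slightly perturb the auxiliary surrogates so as to guarantee the required ordering outside the ball.
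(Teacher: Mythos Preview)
Your proposal has a genuine structural gap in the barrier construction. You attempt to build a barrier with all $N$ layers, using ``smooth surrogates for the current sets $D^j_t$ (for $j<i$) and $E^j_t$ (for $j>i$)'', and you correctly identify this as the main technical obstacle---but you do not actually resolve it. The sets $D^j_t$, $E^j_t$ are defined only through relaxed limits of $u^\ep$; they need not be smooth, and there is no reason their boundaries should satisfy the mean-curvature inequality required by Lemma~\ref{lem:barrier-barrier}. Your closing appeal to ``careful smooth approximation'' and ``freedom in the test-function formulation'' is not an argument.

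The paper sidesteps this entirely with two simple observations you are missing. First, the barrier for the super-flow property of $D^{i_0}$ has only $i_0$ layers, not $N$: one applies Lemma~\ref{lem:barrier-barrier} with $N$ replaced by $i_0$, and the resulting $v^\ep$ is compared to $u^\ep$ using only $u^\ep\ge 0$ and, on compact subsets of $D^{i_0}_{t_0}$, $u^\ep\ge i_0 - \delta_0\,\ep|\ln\ep|$ (which comes directly from the definition of $D^{i_0}$). No information about $D^j$ for $j\neq i_0$ or any $E^j$ is needed. Second, the auxiliary $i_0-1$ fronts are not surrogates for other $D^j_t$ at all: they are simply \emph{parallel surfaces} to the single test front $\{\varphi_{i_0}=0\}$, namely $\Omega^i_t=\{\tilde d_{i_0}(t,\cdot)\ge -4(i_0-i)\tilde\sigma\}$ for $i=1,\dots,i_0-1$. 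These inherit smoothness and the strict mean-curvature inequality directly from $\tilde d_{i_0}$, and the separation condition \eqref{Qisigmaemptyimters} is automatic. With this choice the parabolic-boundary comparison reduces to three explicit cases (near some $\Gamma^j_{t_0}$; in a compact $K\subset D^{i_0}_{t_0}$ away from all fronts; outside $K$, where $v^\ep\le 0\le u^\ep$), none of which requires any approximation of abstractly defined sets.
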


By \cite[Theorem 1.1, Theorem 1.2]{Barles-DaLio},
it follows from Propositions \ref{prop:initialization} and \ref{prop:sub/sup flows} that
\[
{^+}\Omega_t^i \subset D_t^i \subset {^+}\Omega_t^i \cup \Gamma_t^i
\quad \hbox{and} \quad
{^-}\Omega_t^i \subset E_t^i \subset {^-}\Omega_t^i \cup \Gamma_t^i.
\]
The conclusion readily follows; we write the details for completeness.

First, since ${^+}\Omega_t^i \subset D_t^i$, we use the definition of $D_t^i$ to see that
\begin{equation}\label{eq:liminf from below}
\liminf_{\ep \to 0}{_*} u^{\ep}(t,x) \geq i \quad \hbox{for}~x \in {^+}\Omega_t^i.
\end{equation}
Using that $ {^-}\Omega_t^{i+1} \subset E_t^{i+1}$, we similarly get
\begin{equation} \label{eq:limsup from above}
\limsup_{\ep \to 0}{^*} u^{\ep}(t,x) \leq (i+1)-1 = i \quad \hbox{for}~x \in {^-}\Omega_t^{i+1}.
\end{equation}
Therefore, for $i=1,\dots, N-1$, 
\[
\lim_{\ep \to 0} u^{\ep}(t,x) = i \quad \hbox{in}~ {^+}\Omega_t^i \cap {^-}\Omega_t^{i+1}.
\]
Now, since constant functions solve  \eqref{eq:pde} and $0 \leq u_0^{\ep} \leq N$, by the comparison principle, we know $0 \leq u^{\ep} \leq N$. In particular,
\[
0 \leq \liminf_{\ep \to 0}{_*} u^{\ep} \quad \hbox{and} \quad 
\limsup_{\ep \to 0}{^*} u^{\ep} \leq N.
\]
Together with \eqref{eq:limsup from above} and respectively \eqref{eq:liminf from below} we have
\[
\lim_{\ep \to 0} u^{\ep}(t,x) = 0 \quad \hbox{in}~ {^-}\Omega_t^1
\quad \hbox{and} \quad
\lim_{\ep \to 0} u^{\ep}(t,x) = N \quad \hbox{in}~ {^+}\Omega_t^N.
\]
\end{proof}

It remains to prove Propositions \ref{prop:initialization} and \ref{prop:sub/sup flows}. 

\subsection{Proof of Proposition \ref{prop:initialization} }

\begin{proof}
We will prove that $\Omega_0^{i_0} \subset D_0^{i_0}$ for all $1 \leq i_0 \leq N$. 
The proof of $(\overline{\Omega}_0^{i_0})^c \subset E_0^{i_0}$ is similar. 

Fix $i_0$ and a point $x_0 \in \Omega_0^{i_0}$.  
To prove that $x_0 \in D_0^{i_0}$, it is enough to show that, for all $(t,x)$ in a neighborhood of $(0,x_0)$ in $[0,\infty)\times\R^n$, 
\[
\liminf_{\ep \to 0}{_*} \frac{u^{\ep}(t,x) - i_0}{\ep \abs{\ln \ep}} \geq 0.
\]
For this, we will use \eqref{eq:barrier defn} to construct a suitable (global in space) subsolution $v^{\ep} \leq u^{\ep}$, depending on a small constant $\tilde{\sigma}>0$. 

For  $0<\tilde\sigma<d_i^0(x_0)$, where $d_i^0$ is defined  in \eqref{eq:initial d_i},  let $r_i>0$ be given by
\begin{equation}\label{riballinsideomegai}
r_i = d_i^0(x_0) - \tilde{\sigma}, \quad i=1,\dots, i_0.
\end{equation}
Note that $B(x_0, r_i) \subset \subset \Omega_0^i$,  and since the sets $\Gamma_0^i$ are separated,  for $\tilde\sigma$ small enough, 
\begin{equation}\label{ri0-ri0+1}
r_i - r_{i+1} = d_i^0(x_0) - d_{i+1}^0(x_0) \geq d(\Gamma_0^i,\Gamma_0^{i+1})\geq 4\tilde \sigma.
\end{equation}
 For a constant $C>0$, to be determined and for    $t\leq r_{i_0}/(2C)\leq r_i/(2C)$,  let  $\tilde{d}_{i}(t,x)$ be the signed distance function associated to the ball
$B(x_0,{r_i-Ct})$, namely
\begin{equation*}\label{eq:di for initialization}
\tilde{d}_{i}(t,x) = r_{i}-Ct - \abs{x-x_0},\quad i=1,\ldots,i_0. 
\end{equation*}
Then, by \eqref{riballinsideomegai} and \eqref{ri0-ri0+1}, 
\begin{equation}\label{didi+1proppinitial} d_i^0(x)-\tilde\sigma \geq \tilde d_{i}(0,x)\geq \tilde d_{i+1}(0,x)+4\tilde \sigma.\end{equation}
For $0<\rho<r_{i_0}/4$, let  $d_i$ be the  smooth, bounded extension of $\tilde{d}_i$  outside of
$$Q^i_\rho= \left\{(t,x)\in \left[0,\frac{r_{i_0}}{2C}\right]\times\R^n\,:\,|\tilde d_i(t,x)|\leq\rho\right\},$$ as in Definition \ref{defn:extension}. 
Then, for $(t,x)\in Q^i_\rho$, we have that $d_i=\tilde d_i$, $|x-x_0|\geq r_{i_0}/4$ and 
\[
\partial_t d_i(t,x) =  -C,\quad D^2   d_i(t,x)=-\frac{1}{|x-x_0|}\left(I -\frac{(x-x_0)\otimes (x-x_0)}{|x-x_0|^2}\right)
\] 
from which we find that
\begin{equation}\label{varphi:ineLeminit}\begin{split}
\partial_t  d_i(t,x) - \mu\Delta  d_i (t,x)
	= -C+ \mu\frac{n - 1}{|x-x_0|}
	\leq -C + 4\mu\frac{n - 1}{r_{i_0}} 
	&\leq - c_0 \sigma
\end{split}\end{equation}
for $C>0$ sufficiently large, with $\sigma=W''(0)\tilde\sigma$ and $\mu$ as  in \eqref{eq:velocity-intro}. 
Moreover, we can assume, by eventually making $\tilde\sigma$ smaller,  that $\rho>2\tilde\sigma$. 

Let $v^{\ep} = v^{\ep}(t,x)$ be given by
\[
v^{\ep}(t,x)
	=\sum_{i=1}^{i_0} \phi\( \frac{d_i(t,x)- \tilde{\sigma} }{\ep}\) +\ep \abs{\ln \ep} \sum_{i=1}^{i_0}\psi_i\( \frac{d_i(t,x)-\tilde{\sigma}}{\ep};t,x\) - \tilde{\sigma} \ep \abs{\ln \ep}.
\]
Note that $v^\ep$ is the same as \eqref{eq:barrier defn} except we only sum over $1 \leq i \leq i_0$. 
By \eqref{didi+1proppinitial}  and the fact that the balls $ B(x_0,{r_i-Ct})$ shrink with the same constant velocity, condition \eqref{Qisigmaemptyimters} is satisfied in the time interval $[0, r_{i_0}/(2C)]$. Since in addition  \eqref{varphi:ineLeminit} holds true, we are in position to apply  Lemma \ref{lem:barrier} (with $N=i_0$) to conclude that  $v^{\ep}$ is a solution to \eqref{eq:pde sub} in $[0, r_{i_0}/(2C)] \times \R^n$ for $\ep$ small enough.
We claim that choosing $\gamma$ in \eqref{aepsilondef} sufficiently small (but independent of $\ep$), and  $\ep$ sufficiently small, 
\begin{equation}\label{Prop7.1initialcond}
v^{\ep}(0,x)\leq u^{\ep}(0,x)=u_0^\ep(x)\quad\text{for all }x\in\R^n,
\end{equation}
with $u_0^\ep$ as in \eqref{eq:initial cond}. We break into two cases:  we first consider the case when $x$ is near the boundary of one of the balls, say $\partial B(x_0,r_j)$ for some $j$,  and then when $x$ is far from the boundaries of all balls $\partial B(x_0,r_i)$, $i=1,\ldots, i_0$. 

\medskip

\noindent
{\bf Case 1}:~\emph{There is a $ j \in \{1,\ldots, i_0\}$ such that  $|\tilde d_{j}(0,x)-\tilde\sigma|\leq \frac{C_0}{|\ln\ep|}$ for $C_0>0$ to be determined.}

For $\ep$ small enough, $\frac{C_0}{|\ln\ep|}+\tilde\sigma\leq 2\tilde\sigma<\rho$, so that $0\leq \tilde d_{j}(0,x)\leq2\tilde\sigma<\rho$. Hence, recalling the Definition \ref{defn:extension} of $d_i$ and by the second inequality in \eqref{didi+1proppinitial}, we have that 
$$d_j(0,x)=\tilde d_j(0,x),$$ 
$$d_i(0,x)-\tilde\sigma\geq \tilde\sigma\quad\text{for }i=1,\ldots, j-1,$$
$$d_i(0,x)-\tilde\sigma\leq -\tilde\sigma\quad\text{for }i=j+1,\ldots, i_0.$$
Therefore, by  \eqref{eq:asymptotics for phi} and 
\eqref{psiasymptoticsveryfarnew},  
$$\phi\( \frac{d_i(0,x)- \tilde{\sigma} }{\ep}\)+\ep \abs{\ln \ep} \psi_i\( \frac{d_i(0,x)-\tilde{\sigma}}{\ep};0,x\) \leq 
\begin{cases} 
\displaystyle 1+\frac{C\ep}{\tilde\sigma} & \text{for }i=1,\ldots,j-1\\
\displaystyle \frac{C\ep}{\tilde\sigma}& \text{for }i=j+1,\ldots,i_0,
\end{cases}
$$
and by the monotonicity of $\phi$, \eqref{eq:asymptotics for phi} and \eqref{psiLinfinityasymptoticnewclosefront}, 
\begin{align*}\phi\( \frac{d_j(0,x)- \tilde{\sigma} }{\ep}\)+\ep \abs{\ln \ep} \psi_{j}\( \frac{d_{j}(0,x)-\tilde{\sigma}}{\ep};0,x\)&\leq  \phi\( \frac{C_0}{\ep|\ln\ep|}\)+C\ep \abs{\ln \ep}
\\& \leq 1-\frac{C\ep|\ln\ep|}{C_0}+
C\ep \abs{\ln \ep}.\end{align*}
From the last two estimates, we deduce that, for $\ep$ small enough, 
$$v^{\ep}(0,x)\leq j-\frac{C\ep|\ln\ep|}{C_0}+
C\ep \abs{\ln \ep}-\tilde{\sigma} \ep \abs{\ln \ep}\leq j -C\ep \abs{\ln \ep}, $$
if $C_0$ is sufficiently small. 

Similarly, by the first inequality in \eqref{didi+1proppinitial}, \eqref{eq:asymptotics for phi}, and using that $\phi>0$,
$$\phi\( \frac{d_i^0(x)}{\ep}\)\geq 
\begin{cases}
1-\frac{C\ep}{\tilde\sigma} & \text{for }i=1,\ldots,j-1\\
0 & \text{for }i=j+1,\dots i_0,
\end{cases}
$$ and 
since $d_j^0(x)\geq \tilde d_j(0,x)+\tilde\sigma\geq\tilde \sigma$, 
$$\phi\( \frac{d_j^0(x)}{\ep}\)\geq 1-\frac{C\ep}{\tilde\sigma}.$$
Putting it all together, we infer that, for $\ep$ small enough,  
\begin{align*} v^{\ep}(0,x)\leq  j -C\ep \abs{\ln \ep}\leq j-\frac{C\ep}{\tilde\sigma}\leq \sum_{i=1}^N
\phi\( \frac{d_i^0(x)}{\ep}\)=u_0(x), 
\end{align*}
which proves  \eqref{Prop7.1initialcond} in Case 1. 

\medskip

\noindent
{\bf Case 2}: \emph{  $|\tilde d_{i}(0,x)-\tilde\sigma|\geq \frac{C_0}{|\ln\ep|}$ for all $i=1,\ldots, i_0$.}

Recalling the Definition \ref{defn:extension} of $d_i$, 
if $|\tilde d_i(0,x)|<\rho$, then 
 $ d_i(0,x)=\tilde d_i(0,x)$. 
 In this case, by the  first inequality in \eqref{didi+1proppinitial} and the monotonicity of 
$\phi$, we have that 
$$\phi\( \frac{d_i(0,x)- \tilde{\sigma} }{\ep}\)\leq \phi\( \frac{d_i^0(x)}{\ep}\),$$
and by \eqref{psiasymptoticsveryfarnew},
$$\ep \abs{\ln \ep}\left| \psi_i\( \frac{d_i(0,x)-\tilde{\sigma}}{\ep};0,x\)\right|
\leq C\gamma\ep|\ln\ep|.$$
If instead, $\tilde d_i(0,x)\geq \rho $, then $ d_i(0,x)\geq \rho$ and $d_i^0(x)\geq \tilde d_i(0,x)\geq \rho$, and by 
\eqref{eq:asymptotics for phi},
$$\phi\( \frac{d_i(0,x)- \tilde{\sigma} }{\ep}\)\leq 1\leq \phi\( \frac{d_i^0(x)}{\ep}\) +\frac{C\ep}{\rho}.$$
Finally, if $\tilde  d_i(0,x)\leq -\rho $, then  $ d_i(0,x)\leq -\rho$ and 
$$\phi\( \frac{d_i(0,x)- \tilde{\sigma} }{\ep}\)\leq \frac{C\ep}{\rho}\leq \phi\( \frac{d_i^0(x)}{\ep}\) +\frac{C\ep}{\rho}.$$
For $|\tilde  d_i(0,x)|\geq \rho $,  by  \eqref{psiasymptoticsfar} we also have
$$\ep \abs{\ln \ep}\left| \psi_i\( \frac{d_i(0,x)-\tilde{\sigma}}{\ep};0,x\)\right|\leq \frac{C\ep}{\rho}.$$
Putting it all together, we infer that, for $\ep$ small enough, 
\begin{align*} v^{\ep}(0,x)\leq u_0(x)+C\gamma\ep|\ln\ep|-\tilde\sigma\ep|\ln\ep|\leq  u_0(x),\end{align*}
choosing $\gamma\leq \tilde\sigma/C.$ 
This proves  \eqref{Prop7.1initialcond} in Case 2. 

\medskip

By \eqref{Prop7.1initialcond} and the comparison principle, 
$$u^\ep(t,x)\geq v^\ep(t,x)\quad \text{for all }(t,x)\in  \left[0, \frac{r_{i_0}}{2C}\right] \times \R^n. $$
Since $2\sigma<\rho$, note that $\{x :d_{i_0}(t,x) \geq 2\tilde{\sigma} \}= \{x :\tilde{d}_{i_0}(t,x) \geq 2\tilde{\sigma} \}$. Consequently, by  \eqref{eq:barrier estimate-N} (with $N=i_0$) and the inequality above, for $t\in [0, r_{i_0}/(2C)]$ and 
$$x\in  \{x:\tilde{d}_{i_0}(t,x) \geq 2\tilde{\sigma} \}=\{|x-x_0|\leq r_{i_0}-Ct-2\tilde \sigma\}\supset\left \{|x-x_0|\leq \frac{r_{i_0}}{2}-2\tilde\sigma\right\},$$
we have 
\begin{align*}
\liminf_{\ep \to 0}{_*} \frac{u^{\ep}(t,x) - i_0}{\ep \abs{\ln \ep}} 
&\geq \liminf_{\ep \to 0}{_*} \frac{v^{\ep}(t,x) - i_0}{\ep \abs{\ln \ep}} 
\geq -2 \tilde{\sigma}.
\end{align*} 
Letting $\tilde{\sigma} \to 0$, the result follows. 
\end{proof}

\subsection{Proof of Proposition \ref{prop:sub/sup flows} }

\begin{proof}
Fix $1 \leq i_0 \leq N$. We will show that $(D_t^{i_0})_{t>0}$ is a generalized super-flow. 
The proof that $((E_t^{i_0})^c)_{t>0}$ is a generalized sub-flow is similar. 

Let $(t_0,x_0) \in (0,\infty) \times \R^n$, $h, \, r>0$, and $\varphi_{i_0}:(0,\infty)\times \R^n \to \R$ be a smooth function satisfying \ref{item:i}-\ref{item:v} in Definition \ref{defn:flow}
in $[t_0,t_0+h]$ with $F$ given in \eqref{Fdefintro-flow}   and $\mu$ in \eqref{eq:velocity-intro}.
Then, there exists $h'>h$ such that $\varphi_{i_0}$ satisfies  \ref{item:i}-\ref{item:v} in $[t_0,t_0+h']$, with an eventually smaller $\delta$ in \ref{item:ii}. 
For $t\in [t_0,t_0+h']$, let us denote 
$$\Omega^{i_0}_t=\{x\in\R^n\,:\,\varphi_{i_0}(t,x)>0\}\quad\text{and}\quad\Gamma^{i_0}_t=\partial \Omega^{i_0}_t.$$
By  \ref{item:iii}  there exists $R'>r$ such that $\nabla \varphi_{i_0}\neq 0$ on  $\Gamma^{i_0}_t\cap  \overline{B}(x_0,R')$ which is therefore a smooth (and by \ref{item:i} a non-empty) set. 
Let $\tilde{d}_{i_0}(t,x)$ be the signed distance function associated to $\Omega^{i_0}_t$, and let
$Q_\rho^{i_0} =\{(t,x)\in \left[t_0,t_0+h'\right]\times\R^n\,:\,|\tilde{d}_{i_0}(t,x)|\leq\rho\}$ for $\rho>0$. 
Then, there exist $\rho_0>0$ and $r'>r$ such that $\tilde d_{i_0}$  is smooth 
in $Q_{\rho_0}^{i_0}\cap  ([t_0,t_0+h']\times B(x_0,R'))$
 and by \ref{item:ii} (recall \eqref{meancurvature_intro_sigma}),  
\begin{equation}\label{eq:mc for d-locali_0prop72}
\partial_t\tilde d_{i_0} \leq \mu\Delta  \tilde d_{i_0} - \frac{\delta}{2} \quad \hbox{in}~Q_{\rho_0}^{i_0}\cap  ([t_0,t_0+h']\times B(x_0,r')).
\end{equation}
Next, for $i=1,\ldots, i_0-1$, and $\tilde\sigma>0$, define 
\begin{equation*}\label{omegat:prop7.2}\Omega^{i}_t=\{x\in\R^n : \tilde{d}_{i_0}(t,x)\geq -4 (i_0-i)\tilde\sigma\}
 \quad\hbox{and} \quad \Gamma_t^i = \partial \Omega^{i}_t.
\end{equation*}
Clearly, $\Omega^{i+1}_t\subset \Omega^{i}_t$, and if $\tilde{d}_{i}(t,x)$ is the signed distance function associated to $\Omega^{i}_t$, 
 then 
\begin{equation}\label{didi+1proppflow}
 \tilde d_{i}(t,x)\geq \tilde d_{i+1}(t,x)+4\tilde \sigma \quad \hbox{for all}~i=1,\dots, i_0-1.
\end{equation} 
Moreover, by \eqref{eq:mc for d-locali_0prop72} there exist $R,\,\rho,\,\tilde\sigma>0$ 
 such that if   $d_{i}(t,x)$ is the  bounded extension of $\tilde{d}_{i}(t,x)$ outside of 
$ Q_\rho^{i} $ as in Definition \ref{defn:extension}, then  $d_i$ is smooth 
in $[t_0,t_0+h']\times B(x_0,R)$ and 
\begin{equation*}\label{eq:mc for d-locali_0prop72-i}
\partial_td_{i} \leq \mu\Delta  d_{i} - c_0 \sigma \quad \hbox{in}~Q_{\rho}^{i}\cap  ([t_0,t_0+h']\times B(x_0,r)) \quad \hbox{for all}~i=1,\dots, i_0,
\end{equation*}
where $\sigma=W''(0)\tilde\sigma$. 
Then, 
after taking $\rho$ perhaps smaller and  $\gamma$ in \eqref{aepsilondef} such that $0<\rho,\,\gamma<R-r$, 
Lemma \ref{lem:barrier-barrier} (with $N= i_0$) implies that the function
\[
v^{\ep}(t,x)
	=\sum_{i=1}^{i_0} \phi\( \frac{d_i(t,x)- \tilde{\sigma} }{\ep}\) +\ep \abs{\ln \ep} \sum_{i=1}^{i_0}\psi_i\( \frac{d_i(t,x)-\tilde{\sigma}}{\ep};t,x\) - \tilde{\sigma} \ep \abs{\ln \ep}
\]
 is a solution to \eqref{eq:pde sub} in $[t_0,t_0+h'] \times B(x_0,r)$.
We now show that for $\tilde\sigma<\rho/2$ and $\ep,\,\gamma$ small enough,
\begin{equation}\label{initialpropo2}
v^{\ep}(t_0,x)\leq u^{\ep}(t_0,x)\quad\text{for all }x\in  \R^n
\end{equation}
and
\begin{equation}\label{initialpropo2-boundary}
v^{\ep}(t,x)\leq u^{\ep}(t,x)\quad\text{for all }(t,x) \in [t_0,t_0+h'] \times( \R^n \setminus B(x_0,r)). 
\end{equation}
We start with \eqref{initialpropo2}. 
Since $\varphi_{i_0}$ satisfies \ref{item:i} and \ref{item:iv}   in Definition \ref{defn:flow}, 
 for $\tilde\sigma$ small enough we have that $\Omega_{t_0}^{i} \subset \subset D_{t_0}^{i_0}$ for all $i=1,\ldots,i_0$, 
 and there is 
a compact set $K\subset D_{t_0}^{i_0}$ such that 
\begin{equation}\label{didi+1proppflow-K}
d_{K}(x)-2\tilde\sigma \geq \tilde d_{i}(t_0,x) \quad \hbox{for all}~i = 1,\dots, i_0
\end{equation}
where $d_K$ denotes the signed distance function from $K$. 

The proof of \eqref{initialpropo2} is broken into three cases:  we first consider the case when $x$ is close to one of the sets $\Gamma_{t_0}^j$, 
then  when $x$ is in $K$ but far from all sets $\Gamma_{t_0}^i$, and finally when $x$ is not in $K$. 

\medskip

\noindent
{\bf Case 1}:~\emph{There is a $ j \in \{1,\ldots, i_0\}$ such that  $|\tilde d_{j}(t_0,x)-\tilde\sigma|\leq \frac{C_0}{|\ln\ep|}$ for $C_0>0$ to be determined.}

Like in Case 1 in the proof of Proposition  \ref{prop:initialization}, we can show, for $\ep, C_0>0$ small enough
\[
v^\ep(t_0,x) \leq j - C\ep |\ln \ep|.
\]
Since $\tilde{d}_j(t_0,x) \geq 0$  for $\ep$ small enough, and by \eqref{didi+1proppflow-K},  we know that $x \in K\subset D_{t_0}^{i_0}$. 
 Since $K$ is compact and by the definition of  $D_{t_0}^{i_0}$,  given $\delta_0>0$, for $\ep$ small enough and $y\in K$,  
\begin{equation}\label{eq:ue-delta}
\frac{u^\ep(t_0,y) - i_0}{\ep |\ln \ep|}  \geq - \delta_0.  
\end{equation}
In particular, for $\delta_0 \leq C$,
\[
u^\ep(t_0,x) \geq i_0 -\delta_0 \ep |\ln \ep| \geq j - C \ep | \ln \ep| \geq v^\ep(t_0,x). 
\]
Therefore, \eqref{initialpropo2} holds for Case 1 for sufficiently small $\ep >0$. 

\medskip

\noindent
{\bf Case 2}: \emph{$x \in K$ and  $|\tilde d_{i}(t_0,x)-\tilde\sigma|\geq \frac{C_0}{|\ln\ep|}$ for all $i=1,\ldots, i_0$.}

We note that 
\[
\phi \left(\frac{d_i(t_0,x) - \tilde{\sigma}}{\ep}\right) \leq 1 
\]
and then estimate $\psi_i$ as in Case 2 in the proof of Proposition  \ref{prop:initialization}, to find
\[
v^\ep(t_0,x) \leq i_0 + C\gamma\ep|\ln \ep| - \tilde{\sigma} \ep | \ln \ep|. 
\]
On the other hand, since $x \in K$, we know that \eqref{eq:ue-delta} holds for given $\delta_0>0$ and $\ep$ small enough. 
Choosing $\gamma \leq \tilde{\sigma}/(2C) $ and $\delta_0=\tilde{\sigma}/2$, 
\[
u^\ep(t_0,x) \geq  i_0 - \delta_0 \ep | \ln \ep|  \geq v^\ep(t_0,x) 
\]
for $\ep >0$ small. We now have \eqref{initialpropo2} in Case 2. 

\medskip

\noindent
{\bf Case 3}: \emph{$x \notin K $.}

Since $d_{K}(t_0,x) \leq 0$, by \eqref{didi+1proppflow-K}, we have that $\tilde{d}_i(t_0,x) \leq - 2\tilde{\sigma}$,  for all $i=1,\ldots, i_0$. 
If $-\rho < \tilde{d}_i(t_0,x) < - 2\tilde{\sigma}$, then $\tilde{d}_i(t_0,x) = d_i(t_0,x)$. By \eqref{eq:asymptotics for phi} and
 \eqref{psiasymptoticsfar}, 
\begin{align*}
\phi \left(\frac{d_i(t_0,x)-\tilde{\sigma}}{\ep}\right)
	&+\ep |\ln \ep| \psi \left(\frac{d_i(t_0,x)-\tilde{\sigma}}{\ep};t_0,x\right)
	- \tilde{\sigma}\ep|\ln \ep|
\leq \frac{C\ep}{\tilde{\sigma}}+\frac{C\ep}{\tilde{\sigma}}-\tilde{\sigma}\ep|\ln \ep|  \leq 0
\end{align*}
for $\ep>0$ sufficiently small. 
If $\tilde{d}_i(t_0,x)< -\rho$, then $d_i(t_0,x) < -\rho$ and we similarly find
\begin{align*}
\phi \left(\frac{d_i(t_0,x)-\tilde{\sigma}}{\ep}\right)
	&+\ep |\ln \ep| \psi \left(\frac{d_i(t_0,x)-\tilde{\sigma}}{\ep};t_0,x\right)
	- \tilde{\sigma}\ep|\ln \ep|
\leq \frac{C\ep}{\rho}+\frac{C\ep}{\rho}-\tilde{\sigma}\ep|\ln \ep|  \leq 0
\end{align*}
for small $\ep>0$. In particular, we have shown $v^\ep(t_0,x) \leq 0$. 
Now, since the zero function is a solution to \eqref{eq:pde} and $u_0^{\ep} \geq 0$, the comparison principle implies $u^\ep(t_0,x) \geq 0$. Therefore, 
\[
u^\ep(t_0,x) \geq 0
	\geq v^\ep(t_0,x),
\]
and \eqref{initialpropo2} holds for Case 3. 

\medskip
This proves \eqref{initialpropo2}. Inequality \eqref{initialpropo2-boundary} follows with a similar argument using that 
 $\varphi_{i_0}$ satisfies \ref{item:i} and \ref{item:v}  in Definition \ref{defn:flow}.
With \eqref{initialpropo2} and \eqref{initialpropo2-boundary}, the comparison principle then implies
\begin{equation}\label{eq:comp-claim}
u^{\ep} (t,x)\geq v^{\ep}(t,x) \quad \hbox{for all}~ (t,x)\in [t_0,t_0+h'] \times \R^n. 
\end{equation}
By  \eqref{eq:barrier estimate-N} (with $N=i_0$),  we have that, for all $t\in[t_0,t_0+h']$, 
\begin{align*}
 \frac{u^{\ep}(t,x) - i_0}{\ep \abs{\ln \ep}} 
 	&\geq  \frac{v^{\ep}(t,x) - i_0}{\ep \abs{\ln \ep}} 
	\geq -2\tilde{\sigma} \quad \hbox{in}~\left\{x \in \R^n: d_{i_0}(t,x) - \tilde{\sigma} \geq \frac{C}{\tilde\sigma|\ln\ep|}\right\}.
\end{align*}
Letting $\ep \to 0$, it follows that, for all $t\in[t_0,t_0+h']$, 
\begin{equation}\label{lastinclusion_prop72}
\{x \in \R^n :d_{i_0}(t,x) - \tilde{\sigma} \geq 0\} \subset \left\{ x \in \R^n : \liminf_{\ep \to 0}{_*} \frac{u^{\ep}(t,x) - i_0}{\ep \abs{\ln \ep}} \geq -2 \tilde{\sigma} \right\}.
\end{equation}
Now, let $x_1 \in \{ x \in B(x_0,r) : \varphi_{i_0}(t_0+h,x) >0\}$, so that  $d_{i_0}(t_0+h,x_1)>0$.
Then, there exist $r_1>0$ and $0<\tau<h'-h$ such that for $|t-(t_0+h)|<\tau$, it holds that $B(x_1,r_1)\subset \{x \in B(x_0,r) :d_{i_0}(t,x) > 0 \}$ 
and by \eqref{lastinclusion_prop72}, for  $\tilde\sigma<r_1/2$, 
$$[t_0+h-\tau,t_0+h+\tau]\times B\left(x_1,\frac{r_1}{2}\right)\subset\left\{ (t,x): \liminf_{\ep \to 0}{_*} \frac{u^{\ep}(t,x) - i_0}{\ep \abs{\ln \ep}} \geq -2 \tilde{\sigma} \right\}.$$
Taking $\tilde{\sigma} \to 0$, we see that $(t_0+h,x_1)$ is an interior point of the set 
$$\left\{ (t,x) \in (0,\infty) \times\R^n : \liminf_{\ep \to 0}{_*} \frac{u^{\ep}(t,x) - i_0}{\ep \abs{\ln \ep}} \geq 0 \right\} ,$$
namely, it belongs to $D^{i_0}$. This proves the desired inclusion
\[
\{x \in B(x_0,r) :\varphi_{i_0}(t_0+h,x) > 0 \} 
	= \{x \in B(x_0,r) :d_{i_0}(t_0+h,x) >0 \} \subset  D^{i_0}_{t_0+h}. 
\]
\end{proof}

\section{Proof of Lemma \ref{lem:ae bound}}
\label{sec:estimates}

First, note that by the regularity of $\phi$ and $d$, there is some $\tau\in(0,1)$ and $C>0$ such that 
\begin{equation}\label{Taylorexpansionsecondphi0}\begin{split}&\abs{ \phi\(\xi +  \frac{d(t,x+\ep z) - d(t,x)}{\ep}\) - \phi\(\xi+ \nabla d(t,x) \cdot z\)}
\\&\leq \dot \phi\(  \xi +  (1-\tau) \nabla d(t,x) \cdot z+\tau \frac{d(t,x+\ep z) - d(t,x)}{\ep}\)C\ep|z|^2.\end{split}
\end{equation}

\subsection{Proof of \eqref{aLinfinityestimate} and \eqref{abarestimate}}

To prove \eqref{aLinfinityestimate}, we will show that for all $(\xi,t,x)\in\R\times [t_0,t_0+h]\times\R^n$,
\begin{equation}\label{aLinfinityestimate0}
|a_\ep(\xi;t,x)|\leq C\ep^\frac12.
\end{equation}
The same estimate for $\dot a_\ep$,  $\ddot a_\ep$ and $\dddot a_\ep$ will follow similarly by replacing $\phi$ in expression \eqref{aepsilondef} by
$\dot\phi$, $\ddot\phi$ and  $\dddot\phi$, respectively.

Begin by writing
\begin{align*}
a_{\ep}(\xi;t,x)
	&=\int_{\abs{z}<\ep^{-\frac12}} \( \phi\(\xi +  \frac{d(t,x+\ep z) - d(t,x)}{\ep}\) - \phi\( \xi + \nabla d(t,x) \cdot z\) \) \frac{dz}{\abs{z}^{n+1}}\\
	&\quad + \int_{\ep^{-\frac12}< |z|<\gamma \ep^{-1}} \( \phi\(\xi +  \frac{d(t,x+\ep z) - d(t,x)}{\ep}\) - \phi\( \xi + \nabla d(t,x) \cdot z\) \) \frac{dz}{\abs{z}^{n+1}}\\
	&=: I + II.
\end{align*}
For the long-range interactions,  
\begin{align*}
\abs{II}
	&\leq 2  \int_{\abs{z}>\ep^{-\frac12}}  \frac{dz}{\abs{z}^{n+1}}
	= C \ep^{\frac12}.
\end{align*}
For the short range interactions, we use \eqref{Taylorexpansionsecondphi0}  to estimate
\begin{align*}
\abs{I}
	&\leq C \int_{\abs{z}<\ep^{-\frac12}}   \ep \abs{z}^2 \, \frac{dz}{\abs{z}^{n+1}}
	= C \ep^{\frac12}.
\end{align*}
Estimate \eqref{aLinfinityestimate0} follows. 

Consequently, using the behavior of $\phi$ at $\pm\infty$ in \eqref{eq:standing wave}, we estimate
\begin{equation}\label{eq:abar-eps-proof}
\begin{aligned}
|\bar{a}_{\ep}(t,x)|
	&\leq \frac{1}{\ep \abs{\ln \ep}} \int_{\R} \abs{a_{\ep}(\xi;t,x)} \dot{\phi}(\xi) \, d \xi
	\leq \frac{C \ep^{\frac12}}{\ep \abs{\ln \ep}} \int_{\R} \dot{\phi}(\xi) \, d \xi
	= \frac{C}{\ep^{\frac12} \abs{\ln \ep}},
\end{aligned}
\end{equation}
which gives  \eqref{abarestimate}.

\subsection{Proof of \eqref{aderivativesLinfinityestimate} and \eqref{abarestimatextder}}

We first show the estimate for $\partial_t {a}_\ep$ in \eqref{aderivativesLinfinityestimate}. For this, we write
\begin{align*}
\partial_t a_\ep(\xi;t,x)
	&= \int_{|z|<\frac{\gamma}{\ep}} \left\{ \dot{\phi}\left( \xi + \frac{d(t,x+\ep z) - d(t,x)}{\ep}\right) 		\frac{d_t(t,x+\ep z) - d_t(t,x)}{\ep} \right.\\
	&\quad\qquad  - \dot{\phi}\left( \xi + \nabla d(t,x)\cdot z \right) \nabla d_t(t,x)\cdot z \bigg\} \frac{dz}{|z|^{n+1}}\\
	&=  \int_{|z|<\gamma} (\ldots) + \int_{\gamma< |z|<\frac{\gamma}{\ep}} (\ldots)
	=: I + II. 
\end{align*}
Beginning with $I$, we write
\begin{align*}
I
	&= \int_{|z|<\gamma} \left\{ \dot{\phi}\left( \xi + \frac{d(t,x+\ep z) - d(t,x)}{\ep}\right) 		\frac{d_t(t,x+\ep z) - d_t(t,x)-\nabla d_t(t,x)\cdot (\ep z) }{\ep} \right.\\
	&\quad+\left( \dot{\phi}\left( \xi + \frac{d(t,x+\ep z) - d(t,x)}{\ep}\right) - \dot{\phi}\left( \xi + \nabla d(t,x)\cdot z \right)\right) \nabla d_t(t,x)\cdot z \bigg\} \frac{dz}{|z|^{n+1}}.
\end{align*}
With the regularity of $\phi$ and $d$, we have
\begin{align*}
|I|
	&\leq  C\int_{|z|<\gamma} \left\{
	\frac{|d_t(t,x+\ep z) - d_t(t,x)-\nabla d_t(t,x)\cdot (\ep z) |}{\ep} \right.\\
	&\quad+\frac{|d_t(t,x+\ep z) - d_t(t,x)-\nabla d_t(t,x)\cdot (\ep z) |}{\ep}| \nabla d_t(t,x)\cdot z| \bigg\} \frac{dz}{|z|^{n+1}}\\
	&\leq C \int_{|z|<\gamma} \left( \ep |z|^2 + \ep |z|^3\right) \frac{dz}{|z|^{n+1}} \leq C \gamma \ep\leq C |\ln \ep|. 
\end{align*}
Next, we estimate $II$. By the regularity of $\phi$ and $d$, we have
\begin{align*}
|II|
&\leq C \int_{\gamma < |z|< \frac{\gamma}{\ep}} |z| \frac{dz}{|z|^{n+1}} 
= C |\ln \ep|.
\end{align*}
Combining the estimates for $I$ and $II$, we have \eqref{aderivativesLinfinityestimate} for $\partial_ta_\ep$. 
The estimate for $\nabla_x a_\ep$ in \eqref{aderivativesLinfinityestimate} follows along the same lines. 

We now check \eqref{aderivativesLinfinityestimate} for $D^2_x a_\ep$. 
For ease, we drop the notation in $t$ and write
\begin{align*}
&D^2_x a_\ep(\xi;x)\\
	&=  \int_{|z|<\frac{\gamma}{\ep}} \bigg\{ \ddot{\phi}\left( \xi + \frac{d(x+\ep z) - d(x)}{\ep}\right) \frac{(\nabla d(x+\ep z) - \nabla d(x))\otimes (\nabla d(x+\ep z) - \nabla d(x))}{\ep^2}  \\
	&\quad\qquad - \ddot{\phi}\left( \xi + \nabla d(x)\cdot z \right) D^2d(x)z \otimes D^2d(x)z \\
	&\quad\qquad + \dot{\phi}\left( \xi + \frac{d(x+\ep z) - d(x)}{\ep}\right)  \frac{D^2d(x+\ep z) - D^2d(x)}{\ep}\\
	&\quad\qquad - \dot{\phi}\left( \xi + \nabla d(x)\cdot z \right) D^3d(x)z.
	\bigg\} \frac{dz}{|z|^{n+1}}.
\end{align*}
Using the regularity of $\phi$ and $d$, we estimate to find
\begin{equation}\label{D2axsplit}\begin{split}
&|D^2_x a_\ep(\xi;x)|\\
	&\leq  \int_{|z|<\frac{\gamma}{\ep}} \bigg\{ \bigg|\ddot{\phi}\left( \xi + \frac{d(x+\ep z) - d(x)}{\ep}\right) \frac{(\nabla d(x+\ep z) - \nabla d(x))\otimes (\nabla d(x+\ep z) - \nabla d(x))}{\ep^2} \bigg| \\
	&\quad\qquad + \bigg| \ddot{\phi}\left( \xi + \nabla d(x)\cdot z \right) D^2d(x)z \otimes D^2d(x)z\bigg| \\
	&\quad\qquad + \bigg|\left(\dot{\phi}\left( \xi + \frac{d(x+\ep z) - d(x)}{\ep}\right)  - \dot{\phi}(\xi) \right) \frac{D^2d(x+\ep z) - D^2d(x)}{\ep}\bigg|\\
	&\quad\qquad +\bigg| \left(\dot{\phi}\left( \xi + \nabla d(x)\cdot z \right) - \dot{\phi}(\xi) \right)D^3d(x)z \bigg|\\
	&\quad\qquad +\bigg| \dot{\phi}(\xi) \left( \frac{D^2d(x+\ep z) - D^2d(x)}{\ep} - D^3d(x)z\right)	\bigg|
	\bigg\} \frac{dz}{|z|^{n+1}}\\
	&\leq C \int_{|z|<\frac{\gamma}{\ep}}\bigg\{ \frac{|\nabla d(x+\ep z) - \nabla d(x)|^2}{\ep^2}  
	 +|D^2d(x)z \otimes D^2d(x)z| \\
	 &\quad\qquad+  \frac{|d(x+\ep z) - d(x)|}{\ep} \frac{|D^2d(x+\ep z) - D^2d(x)|}{\ep}
	  + | \nabla d(x)\cdot z||D^3d(x)z|\\
	&\quad\qquad +\left| \frac{D^2d(x+\ep z) - D^2d(x)}{\ep} - D^3d(x)z	\right|
	\bigg\} \frac{dz}{|z|^{n+1}}\\
	&\leq C\int_{|z|< \frac{\gamma}{\ep}}  |z|^2 \frac{dz}{|z|^{n+1}}
	 \leq \frac{C}{\ep}. 
\end{split}
\end{equation}
With \eqref{aderivativesLinfinityestimate}, we estimate as in \eqref{eq:abar-eps-proof} to obtain
\[
|\partial_t \bar{a}_\ep (t,x)|
	\leq \frac{1}{\ep |\ln \ep|} \int_{\R} |\partial_t a_\ep(\xi;t,x)| \dot{\phi}(\xi) \, d \xi
	\leq \frac{C |\ln \ep|}{\ep |\ln \ep|} \int_{\R}  \dot{\phi}(\xi) \, d \xi
	=  \frac{C}{\ep}
\]
and similarly for $\nabla_x a_\ep$ and $D^2_x a_\ep$. This gives  \eqref{abarestimatextder}.

\medskip

\subsection{Proof of \eqref{aL2estimatefar}}
By  \eqref{aLinfinityestimate}, we can assume that $|\xi|>1$. 

Choose $\kappa>0$ such that 
\begin{equation}\label{kappaaepest}\|\nabla d\|_\infty \kappa<\frac14.\end{equation}
Then, for $|z|<\kappa |\xi|$,  we have that $| \nabla d(t,x) \cdot z|\leq |\xi|/4$ and $ \frac{|d(t,x+\ep z) - d(t,x)|}{\ep}\leq \|\nabla d\|_\infty |z|\leq |\xi|/4$. 
In particular, by \eqref{Taylorexpansionsecondphi0} and the  estimate  \eqref{eq:asymptotics for phi dot} for $\dot\phi$, 

\begin{equation}\label{estimatecase1_0far}\left|\phi\(\xi +  \frac{d(t,x+\ep z) - d(t,x)}{\ep}\) - \phi\( \xi + \nabla d(t,x) \cdot z\)\right|\leq \frac{C\ep|z|^2}{|\xi|^2}.
\end{equation}
Now, we write
\begin{align*}
a_{\ep}(\xi;t,x)&=\int_ {\{|z|<\frac{\gamma}{\ep}\}} \( \phi\(\xi +  \frac{d(t,x+\ep z) - d(t,x)}{\ep}\) - \phi\( \xi + \nabla d(t,x) \cdot z\) \) \frac{dz}{\abs{z}^{n+1}}\\
	&=\int_{\{|z|<\kappa |\xi|\}\cap \{|z|<\frac{\gamma}{\ep}\}} (\ldots)
		+ \int_{\{|z|>\kappa |\xi|\}\cap \{|z|<\frac{\gamma}{\ep}\}}  (\ldots)\\&
=: I + II.
\end{align*}
By  \eqref{estimatecase1_0far}, \begin{equation*}\label{Icase1prooffarfromthefrontaep}|I|\leq \frac{C\ep}{|\xi|^2} \int_{\{|z|<\kappa |\xi|\}}\frac{dz}{\abs{z}^{n-1}}=\frac{C\ep}{|\xi|}. 
\end{equation*}
Next, we have
$$|II|\leq 2 \int_{\{|z|>\kappa |\xi|\}}\frac{dz}{\abs{z}^{n+1}}\leq \frac{C}{|\xi|}.$$
Estimate \eqref{aL2estimatefar} for $a_\ep$ then follows. 

Estimate \eqref{aL2estimatefar} for $\dot{a}_{\ep}$ is proven in the same way, just replacing $\phi$ by $\dot{\phi}$.

\medskip

\subsection{Proof of   \eqref{aL2estimatefarxtder}}
By \eqref{aderivativesLinfinityestimate}, we can  assume that $|\xi|>1$. 

Choose $\kappa>0$ satisfying \eqref{kappaaepest}. 
We split
\begin{align*}
\partial_t a_\ep(\xi;t,x)
	&= \int_{\{|z|<\frac{\gamma}{\ep}\}} \left\{ \dot{\phi}\left( \xi + \frac{d(t,x+\ep z) - d(t,x)}{\ep}\right) 		\frac{d_t(t,x+\ep z) - d_t(t,x)}{\ep} \right.\\
	&\quad\qquad  - \dot{\phi}\left( \xi + \nabla d(t,x)\cdot z \right) \nabla d_t(t,x)\cdot z \bigg\} \frac{dz}{|z|^{n+1}}\\
	&=  \int_{\{|z|<\kappa |\xi|\}\cap \{|z|<\frac{\gamma}{\ep}\}} (\ldots) + \int_{\{|z|>\kappa |\xi|\}\cap \{|z|<\frac{\gamma}{\ep}\}}  (\ldots)
	=: I + II. 
\end{align*}
Beginning with $I$, we write
\begin{align*}
I&= \int_{\{|z|<\kappa |\xi|\}\cap \{|z|<\frac{\gamma}{\ep}\}} \left\{ \dot{\phi}\left( \xi + \frac{d(t,x+\ep z) - d(t,x)}{\ep}\right) 		\frac{d_t(t,x+\ep z) - d_t(t,x)-\nabla d_t(t,x)\cdot (\ep z) }{\ep} \right.\\
	&\quad+\left( \dot{\phi}\left( \xi + \frac{d(t,x+\ep z) - d(t,x)}{\ep}\right) - \dot{\phi}\left( \xi + \nabla d(t,x)\cdot z \right)\right) \nabla d_t(t,x)\cdot z \bigg\} \frac{dz}{|z|^{n+1}}.
\end{align*}
As in the proof of  \eqref{aL2estimatefar},  by  the  estimates  \eqref{eq:asymptotics for phi dot} for $\dot\phi$ and 
 $\ddot\phi$, if $|z|<\kappa |\xi|$ and $|z|<\frac{\gamma}{\ep}$, then 

$$0<\dot{\phi}\left( \xi + \frac{d(t,x+\ep z) - d(t,x)}{\ep}\right)\leq \frac{C}{|\xi|^2}$$ 
and 
$$ \left|\dot{\phi}\left( \xi + \frac{d(t,x+\ep z) - d(t,x)}{\ep}\right) - \dot{\phi}\left( \xi + \nabla d(t,x)\cdot z \right)\right|\leq  \frac{C\ep }{|\xi|^2}|z|^2\leq \frac{C }{|\xi|^2}|z| .$$
Therefore,
$$|I|\leq  \frac{C}{|\xi|^2}\int_{\{|z|<\kappa |\xi|\}}(\ep |z|^2+|z|^2)\frac{dz}{|z|^{n+1}}\leq \frac{C}{|\xi|}.$$
Regarding $II$, we have
\begin{align*}
|II|&\leq C\int_{\{|z|>\kappa |\xi|\}\cap \{|z|<\frac{\gamma}{\ep}\}}\left (\frac{1}{\ep}+|z|\right)\frac{dz}{|z|^{n+1}}\leq \frac{C}{\ep}\int_{\{|z|>\kappa |\xi|\}}\frac{dz}{|z|^{n+1}}\leq \frac{C}{\ep|\xi|}.
\end{align*}
From the estimates of $I$ and $II$, \eqref{aL2estimatefarxtder} for $\partial_t a_\ep$ follows. 

The estimate for  $\nabla_x a_\ep$ can be proven in the same way. 

We next  check \eqref{aL2estimatefarxtder}   for $D^2_x a_\ep$. 
As before, if $\kappa$ satisfies  \eqref{kappaaepest} and if $|z|<\kappa |\xi|$, 
then,  by   \eqref{eq:asymptotics for phi dot} for $\dot\phi$ and 
 $\ddot\phi$, 
$$\left |\ddot{\phi}\left( \xi + \frac{d(t,x+\ep z) - d(t,x)}{\ep}\right)\right|,\, |\ddot{\phi}\left( \xi + \nabla d(t,x)\cdot z \right)|,\,  \dot{\phi}(\xi) \leq \frac{C}{|\xi|^2},$$
$$\left|\dot{\phi}\left( \xi + \nabla d(t,x)\cdot z \right) - \dot{\phi}(\xi) \right|,\,\left|\dot{\phi}\left( \xi + \frac{d(t,x+\ep z) - d(t,x)}{\ep}\right)  - \dot{\phi}(\xi) \right|\leq \frac{C|z|}{|\xi|^2}.$$
Therefore, recalling \eqref{D2axsplit},
\begin{align*}
|D^2_x a_\ep(\xi;t,x)|
	&\leq  \frac{C}{|\xi|^2} \int_{\{|z|<\kappa |\xi|\}\cap \{|z|<\frac{\gamma}{\ep}\}}|z|^2\frac{dz}{|z|^{n+1}}+C\int_{\{|z|>\kappa |\xi|\}\cap \{|z|<\frac{\gamma}{\ep}\}}|z|^2\frac{dz}{|z|^{n+1}}\\&
	\leq \frac{C}{|\xi|^2} \int_{\{|z|<\kappa |\xi|\}}\frac{dz}{|z|^{n-1}}+\frac{C}{\ep^2} \int_{\{|z|>\kappa|\xi|\}}\frac{dz}{|z|^{n+1}}\leq   \frac{C}{\ep^2|\xi|}.
\end{align*}
\medskip

The proof of the second line in \eqref{aL2estimatefarxtder} is proven the same way, just replacing $\phi$ with $\dot{\phi}$. 

\medskip

The proof of  Lemma \ref{lem:ae bound} is then complete. \qed

\section{Proof of Lemma \ref{lem:ae-updates}} \label{sec:ae-updates}

\subsection{Proof of  \eqref{eq:aeestnonzerograd}} 
For ease, we drop the notation in $t$.  Note that 
\[
\frac{d(x+\ep z) - d(x)}{\ep} =  \nabla d(x) \cdot z
	+ \ep O(|z|^2).
\] 
If $|\nabla d(x)|\leq C_0\ep$, with $C_0>0$ to be determined, then 
$$|a_\ep(\xi;x)|\leq C\ep  \int_{\{|z|<\frac{\gamma}{\ep}\}}\frac{dz}{|z|^{n-1}}\leq C\leq \frac{C \ep |\ln \ep|}{|\nabla d(x)|},$$ 
as desired.

Next, let us assume 
\begin{equation}\label{eqaesplitnabla}c_1:=|\nabla d(x)|> C_0\ep.\end{equation}
Let $T$ be an orthogonal matrix such that if $z = Ty$, then 
\begin{equation}\label{eq:change-of-variablesT}
\nabla d(x) \cdot z = \nabla d(x) \cdot Ty = c_1 y_n . 
\end{equation}
With the change of variables $\ep z = Ty$ and using the monotonicity of $\phi$,  we obtain
\begin{align*}
a_\ep(\xi;x)
	&= \ep \int_{\{|y|<\gamma\}}  
	 \left\{ \phi \left( \xi + \frac{c_1 y_n}{\ep} + \frac{O( |y|^2)}{\ep}\right)
		 - \phi\left( \xi + \frac{c_1y_n}{\ep} \right) \right\} \frac{dy}{|y|^{n+1}}\\
		& \leq \ep \int_{\R^n}
	 \left\{ \phi \left( \xi + \frac{c_1 y_n}{\ep} + \frac{C |y|^2}{\ep}\right)
		 - \phi\left( \xi + \frac{c_1y_n}{\ep} \right) \right\} \frac{dy}{|y|^{n+1}}.
\end{align*}
Fix $r>\ep  $, to be chosen. For $y = (y', y_n)$, we write
\begin{align*}
\frac{a_\ep(\xi;x)}{\ep}
	&\le  \int_{ \{|y'|>r\}\cup\{ |y_n|>r\} }(\ldots)
		+ \int_{ \{|y'|, |y_n|<\ep \}}(\ldots)\\
		&\quad+ \underbrace{\int_{\{|y_n|<r\} \cap \{\ep < |y'|<r\}}(\ldots)}_{:= I}
		+  \underbrace{\int_{\{|y'|<\ep\} \cap \{\ep < |y_n|<r\}}(\ldots)}_{:= II}. 
\end{align*}
Since
\begin{align*}
&\int_{\{|y'|>r\}\cup\{ |y_n|>r\} }(\ldots)  \frac{dy}{|y|^{n+1}}
	 \leq 2 \int_{\{|y|>r\}} \frac{dy}{|y|^{n+1}} 
	 \leq \frac{C}{r}
\end{align*}
and 
\begin{align*}
&\int_{ \{|y'|, |y_n|<\ep \}}(\ldots) \frac{dy}{|y|^{n+1}}
	 \leq \frac{C}{\ep} \int_{ \{|y|< \sqrt{2}\ep \}}|y|^2 \frac{dy}{|y|^{n+1}} 
	 \leq C,
\end{align*}
we have
\begin{equation}\label{eq:aesplit}
\frac{a_\ep(\xi;x)}{\ep}
	\leq  O(1) + O\left(\frac{1}{r}\right) + I + II.
\end{equation}
Next, we will show that
\begin{equation}\label{eq:aesplitI}
I\leq \frac{C }{c_1}|\ln \ep|.
\end{equation}
For this, we will make the change of variables 
\begin{equation}\label{eq:change-of-variables}
t = \frac{y_n}{|y'|} \quad \hbox{and} \quad
y' = \theta p, \quad \theta \in S^{n-2}.
\end{equation}
Indeed,  changing variables first in $t$ for a fixed $y'$, we have
\begin{align*}
I
&= \int_{\{\ep < |y'|<r\}} dy' \int_{\{|y_n|<r\}} \frac{dy_n}{(|y'|^2 + |y_n|^2)^{\frac{n+1}{2}}}\\
&\qquad \left\{ \phi \left( \xi + \frac{c_1 y_n}{\ep} + \frac{C}{\ep}  ( |y'|^2 + |y_n|^2)\right)
		 - \phi\left( \xi + \frac{c_1 y_n}{\ep} \right) \right\} \\
&= \int_{\{\ep < |y'|<r\}}\frac{dy'}{|y'|^{n}}\int_{\{|t| < \frac{r}{|y'|} \}} \, \frac{dt}{\left(1 +t^2\right)^{\frac{n+1}{2}}}\\
&\qquad \left\{ \phi \left( \xi + \frac{|y'|c_1}{\ep}  t + C \frac{|y'|^2}{\ep}(1 + t^2)\right)
		 - \phi\left( \xi + \frac{|y'|c_1}{\ep}  t\right) \right\}.
\end{align*}
Then, we change to polar coordinates in $y'$ to write
\begin{align*}
I &=  \int_{S^{n-2}} d \theta \int_{\ep}^r \frac{dp}{p^2}\int_{\{|t| < \frac{r}{p} \}}\frac{dt}{ (1 +t^2)^{\frac{n+1}{2}}}\\
&\qquad \bigg\{ \phi \left( \xi + \frac{p c_1}{\ep}t  +C\frac{p^2}{\ep}(1 + t^2)\right)
		 - \phi\left( \xi + \frac{pc_1 }{\ep}  t\right) \bigg\}.
\end{align*}
We split
\[
I = \int_{S^{n-2}} d \theta \int_{\ep}^r \frac{dp}{p^2}\int_{\{|t| <1 \}} \, dt (\dots) 
+ \int_{S^{n-2}} d \theta \int_{\ep}^r \frac{dp}{p^2}\int_{\{1<|t| <\frac{r}{p} \}} \, dt (\dots)  =: I_1 + I_2. 
\]
If $|t|<1$, then
$1+t^2\leq C$. With this and the monotonicity of $\phi$, 
\begin{align*}
 I_1
	&\leq C\int_{\ep}^r \frac{dp}{p^2}\int_{\{|t| < 1 \}}
	 \bigg\{ \phi \left( \xi + \frac{p c_1}{\ep} t +C\frac{p^2}{\ep}\right)
		 - \phi\left( \xi + \frac{p c_1 }{\ep}  t\right) \bigg\} \, dt\\
	&= C\int_{\ep}^r \frac{dp}{p^2}\int_{\{|t| < 1\}} dt\int_0^1
	\dot\phi \left( \xi + \frac{p c_1}{\ep} t +C\frac{ p^2 \tau}{\ep} \right) \frac{p^2}{\ep} d \tau \\
	&= \frac{C}{\ep}\int_{\ep}^r d p  \int_0^1 d\tau \int_{\{|t| < 1\}}
	\partial_t \left[\phi \left( \xi + \frac{p c_1}{\ep} t +C\frac{ p^2 \tau}{\ep} \right) \right] \frac{\ep}{pc_1} d t\\
	&=\frac{C}{c_1}\int_{\ep}^r \frac{d p}{p}  \int_0^1 d\tau 
	\left\{\phi \left( \xi + \frac{p c_1}{\ep} +C\frac{ p^2 \tau}{\ep} \right) 
	-\phi \left( \xi - \frac{p c_1}{\ep} +C\frac{ p^2 \tau}{\ep}\right) \right\} \\
	&\leq \frac{ C|\ln\ep|}{c_1}. 
\end{align*}
This shows that 
\begin{equation}\label{eq:I1estimate}
I_1\leq\frac{ C|\ln\ep|}{c_1}. 
\end{equation}
Next, estimating $I_2$, we use that the integrand function is positive to find
\begin{align*}
I_2 &
	 \leq C
	  \int_{\ep}^{r} \frac{dp}{p^2}\int_{\{|t| >1 \}} \frac{dt}{(1+t^2)^{\frac{n+1}{2}}}\quad \bigg\{ \phi \left( \xi + \frac{p c_1}{\ep} t + \frac{Cp ^2}{\ep} (1+t^2)\right)
		 - \phi\left( \xi + \frac{p c_1 }{\ep}  t\right) \bigg\}. 
	\end{align*}
Let $r$ satisfy
\begin{equation}\label{eq:c1r}
c_1 \geq r,
\end{equation}
so that $c_1/p>1$ for $0<p<r$. Then we can split 
\begin{align*}
I_2 &\leq 
C
\int_{\ep}^{r} \frac{dp}{p^2}\int_{\{|t|>\sqrt{\frac{c_1}{p}} \}} \, dt (\dots)
+
C 
\int_{\ep}^{r} \frac{dp}{p^2}\int_{\{1<|t| <\sqrt{\frac{c_1}{p}} \}} \, dt (\dots)\\
  &=: J_1 + J_2.
\end{align*}
We will show that
\begin{equation}\label{eq:J1J2est}
 J_1,J_2 \leq\frac{ C|\ln\ep|}{c_1}.
\end{equation}
For $J_1$, we estimate
\begin{align*}
J_1
	&\leq C  \int_{\ep}^{r} \frac{dp}{p^2}\int_{\{|t| >\sqrt{\frac{c_1}{p}} \}} \, \frac{dt}{|t|^{n+1}}
	= \frac{C}{c_1^{\frac{n}{2}}} \int_{\ep}^{r} p^{\frac{n}{2}-2} dp 
	\leq  \frac{C}{c_1} \int_{\ep}^{r} \frac{dp}{p}
	\leq \frac{C}{c_1}|\ln\ep|.
\end{align*}
Estimating $J_2$, we find
\begin{align*}
J_2
	&=C \int_{\ep}^{r} \frac{dp}{p^2}\int_{\{1<|t| <\sqrt{\frac{c_1}{p}} \}} \frac{dt}{(1+t^2)^{\frac{n+1}{2}}} \int_0^1 
	  \dot{\phi} \left( \xi + \frac{p c_1}{\ep}  t +\frac{Cp^2 \tau}{\ep} (1+t^2)\right) \frac{p^2}{\ep} (1+t^2)d \tau\\
	&= C \int_{\ep}^{r} \frac{dp}{p} \int_0^1 d \tau \int_{\{1<|t| <\sqrt{\frac{c_1}{p}} \}} \frac{dt}{(1+t^2)^{\frac{n-1}{2}}} 
	 \partial_t \left[ \phi\left( \xi + \frac{p c_1}{\ep}  t +\frac{Cp^2 \tau}{\ep} (1+t^2)\right) \right] \frac{1}{c_1 + 2Cp \tau t}.
\end{align*}
Now, if
\[
\ep < p < r, \quad 0 < \tau < 1 \quad \hbox{and} \quad 1 \leq |t| \leq \sqrt{\frac{c_1}{p}},
\]
then
\[
|2C p\tau t| \leq 2C p \sqrt{\frac{c_1}{p}} = 2C \sqrt{c_1 p}\leq 2C \sqrt{c_1r},
\]
where we can assume $C>1$ in the last inequality above. Choose
\begin{equation}\label{eq:c1r-update}
r=\frac{c_1}{16C^2},
\end{equation}
then \eqref{eq:c1r} is satisfied, and choosing $C_0=16 C^2$ in \eqref{eqaesplitnabla} we also have that $r>\ep$. Moreover, 
$$c_1 + 2Cp\tau t\geq c_1- 2C \sqrt{c_1r}\geq \frac{c_1}{2}. $$
Then, integrating by parts, we get
\begin{align*}
J_2
	&\leq \frac{C}{c_1}  \int_{\ep}^{r} \frac{dp}{p} \int_0^1 d \tau \int_{\{1<|t| <\sqrt{\frac{c_1}{p}} \}} 
	  \partial_t \left[ \phi\left( \xi + \frac{p c_1}{\ep}  t +\frac{Cp^2 \tau}{\ep} (1+t^2)\right) \right]\frac{dt}{|t|^{n-1}} \\
	  &= \frac{C}{c_1}  \int_{\ep}^{r} \frac{dp}{p} \int_0^1 d \tau 
	  \Bigg[ (n-1)\int_{\{1<|t| <\sqrt{\frac{c_1}{p}} \}} 
	    \phi\left( \xi + \frac{p c_1 }{\ep}  t +\frac{Cp^2 \tau}{\ep} (1+t^2)\right) \frac{t}{|t|} \frac{dt}{|t|^{n}} \\
	    &\quad +  \phi\left( \xi + \frac{pc_1}{\ep}  t +\frac{Cp^2 \tau}{\ep} (1+t^2)\right) \frac{1}{|t|^{n-1}}\bigg|_{-\sqrt{\frac{c_1}{p}}}^{-1}\\
	    &\quad +  \phi\left( \xi + \frac{pc_1}{\ep}  t +\frac{Cp^2 \tau}{\ep} (1+t^2)\right) \frac{1}{|t|^{n-1}}\bigg|^{\sqrt{\frac{c_1}{p}}}_{1}\Bigg]\\
	    &\leq \frac{C}{c_1}  \int_{\ep}^{r} \frac{dp}{p} \int_0^1 d \tau  \left [ \int_{|t|>1}\frac{dt}{|t|^n}+1\right]\\
	    &= \frac{ C|\ln\ep|}{c_1}.
\end{align*}
  Hence, we obtain \eqref{eq:J1J2est}. The desired estimate \eqref{eq:aesplitI} follows from \eqref{eq:I1estimate} and \eqref{eq:J1J2est}.

It remains to estimate $II$. First, we make the same change of variables as in $I$ to write
\begin{align*}
II
	&=  \int_{S^{n-2}} d \theta \int_{0}^\ep \frac{dp}{p^2}\int_{\{\frac{\ep}{p} < |t|< \frac{r}{p}\}}\frac{dt}{ (1 +t^2)^{\frac{n+1}{2}}}\\
&\qquad \bigg\{ \phi \left( \xi + \frac{pc_1}{\ep} t + C\frac{p^2}{\ep}(1 + t^2)\right)
		 - \phi\left( \xi + \frac{p c_1}{\ep}  t\right) \bigg\}.
\end{align*}
With the regularity of $\phi$ and $d$, we have
\begin{align*}
II
	&\leq C 
	 \int_{0}^\ep \frac{dp}{p^2}\int_{\{\frac{\ep}{p} < |t|< \frac{r}{p}\}}
	\frac{p^2}{\ep} 
	\frac{dt}{ (1 +t^2)^{\frac{n-1}{2}}}\\
	&\leq  \frac{C}{\ep} \int_{0}^\ep dp \int_{\{1 < |t|< \frac{r}{p}\}}\frac{dt}{|t|}\\
	&\leq \frac{C}{\ep} \int_{0}^\ep (\ln r-\ln p) \, dp
	 \leq C |\ln \ep|. 
\end{align*}
Combining the previous estimate for $II$ with \eqref{eq:aesplit},   \eqref{eq:aesplitI} and \eqref{eq:c1r-update}   we get
\[
\frac{a_\ep(\xi;x)}{\ep}
	\leq \frac{C}{r} + \frac{ C|\ln\ep|}{c_1}\leq \frac{ C|\ln\ep|}{c_1}.
\] Similarly, one can show
\[
\frac{a_\ep(\xi;x)}{\ep}
	\geq - \frac{ C|\ln\ep|}{c_1}.
\] 
This proves \eqref{eq:aeestnonzerograd} for $a_\ep$. 
The estimates for $\dot{a}_\ep$ and $\ddot{a}_\ep$ are proved in the same way, just replacing $\phi$ by $\dot{\phi}$ and $\ddot{\phi}$, respectively. 
Lastly, \eqref{eq:aeestnonzerograd} for $\bar{a}_\ep$ follows from the estimate for $a_\ep$. 

\subsection{Proof of  \eqref{aderivativesclosetothefront}}
We will only prove the estimate for $\nabla_x a_\ep$, the  estimate for  $\partial_t a_\ep$ and 
$\nabla_x \dot a_\ep$ will follow similarly. Moreover,   \eqref{aderivativesclosetothefront} for the derivatives of $\bar{a}_\ep$ follows from the estimates for the derivatives of $a_\ep$.
For ease, we continue to drop the notation in $t$. 

We perform the same Taylor expansion  as above but we make the error term explicit, 
$$\frac{d(x+\ep z) - d(x)}{\ep} =  \nabla d(x) \cdot z
	 +\ep \int_0^1D^2d(x+\tau\ep z)(1-\tau)d\tau\, z\cdot z.
	$$
Recall  that  by Definition  \ref{defn:extension}, $d$ is  the smooth signed distance function from the front  $\Gamma_t$ in the set 
$\{|d|<\rho\}$ . Therefore, 
$|\nabla d(x)|=1$ for all $x$ in that set. As in \eqref{eq:change-of-variablesT} (with $c_1=1$), let $T$ be an orthogonal matrix such that if $z = Ty$, then 
\begin{equation}\label{eq:change-of-variablesT-2}
\nabla d(x) \cdot z = \nabla d(x) \cdot Ty =  y_n.
\end{equation}
With the change of variables $ z = Ty$, we obtain, for all $x\in \{|d|<\rho\}$, 
\begin{align*}
a_\ep(\xi;x)
	&=  \int_{\{|y|<\frac{\gamma}{\ep}\}}   \frac{dy}{|y|^{n+1}}
	 \left\{ \phi \left( \xi +  y_n +\ep \int_0^1D^2d(x+\tau\ep Ty)(1-\tau)d\tau\, Ty\cdot Ty\right)
		 - \phi\left( \xi + y_n \right) \right\}.
\end{align*}
Therefore, for $i=1,\ldots, n$, 
\begin{align*}
\partial_{x_i} a_\ep(\xi;x)&=  \int_{\{|y|<\frac{\gamma}{\ep}\}}   \frac{dy}{|y|^{n+1}}
	\dot  \phi \left( \xi +  y_n +\ep \int_0^1D^2d(x+\tau\ep Ty)(1-\tau)d\tau\, Ty\cdot Ty\right)\\
	&\qquad \cdot \ep \int_0^1D^2d_{x_i}(x+\tau\ep Ty)(1-\tau)d\tau\, Ty\cdot Ty,
	\end{align*}
	from which we deduce that 
	$$|\partial_{x_i} a_\ep(\xi;x)|\leq \ep  \int_{\{|y|<\frac{\gamma}{\ep}\}}   \frac{dy}{|y|^{n-1}}\leq C,$$
	as desired. 

\subsection{Proof of \eqref{aeclosetofront-xi}}
For ease, we continue to  drop the notation in $t$. 

If $x\in Q_\rho$, then, recalling Definition \ref{defn:extension},  $|\nabla d(x)|=1$. 
Estimate  \eqref{aeclosetofront-xi} is then a consequence of estimate \eqref{eq:aeestnonzerograd} for $a_\ep$ when $|\xi|\leq \gamma^{-2}$ for $\ep$ small enough. Therefore, we  assume 
$|\xi|>\gamma^{-2}$.

We will show that 
\begin{equation}\label{aeclosetofront-xi-oneside}
a_\ep(\xi;x)\leq\frac{C\gamma}{|\xi|}.
\end{equation}
Performing first the change of variable $\ep z=Tz$ with $T$ as  in \eqref{eq:change-of-variablesT-2}  
and then the change of variable in \eqref{eq:change-of-variables}, we get 
\begin{align*}
a_\ep(\xi;x) &=  \ep \int_{S^{n-2}} d \theta \int_{0}^\gamma \frac{dp}{p^2}\int_{\left\{|t| < \sqrt{\frac{\gamma ^2}{p^2}-1}\right \}}\frac{dt}{ (1 +t^2)^{\frac{n+1}{2}}}\\
&\qquad \bigg\{ \phi \left( \xi + \frac{p }{\ep}t  +\frac{p^2}{\ep}O(1 + t^2)\right)
		 - \phi\left( \xi + \frac{p }{\ep}  t\right) \bigg\}.
\end{align*}
By the monotonicity of $\phi$, for some $C_0>0$, 
\begin{align*}
&a_\ep(\xi;x) \leq C\ep 
 \int_{0}^\gamma \frac{dp}{p^2}\int_{\left\{|t| < \frac{\gamma }{p}\right \}}\frac{dt}{ (1 +t^2)^{\frac{n+1}{2}}}
 \bigg\{ \phi \left( \xi + \frac{p }{\ep}t  +\frac{C_0p^2}{\ep} (1+t^2)\right)
		 - \phi\left( \xi + \frac{p }{\ep}  t\right) \bigg\}.
\end{align*}
		
If $|\xi|\geq 2\gamma\ep^{-1}(1+2C_0)$, then for $0<p<\gamma<1$ and $|t| < \frac{\gamma}{p}$, 
$$\left| \xi + \frac{p }{\ep}t  +\frac{C_0p^2}{\ep}(1+t^2) \right|
	\geq |\xi| - \frac{|pt|}{\ep} - \frac{C_0p^2}{\ep} - \frac{C_0|pt|}{\ep}\geq |\xi| -\frac{\gamma}{\ep}(1+2C_0)
	\geq \frac{|\xi|}{2}$$
and by \eqref{eq:asymptotics for phi dot}, for some $\tau\in(0,1)$, 
\begin{align*} 
\phi \left( \xi + \frac{p }{\ep}t  +\frac{C_0p^2}{\ep} (1+t^2)\right)
		 - \phi\left( \xi + \frac{p }{\ep}  t\right) 
&=\dot\phi\(\xi+ \frac{p }{\ep}  t+\tau \frac{C_0p^2}{\ep} (1+t^2)\)\frac{C_0p^2}{\ep} (1+t^2)\\&
\leq \frac{C}{\left| \xi + \frac{p }{\ep}t  +\frac{C_0p^2}{\ep}(1+t^2) \right|^2} \frac{C_0p^2}{\ep} (1+t^2)\\&
		 \leq \frac{C p^2}{\ep |\xi|^2} (1+t^2).
\end{align*}
Therefore, 
\begin{align*}
a_\ep(\xi;x)&\leq 	 \frac{C}{ |\xi|^2} 	 \int_{0}^\gamma dp\int_{\left\{|t| < \frac{\gamma}{p}\right \}}\frac{dt}{ (1 +t^2)^{\frac{n-1}{2}}}
\leq \frac{C}{ |\xi|^2}\(\gamma+ \int_{0}^\gamma dp\int_{\left\{1<|t| < \frac{\gamma}{p}\right \}}\frac{dt}{| t|}\)\\&
\leq  \frac{C}{ |\xi|^2}\(\gamma  +\int_{0}^\gamma (\ln\gamma-\ln p)dp\)
\leq 	 \frac{C\gamma }{ |\xi|^2} \leq 	 \frac{C\ep}{|\xi|}, 
\end{align*}		
where we used that $|\xi|\geq 2\gamma\ep^{-1}(1+2C_0)$ in the last inequality. This implies \eqref{aeclosetofront-xi-oneside}. 

Next, let us assume $ \gamma^{-2}<|\xi|<2\gamma\ep^{-1}(1+2C_0)$. We split, 
\begin{equation}\label{aepsplit:aeclosetofront-xi}\begin{split}
&\frac{a_\ep(\xi;x)}{\ep} \leq C
 \int_{0}^\gamma \frac{dp}{p^2}\int_{\left\{|t| < \frac{\gamma}{p}\right \}}\frac{dt}{ (1 +t^2)^{\frac{n+1}{2}}}
 \bigg\{ \phi \left( \xi + \frac{p }{\ep}t  +\frac{C_0p^2}{\ep} (1+t^2)\right)
		 - \phi\left( \xi + \frac{p }{\ep}  t\right) \bigg\}\\
		 &=C\int_{0}^\gamma \frac{dp}{p^2}\int_{\left\{|t| <1\right \}}(\ldots)+C\int_{0}^\gamma \frac{dp}{p^2}\int_{\left\{1<|t| < \frac{\gamma}{p}\right \}}(\ldots)\\&
		 =:I+II. 
\end{split}
\end{equation}
As in the proof of \eqref{eq:aeestnonzerograd}, for $C_1=2C_0$, 	
\begin{align}\label{Ifirstest:aeclosetofront-xi} I\leq C
	 \int_{0}^\gamma \frac{dp}{p} \int_0^1 \(\phi\( \xi + \frac{p }{\ep}  +\frac{C_1p^2\tau}{\ep}\right)-\phi\( \xi - \frac{p }{\ep}  +\frac{C_1p^2\tau}{\ep}\right)\)d\tau.
\end{align}
We next observe that
\begin{equation}\label{eq:p<p1} \frac{p }{\ep}  +\frac{C_1p^2\tau}{\ep}< \frac{|\xi|}{2}\quad \text{if}\quad 0< p< p_1:=\frac{\ep|\xi|}{1+\sqrt{1+2C_1\tau\ep|\xi|}},\end{equation}
and since 
 $|\xi|<2\gamma\ep^{-1}(1+2C_0)$,  
\begin{equation}\label{p1p2}c \ep|\xi|\leq p_1\leq C\ep|\xi|.\end{equation}
Since the integrand function in \eqref{Ifirstest:aeclosetofront-xi}  is positive,  we can write
\begin{align*}
I&\leq C\int_0^1d\tau\int_0^{p_1}dp\,(\ldots)+C\int_0^1d\tau \int_{p_1}^{\max\{\gamma,p_1\}} dp\,(\ldots)
=:J_1+J_2. 
\end{align*}
By \eqref{eq:p<p1}  and \eqref{eq:asymptotics for phi dot}, if $0<p<p_1$, 
\begin{align*}\phi\( \xi + \frac{p }{\ep}  +\frac{C_1p^2\tau}{\ep}\right)-\phi\( \xi - \frac{p }{\ep}  +\frac{C_1p^2\tau}{\ep}\right)\leq 
\frac{C}{\(|\xi|-\frac{p }{\ep}-\frac{C_1p^2\tau}{\ep}\)^2}\frac{p }{\ep} \leq \frac{Cp}{\ep|\xi|^{2}}.
\end{align*}
Therefore, using \eqref{p1p2} and that  $|\xi|<2\gamma\ep^{-1}(1+2C_0)$, 
$$J_1\leq \frac{C}{\ep|\xi|^{2}}\int_0^{p_1}p\,dp\leq  \frac{Cp_1^2}{\ep|\xi|^{2}}\leq C\ep \leq  \frac{C\gamma}{|\xi|}.$$
Next, again by \eqref{p1p2},
\begin{align*}J_2&\leq C  \int_{p_1}^{\max\{\gamma,p_1\}}  \frac{dp}{p}
\leq C\frac{\max\{\gamma,p_1\}-p_1}{p_1}\leq \frac{C\gamma}{\ep|\xi|}.\end{align*}
Putting it all together, we obtain
\begin{equation}\label{Iestimate:aeclosetofront-xi}\ep I\leq \frac{C\gamma}{|\xi|}.
\end{equation}
Let us now estimate $II$ in \eqref{aepsplit:aeclosetofront-xi}. By the monotonicity of $\phi$, 
\begin{align*}\label{IIfirstest:aeclosetofront-xi} 
 II&\leq \int_{0}^\gamma \frac{dp}{p^2}\int_{\left\{1<|t| < \frac{\gamma }{p}\right \}}\frac{dt}{ |t|^{n+1}}
 \bigg\{ \phi \left( \xi + \frac{p t}{\ep}  +\frac{C_1p^2t^2}{\ep} \right) - \phi\left( \xi + \frac{p t}{\ep}  \right) \bigg\}\\
 &= \int_{0}^\gamma  \frac{dp}{p^2}\int_{\left\{1<t < \frac{\gamma }{p}\right \}}(\ldots)+ \int_{0}^\gamma  \frac{dp}{p^2}\int_{\left\{- \frac{\gamma }{p}<t<-1\right \}}(\ldots)\\&
 =:II_1+II_2.
\end{align*}
We are only going to  estimate $II_1$ since the estimate for $II_2$  follows with a similar argument. Thus, we assume below that $t>1$. 
Observe that 
\begin{equation}\label{eqII:p<p1} \frac{p t}{\ep} +\frac{C_1p^2t^2}{\ep}< |\xi|-\frac{\gamma}{2} |\xi|\quad \text{if}\quad0< tp< p_1:=\frac{2\ep\(|\xi|-\frac{\gamma}{2} |\xi|\)}{1+\sqrt{1+4C_1\ep \(|\xi|-\frac{\gamma}{2} |\xi|\)}}\end{equation}
and 
\begin{equation}\label{eqII:p<p2} \frac{p t}{\ep}  > |\xi|+\gamma|\xi|\quad 
\text{if}\quad tp> p_2:=\ep\(|\xi|+\gamma |\xi|\). \end{equation}
Notice that for    $ |\xi|<2\gamma\ep^{-1}(1+2C_0)$ and $0<\gamma<1$, 
\begin{equation}\label{p1p2bis}c \ep|\xi|\leq p_1,\,p_2\leq C\ep|\xi|\end{equation}
and 
\begin{equation}\label{p1-p2} 0\leq p_2-p_1\leq \ep|\xi|\(1-\frac{2}{1+\sqrt{1+O(\gamma)}}\)+C\ep\gamma |\xi|\leq C\gamma\ep|\xi| .\end{equation}
Therefore, if $0<tp<p_1$, by  \eqref{eq:asymptotics for phi dot} and \eqref{eqII:p<p1}, 
\begin{align}\label{phiest1:aeclosetofront-xi}
 \phi \left( \xi + \frac{pt }{\ep}  +\frac{C_1p^2t^2}{\ep} \right) - \phi\left( \xi + \frac{pt }{\ep}  \right) \leq \frac{C}{\(|\xi|- \frac{p t}{\ep} -\frac{C_1p^2t^2}{\ep}\)^2}\frac{p^2t^2}{\ep} 
 \leq \frac{Cp^2 t^2}{\ep\gamma^2|\xi|^2},
\end{align} 
and if  $tp> p_2$, 
by    \eqref{eq:asymptotics for phi} and \eqref{eqII:p<p2}, 
\begin{align}\label{phiest2:aeclosetofront-xibis}
 \phi \left( \xi + \frac{pt }{\ep}  +\frac{C_1p^2t^2}{\ep} \right) - \phi\left( \xi + \frac{pt }{\ep}  \right)
 \leq 1-\(1- \frac{C}{ \xi + \frac{pt }{\ep}  }\)\leq  \frac{C}{\gamma|\xi|}.
\end{align}
Assume $\gamma>p_2$. The cases $\gamma<p_1$ and $p_1\leq \gamma\leq p_2$ can be treated similarly.  We split
\begin{align*}
II_1&\leq  \int_{0}^{p_1} (\ldots)+\int_{p_1}^{p_2} (\ldots)
+\int_{p_2}^\gamma (\ldots)
=:J_1+J_2+J_3.
\end{align*}
We further split
\begin{align*}
J_1=\int_{0}^{p_1}\frac{dp}{p^2} \int_1^{\frac{p_1}{p}}dt\, (\ldots)+\int_{0}^{p_1}\frac{dp}{p^2}  \int_{\frac{p_1}{p}}^{\frac{p_2}{p}}dt\, (\ldots)+\int_{0}^{p_1} \frac{dp}{p^2} \int_{\frac{p_2}{p}}^{\frac{\gamma}{p}}dt \,(\ldots)=:J_1^1+J_1^2+J_1^3.
\end{align*}
By \eqref{p1p2bis}, \eqref{phiest1:aeclosetofront-xi} and  for $\ep<\gamma^3$,  
\begin{align*}J_1^1&\leq \frac{C}{\ep \gamma^2|\xi|^2 } \int_{0}^{p_1}dp \int_1^{\frac{p_1}{p}}\frac{dt}{t^{n-1}}\leq \frac{C}{\ep   \gamma^2|\xi|^2 } \int_{0}^{p_1}dp \int_1^{\frac{p_1}{p}}\frac{dt}{t}
=\frac{C}{\ep  \gamma^2|\xi|^2 } \int_{0}^{p_1}(\ln p_1-\ln p)dp\\&= \frac{Cp_1}{\ep  \gamma^2|\xi|^2  }\leq \frac{C}{\gamma^2 |\xi|}\leq \frac{C\gamma}{\ep |\xi|}. \end{align*}
By \eqref{p1p2bis} and  \eqref{p1-p2}, 
\begin{align*}
J_1^2&\leq 2 \int_{0}^{p_1}\frac{dp}{p^2} \int_{\frac{p_1}{p}}^{\frac{p_2}{p}}\frac{dt}{t^{n+1}}\leq  2 \int_{0}^{p_1}\frac{dp}{p^2} \int_{\frac{p_1}{p}}^{\frac{p_2}{p}}\frac{dt}{t^{3}}
= \int_{0}^{p_1}\(\frac{1}{p_1^2}-\frac{1}{p_2^2}\)\, dp=\frac{(p_2-p_1)(p_1+p_2)}{p_1p_2^2}\\&\leq \frac{C\gamma}{\ep|\xi|}.
\end{align*}
By \eqref{p1p2bis},  \eqref{phiest2:aeclosetofront-xibis} and since $ |\xi|>\gamma^{-2}$,
\begin{align*}
J_1^3&\leq \frac{C}{\gamma|\xi|} \int_{0}^{p_1}\frac{dp}{p^2} \int_{\frac{p_2}{p}}^{\frac{\gamma}{p}}\frac{dt}{t^{n+1}}\leq  \frac{C}{\gamma|\xi|} \int_{0}^{p_1}\frac{dp}{p^2} \int_{\frac{p_2}{p}}^{\infty}\frac{dt}{t^{3}}=\frac{C}{\gamma|\xi|}\int_{0}^{p_1}\frac{dp}{p_2^2} = \frac{Cp_1}{\gamma |\xi|p_2^2 } \leq
 \frac{C }{\ep\gamma  |\xi|^2}\leq \frac{C\gamma }{\ep  |\xi|}.
 \end{align*}
Therefore,
\begin{equation*}\label{J1est:aeclosetofront-xi}J_1\leq  \frac{C\gamma }{\ep  |\xi|}.
\end{equation*}
Next, by \eqref{p1p2bis} and \eqref{p1-p2}, 
\begin{align*}\label{J2est:aeclosetofront-xi}
J_2&\leq 2\int_{p_1}^{p_2}\frac{dp}{p^2}\int_1^{\infty}\frac{dt}{t^{n+1}}\leq C\int_{p_1}^{p_2}\frac{dp}{p^2}=C\frac{p_2-p_1}{p_1p_2}\leq  \frac{C\gamma}{\ep|\xi|}.
\end{align*}
Finally, if $p_2<p<\gamma$ and $t>1$, then $tp> p_2$. Hence, by  \eqref{p1p2bis}, \eqref{phiest2:aeclosetofront-xibis} and since $ |\xi|>\gamma^{-2}$,
\begin{align*}
J_3\leq \frac{C}{\gamma|\xi|}\int_{p_2}^{\gamma}\frac{dp}{p^2}\int_1^{\infty}\frac{dt}{t^{n+1}}\leq  \frac{C}{\gamma|\xi|}\int_{p_2}^{\gamma}\frac{dp}{p^2} 
\leq\frac{C}{\gamma|\xi|p_2}\leq \frac{C}{\ep\gamma |\xi|^2}\leq \frac{C\gamma }{\ep  |\xi|}.
\end{align*}
From the last three estimates, we infer that
\begin{align*}
\ep II_1\leq  \frac{C\gamma }{ |\xi|}.
\end{align*}
Similarly, one gets the same upper bound for 
$\ep II_2$ and thus
$$\ep II\leq  \frac{C\gamma }{ |\xi|},$$ which together with \eqref{aepsplit:aeclosetofront-xi} and \eqref{Iestimate:aeclosetofront-xi} gives \eqref{aeclosetofront-xi-oneside}. 
The lower bound for $ a_\ep(\xi;x) $ is obtained with a similar argument. Estimate \eqref{aeclosetofront-xi} follows. 

\medskip 
This completes the proof of Lemma \ref{lem:ae-updates}. 
\qed

\section{Proof of Lemma \ref{lem:ae near front}}\label{sec:estimatesbis}

 Lemma \ref{lem:ae near front} follows from the next  four lemmas. 
 
\begin{lem}\label{lem:ae-near}
Let $d$ be as in Definition \ref{defn:extension}.
If $\abs{d(t,x)}<\rho$, then there is $C >0$ such that
\[
\abs{a_{\ep}\(\frac{d(t,x)}{\ep};t,x\)- \left(\ep \mathcal{I}_n\left[\phi\(\frac{d(t,\cdot)}{\ep}\) \right](x) - C_n \mathcal{I}_1[\phi]\(\frac{d(t,x)}{\ep}\) \right)} 
	\leq  \frac{C\ep}{\gamma}.
\]
\end{lem}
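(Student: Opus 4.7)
The strategy is a direct computation: I will show that $a_{\ep}(d(t,x)/\ep;t,x)$ is precisely the truncation to $\{|z|<\gamma/\ep\}$ of an integral representation of $\ep\mathcal{I}_n[\phi(d(t,\cdot)/\ep)](x)-C_n\mathcal{I}_1[\phi](d(t,x)/\ep)$, and then control the tail by using that $\phi$ is bounded between $0$ and $1$.

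First, I would apply the change of variables $y=\ep z$ in the definition \eqref{eq:operator} of $\mathcal{I}_n$ to rewrite
\[
\ep\mathcal{I}_n\bigg[\phi\bigg(\frac{d(t,\cdot)}{\ep}\bigg)\bigg](x)=\operatorname{P.V.}\int_{\R^n}\bigg(\phi\bigg(\frac{d(t,x+\ep z)}{\ep}\bigg)-\phi\bigg(\frac{d(t,x)}{\ep}\bigg)\bigg)\frac{dz}{|z|^{n+1}}.
\]
Next, since $|d(t,x)|<\rho$ implies $|\nabla d(t,x)|=1$, Lemma \ref{lem:one to n} with $e=\nabla d(t,x)$ gives
\[
C_n\mathcal{I}_1[\phi]\bigg(\frac{d(t,x)}{\ep}\bigg)=\operatorname{P.V.}\int_{\R^n}\bigg(\phi\bigg(\frac{d(t,x)}{\ep}+\nabla d(t,x)\cdot z\bigg)-\phi\bigg(\frac{d(t,x)}{\ep}\bigg)\bigg)\frac{dz}{|z|^{n+1}}.
\]
Subtracting the two identities, the singular $\phi(d(t,x)/\ep)$ terms cancel. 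Using the smoothness of $d$ to Taylor expand $d(t,x+\ep z)=d(t,x)+\ep\nabla d(t,x)\cdot z+O(\ep^2|z|^2)$ together with the boundedness of $\dot\phi$ (Lemma \ref{lem:asymptotics}), the resulting integrand is $O(\ep|z|^2)$ near $z=0$ and hence the combined integrand is $O(\ep|z|^{1-n})/|z|^2 = O(\ep|z|^{-n+1})$ times $1/|z|^2$... more precisely $O(\ep|z|^{1-n})$, which is integrable at the origin for $n\geq 2$. The principal values can then be combined into an ordinary (absolutely convergent) integral:
\[
\ep\mathcal{I}_n\bigg[\phi\bigg(\frac{d(t,\cdot)}{\ep}\bigg)\bigg](x)-C_n\mathcal{I}_1[\phi]\bigg(\frac{d(t,x)}{\ep}\bigg)=\int_{\R^n}\bigg(\phi\bigg(\frac{d(t,x+\ep z)}{\ep}\bigg)-\phi\bigg(\frac{d(t,x)}{\ep}+\nabla d(t,x)\cdot z\bigg)\bigg)\frac{dz}{|z|^{n+1}}.
\]

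Comparing with the definition \eqref{aepsilondef} of $a_{\ep}$ evaluated at $\xi=d(t,x)/\ep$, the desired difference is exactly the tail
\[
\int_{\{|z|>\gamma/\ep\}}\bigg(\phi\bigg(\frac{d(t,x+\ep z)}{\ep}\bigg)-\phi\bigg(\frac{d(t,x)}{\ep}+\nabla d(t,x)\cdot z\bigg)\bigg)\frac{dz}{|z|^{n+1}}.
\]
Since $\phi$ is monotone with $\phi(-\infty)=0$ and $\phi(+\infty)=1$, the integrand is pointwise bounded by $2$. Lemma \ref{kernellemma3} then yields
\[
\bigg|\int_{\{|z|>\gamma/\ep\}}(\cdots)\frac{dz}{|z|^{n+1}}\bigg|\leq 2\int_{\{|z|>\gamma/\ep\}}\frac{dz}{|z|^{n+1}}=\frac{C\ep}{\gamma},
\]
which is the claimed bound.

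The only subtlety in this argument is the proper combining of the two principal values into a single absolutely convergent integral; once that bookkeeping is done, the tail estimate is immediate from the boundedness of $\phi$ and there is no genuinely hard step.
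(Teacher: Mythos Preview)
Your argument is correct and is essentially the same as the paper's: both identify $a_\ep(d/\ep;t,x)$ as the truncation to $\{|z|<\gamma/\ep\}$ of the integral representation of $\ep\mathcal{I}_n[\phi(d/\ep)]-C_n\mathcal{I}_1[\phi](d/\ep)$ (using Lemma~\ref{lem:one to n} with the unit vector $\nabla d(t,x)$), and then bound the tail by $2\int_{\{|z|>\gamma/\ep\}}|z|^{-n-1}\,dz=C\ep/\gamma$ via $0<\phi<1$. The only difference is presentational order; your remark that the combined integrand is $O(\ep|z|^{1-n})$ near the origin cleanly justifies dropping the principal values, which the paper handles more implicitly.
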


\begin{lem}\label{lem:ae-away}
Let $d$ be as in Definition \ref{defn:extension}.
If $\abs{d(t,x)}>\rho$, then there is $C>0$ such that
\[
\abs{a_{\ep}\(\frac{d(t,x)}{\ep};t,x\) } 
	\leq  \frac{C\ep}{\rho}.
\]
\end{lem}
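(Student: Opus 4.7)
The plan is to reduce Lemma \ref{lem:ae-away} directly to the pointwise decay estimate \eqref{aL2estimatefar} from Lemma \ref{lem:ae bound}, which has already been established. Recall that \eqref{aL2estimatefar} gives the bound
\[
|a_{\ep}(\xi;t,x)| \leq \frac{C}{1+|\xi|}
\]
for every $(\xi,t,x) \in \R \times [t_0,t_0+h] \times \R^n$, with $C>0$ independent of $\ep$.

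I would then evaluate this bound at the particular point $\xi = d(t,x)/\ep$. The hypothesis $|d(t,x)| > \rho$ immediately gives
\[
1 + |\xi| \geq |\xi| = \frac{|d(t,x)|}{\ep} > \frac{\rho}{\ep},
\]
and substituting this into the previous estimate yields
\[
\left| a_{\ep}\!\left(\frac{d(t,x)}{\ep};t,x\right) \right| \leq \frac{C\ep}{\rho},
\]
which is exactly the conclusion.

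In this sense, there is no real obstacle: the only work has already been done in proving \eqref{aL2estimatefar} in Section \ref{sec:estimates}, which crucially uses the asymptotic decay \eqref{eq:asymptotics for phi dot} of $\dot\phi$ at infinity together with the split between short-range and long-range interactions in the definition \eqref{aepsilondef} of $a_\ep$. The only thing worth emphasizing when writing the proof is that the bound \eqref{aL2estimatefar} is uniform in $(t,x)$, so that the smooth bounded extension of the signed distance function in Definition \ref{defn:extension} does not enter the argument beyond guaranteeing that $a_\ep$ is well defined at $\xi = d(t,x)/\ep$; the full strength of the refined estimates in Lemma \ref{lem:ae-updates} (which require $|\nabla d|\neq 0$ or $(t,x)\in Q_\rho$) is not needed here.
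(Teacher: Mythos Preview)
Your proposal is correct and matches the paper's proof exactly: the paper also simply states that the lemma is an immediate consequence of \eqref{aL2estimatefar}, evaluated at $\xi = d(t,x)/\ep$ with $|d(t,x)|>\rho$.
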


\begin{lem}\label{lem:I_n-away}
Let $d$ be as in Definition \ref{defn:extension}.
If $\abs{d(t,x)}>\rho$, then there is $C >0$ such that
\[
\abs{ \mathcal{I}_n\left[ \phi \( \frac{d(t,\cdot)}{\ep}\) \right](x) } 
	\leq  \frac{C}{\rho}.
\]
\end{lem}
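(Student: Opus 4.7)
The plan is to exploit the fact that, when $|d(t,x)|>\rho$, the function $u:=\phi(d(t,\cdot)/\ep)$ is nearly constant in a neighborhood of $x$ of size proportional to $\rho$: the decay estimates \eqref{eq:asymptotics for phi dot} for $\dot\phi,\ddot\phi$ will make $D^2u$ small on that neighborhood, after which the usual short-range/long-range split of the principal-value integral closes the argument.

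Concretely, for $R>0$ to be chosen, write
\[
\mathcal{I}_n[u](x)
=\int_{|y|<R}\bigl(u(x+y)-u(x)-\nabla u(x)\cdot y\bigr)\frac{dy}{|y|^{n+1}}
+\int_{|y|>R}\bigl(u(x+y)-u(x)\bigr)\frac{dy}{|y|^{n+1}}=:I+II.
\]
Since $0\le\phi\le1$, one immediately has $|II|\le 2\int_{|y|>R}|y|^{-n-1}\,dy\le C/R$. For $I$ I will use a second-order Taylor expansion, for which I need a pointwise bound on $D^2u$ in $B(x,R)$. From Definition \ref{defn:extension}, the extended distance $d$ is smooth with bounds on $\|\nabla d\|_\infty$ and $\|D^2d\|_\infty$ depending only on $\rho$; let $M:=\|\nabla d\|_\infty$. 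Choosing $R=\rho/(2M)$ ensures $|d(t,x+y)|\ge \rho/2$ for all $|y|\le R$. For $\ep<\rho/4$ we then have $|d(t,x+y)/\ep|\ge 1$, and Lemma \ref{lem:asymptotics} yields
\[
|\dot\phi(d/\ep)|,\ |\ddot\phi(d/\ep)|\le \frac{C\ep^2}{|d|^2}\le \frac{4C\ep^2}{\rho^2}.
\]

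Differentiating $u=\phi(d/\ep)$ twice gives $D^2u=\ep^{-2}\ddot\phi\,\nabla d\otimes\nabla d+\ep^{-1}\dot\phi\,D^2d$, whence $\|D^2u\|_{L^\infty(B(x,R))}\le C/\rho^2$ for a constant $C$ depending only on the extension (hence on $\rho$) but independent of $\ep$. By Taylor,
\[
|I|\le \tfrac12\|D^2u\|_{L^\infty(B(x,R))}\int_{|y|<R}\frac{dy}{|y|^{n-1}}\le \frac{C\,R}{\rho^2}=\frac{C}{\rho},
\]
and combining with $|II|\le C/R=CM/\rho$ gives the claim. For the remaining regime $\ep\ge\rho/4$, the naive bounds $\|\dot\phi\|_\infty,\|\ddot\phi\|_\infty<\infty$ together with $1/\ep\le 4/\rho$ give directly $\|D^2u\|_\infty\le C/\rho^2$, and the same split works.

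The proof is essentially a computation; there is no genuine obstacle, but the main care required is to track that the bound on $D^2u$ indeed has the $1/\rho^2$ scaling and that $R\sim\rho$ is the correct lengthscale to balance the two contributions, so that the long-range $C/R$ and the short-range $CR/\rho^2$ both yield the advertised $C/\rho$.
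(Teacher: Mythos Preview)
Your proof is correct and follows essentially the same approach as the paper: both split the integral at radius $R=\rho/(2\|\nabla d\|_\infty)$, bound the long-range piece by $C/R$ using $0\le\phi\le1$, and control the short-range piece via the second-order Taylor expansion together with the decay estimates \eqref{eq:asymptotics for phi dot} for $\dot\phi,\ddot\phi$ at arguments of size $\gtrsim\rho/\ep$. The only cosmetic difference is that you first compute $D^2u$ explicitly and then apply Taylor, whereas the paper expands $\phi(d(x+z)/\ep)$ directly; your extra case $\ep\ge\rho/4$ is harmless (and in the paper implicitly absorbed into the standing assumption that $\ep$ is small).
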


\begin{lem}\label{lem:I_1-away}
Let $d$ be as in Definition \ref{defn:extension}.
If $\abs{d(t,x)}>\rho$, then there is $C >0$ such that
\[
\abs{ \mathcal{I}_1[\phi]\(\frac{d(t,x)}{\ep}\) } 
	\leq  \frac{C\ep}{\rho}.
\]
\end{lem}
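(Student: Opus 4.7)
The plan is to exploit the equation satisfied by $\phi$ and reduce the statement to an estimate on $W'(\phi(d/\ep))$. From \eqref{eq:standing wave} we have
\[
\mathcal{I}_1[\phi]\!\left(\frac{d(t,x)}{\ep}\right) = \frac{1}{C_n} W'\!\left(\phi\!\left(\frac{d(t,x)}{\ep}\right)\right),
\]
so it suffices to show $|W'(\phi(d/\ep))| \leq C\ep/\rho$ whenever $|d(t,x)| > \rho$.

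First I would handle the case $d(t,x) > \rho$. Recall from Definition \ref{defn:extension} that $d$ is bounded (so $\xi := d/\ep$ may be large but is positive). By the asymptotic estimate \eqref{eq:asymptotics for phi} in Lemma \ref{lem:asymptotics}, for $\ep$ small enough so that $\rho/\ep \geq 1$,
\[
\phi\!\left(\frac{d}{\ep}\right) - 1 = -\frac{\ep}{\alpha d(t,x)} + O\!\left(\frac{\ep^2}{d(t,x)^2}\right).
\]
Now I would Taylor expand $W'$ at $1$. By the periodicity of $W$ in \eqref{eq:W}, $W'(1) = W'(0) = 0$ and $W''(1) = W''(0)$, so
\[
W'\!\left(\phi\!\left(\frac{d}{\ep}\right)\right) = W''(0)\!\left(\phi\!\left(\frac{d}{\ep}\right)-1\right) + O\!\left(\left(\phi\!\left(\frac{d}{\ep}\right)-1\right)^{\!2}\right) = O\!\left(\frac{\ep}{d(t,x)}\right),
\]
and since $d(t,x) > \rho$ this gives the desired $O(\ep/\rho)$ bound.

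The case $d(t,x) < -\rho$ is symmetric: by \eqref{eq:asymptotics for phi} we have $\phi(d/\ep) = -\ep/(\alpha d) + O(\ep^2/d^2) = O(\ep/|d|)$, and Taylor expansion of $W'$ at $0$ together with $W'(0)=0$ yields the same $O(\ep/|d|) \leq O(\ep/\rho)$ bound. Combining both cases and dividing by $C_n$ gives the lemma. There is essentially no obstacle; the only delicate point is remembering to use periodicity so that the expansion of $W'$ at $1$ has the same quadratic vanishing as at $0$, which is precisely what converts the $O(\ep/\rho)$ smallness of $\phi - H(d)$ into the same smallness of $W'(\phi)$.
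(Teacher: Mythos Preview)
Your proof is correct and follows essentially the same route as the paper's: use the equation $C_n\mathcal{I}_1[\phi]=W'(\phi)$, then exploit $W'(0)=W'(1)=0$ (via periodicity) together with the asymptotics \eqref{eq:asymptotics for phi} to get $W'(\phi(d/\ep))=O(\ep/|d|)$. The paper streamlines your two cases into one by writing $W'(\phi)=W'(\tilde\phi)$ with $\tilde\phi=\phi-H$, but the content is identical.
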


\subsection{Proof of Lemma \ref{lem:ae-near}}

First, we write $a_{\ep} = a_{\ep}\(d(t,x)/\ep ; t,x\)$ as
\begin{equation}\label{eq:split-for-ae-laplac}
\begin{aligned}
a_{\ep}
	&=\operatorname{P.V.} \int_{\R^n} \( \phi\(\frac{d(t,x+\ep z)}{\ep}\) - \phi\(\frac{d(t,x)}{\ep}\) \)\, \frac{dz}{\abs{z}^{n+1}} \\
	&\quad -\operatorname{P.V.}\int_{\R^n} \(\phi\( \frac{d(t,x)}{\ep} + \nabla d(t,x) \cdot z\) - \phi\(\frac{d(t,x)}{\ep}\)\) \, \frac{dz}{\abs{z}^{n+1}}\\
	&\quad- \int_{|z|>\frac{\gamma}{\ep}} \( \phi\(\frac{d(t,x+\ep z)}{\ep}\)  -\phi\( \frac{d(t,x)}{\ep} + \nabla d(t,x) \cdot z\) \) \, \frac{dz}{\abs{z}^{n+1}}.
\end{aligned}
\end{equation}
Recalling that $0 < \phi < 1$,  we first estimate
\begin{align*}
\int_{|z|>\frac{\gamma}{\ep}} \left| \phi\(\frac{d(t,x+\ep z)}{\ep}\)  -\phi\( \frac{d(t,x)}{\ep} + \nabla d(t,x) \cdot z\)\right| \, \frac{dz}{\abs{z}^{n+1}}
	\leq 2 \int_{|z|>\frac{\gamma}{\ep}}  \frac{dz}{\abs{z}^{n+1}} =  \frac{C\ep}{\gamma}. 
\end{align*}
Since $e = \nabla d(t,x)$ is a unit vector when $\abs{d(t,x)} \leq \rho$, by applying Lemma \ref{lem:one to n} to the second integral in the right-hand side of  \eqref{eq:split-for-ae-laplac} and a change of variable in the first integral, we obtain
\begin{align*}
a_{\ep}
	&= \ep \operatorname{P.V.} \int_{\R^n} \( \phi\(\frac{d(t,x+z)}{\ep}\) - \phi\(\frac{d(t,x)}{\ep}\) \)\, \frac{dz}{\abs{z}^{n+1}} 
	 -C_n \mathcal{I}_1[\phi]\(\frac{d(t,x)}{\ep}\) + O\left( \frac{\ep}{\gamma}\right)\\
	&= \ep \mathcal{I}_n\left[\phi\(\frac{d(t,\cdot)}{\ep}\) \right](x) - C_n \mathcal{I}_1[\phi]\(\frac{d(t,x)}{\ep}\)
	+ O\left( \frac{\ep}{\gamma}\right).
\end{align*}

\subsection{Proof of Lemma \ref{lem:ae-away}}
The lemma is an immediate consequence of \eqref{aL2estimatefar}. 

\subsection{Proof of Lemma \ref{lem:I_n-away}}
We drop the notation in $t$. For $c=\|\nabla d\|_\infty$, we write
\begin{align*}
\mathcal{I}_n\left[ \phi \( \frac{d(\cdot)}{\ep}\) \right](x)
	&=\operatorname{P.V.} \int_{\R^n} \(\phi \( \frac{d(x+z)}{\ep}\) - \phi \( \frac{d(x)}{\ep}\) \) \frac{dz}{\abs{z}^{n+1}}\\
	&= \int_{\abs{z}< \frac{\rho}{2c}} \(\phi \( \frac{d(x+z)}{\ep}\) - \phi \( \frac{d(x)}{\ep}\) - \dot{\phi} \( \frac{d(x)}{\ep}\) \frac{\nabla d(x)}{\ep} \cdot z  \) \frac{dz}{\abs{z}^{n+1}}\\
	&\quad + \int_{\abs{z}> \frac{\rho}{2c}} \(\phi \( \frac{d(x+z)}{\ep}\) - \phi \( \frac{d(x)}{\ep}\) \) \frac{dz}{\abs{z}^{n+1}}\\
	&=: I + II. 
\end{align*}
Looking first at the long-range interactions, we use $0 < \phi< 1$ to estimate
\[
\abs{II} \leq  2 \int_{\abs{z}> \frac{\rho}{2c}} \frac{dz}{\abs{z}^{n+1}} = \frac{C}{\rho}.
\]
For the short-range interactions, fix $z$ such that $\abs{z}<\rho/(2c)$. Then, by Taylor's theorem, 
\begin{align*}
&\phi \( \frac{d(x+z)}{\ep}\) - \phi \( \frac{d(x)}{\ep}\) - \dot{\phi} \( \frac{d(x)}{\ep}\) \frac{\nabla d(x)}{\ep} \cdot z \\
	&=\frac12 \( \ddot{\phi} \( \frac{d(x+\tau z)}{\ep}\) \frac{\nabla d(x + \tau z) \otimes \nabla d(x+\tau z)}{\ep^2} +  \dot{\phi} \( \frac{d(x+\tau z)}{\ep}\) \frac{D^2d(x+\tau z)}{\ep} \) z \cdot z
\end{align*}
for some $0 \leq \tau \leq 1$. 
Since
\[
\abs{\frac{d(x+\tau z)}{\ep}} 
	\geq \frac{\abs{d(x)}}{\ep} - \frac{c\abs{z}}{\ep}
	> \frac{\rho}{\ep} - \frac{\rho}{2\ep} = \frac{\rho}{2\ep},
\]
we can apply \eqref{eq:asymptotics for phi dot} to estimate
\[
\abs{\dot{\phi} \( \frac{d(x+\tau z)}{\ep}\)} \leq \frac{C \ep^2}{\rho^2}
\quad \hbox{and} \quad
\abs{\ddot{\phi} \( \frac{d(x+\tau z)}{\ep}\)} \leq \frac{C \ep^2}{\rho^2}.
\]
Therefore, with the regularity of $d$, 
\[
\abs{\phi \( \frac{d(x+z)}{\ep}\) - \phi \( \frac{d(x)}{\ep}\) - \dot{\phi} \( \frac{d(x)}{\ep}\) \frac{\nabla d(x)}{\ep} \cdot z}
	\leq  \frac{C}{\rho^2} \abs{z}^2. 
\]
Thus,  we have that
\[
\abs{I} \leq  \frac{C}{\rho^2} \int_{\abs{z}< \frac{\rho}{2c}} \abs{z}^2 \frac{dz}{\abs{z}^{n+1}} = \frac{C}{\rho}.
\]
The conclusion follows by combining the estimates for $I$ and $II$. 

\subsection{Proof of Lemma \ref{lem:I_1-away}}
From \eqref{eq:standing wave}, the periodicity of $W$  and $W'(0)=0$, we see that 
$$ \mathcal{I}_1[\phi]\(\frac{d(t,x)}{\ep}\)=\frac{1}{C_n}W'\left(\phi\(\frac{d(t,x)}{\ep}\)\right)=\frac{1}{C_n}W'\left(\tilde\phi\(\frac{d(t,x)}{\ep}\)\right)=O\left(\tilde\phi\(\frac{d(t,x)}{\ep}\)\right),$$
where, as usual, $\tilde\phi(\xi)=\phi(\xi)-H(\xi)$ and $H$ is the Heaviside function. Therefore, by \eqref{eq:asymptotics for phi},
$$\left|\mathcal{I}_1[\phi]\(\frac{d(t,x)}{\ep}\)\right|\leq \frac{C\ep}{|d(t,x)|}\leq \frac{C\ep}{\rho}.$$

\medskip
The proof of  Lemma \ref{lem:ae near front} is then completed.  \qed

\section{Proof of Lemma \ref{lem:ae psi estimate} }\label{lem:ae psi estimatesec}

Lemma \ref{lem:ae psi estimate} is a consequence of the following three lemmas. 

\begin{lem}\label{lem:I_1-psi}
Let $d$ be as in Definition \ref{defn:extension}.
If $\abs{d(t,x)}< \rho/2$, then there is $C >0$ such that
\[
\abs{\ep \mathcal{I}_n\left[ \psi \( \frac{d(t,\cdot)}{\ep};t,\cdot\) \right](x) 
	- C_n \mathcal{I}_1[\psi\(\cdot;t,x\)]\(\frac{d(t,x)}{\ep}\)  } \leq   \frac{C}{|\ln\ep|^\frac12}.
\]
\end{lem}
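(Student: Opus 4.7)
The plan is to reduce the difference to a single principal-value integral and then estimate it on three nested scales. After the change of variable $z=\ep y$ in the $n$-dimensional Laplacian and applying Lemma~\ref{lem:one to n} (which is available because $|\nabla d(t,x)|=1$ on $Q_\rho$), the quantity to bound becomes
\[
b_\ep(t,x):=\operatorname{P.V.}\int_{\R^n} f(y)\,\frac{dy}{|y|^{n+1}},
\]
where, with $\xi_0:=d(t,x)/\ep$,
\[
f(y):=\psi\bigl(d(t,x+\ep y)/\ep;\,t,x+\ep y\bigr)-\psi\bigl(\xi_0+\nabla d(t,x)\cdot y;\,t,x\bigr).
\]
I would split $\R^n$ into the regions $\{|y|<R\}$, $\{R<|y|<\rho/(2\ep)\}$ and $\{|y|>\rho/(2\ep)\}$, with $R=c|\ln\ep|^{\frac12}$ to be optimized at the end. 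The middle threshold is chosen so that, for $\ep$ sufficiently small, $(t,x+\ep y)\in Q_\rho$ throughout the first two regions (using the hypothesis $|d(t,x)|<\rho/2$ together with the boundedness of $\nabla d$ from Definition~\ref{defn:extension}), which ensures that the sharp near-front estimates \eqref{psiLinfinityasymptoticnewclosefront} apply simultaneously at both arguments of $f$.

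On the short range $|y|<R$ I would decompose $f=A+B$ where $A$ isolates the nonlinearity of $d$ and $B$ isolates the $x$-dependence of $\psi$:
\[
A(y)=\psi\bigl(d(t,x+\ep y)/\ep;t,x+\ep y\bigr)-\psi\bigl(\xi_0+\nabla d\cdot y;t,x+\ep y\bigr),
\]
\[
B(y)=\psi\bigl(\xi_0+\nabla d\cdot y;t,x+\ep y\bigr)-\psi\bigl(\xi_0+\nabla d\cdot y;t,x\bigr).
\]
Since $d(t,x+\ep y)/\ep-\xi_0-\nabla d\cdot y=O(\ep|y|^2)$ by Taylor expansion and $|\dot\psi|\leq C$ on $Q_\rho$ by \eqref{psiLinfinityasymptoticnewclosefront}, the mean value theorem yields $|A(y)|\leq C\ep|y|^2$. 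For $B$, a Taylor expansion in $x$ gives $B(y)=\ep y\cdot\nabla_x\psi(\xi_0+\nabla d\cdot y;t,x)+O(\ep^2|y|^2\|D_x^2\psi\|_\infty)$; a further expansion of the gradient in $\xi$ around $\xi_0$ isolates the purely linear-in-$y$ term $\ep y\cdot\nabla_x\psi(\xi_0;t,x)$, which is odd in $y$ and vanishes under $\operatorname{P.V.}$ on the symmetric ball. The surviving pieces are controlled by $|\nabla_x\dot\psi|\leq C/(\ep|\ln\ep|)$ from \eqref{psiLinfinityasymptoticnewclosefront} and by the global bound $|D_x^2\psi|\leq C/(\ep^2|\ln\ep|)$ from \eqref{psiLinfinityasymptoticsfar}, each contributing $O(|y|^2/|\ln\ep|)$. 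Integrating, the short-range total is at most $C\ep R+CR/|\ln\ep|=O(R/|\ln\ep|)$.

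On the medium range both arguments of $\psi$ lie in $Q_\rho$, so $|f(y)|\leq 2C$ by \eqref{psiLinfinityasymptoticnewclosefront} and the contribution is $O(1/R)$. On the far range $|y|>\rho/(2\ep)$, where the translated base point may leave $Q_\rho$, the global estimate $|\psi|\leq C/(\ep^{\frac12}|\ln\ep|)$ from \eqref{psiLinfinityasymptoticsfar} (applied to the first evaluation) combined with the $Q_\rho$ bound on the second yields a contribution of order $\ep^{\frac12}/|\ln\ep|$, which is negligible. Balancing the short- and medium-range contributions through the choice $R=c|\ln\ep|^{\frac12}$ then gives $|b_\ep(t,x)|\leq C/|\ln\ep|^{\frac12}$, as required.

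The main obstacle is the principal-value cancellation on the short range: because $\nabla_x\psi$ is evaluated at the $y$-dependent point $\xi_0+\nabla d\cdot y$, one must expand it once more in $\xi$ around $\xi_0$ to extract the odd-in-$y$ piece that vanishes under $\operatorname{P.V.}$, and the resulting quadratic remainder is only controlled thanks to the sharp near-front estimate on $\nabla_x\dot\psi$ in \eqref{psiLinfinityasymptoticnewclosefront}—a weaker bound would spoil the balance. Equally tight is the pairing between the $\ep^2$ from the second-order Taylor remainder in $x$ and the $\ep^{-2}$ appearing in the global $D_x^2\psi$ estimate, so that simultaneous use of the near-front bounds \eqref{psiLinfinityasymptoticnewclosefront} and the global bounds \eqref{psiLinfinityasymptoticsfar} from Theorem~\ref{lem:psi-reg} is indispensable.
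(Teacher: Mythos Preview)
Your argument is correct and follows essentially the same route as the paper: the same reduction via Lemma~\ref{lem:one to n}, the same $A+B$ decomposition of $f$ (the paper's $I$ and $II$), the same extraction of the odd-in-$y$ term $\ep y\cdot\nabla_x\psi(\xi_0;t,x)$ to handle the principal value, and the same combination of near-front bounds \eqref{psiLinfinityasymptoticnewclosefront} with the global bound on $D_x^2\psi$ from \eqref{psiLinfinityasymptoticsfar}. The only organizational difference is that the paper treats the $A$-piece over all of $\R^n$ with its own split at scale $\ep^{-1/2}$ (using the global $\|\dot\psi\|_\infty$ bound) and obtains $|I|\le C/|\ln\ep|$, whereas you restrict $A$ to the short range and use the near-front $|\dot\psi|\le C$ to get $C\ep R$; both are well within the target $|\ln\ep|^{-1/2}$.
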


\begin{lem}\label{lem:I_n-awaypsi}
Let $d$ be as in Definition \ref{defn:extension}.
If $\abs{d(t,x)}>\rho/2$, then there is $C >0$ such that
\[
\abs{ \mathcal{I}_1[\psi\(\cdot;t,x\)]\(\frac{d(t,x)}{\ep}\) } 
	\leq  \frac{C}{|\ln\ep|\rho}.
\]
\end{lem}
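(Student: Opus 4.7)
The natural approach is to bypass direct analysis of the nonlocal operator $\mathcal{I}_1$ by invoking the equation \eqref{eq:linearized wave} that $\psi$ satisfies. Rewriting
\[
\mathcal{I}_1[\psi(\cdot;t,x)](\xi) = \frac{1}{C_n}\left(W''(\phi(\xi))\psi(\xi;t,x) - g(\xi;t,x)\right),
\]
with $g$ as in \eqref{gdef}, I reduce the problem to pointwise estimates on $\psi$ and on the three pieces of $g$ evaluated at the specific point $\xi_0 := d(t,x)/\ep$. The key observation is that, since $|d(t,x)| > \rho/2$, one has $|\xi_0| \geq \rho/(2\ep)$, so that the decay $1/(1+|\xi|)$ translates into a factor $\ep/\rho$ at $\xi = \xi_0$.

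The plan is to estimate each contribution separately. For the $W''(\phi)\psi$ term, I will invoke the global asymptotic estimate \eqref{psiasymptoticsfar}, which yields
\[
|W''(\phi(\xi_0))\psi(\xi_0;t,x)| \le C |\psi(\xi_0;t,x)| \le \frac{C}{\ep|\ln\ep|(1+|\xi_0|)} \le \frac{C}{|\ln\ep|\rho}.
\]
For the first term in $g$, I will use the global bound \eqref{aL2estimatefar} on $a_\ep$ to get
\[
\frac{|a_\ep(\xi_0;t,x)|}{\ep|\ln\ep|} \le \frac{C}{\ep|\ln\ep|(1+|\xi_0|)} \le \frac{C}{|\ln\ep|\rho}.
\]
For the $\dot\phi$ term in $g$, I combine the decay $|\dot\phi(\xi_0)| \le C/\xi_0^2 \le C\ep^2/\rho^2$ from \eqref{eq:asymptotics for phi dot} with $|\bar a_\ep| \le C/(\ep^{1/2}|\ln\ep|)$ from \eqref{abarestimate} and the fact that $\sigma$ is a fixed constant, obtaining a bound of order $\ep^{3/2}/(\rho^2|\ln\ep|)$, which is negligible compared to $1/(|\ln\ep|\rho)$ for small $\ep$.

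The only slightly delicate piece is the term $\tilde\sigma(W''(\phi(\xi_0)) - W''(0))$. Here I will exploit the periodicity of $W$ to write, for positive $\xi_0$, $W''(0)=W''(1)=W''(H(\xi_0))$, and similarly for negative $\xi_0$; Taylor expanding $W''$ around $H(\xi_0)$ and applying \eqref{eq:asymptotics for phi} gives
\[
|W''(\phi(\xi_0)) - W''(0)| \le C|\phi(\xi_0) - H(\xi_0)| \le \frac{C}{|\xi_0|} \le \frac{C\ep}{\rho},
\]
again negligible. Summing these four contributions yields the claim. I do not expect any serious obstacle: the main (mild) point is keeping track that the $\ep|\ln\ep|$ in the denominator of $\psi$ combines with $\ep|\xi_0| \ge \rho/2$ so that the $\ep$'s cancel, producing exactly the stated $1/(|\ln\ep|\rho)$ rate.
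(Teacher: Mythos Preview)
Your proposal is correct and follows essentially the same approach as the paper: both use the equation \eqref{eq:linearized wave} to express $\mathcal{I}_1[\psi]$ as $C_n^{-1}(W''(\phi)\psi - g)$ and then bound each term pointwise at $\xi_0=d(t,x)/\ep$ using \eqref{psiasymptoticsfar}, \eqref{aL2estimatefar}, \eqref{eq:asymptotics for phi dot}, \eqref{abarestimate}, and \eqref{eq:asymptotics for phi}. Your treatment of the $W''(\phi)-W''(0)$ term via periodicity and Taylor expansion is exactly the mechanism behind the paper's $O(\tilde\phi)$ estimate.
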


\begin{lem}\label{lem:ae-awaypsi}
Let $d$ be as in Definition \ref{defn:extension}.
If $\abs{d(t,x)}>\rho/2$, then there is $C >0$ such that
\[
\abs{\ep \mathcal{I}_n\left[ \psi \( \frac{d(t,\cdot)}{\ep};t,\cdot\) \right](x) 
	  } \leq \frac{C}{|\ln\ep|^\frac12\rho}.
\]
\end{lem}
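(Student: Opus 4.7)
The plan is to estimate $\ep\mathcal I_n[v](x)$, where $v(y):=\psi(d(t,y)/\ep;t,y)$ and the $t$-variable is suppressed, by splitting the principal-value integral into three annuli centered at $x$: a short-range ball $\{|z|<R\}$, an intermediate annulus $\{R<|z|<r_0\}$, and a far-field region $\{|z|>r_0\}$, with $r_0:=\rho/(4\|\nabla d\|_\infty)$ and $R:=\ep|\ln\ep|^{1/2}$. Since $d$ is Lipschitz, every $y\in B(x,r_0)$ satisfies $|d(y)|>\rho/4$, so \eqref{psiasymptoticsfar} gives $|v(y)|\le C/(\rho|\ln\ep|)$ on $B(x,r_0)$, while \eqref{psiLinfinityasymptoticsfar} yields $|v(y)|\le C/(\ep^{1/2}|\ln\ep|)$ elsewhere.

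In the far-field ($|z|>r_0$), the global $L^\infty$ bound and $\int_{|z|>r_0}dz/|z|^{n+1}\le C/r_0=C/\rho$ yield a contribution of order $C\ep^{1/2}/(\rho|\ln\ep|)$, which is $o\bigl(1/(\rho|\ln\ep|^{1/2})\bigr)$. In the intermediate range, the refined bound $|v|\le C/(\rho|\ln\ep|)$ combined with $\int_{R<|z|<r_0}dz/|z|^{n+1}\le C/R$ produces a contribution of order $C\ep/(R\rho|\ln\ep|)=C/(\rho|\ln\ep|^{3/2})$, again negligible.

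The main work is the short-range integral. Principal-value symmetry kills the first-order Taylor term $z\cdot\nabla v(x)$, leaving a quadratic remainder controlled by $\sup_{B(x,R)}|D^2 v|$. To bound $|D^2 v(y)|$ for $y\in B(x,R)$, I would establish derivative decay of $\psi(\xi;y)$ at the large values $|\xi|\ge\rho/(4\ep)$, a fact not explicit in Theorem \ref{lem:psi-reg}. This is obtained by a bootstrap: by \eqref{gL2estimate} one has $|g(\xi;y)|\le C/(\ep|\ln\ep||\xi|)$ for $|\xi|\ge 1$, and differentiating the definition \eqref{gdef} of $g$ and invoking sharpened versions of Lemma \ref{lem:ae bound} for $\dot a_\ep,\ddot a_\ep$ (proved by the same argument that gives \eqref{aL2estimatefar}) yields the same rate for $\dot g$ and $\ddot g$; the barrier argument from the proof of \eqref{psiasymptoticsfar}, applied to the linearized equations satisfied by $\dot\psi$ and $\ddot\psi$, then upgrades the $\psi$-decay to $|\dot\psi(\xi;y)|,|\ddot\psi(\xi;y)|\le C/(\rho|\ln\ep|)$ for $|\xi|\ge\rho/(4\ep)$. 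The chain rule and the uniform bounds on derivatives of $d$ give $|D^2 v(y)|\le C/(\rho\ep^2|\ln\ep|)$ on $B(x,R)$, so the short-range contribution is bounded by $\ep\cdot C/(\rho\ep^2|\ln\ep|)\cdot R = CR/(\rho\ep|\ln\ep|) = C/(\rho|\ln\ep|^{1/2})$, matching the required rate.

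The main obstacle is this bootstrap upgrade of the $\xi$-decay from $\psi$ to $\dot\psi$ and $\ddot\psi$, which requires carefully propagating the decay of $g$ through the differentiated linearized equations. An alternative, possibly cleaner route, would be to first add and subtract the ``flattened'' integrand $\psi((d(x)+\nabla d(x)\cdot z)/\ep;x)$: by Lemma \ref{lem:one to n} the resulting leading term equals $|\nabla d(x)|\,C_n\,\mathcal I_1[\psi(\cdot;x)](d(x)/\ep)$ and is bounded by $C/(|\ln\ep|\rho)$ via Lemma \ref{lem:I_n-awaypsi}, reducing the problem to estimating an error term involving only $x$-derivatives of $\psi$, whose decay at large $|\xi|$ requires a slightly simpler bootstrap.
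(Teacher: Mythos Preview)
Your primary approach has a real gap in the short-range estimate. When you Taylor-expand the composite $v(y)=\psi(d(y)/\ep;y)$ to second order, the Hessian contains not only $\ddot\psi\,(\nabla d\otimes\nabla d)/\ep^{2}$ and $D_x^{2}\psi$, but also the mixed term $(\nabla_x\dot\psi\otimes\nabla d+\nabla d\otimes\nabla_x\dot\psi)/\ep$. Your bootstrap treats only the pure $\xi$-derivatives $\dot\psi,\ddot\psi$; it says nothing about $\nabla_x\dot\psi$. With only the global bound $|\nabla_x\dot\psi|\le C/\ep$ from Theorem~\ref{lem:psi-reg}, that mixed term gives $|D^{2}v|\gtrsim C/\ep^{2}$, and with your radius $R=\ep|\ln\ep|^{1/2}$ the short-range contribution is of order $|\ln\ep|^{1/2}$, which diverges. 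Shrinking $R$ does not help either: with the smaller radius $\ep|\ln\ep|^{-1/2}$ the $\ddot\psi/\ep^{2}$ term (even using the global bound $|\ddot\psi|\le C\ep^{-1/2}|\ln\ep|^{-1}$) blows up instead. So the direct second-order expansion of the composite cannot close without also bootstrapping $\nabla_x\dot\psi$ (and then $D_x^{2}\psi$) in $\xi$, which is substantially more work than you indicate.

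Your ``alternative, possibly cleaner route'' is exactly what the paper does, and---crucially---it requires \emph{no} bootstrap at all. Writing
\[
\ep\mathcal I_n[v](x)=\bigl(\ep\mathcal I_n[v](x)-|\nabla d(x)|\,C_n\mathcal I_1[\psi(\cdot;x)](d(x)/\ep)\bigr)+|\nabla d(x)|\,C_n\mathcal I_1[\psi(\cdot;x)](d(x)/\ep),
\]
the second piece is $O(1/(\rho|\ln\ep|))$ by Lemma~\ref{lem:I_n-awaypsi}. The first piece splits (after the change $z\mapsto\ep z$) into a term~$I$ where the first argument of $\psi$ varies and a term~$II$ where only the second argument varies. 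The point of this decoupling is that the short-range Taylor expansion for $II$ is purely in $x$, so it uses only $D_x^{2}\psi$ and $\nabla_x\dot\psi$---never $\ddot\psi/\ep^{2}$---and the global bounds in Theorem~\ref{lem:psi-reg} already suffice with the short radius $|z|<|\ln\ep|^{-1/2}$. In the intermediate annulus $|\ln\ep|^{-1/2}<|z|<\rho/(4c\ep)$ one has $|d(x)/\ep+\nabla d(x)\cdot z|\ge\rho/(4\ep)$, so \eqref{psiasymptoticsfar} gives $|\psi|\le C/(\rho|\ln\ep|)$ and the tail integral $\int_{|z|>|\ln\ep|^{-1/2}}|z|^{-n-1}\,dz\le C|\ln\ep|^{1/2}$ produces the stated rate $C/(\rho|\ln\ep|^{1/2})$. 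Term~$I$ is handled exactly as in Lemma~\ref{lem:I_1-psi}. So the paper's route sidesteps the entire bootstrap issue by separating the $\xi$- and $x$-directions before Taylor-expanding.
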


\subsection{Proof of Lemma \ref{lem:I_1-psi}}
For simplicity in notation, we drop the dependence on  $t$. 
By Lemma \ref{lem:one to n} and the fact that $|\nabla d(x)|=1$ if $\abs{d(x)}<\rho$, we have 
\begin{align*}
\ep& \mathcal{I}_n\left[ \psi \( \frac{d(\cdot)}{\ep};\cdot\) \right](x) 
	- C_n \mathcal{I}_1[\psi\(\cdot;x\)]\(\frac{d(x)}{\ep}\) \\&
	=\operatorname{P.V.} \int_{\R^n}\left(\psi\(\frac{d(x+\ep z)}{\ep};x+\ep z\)-\psi\(\frac{d(x)}{\ep}+\nabla d(x)\cdot z;x\)\right)\frac{dz}{|z|^{n+1}}\\&
	=\operatorname{P.V.} \int_{\R^n}\left(\psi\(\frac{d(x+\ep z)}{\ep};x+\ep z\)-\psi\(\frac{d(x)}{\ep}+\nabla d(x)\cdot z;x+\ep z \)\right)\frac{dz}{|z|^{n+1}}\\&
	\quad+\operatorname{P.V.} \int_{\R^n}\left(\psi\(\frac{d(x)}{\ep}+\nabla d(x)\cdot z;x+\ep z \)-\psi\(\frac{d(x)}{\ep}+\nabla d(x)\cdot z;x\)\right)\frac{dz}{|z|^{n+1}}\\&
=:I+II.
	\end{align*}
First, let us estimate  $I$.  We will show that 
\begin{equation}\label{lemmaI_1-psiI}|I|\leq \frac{C}{|\ln\ep|}.
\end{equation}
Notice that 
\begin{equation}\label{I_1-psilemsecondorder}\abs{\psi\(\frac{d(x+\ep z)}{\ep};x+\ep z\)-\psi\(\frac{d(x)}{\ep}+\nabla d(x)\cdot z;x+\ep z\)}\leq C\|\dot\psi\|_\infty\ep|z|^2,
\end{equation}
so we can actually write 
$$I= \int_{\R^n}\left(\psi\(\frac{d(x+\ep z)}{\ep};x\)-\psi\(\frac{d(x)}{\ep}+\nabla d(x)\cdot z;x\)\right)\frac{dz}{|z|^{n+1}}.$$
From here, we split
\begin{align*}I&=\int _{\{|z|<\ep^{-\frac12}\}}(\ldots)
	+\int _{\{{|z|>\ep^{-\frac12}}\}}(\ldots)
	=:I_1+I_2. 
	\end{align*}
By \eqref{I_1-psilemsecondorder} and \eqref{psiLinfinityasymptoticsfar} for $\dot\psi$
\begin{align*}|I_1|\leq  C\ep\|\dot\psi\|_\infty\int_{\{|z|< \ep^{-\frac12}\}}\frac{dz}{|z|^{n-1}}\leq \frac{C\ep}{\ep^\frac12|\ln\ep|}\ep^{-\frac12}=\frac{C}{|\ln\ep|},
\end{align*}
	and  from  \eqref{psiLinfinityasymptoticsfar} for $\psi$,
\begin{equation*}\label{I_5case1I_1psilem}
	|I_2|\leq 2\|\psi\|_\infty \int_{\{|z|>\ep^{-\frac12}\}}\frac{dz}{|z|^{n+1}}\leq C\frac{\ep^{\frac12}}{\ep^\frac12|\ln\ep|}=\frac{C}{|\ln\ep|}.
\end{equation*}
This proves \eqref{lemmaI_1-psiI}. 

Next, using that
 $ \operatorname{P.V.}\int_{\{\abs{z}<|\ln\ep|^{\frac12}\}} \nabla_x\psi\(\frac{d(x)}{\ep}; x\) \cdot z \frac{dz}{\abs{z}^{n+1}}=0$,   we write 
\begin{align*}
II&=
	\int_{\{\abs{z}<|\ln\ep|^{\frac12}\}}\left\{\psi\(\frac{d(x)}{\ep}+\nabla d(x)\cdot z;x+
	\ep z \)-\psi\(\frac{d(x)}{\ep}+\nabla d(x)\cdot z;x \)\right.
	\\& \quad \left.-\nabla_x\psi\(\frac{d(x)}{\ep}+\nabla d(x)\cdot z; x\)\cdot (\ep z)\right\}\frac{dz}{|z|^{n+1}}\\&\quad
+ \int_{\{\abs{z}<|\ln\ep|^{\frac12}\}}\left\{ \nabla_x\psi\(\frac{d(x)}{\ep}+\nabla d(x)\cdot z; x\) - \nabla_x\psi\(\frac{d(x)}{\ep}; x\) \right\}\cdot (\ep z)\frac{dz}{|z|^{n+1}}\\&\quad
+ \int_{\{\ln\ep|^{\frac12}<\abs{z}<\frac{\rho}{2\ep}\}}\left\{\psi\(\frac{d(x)}{\ep}+\nabla d(x)\cdot z;x+\ep z\)-\psi\(\frac{d(x)}{\ep}+\nabla d(x)\cdot z;x\)\right\}\frac{dz}{|z|^{n+1}}\\&
\quad + \int_{\{\abs{z}>\frac{\rho}{2\ep}\}}\left\{\psi\(\frac{d(x)}{\ep}+\nabla d(x)\cdot z;x+\ep z\)-\psi\(\frac{d(x)}{\ep}+\nabla d(x)\cdot z;x\)\right\}\frac{dz}{|z|^{n+1}}\\&
=: II_1+II_2+II_3 +II_4. 
\end{align*}
By   \eqref{psiLinfinityasymptoticsfar} for $D^2_x\psi$, 
\begin{align*}
|II_1|\leq C\|D^2_x\psi\|_\infty\ep^2\int_{\{\abs{z}<|\ln\ep|^{\frac12}\}}\frac{dz}{|z|^{n-1}}\leq\frac{C}{\ep^2|\ln\ep|}\ep^2 |\ln\ep|^{\frac12}=\frac{C}{|\ln\ep|^\frac12}.
\end{align*}
By   \eqref{psiLinfinityasymptoticnewclosefront} for $\nabla_x\dot\psi$, 
\begin{align*}
|II_2|\leq  \|\nabla_x\dot\psi(\cdot;x)\|_\infty\ep\int_{\{\abs{z}<|\ln\ep|^{\frac12}\}}\frac{dz}{|z|^{n-1}}\leq\frac{C}{\ep|\ln\ep|}\ep|\ln\ep|^{\frac12}=\frac{C}{|\ln\ep|^\frac12}.
\end{align*}
Next, since $|d(x)|<\rho/2$, for $\abs{z}<\rho/(2\ep)$ we have that $|d(x+\ep z)|\leq \rho$, so that from  \eqref{psiLinfinityasymptoticnewclosefront} for $\psi$, 
\begin{align*}
|II_3|\leq 2\sup_{|d(y)|\leq\rho} \|\psi(\cdot,y)\|_\infty \int_{\{|z|>|\ln\ep|^\frac12\}} \frac{dz}{|z|^{n+1}}\leq\frac{C}{|\ln\ep|^\frac12}.
\end{align*}
Finally, by \eqref{psiLinfinityasymptoticsfar} for $\psi$, 
\begin{align*}
|II_4|\leq 2 \|\psi\|_\infty \int_{\{|z|>\frac{\rho}{2\ep}\}} \frac{dz}{|z|^{n+1}}\leq\frac{C}{\ep^\frac12|\ln\ep|}\frac{\ep}{\rho}=\frac{C\ep^\frac12}{\rho|\ln\ep|}.
\end{align*}

\noindent This proves that 
\begin{equation}\label{lemmaI_1-psiII}|II|\leq \frac{C}{|\ln\ep|^\frac12}.
\end{equation}
From \eqref{lemmaI_1-psiI} and  \eqref{lemmaI_1-psiII}, we have Lemma \ref{lem:I_1-psi}. 

\subsection{Proof of Lemma \ref{lem:I_n-awaypsi}}

From \eqref{eq:linearized wave}, \eqref{eq:asymptotics for phi}, \eqref{eq:asymptotics for phi dot},  \eqref{aL2estimatefar}, \eqref{abarestimate} and \eqref{psiasymptoticsfar},
for $|\xi|\geq 1$,  
\begin{align*}
\abs{ \mathcal{I}_1[\psi\(\cdot;t,x\)]\(\xi\) }&\leq C\left|\psi (\xi)\right|+ \frac{|a_{\ep}\(\xi;t,x\)|}{\ep \abs{\ln \ep}}+C|1+\bar{a}_{\ep}(t,x)|\dot\phi(\xi)+C|\phi(\xi)|\\&
\leq \frac{C}{\ep|\ln\ep||\xi|}.
\end{align*}
In particular, 
\[
\abs{ \mathcal{I}_1[\psi\(\cdot;t,x\)]\(\frac{d(t,x)}{\ep}\) } 
	\leq  \frac{C}{|\ln\ep||d(t,x)|}\leq  \frac{C}{|\ln\ep|\rho}.
\]

\subsection{Proof of Lemma \ref{lem:ae-awaypsi}}
For convenience, we drop the dependence on  $t$ and begin by writing 
\begin{equation}\label{eq:ae-awaypsi-split}
\begin{aligned}
\ep \mathcal{I}_n\left[ \psi \( \frac{d(\cdot)}{\ep};\cdot\) \right](x)&=
\ep \mathcal{I}_n\left[ \psi \( \frac{d(\cdot)}{\ep};\cdot\) \right](x)-|\nabla d(x)|C_n \mathcal{I}_1[\psi\(\cdot;x\)]\(\frac{d(x)}{\ep}\) \\&\quad+|\nabla d(x)| C_n \mathcal{I}_1[\psi\(\cdot;x\)]\(\frac{d(x)}{\ep}\) . 
\end{aligned}
\end{equation}
 With Lemma  \ref{lem:one to n} and 
 as in the proof of  Lemma \ref{lem:I_1-psi}, we obtain
 \begin{align*}
\ep& \mathcal{I}_n\left[ \psi \( \frac{d(\cdot)}{\ep};\cdot\) \right](x) 
	 -|\nabla d(x)|C_n \mathcal{I}_1[\psi\(\cdot;x\)]\(\frac{d(x)}{\ep}\) \\&
	=\operatorname{P.V.} \int_{\R^n}\left(\psi\(\frac{d(x+\ep z)}{\ep};x+\ep z\)-\psi\(\frac{d(x)}{\ep}+\nabla d(x)\cdot z;x+\ep z \)\right)\frac{dz}{|z|^{n+1}}\\&
	\quad+\operatorname{P.V.} \int_{\R^n}\left(\psi\(\frac{d(x)}{\ep}+\nabla d(x)\cdot z;x+\ep z \)-\psi\(\frac{d(x)}{\ep}+\nabla d(x)\cdot z;x\)\right)\frac{dz}{|z|^{n+1}}\\&
=:I+II,
\end{align*}
with 
$$|I|\leq \frac{C}{|\ln\ep|}.$$
Next, using that
 $ \operatorname{P.V.}\int_{\{\abs{z}<|\ln\ep|^{-\frac12}\}} \nabla_x\psi\(\frac{d(x)}{\ep}; x\) \cdot z \frac{dz}{\abs{z}^{n+1}}=0$,   we write, for $c=\|\nabla d\|_\infty$, 
\begin{align*}
II&=\int_{\{\abs{z}<|\ln\ep|^{-\frac12}\}}\left\{\psi\(\frac{d(x)}{\ep}+\nabla d(x)\cdot z;x+
	\ep z \)-\psi\(\frac{d(x)}{\ep}+\nabla d(x)\cdot z;x \)\right.
	\\& \quad \left.-\nabla_x\psi\(\frac{d(x)}{\ep}+\nabla d(x)\cdot z; x\)\cdot (\ep z)\right\}\frac{dz}{|z|^{n+1}}\\&\quad
+ \int_{\{\abs{z}<|\ln\ep|^{-\frac12}\}}\left\{ \nabla_x\psi\(\frac{d(x)}{\ep}+\nabla d(x)\cdot z; x\) - \nabla_x\psi\(\frac{d(x)}{\ep}; x\) \right\}\cdot (\ep z)\frac{dz}{|z|^{n+1}}\\&\quad
+ \int_{\{|\ln\ep|^{-\frac12}<\abs{z}<\frac{\rho}{4c\ep}\}}\left\{\psi\(\frac{d(x)}{\ep}+\nabla d(x)\cdot z;x+\ep z\)-\psi\(\frac{d(x)}{\ep}+\nabla d(x)\cdot z;x\)\right\}\frac{dz}{|z|^{n+1}}\\&\quad
+ \int_{\{\abs{z}>\frac{\rho}{4c\ep}\}}\left\{\psi\(\frac{d(x)}{\ep}+\nabla d(x)\cdot z;x+\ep z\)-\psi\(\frac{d(x)}{\ep}+\nabla d(x)\cdot z;x\)\right\}\frac{dz}{|z|^{n+1}}\\&
=: II_1+II_2+II_3+II_4. 
\end{align*}
By   \eqref{psiLinfinityasymptoticsfar} for $D^2_x\psi$, 
\begin{align*}
|II_1|\leq C\|D^2_x\psi\|_\infty\ep^2\int_{\{\abs{z}<|\ln\ep|^{-\frac12}\}}\frac{dz}{|z|^{n-1}}\leq\frac{C}{\ep^2|\ln\ep|}\ep^2|\ln\ep|^{-\frac12}=\frac{C}{|\ln\ep|^\frac32}.
\end{align*}
By  \eqref{psiLinfinityasymptoticsfar} for $\nabla_x\dot\psi$, 
\begin{align*}
|II_2|\leq  \|\nabla_x\dot\psi(\cdot;x)\|_\infty\ep\int_{\{\abs{z}<|\ln\ep|^{-\frac12}\}}\frac{dz}{|z|^{n-1}}\leq\frac{C}{\ep}\ep |\ln\ep|^{-\frac12}= \frac{C}{|\ln\ep|^\frac12}.
\end{align*}
Next,  note that if $|d(x)|>\rho/2$ and $|z|<\frac{\rho}{4c\ep}$, then $|\frac{d(x)}{\ep}+\nabla d(x)\cdot z|>\frac{\rho}{4\ep}$. Therefore, by \eqref{psiasymptoticsfar},
\begin{align*}
|II_3|\leq \frac{C}{\ep|\ln\ep|}\frac{\ep}{\rho} \int_{\{|z|>|\ln\ep|^{-\frac12}\}} \frac{dz}{|z|^{n+1}}=\frac{C}{|\ln\ep|^\frac12 \rho}.
\end{align*}
Finally, from \eqref{psiLinfinityasymptoticsfar} for $\psi$, 
\begin{align*}
|II_4|\leq 2\|\psi\|_\infty\int_{\{\abs{z}>\frac{\rho}{4c\ep}\}}\frac{dz}{|z|^{n+1}}\leq \frac{C\ep^\frac12}{|\ln\ep|\rho}. 
\end{align*}
We conclude that 
 $$\abs{ \mathcal{I}_n\left[ \psi \( \frac{d(\cdot)}{\ep};\cdot\) \right](x)-|\nabla d(x)|C_n \mathcal{I}_1[\psi\(\cdot;x\)]}\leq\frac{C}{|\ln\ep|^\frac12\rho}. $$
 Moreover, by Lemma \ref{lem:I_n-awaypsi}, 
 $$|\nabla d(x)| \abs{C_n \mathcal{I}_1[\psi\(\cdot;x\)]}\leq\frac{C}{|\ln\ep|\rho}. $$
 Recalling recalling \eqref{eq:ae-awaypsi-split}, the lemma then follows. 

\medskip
The proof of Lemma \ref{lem:ae psi estimate} is then completed.  \qed
 
 \section*{Acknowledgements}
 
 The first author has been supported by the NSF Grant DMS-2155156 ``Nonlinear PDE methods in the study of interphases.'' 
The second author has been supported by the Australian Laureate Fellowship FL190100081 ``Minimal surfaces, free boundaries and partial differential equations.''
Both authors acknowledge the support of NSF Grant DMS RTG 18403.
 


\end{document}